\renewcommand*{\eqref}[1]{%
	\hyperref[{#1}]{\textup{\tagform@{\ref*{#1}}}}%
}
\setlist[enumerate,1]{label={\textup{(\roman*)}}}
\theoremstyle{plain}
\newtheorem{theorem}{Theorem}[section]
\newtheorem{lemma}[theorem]{Lemma}
\newtheorem{corollary}[theorem]{Corollary}
\newtheorem{proposition}[theorem]{Proposition}
\theoremstyle{definition}
\newtheorem{remark}[theorem]{Remark}
\newtheorem{definition}[theorem]{Definition}
\newtheorem{example}[theorem]{Example}
\numberwithin{equation}{section}
\def\M{\mathcal M}
\def\N{\mathbb N}
\def\R{\mathbb R}
\def\Z{\mathbb Z}
\DeclareMathOperator\supp{supp}
\DeclareMathOperator{\sgn}{sgn}
\def\L1loc{L^1_{\text{loc}}}
\newcommand{\newtopology}{\xi}
\newcommand{\ACR}{(ACR) }
\begin{document}

\title[Mean and pointwise ergodicity for composition operators on r.i.~spaces]{Mean and pointwise ergodicity for composition operators on rearrangement-invariant spaces}
\author{Thomas Kalmes$^1$\orcidlink{0000-0001-7542-1334} and Dalimil Pe{\v s}a$^{1,2}$\orcidlink{0000-0001-6638-0913}}

\address{$^1$Chemnitz University of Technology, Faculty of Mathematics, 09107 Chemnitz, Germany}
\email{thomas.kalmes@math.tu-chemnitz.de}

\address{$^2$Department of Mathematical Analysis, Faculty of Mathematics and Physics, Charles University, Sokolovsk\'a~83, 186~75 Praha~8, Czech Republic}
\email{dalimil.pesa@mathematik.tu-chemnitz.de}


\thanks{The second author was supported by the grant 23-04720S of the Czech Science Foundation.}

\begin{abstract}
    We study ergodicity of composition operators on rearrangement-invariant Banach function spaces. More precisely, we give a natural and easy-to-check condition on the symbol of the operator which entails mean ergodicity on a very large class of rearrangement-invariant Banach function spaces. Further, we present some natural additional assumptions that allow us to obtain pointwise ergodicity. The class of spaces covered by our results contains many non-reflexive spaces, such as the Lorentz spaces $L^{p, 1}$ and $L^{p,\infty}$, $p \in (1, \infty)$, Orlicz spaces $L \log^{\alpha} L$ and $\exp L^{\alpha}$, $\alpha > 0$, and the spaces $L^1$ and $L^{\infty}$ over measure spaces of finite measure. The main novelty in our approach is the application of a new locally convex topology which we introduce and which lies strictly between the norm topology and the weak topology induced by the associate space. Throughout, we give several examples which illustrate the applicability of our results as well as highlight the necessity and optimality of our assumptions.\\

    \noindent Keywords: re\-arrange\-ment-invariant (quasi-)Banach function space, composition operator, mean ergodic operator, power-bounded operator.\\

    \noindent MSC 2020: 47A35, 47B33, 46E30.
\end{abstract}

\date{\today}

\maketitle

\makeatletter
   \providecommand\@dotsep{2}
\makeatother

\section{Introduction} \label{SectionIntroduction}

For Banach spaces, or more generally locally convex spaces, $X$ a continuous linear operator $T$ (briefly \emph{continuous operator}; we denote the space of all continuous operators on $X$ by $\mathscr{L}(X)$) is said to be \emph{mean ergodic} if the limits of its Ces\`aro means,
\begin{align*}
\lim_{n\rightarrow\infty} &\frac{1}{n} \sum_{i=0}^{n-1} T^i f,  &f \in X,
\end{align*}
exist in the topology of $X$. One of the first results on mean ergodicity was von Neumann’s seminal work \cite{vonNeumann32} in 1932, where he proved that unitary operators on Hilbert spaces are mean ergodic. Since then, many authors investigated various aspects of mean ergodic behaviour in operator theory; for an overview of the period up to the 1980s, see for example \cite{Krengel85} and the references therein. A central notion closely tied to mean ergodicity is that of \emph{power-boundedness}: an operator  $T \in \mathscr{L}(X)$ is \emph{power-bounded} if the set of its iterates $\{T^n; \; n \in \mathbb{N}\}$ is equicontinuous, which in the Banach space setting means $\sup_{n \in \mathbb{N}} \lVert T^n \rVert < \infty$. For such operators, it is well-known that mean ergodicity is equivalent to the decomposition
\[
X = \operatorname{Ker}(I - T) \oplus \overline{\operatorname{Im}(I - T)}.
\]
A Banach space $X$ is called \emph{mean ergodic} if every power-bounded operator on it is mean ergodic. Notable early results on mean ergodic spaces include the Hilbert spaces and the $L^p$-spaces for $1 < p < \infty$, as shown by Riesz \cite{Riesz38} in 1938, while Kakutani \cite{Kakutani1938}, Lorch \cite{Lorch39}, and Yosida \cite{Yosida1938} proved (independently) that all reflexive Banach spaces are mean ergodic. Concerning the converse, in 1976, Sucheston \cite[Problem 1, p.~285]{Sucheston1976} posed the following question: \emph{If every contraction in a Banach space $X$ is mean ergodic, is $X$ reflexive?} (By a simple renorming argument, it is easily seen that this problem is equivalent to the one whether every mean ergodic Banach space $X$ is reflexive.) In the celebrated article \cite{FoLiWo01} from 2001, Fonf, Lin and Wojtaszczyk proved (among other things) that for a Banach space $X$ \emph{with a basis}, being mean ergodic indeed implies reflexivity.  

A class of operators which plays a prominent role in functional analysis and operator theory is the class of composition operators. Given a set $\mathcal{R}$ and a self mapping $\phi$ on $\mathcal{R}$, beyond the fundamental question under which conditions the composition operator induced by $\phi$ acts on a given space of functions $X$ defined on $\mathcal{R}$, i.e.~whether the operation $T_\phi f = f\circ \phi$ results in a function belonging to $X$ whenever $f$ does, it is a natural task to study properties of $T_\phi$ on $X$ in terms of its \emph{symbol} $\phi$. As general references for composition operators on $L^p$-spaces and spaces of continuous, differentiable, or holomorphic functions, we only mention the textbooks \cite{SinghManhas93, Shapiro93, CowenMacCluer95}, respectively.

In recent years, mean ergodicity and power-boundedness of composition operators on various spaces of (generalised) functions have attracted the attention of a large number of authors. We give only a sample of articles (and refer to references therein); see e.g.\ \cite{Albanese25, AlJoMe22, AsJoKa25, BeGoJoJo16, BeGoJoJo16b, BeJo21, BernardesBonillaPinto25, BoDo11, BoDo11b, FeGaJo18, FeGaJo20, GoJoJo16, JoRo20, Kalmes19, Kalmes20, KaSa22, KaSa23, Santacreu24}.

A further natural aspect to consider in this setting is pointwise ergodicity. While mean ergodicity concerns convergence in norm (or, more generally, a topology) of the function space, the question of almost everywhere convergence of the Ces\`aro means is at the heart of classical ergodic theory and goes back to Birkhoff’s fundamental paper \cite{Birkhoff31}. This naturally raises the problem of determining under which additional assumptions normwise convergence implies pointwise convergence almost everywhere. We only mention the classical generalisation of Birkhoff's result by Dunford and Schwartz \cite{DunfordSchwartz56}, by Calder{\' o}n \cite{Calderon68}, as well as the recent article by Mart\'in-Reyes and de la Rosa \cite{Martin-ReyesDelaRosa22}.

In this paper, we study composition operators on \emph{rearrangement-invariant (quasi-)Banach function spaces} (briefly, r.i.~(quasi-)Banach function spaces). These are (quasi-)Banach spaces of (equivalence classes of) measurable functions on a fixed measure space; the archetypal example are the classical $L^p$-spaces but the class contains many other important function spaces, such as the Lorentz spaces $L^{p,q}$ (and their various generalisations such as Lorentz--Zygmund and --Karamata spaces), Orlicz spaces, classical Lorentz spaces, and more. The precise definitions and properties of these spaces, including the notion of associate spaces, will be introduced in Section~\ref{section: preliminaries} below. The same is true for our standing assumption on the underlying measure space, i.e.~that it is resonant. Our main goal is to establish mean ergodic and pointwise ergodic theorems for composition operators in this setting, under conditions that are significantly weaker than reflexivity.

In particular, we identify a mild and widely satisfied decay condition---namely, the \emph{\ACR property} (see Definition~\ref{DefACR})---for both the space $X$ and its associate space $X'$ and a natural growth condition on the symbol $\phi$---namely, the \emph{power-measure-boundedness} (see Definition~\ref{DefPowerMeasureBounded}---which together ensure that a composition operator $T_\phi$ is mean ergodic. Additionally, we prove that power-measure-boundedness of $\phi$ is equivalent to $T_\phi$ being power-bounded on every r.i.~quasi-Banach function space. This property is also necessary for power-boundedness in many specific r.i.~quasi-Banach function spaces, with some exceptions (we provide a non-trivial counterexample in Example~\ref{CounterExSVPower}). We note that power-measure-boundedness is a much weaker condition than assuming that the symbol is a measure-preserving transformation, which is a common assumption in this context (see e.g.~\cite{Calderon68, DurcikSlavikova25, EinsiedlerWard11, KrauseMirekTao22, MirekSzarek22, Petersen89, Tao08}), and that we do not require invertibility of the symbol. Our results also require only the most natural assumptions on the underlying measure space, i.e.~that it is $\sigma$-finite and resonant (see Definition~\ref{DefResonant}); notably, we do not require finiteness of the measure.

It is important to note, that the \ACR property is very mild. Not only is it satisfied by every r.i.~quasi-Banach function space with absolutely continuous quasinorm (see Proposition~\ref{PropACNimpliesACR}), it turns out that it is satisfied by every r.i.~quasi-Banach function space that is not equal to $L^{\infty}$ ``near infinity'', see \cite[Theorem~4.16]{MusilovaNekvinda24} for the precise formulation of this statement and its proof. Furthermore, if the measure of the underlying measure space is finite, then the \ACR property is trivially satisfied for every r.i.~quasi-Banach function space. Hence, it is clear that the assumptions that the \ACR property holds for both $X$ and $X'$ is much weaker than assuming reflexivity of $X$ (which is equivalent to both $X$ and $X'$ having absolutely continuous norm, see Corollary~\ref{CorReflexBFS}). Some concrete examples of non-reflexive spaces covered by our results are given below.

Conversely, it turns out that the \ACR property, both for an r.i.~Banach function space $X$ on the one hand and for its associate space $X'$ on the other hand, is necessary for a general power-bounded composition operator on $X$ to be mean ergodic (see Examples \ref{CounterExampleL1} and \ref{CounterExampleLInfty}, respectively).

Let us now present a special case of our main results that can be formulated easily without any new concepts. Here, we assume absolute continuity of the norm for $X$ (see Definition \ref{DefACqN}) and the \ACR property for $X'$, which is still much weaker than reflexivity. The most important examples of non-reflexive r.i.~Banach function spaces covered by this theorem are the Lorentz spaces $L^{p, 1}$, $p \in (1, \infty)$, which play an important role in the theories of interpolation and Sobolev embeddings, and, when the underlying measure space is of finite measure, also the space $L^1$ and the Orlicz spaces $L \log^{\alpha} L$, $\alpha > 0$; here, the restriction on the measure is fundamental in the case of $L^1$, as we show in Example~\ref{CounterExampleL1}, while in the case of the spaces $L \log^{\alpha} L$ it is merely a consequence of the fact their definition itself assumes that the measure is finite. The last paragraph of the theorem then outlines what can be shown under a weaker assumption on $X$ than absolute continuity of the norm; Theorems~\ref{ThmErgodic(w')*} and \ref{ThmErgodicNormLocalised} contain the precise formulation. Spaces covered by those theorems, i.e.~satisfying the \ACR property but without absolutely continuous norm, include the weak Lebesgue spaces $L^{p, \infty}$, $p \in (1, \infty)$ and, when the underlying measure space is of finite measure, the space $L^{\infty}$ and the Orlicz spaces $\exp L^{\alpha}$, $\alpha > 0$; again, this restriction on the measure is fundamental in the case of $L^{\infty}$ (see Example~\ref{CounterExampleLInfty}), while for $\exp L^{\alpha}$ it is a mere consequence of the way these spaces are defined.

\begin{theorem} \label{ThmErgodicLite}
	Let $X$ be an r.i.~Banach function space over a $\sigma$-finite and resonant measure space $(\mathcal{R},\mu)$, and let $X'$ be the corresponding associate space. Assume that $X$ has absolutely continuous norm and $X'$ has the \ACR property. Assume that the measurable mapping $\phi \colon \mathcal{R} \to \mathcal{R}$ has the property that there is some $A \in (0, \infty)$ such that it holds for every $n \in \mathbb{N}$, $n\geq 1$, that
	\begin{align*}
		\mu(\phi^{-n}(E)) &\leq A \mu(E) &\text{for every measurable } E \subseteq \mathcal{R}.
	\end{align*}    
	Then the operator $T_{\phi}$ is mean ergodic on $(X, \lVert \cdot \rVert_X)$, i.e.~there is some continuous operator $T \colon X \to X$ such that we have for every $f \in X$ that
	\begin{align} \label{ThmErgodicLite:E0}
		\frac{1}{n}\sum_{i = 0}^{n-1} T_{\phi}^i f &\to Tf & \text{in } (X, \lVert \cdot \rVert_X)  \text{ as } n \to \infty.
	\end{align}
	
	Furthermore, the above defined operator $T$ is positive and there exists the associate operator $T'\colon X' \to X'$ which is also continuous.
	Finally, the operator $T$ is a projection onto the space $\operatorname{Ker} (I - T_{\phi})$ while the operator $I - T$ is a projection onto $\overline{\operatorname{Im} (I - T_{\phi})}^{\lVert \cdot \rVert_X}$, and we have
	\begin{align}
		X &= \operatorname{Ker} (I - T_{\phi}) \bigoplus \overline{\operatorname{Im} (I - T_{\phi})}^{\lVert \cdot \rVert_X} \label{ThmErgodicLite:E0.2}.
	\end{align}

    Finally, if the space $X$ does not have absolutely continuous norm but it has the \ACR property, than the limit operator $T$ still exists, however the convergence in \eqref{ThmErgodicLite:E0} holds only for those functions that themselves have absolutely continous norm; for other functions the  Ces\`aro means converge to the limit operator with respect to a locally convex topology on $X$ that we denote $\newtopology$ and that is weaker than the norm topology but stronger than the weak topology $\sigma(X, X')$. Also, in this case, $I - T$ is a projection onto the a~priori larger set $\overline{\operatorname{Im}(I - T_{\phi})}^{\newtopology}$ and the closure on the right-hand side of \eqref{ThmErgodicLite:E0.2} must likewise be taken with respect to the $\newtopology$ topology.
\end{theorem}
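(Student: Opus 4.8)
The plan is to reduce everything to the classical mean ergodic theorem for power-bounded operators, with absolute continuity of $\lVert\cdot\rVert_X$ and the \ACR property of $X'$ entering only at the point where one needs a compactness property of the Ces\`aro means. By power-measure-boundedness of $\phi$ and the equivalence recorded above, $T_\phi\in\mathscr L(X)$ is power-bounded; set $M=\sup_n\lVert T_\phi^n\rVert$ and $A_n=\tfrac1n\sum_{i=0}^{n-1}T_\phi^i$, so that $\sup_n\lVert A_n\rVert\le M$, and note that $T_\phi$ is positive with $T_\phi\mathbf 1=\mathbf 1$ and that, since $X$ has absolutely continuous norm, $X^*=X'$. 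For a power-bounded operator the set $G:=\{f\in X:(A_nf)_n\text{ is }\lVert\cdot\rVert_X\text{-convergent}\}$ is a closed linear subspace containing both $\operatorname{Ker}(I-T_\phi)$ and $\overline{\operatorname{Im}(I-T_\phi)}^{\lVert\cdot\rVert_X}$, and mean ergodicity is exactly the statement $G=X$; once this is known, that $T:=\lim_n A_n$ is a projection onto $\operatorname{Ker}(I-T_\phi)$, that $I-T$ is a projection onto $\overline{\operatorname{Im}(I-T_\phi)}^{\lVert\cdot\rVert_X}$, and the decomposition \eqref{ThmErgodicLite:E0.2} all follow by standard arguments, $T$ is positive as a strong limit of the positive $A_n$, and $T':=T^*$ is a continuous associate operator on $X'$ (again using $X^*=X'$). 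It therefore suffices to prove that $G$ contains a dense subset of $X$, and, by absolute continuity of the norm together with $\sigma$-finiteness, the bounded functions supported on sets of finite measure are dense; fix such an $f$.

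I would show $f\in G$ by proving that $\{A_n f:n\in\N\}$ is relatively $\sigma(X,X')$-compact. Since $X^*=X'$, this is genuine relative weak compactness, and any weak cluster point $\bar f$ of $(A_n f)_n$ satisfies $(I-T_\phi)\bar f=0$ because $A_n f-T_\phi A_n f=\tfrac1n(f-T_\phi^n f)\to 0$ in norm; the mean ergodic theorem for power-bounded operators then forces $A_n f\to\bar f$ in $\lVert\cdot\rVert_X$. As $X$ is order continuous, relative weak compactness of the bounded set $\{A_nf\}$ reduces, via the Dunford--Pettis-type criterion for rearrangement-invariant Banach function spaces, to a uniform absolute continuity of the norms: for every $E_k\downarrow\emptyset$ one needs $\sup_n\lVert(A_nf)\chi_{E_k}\rVert_X\to 0$. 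When $\mu(E_1)<\infty$ this is immediate from $\lvert A_nf\rvert\le\lVert f\rVert_\infty\mathbf 1$ (positivity and $T_\phi\mathbf 1=\mathbf 1$) and absolute continuity of $\lVert\cdot\rVert_X$; the substantive case is that of an exhaustion $R_k\uparrow\mathcal R$ by finite-measure sets, i.e.\ the assertion that the ergodic averages do not leak $X$-mass to spatial infinity, and this is where the \ACR property of $X'$ is needed.

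This tightness estimate is the crux of the matter. I would analyse it through the Hopf decomposition $\mathcal R=C\sqcup D$ of $\phi$ into its conservative and dissipative parts --- $T_\phi$ being a positive $L^\infty$-contraction that is power-bounded on $L^1$ with constant $A$ by power-measure-boundedness, so that $\lVert A_nf\rVert_{L^1}\le A\lVert f\rVert_{L^1}$ and the ergodic theory of such operators applies. On (the finite ergodic components of) $C$ the averages $A_nf$ converge $\mu$-a.e.\ and are dominated, and absolute continuity of $\lVert\cdot\rVert_X$ provides a dominated-convergence principle in $X$ that turns this into $\lVert\cdot\rVert_X$-convergence there; on $D$ and on the infinite ergodic components of $C$ one has $A_nf\to 0$ a.e., and, crucially, the near-disjointness of the orbit pieces $(f\circ\phi^i)$ forces $A_nf$ on this part to be ``low'' --- of height tending to $0$, still carrying $L^1$-mass at most $A\lVert f\rVert_1$, and supported on a set whose measure grows at most linearly in $n$. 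The $X$-norm of such a function is of order $\frac{L^1\text{-mass}}{\text{measure}}\cdot\varphi_X(\text{measure})$, where $\varphi_X$ is the fundamental function of $X$; and the \ACR property of $X'$, which through the duality $\varphi_X(t)\varphi_{X'}(t)=t$ of fundamental functions is precisely the statement that $\varphi_X(t)=o(t)$ as $t\to\infty$, makes this quantity tend to $0$ in the relevant uniform sense. I expect the rigorous execution of this last point --- combining the ergodic-theoretic structure (that the averages retain no escaping bumps) with the sublinear decay of $\varphi_X$ --- to be the main technical obstacle; that the \ACR hypothesis on $X'$ cannot simply be dropped is already visible from the $L^1$ counterexample in Example~\ref{CounterExampleL1}.

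For the final assertion, where $X$ is only assumed to have the \ACR property, one runs the same scheme inside the locally convex space $(X,\newtopology)$: the topology $\newtopology$ is weaker than $\lVert\cdot\rVert_X$ and stronger than $\sigma(X,X')$, coincides with $\sigma(X,X')$ on $\lVert\cdot\rVert_X$-bounded sets, has the same bounded sets as the norm, satisfies $(X,\newtopology)'=X'$, and makes $T_\phi$ continuous with $\{T_\phi^n\}$ equicontinuous. Since $\{A_nf\}$ is norm-bounded, $\newtopology$ agrees with $\sigma(X,X')$ on it, so the relative compactness of the previous step --- whose proof used the \ACR property of $X'$ but never absolute continuity of $\lVert\cdot\rVert_X$ --- still holds, and the mean ergodic argument goes through in $(X,\newtopology)$, producing the $\newtopology$-continuous projection $T$ with $A_nf\to Tf$ in $\newtopology$ for every $f\in X$ and $I-T$ a $\newtopology$-continuous projection onto $\overline{\operatorname{Im}(I-T_\phi)}^{\newtopology}$. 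Finally, if $f$ itself has absolutely continuous norm, then, since $T_\phi$ maps the closed subspace $X_a$ of functions with absolutely continuous norm into itself (split $f\in X_a$ into a bounded finitely supported function, whose $T_\phi$-image has support of finite measure and so lies in $X_a$, plus a norm-small remainder), the orbit of $f$ and all its averages remain in $X_a$; applying the already-proved first part of the theorem to the rearrangement-invariant space $X_a$, which has absolutely continuous norm and whose associate space is again $X'$ and hence still has the \ACR property, upgrades the $\newtopology$-convergence of $(A_nf)$ to $\lVert\cdot\rVert_X$-convergence. Specialising to $X=X_a$ recovers the first part of the theorem.
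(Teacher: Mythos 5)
Your overall route---reduce to the classical mean ergodic theorem for power-bounded operators by showing that the averages $A_nf=\frac1n\sum_{i=0}^{n-1}T_\phi^if$ are relatively weakly compact for $f$ in a dense set, via uniform absolute continuity of their norms---is a legitimate alternative in principle, and is genuinely different from the paper's argument. However, the step you yourself identify as the crux, tightness of $\{A_nf\}$ at spatial infinity, is not proved, and the mechanism you offer for it does not work. The estimate ``$\lVert g\rVert_X$ is of order $\frac{\lVert g\rVert_{L^1}}{\mu(\supp g)}\varphi_X(\mu(\supp g))$'' is valid only for functions that are roughly constant on their support. From the data you actually control (height $\le h$, $L^1$-mass $\le m$, support measure $\le s$) the best rearrangement bound is $g^*\le h\chi_{[0,m/h)}+\frac mt\chi_{[m/h,s)}$, and the tail piece need not have small---or even finite---$X$-norm under your hypotheses: for the Lorentz space $\Lambda_\varphi$ with $\varphi(t)\approx t/\log(e+t)$, which has absolutely continuous norm and whose associate space has the \ACR property, one has $\tfrac1t\chi_{[1,\infty)}\notin\overline{X}$. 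The uniform control that power-measure-boundedness actually yields is the Hardy--Littlewood--P\'olya domination $(A_nf)^*\prec D_Bf^*$ of Proposition~\ref{PropHLPEstimate}, and the paper notes explicitly after that proposition that this is \emph{insufficient} for uniform absolute continuity on sets of the form $E_k=[k,\infty)$. Your stronger structural claims (uniformly vanishing heights on the dissipative part, ``no escaping bumps'', near-disjointness of the orbit pieces) are not established and are essentially as hard as the convergence you are trying to prove; note that the preimages $\phi^{-i}(\supp f)$ can overlap arbitrarily. So the first part of the theorem rests on a conjectural estimate.

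The final paragraph has further problems. First, $\newtopology$ does \emph{not} coincide with $\sigma(X,X')$ on norm-bounded sets: for a unimodular orthonormal basis $(e_n)$ of $L^2[0,1]$ one has $e_n\to0$ weakly while $\lvert e_n\rvert_\varphi=\int_0^1\varphi^*\,d\lambda$ is a nonzero constant; consequently you cannot import the compactness of the previous step into $(X,\newtopology)$ this way, and in the \ACR-only case $\sigma(X,X')$-cluster points are not weak cluster points, so the classical mean ergodic theorem is unavailable. Second, you cannot ``apply the first part of the theorem to $X_a$'': $X_a$ is in general not a Banach function space (it may fail the Fatou property), as the paper stresses after Theorem~\ref{ThmACNDual}. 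The paper's proof avoids compactness entirely: it runs a Lorch-type argument in $(X,\newtopology)$, using $\newtopology$-power-boundedness of $T_\phi$ and $T_\phi'$, the duality $(X,\newtopology)^*=X'$ (Theorem~\ref{ThmACRDual}), completeness of $(X,\newtopology)$ (Theorem~\ref{Thm(w')*Complete}), and the bipolar relations (Theorem~\ref{ThmOrtogonality}) to obtain density of $\operatorname{Ker}(I-T_\phi)\oplus\overline{\operatorname{Im}(I-T_\phi)}^{\newtopology}$; it then upgrades to norm convergence on $X_a$ via $T(X_a)\subseteq X_a$ (a consequence of the HLP domination) and the coincidence of the $\newtopology$- and $\lVert\cdot\rVert_X$-closures, which follows from Hahn--Banach once both duals are identified with $X'$. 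If you want to pursue your route for the absolutely continuous case, the missing ingredient is a genuine proof that no $X$-mass of $A_nf$ escapes to infinity, and that is precisely where the work lies.
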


The key to our mean ergodicity results is the new locally convex topology $\newtopology$ for r.i.~quasi-Banach function spaces which we introduce and study in Section~\ref{SecRITopo}. This topology, defined via a family of rearrangement-invariant norms, is crucial for our work, because it serves as a middle ground between the norm topology and the weak topology induced by $X'$. It has several strongly desirable properties, chief among which is that it is complete (not just sequentially- or quasi-complete) and that the topological dual of $X$ when equipped with this topology is equal to the associate space $X'$, provided that $X$ has the \ACR property (this is, in fact, the reason why we need the \ACR property in our results).

As for pointwise ergodicity, it turns out that with additional assumptions on the symbol $\phi$, the norm convergence of the  Ces\`aro means implies pointwise convergence. We have found two distinct assumptions that guarantee this behaviour which we present in Definition~\ref{Def(I)}; the full version of the pointwise ergodic result is presented as Theorem~\ref{ThmErgodicPointwise}. However, as before, we present a simpler version of our theorem that is well compatible with Theorem~\ref{ThmErgodicLite}:

\begin{theorem} \label{ThmErgodicPointwiseLite}
	Let $X$ be an r.i.~Banach function space over a $\sigma$-finite and resonant measure space $(\mathcal{R},\mu)$, and let $X'$ be the corresponding associate space. Assume that $X$ has absolutely continuous norm and $X'$ has the \ACR property. Assume that the measurable mapping $\phi \colon \mathcal{R} \to \mathcal{R}$ has the property that there is some $A \in (0, \infty)$ such that it holds for every $n \in \mathbb{N}$, $n\geq 1$, that
	\begin{align}
		A^{-1} \mu(E) \leq \mu(\phi^{-n}(E)) &\leq A \mu(E) &\text{for every measurable } E \subseteq \mathcal{R}. \label{ThmErgodicPointwiseLite:E1}
	\end{align}    
	Then the operator $T_{\phi}$ is pointwise ergodic on $X$, i.e.~we have for every $f \in X$ that
	\begin{align} \label{ThmErgodicLite:E0.1}
		\frac{1}{n}\sum_{i = 0}^{n-1} T_{\phi}^i f &\to Tf & \mu\text{-a.e.~as } n \to \infty,
	\end{align}
    where $T \colon X \to X$ is the continuous linear operator whose existence is asserted in Theorem~\ref{ThmErgodicLite}.

    In the case when $X$ does not have absolutely continuous norm but it has only the \ACR property, we get the pointwise convergence for those functions that have absolutely continuous norm in $X$.
\end{theorem}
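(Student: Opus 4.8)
Since the two-sided bound \eqref{ThmErgodicPointwiseLite:E1} implies in particular the one-sided bound assumed in Theorem~\ref{ThmErgodicLite}, that theorem applies: writing $A_nf=\frac1n\sum_{i=0}^{n-1}T_\phi^if$, there is the limit operator $T$ with $A_nf\to Tf$ in $\lVert\cdot\rVert_X$ for every $f\in X$ when $X$ has absolutely continuous norm, and for every $f\in X$ having absolutely continuous norm otherwise. Now $\lVert\cdot\rVert_X$-convergence in an r.i.~Banach function space forces $L^1$-convergence on every set $E$ of finite measure, because $\chi_E\in X'$ and hence $\int_E\lvert A_nf-Tf\rvert\le\lVert A_nf-Tf\rVert_X\lVert\chi_E\rVert_{X'}$; exhausting $\mathcal R$ by finite-measure sets, a subsequence of $(A_nf)$ therefore converges $\mu$-a.e. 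Consequently, as soon as we know that $(A_nf)$ converges $\mu$-a.e., its limit must be $Tf$. So the entire task is to establish $\mu$-a.e.~convergence of $(A_nf)$.

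The plan is to reduce to classical Dunford--Schwartz theory by replacing $\mu$ with an equivalent, $\phi$-invariant measure. Put $w_n=\d(\mu\circ\phi^{-n})/\d\mu$; then \eqref{ThmErgodicPointwiseLite:E1} gives $A^{-1}\le w_n\le A$ $\mu$-a.e. Using $\sigma$-finiteness to realise $L^\infty(\mu)$ as the Banach dual $(L^1(\mu))^{\ast}$, let $S=(T_\phi|_{L^1(\mu)})^{\ast}$ be the induced transfer operator on $L^\infty(\mu)$; it is positive, weak-$\ast$ continuous, and satisfies $Sw_n=w_{n+1}$. The weak-$\ast$ closed convex hull $K$ of $\{w_n:n\ge0\}$ is a weak-$\ast$ compact convex subset of the order interval $\{g: A^{-1}\le g\le A\}$ which $S$ maps into itself, so the Markov--Kakutani fixed point theorem supplies $w\in K$ with $Sw=w$, whence $A^{-1}\le w\le A$. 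A short computation from the identity $\int_E Sg\,\d\mu=\int_{\phi^{-1}(E)}g\,\d\mu$ then shows that $\nu:=w\,\d\mu$ is a $\sigma$-finite measure, equivalent to $\mu$ with $A^{-1}\mu\le\nu\le A\mu$ and hence again resonant, that satisfies $\nu\circ\phi^{-1}=\nu$.

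Over $(\mathcal R,\nu)$ the operator $T_\phi$ is a Dunford--Schwartz operator: by $\phi$-invariance of $\nu$ it is an isometry of $L^1(\nu)$, and since $T_\phi\mathbf 1=\mathbf 1$ it is a positive contraction of $L^\infty(\nu)=L^\infty(\mu)$. Hence the Dunford--Schwartz maximal ergodic inequality \cite{DunfordSchwartz56} gives that $f\mapsto\sup_n\lvert A_nf\rvert$ is of weak type $(1,1)$ and of strong type $(\infty,\infty)$ relative to $\nu$; interpolation (the couple $(L^1,L^\infty)$ being a Calder\'on couple) bounds this maximal operator on $X(\nu)$, the r.i.~space over $(\mathcal R,\nu)$ with the same representation space as $X$ --- in the finite-measure case one may instead just use $X(\nu)\hookrightarrow L^1(\nu)$ together with the classical theorem. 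Since, moreover, $(A_nf)$ converges $\mu$-a.e.~on the dense subspace $\operatorname{Ker}(I-T_\phi)+(I-T_\phi)\bigl(X\cap L^\infty\bigr)$ --- which is dense because absolute continuity of the norm makes $X\cap L^\infty$ dense, the averages being constantly equal to the first component and telescoping to $\tfrac1n(h-T_\phi^nh)\to0$ uniformly on the second (and, in the case where $X$ itself lacks absolute continuity, working inside the closed subspace $X_a$ of functions with absolutely continuous norm) --- the Banach principle (that is, Calder\'on's ergodic theorem \cite{Calderon68} in this setting) yields $\nu$-a.e.~convergence of $(A_nf)$ for every $f\in X$, resp.~for every $f$ having absolutely continuous norm in $X$. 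Finally, $A^{-1}\le\d\nu/\d\mu\le A$ makes the $\mu$- and $\nu$-decreasing rearrangements comparable up to a dilation by $A$, so that $X(\mu)=X(\nu)$ with equivalent norms and with the same absolutely continuous parts, while $\mu$-a.e.~and $\nu$-a.e.~coincide; combined with the first paragraph this gives $A_nf\to Tf$ $\mu$-a.e., which is the assertion.

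The main obstacle is the second step together with the change of measure in the third: the Markov--Kakutani argument depends on the weak-$\ast$ compactness furnished by $\sigma$-finiteness (so that $L^\infty=(L^1)^{\ast}$) and on weak-$\ast$ continuity of the transfer operator, while transporting the rearrangement-invariant structure from $\mu$ to $\nu$ uses the (standard) boundedness of dilation operators on r.i.~spaces. A secondary point is to pin down the precise form of Calder\'on's ergodic theorem needed for a general r.i.~space --- in particular, that the hypotheses ``$X$ has absolutely continuous norm and $X'$ has the \ACR property'' exclude the $L^1$- and $L^\infty$-at-infinity obstructions and thereby secure the boundedness of the maximal operator on $X$; the identification of the $\mu$-a.e.~limit with the mean-ergodic limit $T$ of Theorem~\ref{ThmErgodicLite} is then routine, as in the first paragraph.
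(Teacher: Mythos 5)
Your proposal is correct in substance but reaches the crucial maximal ergodic inequality by a genuinely different route. The paper proves the weak $(1,1)$ bound for $T^{\#}_{\phi}$ (Theorem~\ref{ThmMaxErgodicL1}) directly from condition \ref{I3} via the Calder\'on transference principle, comparing $T^{\#}_{\phi}$ with the discrete maximal operator on $\mathbb{Z}$. You instead exploit the two-sided bound \eqref{ThmErgodicPointwiseLite:E1} structurally: the densities $w_n=d(\mu\circ\phi^{-n})/d\mu$ lie in the weak-$*$ compact order interval $[A^{-1},A]$ of $L^{\infty}(\mu)=(L^1(\mu))^{*}$, the transfer operator maps their weak-$*$ closed convex hull into itself, and Markov--Kakutani produces an equivalent $\sigma$-finite $\phi$-invariant measure $\nu$ with $A^{-1}\mu\le\nu\le A\mu$; over $\nu$ the operator $T_{\phi}$ is a genuine Dunford--Schwartz operator and the classical maximal inequality applies. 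This is more conceptual --- it shows that under \eqref{ThmErgodicPointwiseLite:E1} the dynamics is measure-preserving after a bounded change of density --- at the price of the fixed-point construction; the paper's transference argument is more self-contained and, unlike yours, also covers the one-sided case \ref{I1} of Definition~\ref{Def(I)}, where the $w_n$ have no uniform lower bound and no equivalent invariant measure of this form need exist. From the maximal inequality onward your argument (interpolation, a.e.~convergence on the dense subspace $\operatorname{Ker}(I-T_{\phi})+(I-T_{\phi})(X\cap L^{\infty})$, Banach principle, identification of the a.e.~limit with $T$ through a.e.-convergent subsequences of the norm-convergent averages) parallels Theorem~\ref{ThmInterpolation}, Corollary~\ref{CorMaxErgodicX}, and the proof of Theorem~\ref{ThmErgodicPointwise}, merely with a different, more explicit dense subspace than the simple functions used there.

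Two inaccuracies should be repaired, though neither is fatal. First, $(\mathcal{R},\nu)$ need not be resonant: if $\mu$ is completely atomic with equal atoms, the atoms will in general have unequal $\nu$-measure, so the space $X(\nu)$ and the interpolation over $\nu$ are not covered by the paper's framework. The clean fix is to transfer the weak $(1,1)$ bound back to $\mu$ immediately --- from $A^{-1}\mu\le\nu\le A\mu$ one gets $\mu(\{T^{\#}_{\phi}f>s\})\le A\,\nu(\{T^{\#}_{\phi}f>s\})\lesssim A^{2}s^{-1}\lVert f\rVert_{L^1(\mu)}$ --- and then interpolate over $(\mathcal{R},\mu)$, where resonance holds. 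Second, interpolation does not yield boundedness of $T^{\#}_{\phi}$ on $X$; this already fails for $X=L^1$ over a finite measure space, which your theorem covers. What it yields, and what the Banach principle actually needs, is the weak-type estimate $s\,\varphi_X\bigl(\mu(\{T^{\#}_{\phi}f>s\})\bigr)\lesssim\lVert f\rVert_X$ of Corollary~\ref{CorMaxErgodicX}; this holds for every r.i.~Banach function space, and the absolute-continuity and \ACR hypotheses enter only through the mean ergodic theorem and the density argument, not through the maximal inequality.
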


Here, \eqref{ThmErgodicPointwiseLite:E1} is one of the two possible sufficient conditions for pointwise ergodicity presented in Definition~\ref{Def(I)}. The key to this result is the r.i.~version of the Maximal Ergodic Theorem that we present as Corollary~\ref{CorMaxErgodicX}; this result itself follows via interpolation from a weak-type estimate for the maximal ergodic operator on $L^1$ (Theorem~\ref{ThmMaxErgodicL1}).

It is worth noting that in the case when $\mu(\mathcal{R}) < \infty$, then every r.i.~Banach function space is embedded into $L^1$, whence it is pointless to examine any other space and our contribution lies in weakening the known sufficient conditions onto $\phi$ (cf.~\cite{Birkhoff31} or \cite[Theorem~2.30]{EinsiedlerWard11}). Our main contribution to the pointwise ergodic theory is therefore the case when $\mu(\mathcal{R}) = \infty$, where our result shows that the pointwise convergence holds for every function that has absolutely continuous norm in any given r.i.~Banach function space.

The paper is organised as follows. First, in Section~\ref{section: preliminaries} we recall the necessary background on r.i.~(quasi-)Banach function spaces, including their representation, duality, and norm properties. In order to keep the paper self-contained and since a significant part of the required background is rather recent and not yet covered by any standard books, this is a rather extensive section. Section~\ref{section: composition operators on r.i.} studies composition operators on  r.i.~(quasi-)Banach function spaces, culminating in our results on power-boundedness for composition operators. Section~\ref{SecRITopo} is devoted to the construction and study of the locally convex topology mentioned above. This serves as a preparation for our main ergodic results, but given how nicely the new topology combines the properties of the norm topology (e.g.~net-completeness) with those of the weak topology $\sigma(X, X')$ (e.g.~that the topological dual coincides with the associate space, assuming the \ACR condition), we believe it to be of independent interest. In Section~\ref{SectionErgodic}, we present our mean ergodicity results for composition operators on r.i.~Banach function spaces. Additionally, throughout, we give several examples which illustrate the applicability of our results and highlight the necessity and optimality of our assumptions. Finally, Section~\ref{SectionPointwise} is devoted to pointwise ergodicity. There we investigate when the norm convergence of Ces\`aro means established earlier can in fact be strengthened to almost everywhere convergence. The argument relies on a suitable maximal inequality and shows that pointwise ergodic theorems can be obtained under rather natural and comparatively mild assumptions on the underlying transformation. It is worth noting, that in Sections~\ref{section: composition operators on r.i.}, \ref{SecRITopo}, \ref{SectionErgodic}, and \ref{SectionPointwise} we work under the standing assumption that the underlying measure space is $\sigma$-finite and resonant. 

\section{Preliminaries}\label{section: preliminaries}

The purpose of this section is to establish the theoretical background that serves as the foundation for our research. The definitions and notation is intended to be as standard as possible. The usual references for most of this theory are \cite{BennettSharpley88} and \cite{MeiseVogt97}.

Throughout this paper we will denote by $(\mathcal{R}, \mu)$, and occasionally by $(\mathcal{S}, \nu)$, some arbitrary (totally) $\sigma$-finite measure space. Given a $\mu$-measurable set $E \subseteq \mathcal{R}$ we will denote its characteristic function by $\chi_E$. By $\mathcal{M}(\mathcal{R}, \mu)$ we will denote the set of all extended complex-valued $\mu$-measurable functions defined on $\mathcal{R}$. As is customary, we will identify functions that coincide $\mu$-almost everywhere. We will further denote by $\mathcal{M}_0(\mathcal{R}, \mu)$ and $\mathcal{M}_+(\mathcal{R}, \mu)$ the subsets of $\mathcal{M}(\mathcal{R}, \mu)$ containing, respectively, the functions finite $\mu$-almost everywhere and the non-negative functions.

When there is no risk of confusing the reader, we will abbreviate $\mu$-almost everywhere, $\mathcal{M}(\mathcal{R}, \mu)$, $\mathcal{M}_0(\mathcal{R}, \mu)$, and $\mathcal{M}_+(\mathcal{R}, \mu)$ to $\mu$-a.e., $\mathcal{M}$, $\mathcal{M}_0$, and $\mathcal{M}_+$, respectively.

By the support of a given measurable function $f$ on $(\mathcal{R}, \mu)$ (i.e.~not an equivalence class) we mean the set where it is non-zero, i.e.
\begin{equation*}
	\supp f =\left \{x \in \mathcal{R}; \; \lvert f \rvert> 0  \right \}.
\end{equation*}

We will, generally, be treating functions and their respective equivalence classes interchangeably whenever the distinction is not important. However, to avoid confusion, let us clarify some statements we may make when talking about $g \in \mathcal{M}$ (i.e.~an equivalence class of functions):
\begin{enumerate}
    \item By saying that $g$ is a simple function, we mean that there is a representative that is a simple function.
    \item By saying that the support of $g$ is included in some set $E$, or writing $\supp g \subseteq E$, we mean that $g$ is equal to $0$ a.e.~outside of $E$.
\end{enumerate}

When $X$ is a set and $f, g \colon X \to \mathbb{C}$ are two maps satisfying that there is some positive and finite constant $C$, depending only on $f$ and $g$, such that $\lvert f(x) \rvert \leq C \lvert g(x) \rvert$ for all $x \in X$, we will denote this by $f \lesssim g$. We will also write $f \approx g$, or sometimes say that $f$ and $g$ are equivalent, whenever both $f \lesssim g$ and $g \lesssim f$ are true at the same time. We choose this general definition because we will use the symbols ``$\lesssim$'' and ``$\approx$'' with both functions and functionals. 

When $X, Y$ are two topological linear spaces, we will denote by $Y \hookrightarrow X$ that $Y \subseteq X$ and that the identity mapping $I \colon Y \rightarrow X$ is continuous.

As for some special cases, we will denote by $\lambda^n$ the classical $n$-dimensional Lebesgue measure on $\mathbb{R}^n$, with the exception of the $1$-dimensional case where we simply write $\lambda$. We will further denote by $m$ the counting measure over $\mathbb{N}$ or $\mathbb{Z}$. When $p \in (0, \infty]$ we will denote by $L^p$ the classical Lebesgue space (of functions in $\mathcal{M}(\mathcal{R}, \mu)$), defined for finite $p$ as the set
\begin{equation*}
	L^p = \left \{ f \in \mathcal{M}(\mathcal{R}, \mu); \; \int_{\mathcal{R}} \lvert f \rvert^p \: d\mu < \infty \right \},
\end{equation*}
equipped with the customary (quasi)norm
\begin{equation*}
	\lVert f \rVert_p = \left( \int_ {\mathcal{R}} \lvert f \rvert^p \: d\mu \right)^{\frac{1}{p} },
\end{equation*}
and through the usual modifications for $p=\infty$. In the special case when $(\mathcal{R}, \mu) = (\mathbb{N}, m)$ we will denote this space by $\ell^p$. Note that in this paper we consider $0$ to be an element of $\mathbb{N}$.

When $\phi \colon \mathcal{R} \to \mathcal{R}$ is measurable and $n \in \mathbb{N}$ then $\phi^n$ is the composition
\begin{equation*}
	\phi^n = \underbrace{\phi \circ \cdots \circ \phi}_{\text{$n$-times}}.
\end{equation*}
Specifically, $\phi^1 = \phi$ and $\phi^0$ is the identity mapping on $\mathcal{R}$. The same notation will also be used for self-mappings on $\mathcal{M}$. When further $E \subseteq \mathcal{R}$ is measurable, $\phi^{-n}(E)$ denotes the preimage of $E$ with respect to the mapping $\phi^n$.

\subsection{Non-increasing rearrangement} \label{SectionNon-increasingRearrangement}
We now present the crucial concept of the non-in\-crea\-sing rearrangement of a function and state some of its properties that will be important for our work. We proceed in accordance with \cite[Chapter~2]{BennettSharpley88}.

\begin{definition} \label{DefNIR}
	The \emph{distribution function} $f_*$ of $f \in \mathcal{M}$ is defined for $s \in [0, \infty)$ by
	\begin{equation*}
		f_*(s) = \mu(\{ x \in \mathcal{R}; \; \lvert f(x) \rvert > s \}).
	\end{equation*}	
	The \emph{non-increasing rearrangement} $f^*$ of the said function is then defined for $t \in [0, \infty)$ by
	\begin{equation*}
		f^*(t) = \inf \{ s \in [0, \infty); \; f_*(s) \leq t \}.
	\end{equation*}
\end{definition}

For the basic properties of the distribution function and the non-increasing rearrangement, with proofs, see \cite[Chapter~2, Propositions~1.3 and 1.7]{BennettSharpley88}, respectively. We consider those properties to be classical and well known and we will be using them without further explicit reference; let us, however, just point out one critical property we will use several times in the paper, that is
\begin{equation} \label{EqAlmostSubAdd*}
	(f+g)^* (t) \leq f^* \left( \frac{t}{2} \right) + g^* \left( \frac{t}{2} \right)
\end{equation}
for all $f,g \in \mathcal{M}$ and all $t \in [0, \infty)$

An important concept used in the paper is that of equimeasurability.

\begin{definition} \label{DEM}
	We say that $f \in \mathcal{M}(\mathcal{R}, \mu)$ and $g \in \mathcal{M}(\mathcal{S}, \nu)$ are \emph{equimeasurable} if $f^* = g^*$, where the non-increasing rearrangements are computed with respect to the corresponding measures.
\end{definition}

It is not hard to show that two functions are equimeasurable if and only if their distribution functions coincide too (with each distribution function being considered with respect to the corresponding measure).

A very significant classical result is the Hardy--Littlewood inequality. For a proof, see for example \cite[Chapter~2, Theorem~2.2]{BennettSharpley88}.

\begin{theorem} \label{THLI}
	It holds for all $f, g \in \mathcal{M}$ that
	\begin{equation*}
		\int_\mathcal{R} \lvert fg \rvert \: d\mu \leq \int_0^{\infty} f^*g^* \: d\lambda.
	\end{equation*}
\end{theorem}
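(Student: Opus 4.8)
The plan is to prove this via the \emph{layer-cake} (distribution function) representation, which reduces both sides to integrals over superlevel sets and thereby isolates the one feature of the rearrangement that drives the estimate: the superlevel sets of a non-increasing function on $(0,\infty)$ are nested intervals anchored at the origin, so they overlap maximally. First I would use, for any non-negative measurable function, the elementary identity $\lvert f(x) \rvert = \int_0^\infty \chi_{\{\lvert f \rvert > s\}}(x) \, ds$ and the analogue for $\lvert g \rvert$. Multiplying these and integrating over $\mathcal{R}$, an application of Tonelli's theorem (legitimate since all integrands are non-negative and $(\mathcal{R}, \mu)$ is $\sigma$-finite) yields
\begin{equation*}
\int_\mathcal{R} \lvert fg \rvert \, d\mu = \int_0^\infty \int_0^\infty \mu\bigl(\{\lvert f \rvert > s\} \cap \{\lvert g \rvert > t\}\bigr) \, ds \, dt.
\end{equation*}
The identical computation applied to $f^*$ and $g^*$ on $(0,\infty)$ equipped with $\lambda$ gives
\begin{equation*}
\int_0^\infty f^* g^* \, d\lambda = \int_0^\infty \int_0^\infty \lambda\bigl(\{f^* > s\} \cap \{g^* > t\}\bigr) \, ds \, dt.
\end{equation*}

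The heart of the argument is then a comparison of the two inner measures. On the one hand, an intersection is contained in each of its factors, so $\mu(\{\lvert f \rvert > s\} \cap \{\lvert g \rvert > t\}) \leq \min\bigl(f_*(s), g_*(t)\bigr)$. On the other hand, because $f^*$ is non-increasing and right-continuous, each superlevel set $\{f^* > s\}$ is an interval of the form $[0, a)$; the equimeasurability of $f$ with $f^*$, i.e.\ $(f^*)_* = f_*$, identifies its right endpoint as $a = f_*(s)$, so that $\{f^* > s\} = [0, f_*(s))$ and likewise $\{g^* > t\} = [0, g_*(t))$. These two intervals share the initial segment $[0, \min(f_*(s), g_*(t)))$, whence $\lambda(\{f^* > s\} \cap \{g^* > t\}) = \min\bigl(f_*(s), g_*(t)\bigr)$. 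Substituting both facts into the two displayed identities and integrating the resulting pointwise inequality $\mu(\{\lvert f \rvert > s\} \cap \{\lvert g \rvert > t\}) \leq \min(f_*(s), g_*(t)) = \lambda(\{f^* > s\} \cap \{g^* > t\})$ over $(s,t) \in (0,\infty)^2$ delivers the claim.

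I expect the only delicate point to be the identification $\{f^* > s\} = [0, f_*(s))$, which rests on the standard equimeasurability of $f$ with $f^*$ together with the monotonicity of $f^*$; this is precisely one of the basic properties of the non-increasing rearrangement recorded in \cite[Chapter~2]{BennettSharpley88} that the text allows us to invoke without further comment. Everything else is routine bookkeeping with Tonelli's theorem applied to non-negative integrands. It is worth emphasising conceptually that the inequality becomes an equality exactly at the level of the rearrangements: for arbitrary $f$ and $g$ the superlevel sets can be disjoint, but after rearranging they are forced into nested intervals, which is the structural reason the product integral is largest for equimeasurable, comonotone rearrangements.
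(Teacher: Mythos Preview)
Your argument is correct and is precisely the standard layer-cake proof; the paper does not supply its own proof but simply cites \cite[Chapter~2, Theorem~2.2]{BennettSharpley88}, where exactly this argument appears. There is nothing to add.
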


It follows directly from this result that
\begin{equation} \label{HLI_sup}
	\sup_{\substack{\tilde{g} \in \mathcal{M} \\ \tilde{g}^* = g^*}} \int_{\mathcal{R}} \lvert f \tilde{g} \rvert \: d\mu \leq \int_0^{\infty} f^*g^* \: d\lambda
\end{equation}
holds for every $f,g \in \mathcal{M}$. This motivates the definition of resonant measure spaces as those spaces where we have equality in \eqref{HLI_sup}.

\begin{definition} \label{DefResonant}
	A $\sigma$-finite measure space $(\mathcal{R}, \mu)$ is said to be \emph{resonant} if it is true for all $f, g \in \mathcal{M}(\mathcal{R}, \mu)$ that
	\begin{equation} \label{DefResonant:E1}
		\sup_{\substack{\tilde{g} \in \mathcal{M} \\ \tilde{g}^* = g^*}} \int_\mathcal{R} \lvert f \tilde{g} \rvert \: d\mu = \int_0^{\infty} f^* g^* \: d\lambda.
	\end{equation}
\end{definition}

\begin{remark} \label{RemResonant}
	Note that $\tilde{g}^*$ does not depend on the sign of $\tilde{g}$, hence for every admissible $\tilde{g}_1$ there is some admissible $\tilde{g}_2$ such that
	\begin{equation*}
		\int_\mathcal{R} \lvert f \tilde{g}_1 \rvert \: d\mu = \int_\mathcal{R} f \tilde{g}_2 \: d\mu.
	\end{equation*}
	Hence, the modulus on the left-hand side of \eqref{DefResonant:E1} could as well be placed outside of the integral or omitted altogether, provided that the functions for which the resulting integrals fail to be defined would not be considered in the supremum. 
\end{remark}

The property of being resonant is a crucial one. Luckily, there is a straightforward characterisation of resonant measure spaces which we present below. For its proof and further details see \cite[Chapter~2, Theorem~2.7]{BennettSharpley88}.

\begin{theorem} \label{TheoremCharResonance}
	A $\sigma$-finite measure space is resonant if and only if it is either non-atomic or completely atomic with all atoms having equal measure.
\end{theorem}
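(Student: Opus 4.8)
The plan is to fix $f,g\in\mathcal M(\mathcal R,\mu)$ and make two preliminary reductions: first, since neither side of \eqref{DefResonant:E1} changes when $f$ and $g$ are replaced by $\lvert f\rvert$ and $\lvert g\rvert$, one may assume $f,g\in\mathcal M_+$; second, by Theorem~\ref{THLI} the inequality ``$\leq$'' in \eqref{DefResonant:E1} always holds (it is precisely \eqref{HLI_sup}), so that $(\mathcal R,\mu)$ being resonant is equivalent to the supremum in \eqref{DefResonant:E1} never falling strictly below $\int_0^\infty f^*g^*\,d\lambda$. I would then establish the two implications of the equivalence separately.

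For the ``if'' direction, suppose first that $(\mathcal R,\mu)$ is non-atomic. Here I would invoke the classical fact that on a $\sigma$-finite non-atomic measure space every $h\in\mathcal M$ admits a measure-preserving transformation $\sigma$ of $(\mathcal R,\mu)$ onto the interval $\big((0,\mu(\mathcal R)),\lambda\big)$ with $\lvert h\rvert=h^*\circ\sigma$ $\mu$-a.e.\ (this belongs to the circle of results behind \cite[Chapter~2, Theorem~2.7]{BennettSharpley88}). Applying it to $h=f$ and putting $\tilde g:=g^*\circ\sigma$, the function $\tilde g$ is non-negative and equimeasurable with $g$ — composing with a measure-preserving map leaves the distribution function unchanged and $g^*$, being non-increasing, is its own rearrangement — hence admissible in \eqref{DefResonant:E1}, and the image-measure formula for $\sigma$ yields
\[
\int_{\mathcal R}\lvert f\tilde g\rvert\,d\mu=\int_{\mathcal R}(f^*\circ\sigma)(g^*\circ\sigma)\,d\mu=\int_{\mathcal R}(f^*g^*)\circ\sigma\,d\mu=\int_0^\infty f^*g^*\,d\lambda,
\]
so the supremum is in fact attained. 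If instead $(\mathcal R,\mu)$ is completely atomic with all atoms of some common measure $a>0$, then $\sigma$-finiteness leaves only countably many atoms and every function is a.e.\ constant on each of them; identifying functions with their sequences of values, one checks that $\int_0^\infty f^*g^*\,d\lambda=a\sum_{k\geq1}\xi^*_k\eta^*_k$, where $\xi^*$ and $\eta^*$ denote the non-increasing rearrangements of the value-sequences of $f$ and $g$, and that the admissible $\tilde g$ are exactly those whose value-sequence rearranges that of $g$. I would then get ``$\geq$'' from the classical rearrangement inequality for sequences: picking out the indices carrying the $N$ largest values of $f$ and placing on them the $N$ largest values of $g$ in the matching order gives an admissible $\tilde g$ with $\int_{\mathcal R}\lvert f\tilde g\rvert\,d\mu\geq a\sum_{k=1}^N\xi^*_k\eta^*_k$, and letting $N\to\infty$ closes this case.

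For the ``only if'' direction I would argue by contraposition. If $(\mathcal R,\mu)$ is neither non-atomic nor completely atomic with all atoms of equal measure, then it contains an atom $A$ together with either (a) a subset on which $\mu$ is non-atomic and positive, or (b) a second atom $B$ with $\mu(B)\neq\mu(A)$. In case (a) I would use the non-atomicity there (via Sierpi\'nski's theorem) to choose a measurable set $N$ with $0<\mu(N)<\mu(A)$ and take $f=\chi_A$, $g=\chi_N$: any $\tilde g$ equimeasurable with $g$ is the indicator (up to a unimodular factor) of a set $E$ with $\mu(E)=\mu(N)<\mu(A)$, so, $A$ being an atom, $\mu(A\cap E)\in\{0,\mu(A)\}$ forces $\mu(A\cap E)=0$; hence the left-hand side of \eqref{DefResonant:E1} vanishes, whereas $f^*=\chi_{(0,\mu(A))}$ and $g^*=\chi_{(0,\mu(N))}$ give $\int_0^\infty f^*g^*\,d\lambda=\mu(N)>0$. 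In case (b), relabelling so that $\mu(B)>\mu(A)$, I take $f=\chi_B$, $g=\chi_A$: now $\tilde g$ is the indicator of a set $E$ with $\mu(E)=\mu(A)<\mu(B)$, so $\mu(B\cap E)=0$ and again the left-hand side of \eqref{DefResonant:E1} is $0$ while the right-hand side equals $\mu(A)>0$. In either case $(\mathcal R,\mu)$ fails to be resonant.

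The step I expect to be the main obstacle is the existence of the measure-preserving transformation $\sigma$ in the non-atomic case: it is classical but not entirely trivial, especially when $f$ is unbounded or $\mu(\mathcal R)=\infty$. I would either quote it from \cite{BennettSharpley88} or bypass it by treating simple $f$ directly — realising the relevant pieces of $g^*$ on the disjoint non-atomic sets on which $f$ is constant — and then passing to general $f\in\mathcal M_+$ by a monotone approximation $f_n\uparrow f$, using that $\int_{\mathcal R}f\tilde g_n\,d\mu\geq\int_{\mathcal R}f_n\tilde g_n\,d\mu=\int_0^\infty f_n^*g^*\,d\lambda\to\int_0^\infty f^*g^*\,d\lambda$ together with Theorem~\ref{THLI}. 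A further, minor point, easily dispatched, is that in the atomic case the supremum in \eqref{DefResonant:E1} need only be approached, not attained, when the value-sequences of $f$ or $g$ do not tend to $0$.
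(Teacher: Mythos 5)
Your argument is correct, and I should point out that the paper does not prove this statement at all --- it is quoted verbatim from \cite[Chapter~2, Theorem~2.7]{BennettSharpley88}, whose proof (deferred there to the end of that chapter) follows essentially the same route you describe: measure-preserving rearrangement maps in the non-atomic case, direct permutation of values in the equal-atom case, and atom-based test functions for the converse. The one genuinely delicate step, the existence of a measure-preserving $\sigma$ with $\lvert f\rvert=f^*\circ\sigma$ for arbitrary $f$ on a non-atomic $\sigma$-finite space, you correctly identify as the main obstacle, and your fallback --- constructing $\tilde g=g^*\circ\sigma$ only for simple $f$ (where $\sigma$ is obtained by mapping each level set of $f$ measure-preservingly onto an interval) and then passing to general $f$ by monotone approximation $f_n\uparrow f$ with $\int_{\mathcal R}f\tilde g_n\,d\mu\geq\int_0^\infty f_n^*g^*\,d\lambda\to\int_0^\infty f^*g^*\,d\lambda$ --- is a valid way to close it; your observation that the supremum is attained in the non-atomic case but in general only approached in the atomic case is also accurate.
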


Besides the non-increasing rearrangement, we will also need the concept of elementary maximal function, sometimes also called the maximal non-increasing rearrangement, which is defined as the Hardy transform of the non-increasing rearrangement.

\begin{definition}
	The \emph{elementary maximal function} $f^{**}$ of $f \in \mathcal{M}$ is defined for $t \in (0, \infty)$ by 
	\begin{equation*}
		f^{**}(t) = \frac{1}{t} \int_0^{t} f^*(s) \: ds.
	\end{equation*} 
\end{definition}

For the basic properties of the elementary maximal function, we refer the reader to \cite[Chapter~2, Proposition~3.2]{BennettSharpley88}. The usefulness of this concept lies in the fact, that unlike the non-increasing rearrangement, the elementary maximal function is subadditive. For proof, see \cite[Chapter~2, Theorem~3.4]{BennettSharpley88}.

\begin{proposition} \label{PropSubAdd**}
	Let $(\mathcal{R}, \mu)$ be resonant and let $f,g \in \mathcal{M}_0(\mathcal{R}, \mu)$. Then 
    \begin{align*}
        (f + g)^{**}(t) &\leq f^{**}(t) + g^{**}(t) &\text{for all } t \in (0, \infty).
    \end{align*}
\end{proposition}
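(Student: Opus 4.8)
The plan is to reduce the subadditivity of $f^{**}$ to the classical statement about Hardy-type averages of non-increasing rearrangements, and the crucial bridge is the variational characterisation of $\int_0^t h^*\,ds$ on resonant spaces. Concretely, I would first recall the well-known identity
\begin{align*}
	\int_0^t h^*(s)\,ds &= \sup\left\{ \int_E \lvert h \rvert \,d\mu ;\; E \subseteq \mathcal{R} \text{ measurable}, \ \mu(E) \leq t \right\}
\end{align*}
valid for every $h \in \mathcal{M}(\mathcal{R}, \mu)$ whenever $(\mathcal{R}, \mu)$ is resonant. This is essentially \eqref{DefResonant:E1} applied with the second function taken to be a characteristic function $\chi_F$ with $\mu(F) = t$ (using $\sigma$-finiteness and resonance to produce such an $F$, or approximating if $t > \mu(\mathcal{R})$, in which case the identity reads $\int_0^t h^* = \int_{\mathcal R}\lvert h\rvert$ and is trivial): indeed $(\chi_F)^*(s) = \chi_{[0,t)}(s)$, so $\int_0^\infty h^*(\chi_F)^*\,d\lambda = \int_0^t h^*\,ds$, while $\sup_{\tilde g^* = (\chi_F)^*}\int_{\mathcal R}\lvert h\tilde g\rvert\,d\mu = \sup_{\mu(E)\le t}\int_E\lvert h\rvert\,d\mu$.

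Granting this, the proof of the proposition is immediate. Fix $t \in (0, \infty)$ and let $E \subseteq \mathcal{R}$ be any measurable set with $\mu(E) \leq t$. Then, since $f, g \in \mathcal{M}_0$ so that $\lvert f + g \rvert \leq \lvert f \rvert + \lvert g \rvert$ pointwise a.e.,
\begin{align*}
	\int_E \lvert f + g \rvert \,d\mu &\leq \int_E \lvert f \rvert \,d\mu + \int_E \lvert g \rvert \,d\mu \leq \int_0^t f^*(s)\,ds + \int_0^t g^*(s)\,ds,
\end{align*}
where the last inequality uses the identity above for $f$ and for $g$ separately. Taking the supremum over all such $E$ on the left-hand side and invoking the identity once more, now for $f + g$, yields
\begin{align*}
	\int_0^t (f+g)^*(s)\,ds &\leq \int_0^t f^*(s)\,ds + \int_0^t g^*(s)\,ds.
\end{align*}
Dividing through by $t$ gives exactly $(f+g)^{**}(t) \leq f^{**}(t) + g^{**}(t)$, as desired.

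The only genuine obstacle, and the single point that uses resonance in an essential way, is establishing the variational identity for $\int_0^t h^*\,ds$; everything after that is a one-line estimate. The subtlety there is purely measure-theoretic: on a resonant $\sigma$-finite space one needs a set $F$ of any prescribed measure $t \le \mu(\mathcal R)$, which is available because such a space is (by Theorem~\ref{TheoremCharResonance}) either non-atomic — where intermediate-value-type arguments give exact measures — or completely atomic with equal atom masses, where $t$ must be handled along the discrete grid but the supremum over $E$ still reproduces $\int_0^t h^*\,ds$ by a direct computation with the sorted atom values. Since the excerpt explicitly points to \cite[Chapter~2, Theorem~3.4]{BennettSharpley88} for this proposition and the ingredients (Hardy--Littlewood, resonance) are all in place, I would simply cite the Hardy-average identity as the known consequence of resonance that it is, rather than reprove it, and present the three-line deduction above.
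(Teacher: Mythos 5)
Your argument is correct and is essentially the proof the paper points to (\cite[Chapter~2, Theorem~3.4]{BennettSharpley88}): establish the variational identity $\int_0^t h^*\,d\lambda=\sup\{\int_E|h|\,d\mu;\ \mu(E)\le t\}$ on resonant spaces and then take suprema in the pointwise triangle inequality. One small imprecision: in the completely atomic case the supremum over $E$ with $\mu(E)\le t$ equals $\int_0^{t'}h^*\,d\lambda$ only for $t'$ the largest grid point below $t$, so the identity as you state it fails off the grid; the conclusion still follows because $t\mapsto\int_0^t h^*\,d\lambda$ is affine between consecutive grid points, so an inequality valid at grid points propagates to all $t$.
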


The subadditivity of the elementary maximal function closely related to the very useful Hardy's lemma, which we will also need directly. The proof can be found in \cite[Chapter~2, Proposition~3.6]{BennettSharpley88}.

\begin{lemma} \label{LemmaHardy}
	Let $h_1, h_2 \in \mathcal{M}_+([0, \infty), \lambda)$ satisfy
	\begin{align*}
		\int_0^{t} h_1 \: d\lambda &\leq \int_0^{t} h_2 \: d\lambda &\text{for all } t \in (0, \infty).
	\end{align*}
	Let further $\varphi \in \mathcal{M}_+([0, \infty), \lambda)$ be non-increasing. Then
	\begin{equation*}
		\int_0^{\infty} h_1 \varphi \: d\lambda \leq \int_0^{\infty} h_2 \varphi \: d\lambda.
	\end{equation*}
\end{lemma}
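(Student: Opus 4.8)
The plan is to turn the hypothesis, which is a statement about integrals over the \emph{initial} intervals $[0,t]$, into the desired statement about the weighted integrals by exploiting that the weight $\varphi$ is non-increasing; the mechanism is the layer-cake (stratification) representation of $\varphi$ together with Tonelli's theorem. The structural fact that makes this work is the following: since $\varphi$ is non-increasing, for each $r \in (0, \infty)$ the superlevel set
\begin{equation*}
    E_r = \{ t \in [0, \infty); \; \varphi(t) > r \}
\end{equation*}
is downward closed in $[0, \infty)$ and hence an interval of the form $[0, b(r))$ or $[0, b(r)]$ for some $b(r) \in [0, \infty]$; since the right endpoint is a single point, it is $\lambda$-negligible and we may simply treat $E_r$ as $[0, b(r))$. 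Moreover $r \mapsto b(r)$ is itself non-increasing, so the set $\{(t, r); \; t < b(r)\}$ is Borel in $[0, \infty)^2$, which is all the joint measurability Tonelli will require below.

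Next I would write, for every $t \in [0, \infty)$, the layer-cake identity $\varphi(t) = \int_0^{\varphi(t)} \d r = \int_0^{\infty} \chi_{E_r}(t) \: dr$ (valid also when $\varphi(t) = \infty$), insert it into $\int_0^{\infty} h_i \varphi \: d\lambda$ for $i \in \{1, 2\}$, and apply Tonelli to the non-negative integrand to get
\begin{equation*}
    \int_0^{\infty} h_i \varphi \: d\lambda = \int_0^{\infty} \left( \int_0^{\infty} h_i(t) \chi_{E_r}(t) \: dt \right) dr = \int_0^{\infty} \left( \int_0^{b(r)} h_i \: d\lambda \right) dr.
\end{equation*}
For each fixed $r$ the inner integrals satisfy $\int_0^{b(r)} h_1 \: d\lambda \leq \int_0^{b(r)} h_2 \: d\lambda$: this is the hypothesis applied at $t = b(r)$ when $b(r) \in (0, \infty)$, it is the trivial identity $0 = 0$ when $b(r) = 0$, and when $b(r) = \infty$ it follows by letting $t \to \infty$ in the hypothesis and invoking monotone convergence on both sides. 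Integrating this pointwise-in-$r$ inequality over $r \in (0, \infty)$ and using the two displayed identities yields
\begin{equation*}
    \int_0^{\infty} h_1 \varphi \: d\lambda \leq \int_0^{\infty} h_2 \varphi \: d\lambda,
\end{equation*}
which is the assertion.

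I do not expect a genuine obstacle here: the proof is essentially bookkeeping, and the only points that need a moment's attention are the degenerate endpoint values $b(r) \in \{0, \infty\}$ in the comparison step and the joint measurability needed for Tonelli, both of which are handled by the monotonicity of $b$ recorded in the first paragraph. This is consistent with the fact that the paper merely cites \cite[Chapter~2, Proposition~3.6]{BennettSharpley88} for the statement.
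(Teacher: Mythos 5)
Your proof is correct, and it is essentially the standard argument behind the cited result \cite[Chapter~2, Proposition~3.6]{BennettSharpley88}: both exploit that a non-increasing weight is a positive superposition of characteristic functions $\chi_{[0,t)}$, for which the hypothesis applies verbatim. The only difference is packaging --- you use the continuous layer-cake decomposition plus Tonelli, whereas the reference approximates $\varphi$ from below by finite sums $\sum_j c_j\chi_{[0,t_j)}$ and concludes by monotone convergence --- and your handling of the degenerate cases $b(r)\in\{0,\infty\}$ and of the joint measurability is sound.
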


\subsection{Banach function norms and quasinorms} \label{SectionFunctionNormsQuasinorms}

\begin{definition}
	Let $\lVert \cdot \rVert \colon \mathcal{M}(\mathcal{R}, \mu) \rightarrow [0, \infty]$ be a mapping satisfying $\lVert \, \lvert f \rvert \, \rVert = \lVert f \rVert$ for all $f \in \mathcal{M}$. We say that $\lVert \cdot \rVert$ is a \emph{Banach function norm} if its restriction to $\mathcal{M}_+$ satisfies the following axioms:
	\begin{enumerate}[label=\textup{(P\arabic*)}, series=P]
		\item \label{P1} it is a norm, in the sense that it satisfies the following three conditions:
		\begin{enumerate}[ref=(\theenumii)]
			\item \label{P1a} it is absolutely homogeneous, i.e.\ $\forall a \in \mathbb{C} \; \forall f \in \mathcal{M}_+ : \lVert a f \rVert = \lvert a \rvert \lVert f \rVert$,
			\item \label{P1b} it satisfies $\lVert f \rVert = 0 \Leftrightarrow f = 0$  $\mu$-a.e.,
			\item \label{P1c} it is subadditive, i.e.\ $\forall f,g \in \mathcal{M}_+  :  \lVert f+g \rVert \leq \lVert f \rVert + \lVert g \rVert$,
		\end{enumerate}
		\item \label{P2} it has the lattice property, i.e.\ if some $f, g \in \mathcal{M}_+$ satisfy $f \leq g$ $\mu$-a.e., then also $\lVert f \rVert \leq \lVert g \rVert$,
		\item \label{P3} it has the Fatou property, i.e.\ if  some $f_n, f \in \mathcal{M}_+$ satisfy $f_n \uparrow f$ $\mu$-a.e., then also $\lVert f_n \rVert \uparrow \lVert f \rVert $,
		\item \label{P4} $\lVert \chi_E \rVert < \infty$ for all $E \subseteq \mathcal{R}$ satisfying $\mu(E) < \infty$,
		\item \label{P5} for every $E \subseteq \mathcal{R}$ satisfying $\mu(E) < \infty$ there exists some finite constant $C_E$, dependent only on $E$, such that the inequality $ \int_E f \: d\mu \leq C_E \lVert f \rVert $ is true for all $f \in \mathcal{M}_+$.
	\end{enumerate} 
\end{definition}

\begin{definition}
	Let $\lVert \cdot \rVert \colon \mathcal{M}(\mathcal{R}, \mu) \rightarrow [0, \infty]$ be a mapping satisfying $\lVert \, \lvert f \rvert \, \rVert = \lVert f \rVert$ for all $f \in \mathcal{M}$. We say that $\lVert \cdot \rVert$ is a \emph{quasi-Banach function norm} if its restriction to $\mathcal{M}_+$ satisfies the axioms \ref{P2}, \ref{P3}, and \ref{P4} of Banach function norms together with a weaker version of axiom \ref{P1}, namely
	\begin{enumerate}[label=\textup{(Q\arabic*)}]
		\item \label{Q1} it is a quasinorm, in the sense that it satisfies the following three conditions:
		\begin{enumerate}[ref=(\theenumii)]
			\item \label{Q1a} it is absolutely homogeneous, i.e.\ $\forall a \in \mathbb{C} \; \forall f \in \mathcal{M}_+ : \lVert af \rVert = \lvert a \rvert \lVert f \rVert$,
			\item \label{Q1b} it satisfies  $\lVert f \rVert = 0 \Leftrightarrow f = 0$ $\mu$-a.e.,
			\item \label{Q1c} there is a constant $C\geq 1$, called the modulus of concavity of $\lVert \cdot \rVert$, such that it is subadditive up to this constant, i.e.
			\begin{equation*}
				\forall f,g \in \mathcal{M}_+ : \lVert f+g \rVert \leq C(\lVert f \rVert + \lVert g \rVert).
			\end{equation*}
		\end{enumerate}
	\end{enumerate}
\end{definition}

Usually, it is assumed that the modulus of concavity is the smallest constant for which the part \ref{Q1c} of \ref{Q1} holds. We will follow this convention, even though the value will be of little consequence for our results. 

\begin{definition}
	Let $\lVert \cdot \rVert$ be a (quasi-)Banach function norm. We say that $\lVert \cdot \rVert$ is \emph{rearrangement-invariant}, abbreviated r.i., if $\lVert f\rVert = \lVert g \rVert$ whenever $f, g \in \mathcal{M}$ are equimeasurable (in the sense of Definition~\ref{DEM}).
\end{definition}

\begin{definition}
	Let $\lVert \cdot \rVert_X$ be a (quasi-)Banach function norm. We then define the corresponding \emph{(quasi-)Banach function space} $X$ as the set
	\begin{equation*}
		X = \left \{ f \in \mathcal{M};  \; \lVert f \rVert_X < \infty \right \}.
	\end{equation*}
	
	Furthermore, we will say that $X$ is rearrangement-invariant whenever $\lVert \cdot \rVert_X$ is.
\end{definition}

For a detailed treatment of (r.i.)~Banach function spaces we refer the reader to \cite[Chapters~1 and 2]{BennettSharpley88}; for an overview of (r.i.)~quasi-Banach function spaces we recommend \cite{LoristNieraeth23, MusilovaNekvinda24, NekvindaPesa24}, and the references therein. Here we focus exclusively on the properties that are directly related to our work.

Firstly, we will need the result that embeddings of quasi-Banach function spaces are automatically continuous. This result has long been known for Banach function spaces (see e.g.~\cite[Chapter~1, Theorem~1.8]{BennettSharpley88}), but the extension to the wider class of quasi-Banach function spaces has only been obtained quite recently in \cite[Corollary~3.10]{NekvindaPesa24}.

\begin{theorem} \label{TEQBFS}
	Let $\lVert \cdot \rVert_X$ and $\lVert \cdot \rVert_Y$ be quasi-Banach function norms and let $X$ and $Y$ be the corresponding quasi-Banach function spaces. If $X \subseteq Y$ then also $X \hookrightarrow Y$.
\end{theorem}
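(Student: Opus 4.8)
The plan is to prove that the inclusion map $\iota\colon X\to Y$ is \emph{bounded}, since for linear maps between quasinormed spaces boundedness is equivalent to continuity, and this is exactly the assertion $X\hookrightarrow Y$. I would argue by contradiction. If $\iota$ were unbounded, then $\sup\{\lVert f\rVert_Y : \lVert f\rVert_X\le 1\}=\infty$, so one can pick $f_n\in X$ with $\lVert f_n\rVert_X\le 1$ and $\lVert f_n\rVert_Y\ge 4^n$; setting $g_n=2^{-n}f_n$ one obtains a sequence in $X$ with $\lVert g_n\rVert_X\le 2^{-n}$ and yet $\lVert g_n\rVert_Y\ge 2^n\to\infty$. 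The strategy is to glue the $g_n$ into a single function of finite $X$-quasinorm and then read off a contradiction from the lattice property of $\lVert\cdot\rVert_Y$.

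The key external ingredient is the Aoki--Rolewicz theorem: the quasinorm $\lVert\cdot\rVert_X$ is equivalent on $X$ to an $r$-homogeneous quasinorm $\lVert\cdot\rVert_{X,0}$ for some $r\in(0,1]$ depending only on the modulus of concavity, meaning $\lVert u+v\rVert_{X,0}^r\le\lVert u\rVert_{X,0}^r+\lVert v\rVert_{X,0}^r$ for all $u,v\in X$. Passing to this equivalent quasinorm affects neither the boundedness of $\iota$ nor the conclusion, so after rescaling I may assume outright that $\lVert g_n\rVert_{X,0}\le 2^{-n}$. Now consider the partial sums $h_M=\sum_{n=1}^{M}\lvert g_n\rvert\in X$. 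The $r$-subadditivity gives $\lVert h_M\rVert_{X,0}^r\le\sum_{n=1}^{M}\lVert g_n\rVert_{X,0}^r\le\sum_{n=1}^{\infty}2^{-nr}=:S<\infty$ uniformly in $M$, and hence, by the equivalence of the two quasinorms, $\lVert h_M\rVert_X$ is bounded uniformly in $M$. Since $h_M\uparrow h:=\sum_{n=1}^{\infty}\lvert g_n\rvert$ pointwise $\mu$-a.e., the Fatou property \ref{P3} of $\lVert\cdot\rVert_X$ yields $\lVert h\rVert_X=\lim_{M\to\infty}\lVert h_M\rVert_X<\infty$; that is, $h\in X$.

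By hypothesis $X\subseteq Y$, so $h\in Y$, i.e.\ $\lVert h\rVert_Y<\infty$. On the other hand $0\le\lvert g_n\rvert\le h$ $\mu$-a.e.\ for every $n$, so the lattice property \ref{P2} of $\lVert\cdot\rVert_Y$ forces $\lVert g_n\rVert_Y\le\lVert h\rVert_Y<\infty$ for all $n$, contradicting $\lVert g_n\rVert_Y\ge 2^n$. Therefore $\iota$ is bounded, hence continuous, which proves $X\hookrightarrow Y$.

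The point that needs genuine care --- and the reason the result is not entirely trivial in the quasi-Banach setting --- is the possible blow-up of iterated quasi-triangle inequalities: estimating $\lVert\sum_{n=1}^{M}\lvert g_n\rvert\rVert_X$ directly by dyadic grouping produces a factor $C^{\log_2 M}$ coming from the modulus of concavity $C$ of $\lVert\cdot\rVert_X$, which is unbounded in $M$. The Aoki--Rolewicz renorming is precisely what removes this obstruction; in the Banach case $C=1$ one may take $r=1$ and the argument runs verbatim, recovering the classical statement. I would also remark that the same gliding-hump mechanism can alternatively be packaged through the closed graph theorem for $F$-spaces, using that quasi-Banach function spaces are complete (again a Riesz--Fischer argument based on \ref{P3}) and that quasinorm-null sequences admit $\mu$-a.e.-null subsequences (which here follows from \ref{P4} together with \ref{P1} and \ref{P2}); but the direct argument above is shorter and sidesteps completeness altogether.
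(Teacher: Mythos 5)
Your argument is correct, and it is essentially the standard route for this kind of statement; note that the paper itself does not prove Theorem~\ref{TEQBFS} but cites it from the literature, so there is no in-paper proof to compare against. Your gliding-hump scheme (normalise a putative unbounded sequence, sum the moduli into a single majorant $h\in X$ via Aoki--Rolewicz plus the Fatou property \ref{P3}, then contradict the lattice property \ref{P2} of $\lVert\cdot\rVert_Y$) is exactly the quasi-Banach upgrade of the classical Bennett--Sharpley argument, and you correctly identify the one genuine obstruction --- the multiplicative blow-up of iterated quasi-triangle inequalities --- and remove it with the Aoki--Rolewicz renorming. Two cosmetic points: what you call an ``$r$-homogeneous'' quasinorm is usually called $r$-subadditive (an $r$-norm), though your displayed inequality makes the intended meaning unambiguous; and since the Aoki--Rolewicz renorming need not satisfy $\lVert\,\lvert g\rvert\,\rVert_{X,0}=\lVert g\rVert_{X,0}$, you should estimate $\lVert\,\lvert g_n\rvert\,\rVert_{X,0}\lesssim\lVert g_n\rVert_X$ through the equivalence of quasinorms --- harmless, but worth a line. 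Finally, the parenthetical claim in your closing remark that quasinorm-null sequences admit $\mu$-a.e.-null subsequences ``follows from \ref{P4} together with \ref{P1} and \ref{P2}'' is too quick for general (non-rearrangement-invariant) quasi-Banach function norms, where one must rule out $\inf\{\lVert\chi_F\rVert_X;\ F\subseteq E,\ \mu(F)\geq\delta\}=0$; this is the content of Theorem~\ref{ThmEmbM0} and is not immediate. Since that remark concerns only the alternative closed-graph proof and your main argument sidesteps it, the proof as given stands.
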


Secondly, an important property of r.i.~quasi-Banach function spaces over $([0, \infty), \lambda)$ is that the dilation operator is continuous on those spaces, as stated in the following theorem. This is a classical result in the context of r.i.~Banach function spaces which has been recently extended to r.i.~quasi-Banach function spaces in \cite[Section~3.4]{NekvindaPesa24} (for the classical version see e.g.~\cite[Chapter~3, Proposition~5.11]{BennettSharpley88}).

\begin{definition} \label{DDO}
	Let $t \in (0, \infty)$. The dilation operator $D_t$ is defined on $\mathcal{M}([0, \infty), \lambda)$ by the formula
	\begin{equation*}
		D_tf(s) = f(ts),
	\end{equation*}
	where $f \in \mathcal{M}([0, \infty), \lambda)$, $s \in (0, \infty)$.
\end{definition}

\begin{theorem} \label{TDRIS}
	Let $X$ be an r.i.~quasi-Banach function space over $([0, \infty), \lambda)$ and let $t \in (0, \infty)$. Then $D_t \colon X \rightarrow X$ is a continuous operator.
\end{theorem}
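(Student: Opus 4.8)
The plan is to reduce everything to non-increasing functions, dispatch the case $t\ge 1$ at once, and treat the genuinely non-trivial case $0<t<1$ by a dyadic discretisation.

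First I would record that $(D_tf)^{*}=D_t(f^{*})$ for every $f\in\mathcal{M}([0,\infty),\lambda)$: the distribution function of $D_tf$ equals $\tfrac1t$ times that of $f$, so passing to generalised inverses gives $(D_tf)^{*}(u)=f^{*}(tu)=\bigl(D_t(f^{*})\bigr)(u)$, and since $u\mapsto f^{*}(tu)$ is non-increasing, $D_t(f^{*})$ is its own rearrangement. Thus $D_tf$ and $D_t(f^{*})$ are equimeasurable, and rearrangement-invariance yields $\lVert D_tf\rVert_X=\lVert D_t(f^{*})\rVert_X$; hence it suffices to bound $\lVert D_tg\rVert_X$ in terms of $\lVert g\rVert_X$ for non-increasing $g\in\mathcal{M}_+$. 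If $t\ge 1$, then $g(ts)\le g(s)$ for all $s$, so $D_tg\le g$ pointwise and the lattice property gives $\lVert D_tg\rVert_X\le\lVert g\rVert_X$.

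Now let $0<t<1$. Since $D_sD_t=D_{st}$, putting $n=\lceil 1/t\rceil$ we have $tn\ge 1$, whence $g(ts)\le g(s/n)=D_{1/n}g(s)$; choosing $m$ with $2^{m}\ge n$ we further have $D_{1/n}g\le D_{1/2^{m}}g=D_{1/2}^{\,m}g$. As $D_{1/2}$ preserves monotonicity, by the lattice property it is enough to show that $D_{1/2}$ is bounded on non-increasing functions. The core is the following dyadic-staircase estimate. If $(a_k)_{k\in\mathbb{Z}}$ is non-increasing and $S=\sum_{k\in\mathbb{Z}}a_k\chi_{[2^{k},2^{k+1})}$, then $D_{1/2}S=\sum_{k}a_k\chi_{[2^{k+1},2^{k+2})}$, and cutting each dyadic block $[2^{k+1},2^{k+2})$ into its two halves $[2^{k+1},3\cdot2^{k})$ and $[3\cdot2^{k},2^{k+2})$, both of length $2^{k}$, writes $D_{1/2}S=S'+S''$ with $S'$ and $S''$ each equimeasurable with $S$ (all three have distribution function $\lambda\mapsto\sum_{k:\,a_k>\lambda}2^{k}$). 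Rearrangement-invariance then gives $\lVert S'\rVert_X=\lVert S''\rVert_X=\lVert S\rVert_X$, and the quasi-triangle inequality with modulus of concavity $C$ gives $\lVert D_{1/2}S\rVert_X\le 2C\lVert S\rVert_X$.

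To pass from staircases to an arbitrary non-increasing $g$, I would sandwich $g$ between the staircases $S_g:=\sum_{k}g(2^{k+1})\chi_{[2^{k},2^{k+1})}\le g\le\sum_{k}g(2^{k})\chi_{[2^{k},2^{k+1})}=:\widetilde S_g$ and note the shift identity $\widetilde S_g=D_{1/2}S_g$. Using monotonicity of $D_{1/2}$, the lattice property, the staircase estimate (applied first to $\widetilde S_g$ and then to $S_g$), and finally $S_g\le g$, one obtains
\[
\lVert D_{1/2}g\rVert_X\le\lVert D_{1/2}\widetilde S_g\rVert_X\le 2C\lVert\widetilde S_g\rVert_X=2C\lVert D_{1/2}S_g\rVert_X\le 4C^{2}\lVert S_g\rVert_X\le 4C^{2}\lVert g\rVert_X .
\]
Iterating $m$ times and unwinding the reductions gives $\lVert D_tf\rVert_X\le(4C^{2})^{m}\lVert f\rVert_X$ with $m=\lceil\log_2\lceil 1/t\rceil\rceil$, so $D_t$ maps $X$ into itself and is bounded, hence continuous. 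The delicate point is the staircase step: one must choose the dyadic grid precisely so that $D_{1/2}$ merely shifts the staircase by one level and so that each stretched block splits into pieces whose measures match those of the original blocks, making them equimeasurable with $S$. I expect this combinatorial bookkeeping, rather than any analytic subtlety, to be the main obstacle; note that everything used — the lattice property, rearrangement-invariance, and the quasi-triangle inequality — is available for general quasi-Banach function norms, so the argument does not distinguish the Banach and quasi-Banach cases. (In the Banach case one could instead invoke the Hardy--Littlewood--P\'olya principle together with the identity $(D_{1/a}g)^{**}=g^{**}(\cdot/a)\le a\,g^{**}$, but this route does not obviously survive the passage to quasi-norms.)
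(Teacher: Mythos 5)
Your argument is correct, and in fact it supplies a self-contained proof of a statement that the paper itself does not prove but only cites (to \cite[Chapter~3, Proposition~5.11]{BennettSharpley88} for the Banach case and to \cite[Section~3.4]{NekvindaPesa24} for the quasi-Banach extension). I checked the individual steps: the identity $(D_tf)^*=D_t(f^*)$ and the reduction to non-increasing $g$ are exactly as in the paper's conventions; the case $t\ge 1$ is immediate from \ref{P2}; the chain $D_tg\le D_{1/n}g\le D_{1/2}^m g$ is valid for non-increasing $g$; the dyadic staircase $S=\sum_k a_k\chi_{[2^k,2^{k+1})}$ does satisfy $D_{1/2}S=S'+S''$ with $S'$, $S''$ and $S$ all having distribution function $\lambda\mapsto\sum_{k:\,a_k>\lambda}2^k$, so rearrangement-invariance and \ref{Q1c} give $\lVert D_{1/2}S\rVert_X\le 2C\lVert S\rVert_X$; the sandwich $S_g\le g\le\widetilde S_g$ together with the shift identity $\widetilde S_g=D_{1/2}S_g$ closes the loop with the constant $4C^2$; and boundedness of the \emph{linear} operator $D_t$ does imply continuity (the caveat in Section~\ref{SecSublinearOps} concerns only sublinear operators). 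Your closing remark is also on point: the classical route via $(D_{1/a}g)^{**}=D_{1/a}(g^{**})\le a\,g^{**}$ and the Hardy--Littlewood--P\'olya principle works for r.i.~Banach function norms but not in the quasi-Banach setting, where the HLP principle may fail (cf.~the discussion after Definition~\ref{DHLP}); this is precisely why a splitting into equimeasurable pieces, as in your staircase step or in the argument of \cite[Section~3.4]{NekvindaPesa24}, is needed. The only cost of your route is a suboptimal constant $(4C^2)^{\lceil\log_2\lceil 1/t\rceil\rceil}$, which is irrelevant for the continuity statement.
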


It is worth pointing out that the dilation operator satisfies $(D_tf)^* = D_tf^*$ for every function $f \in \mathcal{M}([0, \infty), \lambda)$ (and every $t \in (0, \infty)$), as follows from a simple calculation.

Thirdly, an extremely important result in the theory of r.i.~Banach function spaces is the Luxemburg representation theorem, which allows for a space defined over an arbitrary resonant measure space to be represented by a space defined over $([0,\infty), \lambda)$. Recently, it has been shown that this result extends to the wider class of r.i.~quasi-Banach function spaces, first in \cite[Section~3]{MusilovaNekvinda24} where the representation quasinorm is defined over $([0,\mu(\mathcal{R})), \lambda)$ and then in \cite[Section~3]{PesaRepreACqN} where the original construction was slightly modified for the cases when $\mu(\mathcal{R})<\infty$ in order to always provide a quasinorm over $([0,\infty), \lambda)$).

\begin{theorem} \label{TheoremRepresentation}
	Assume that $(\mathcal{R},\mu)$ is resonant and let $\lVert \cdot \rVert_X$ be an r.i.~quasi-Banach function norm on $\mathcal{M}(\mathcal{R},\mu)$. Then there is an r.i.~quasi-Banach function norm $\lVert \cdot \rVert_{\overline{X}}$ on $\mathcal{M}([0,\infty), \lambda)$ such that for every $f \in \mathcal{M}(\mathcal{R},\mu)$ it holds that $\lVert f \rVert_X=\lVert f^* \rVert_{\overline{X}}$. Furthermore, when $(\mathcal{R},\mu)$ is non-atomic and $\mu(\mathcal{R}) = \infty$, then $\lVert \cdot \rVert_{\overline{X}}$ is uniquely determined.
\end{theorem}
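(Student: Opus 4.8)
The plan is to use the classification of resonant spaces (Theorem~\ref{TheoremCharResonance}): $(\mathcal R,\mu)$ is either non-atomic or completely atomic with all atoms of one common measure $c>0$, and by $\sigma$-finiteness there are then at most countably many atoms; in each case one produces $\|\cdot\|_{\overline X}$ explicitly. The decisive tool in the non-atomic case is a \emph{measure-preserving parametrisation}: writing $\mathcal R=\bigsqcup_{j}\mathcal R_j$ with $0<\mu(\mathcal R_j)<\infty$, Sierpi\'nski's theorem (a non-atomic measure attains every value between $0$ and the measure of a given set) lets one build on each $\mathcal R_j$ a ``binary expansion'' map onto the corresponding subinterval of $[0,\mu(\mathcal R))$ and glue, obtaining $\pi\colon\mathcal R\to[0,\mu(\mathcal R))$ with $\mu(\pi^{-1}(B))=\lambda(B)$ for every Borel $B\subseteq[0,\mu(\mathcal R))$. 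Then $h\circ\pi$ and $h$ are equimeasurable for every $h\in\mathcal M([0,\mu(\mathcal R)),\lambda)$, so $\pi$ furnishes a canonical representative on $\mathcal R$ for every admissible rearrangement profile.

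When $\mu(\mathcal R)=\infty$ I would set $\|g\|_{\overline X}:=\|g^*\circ\pi\|_X$ for $g\in\mathcal M([0,\infty),\lambda)$. Since $(f^*)^*=f^*$ and $(f^*\circ\pi)^*=f^*$, rearrangement-invariance of $\|\cdot\|_X$ yields $\|f^*\|_{\overline X}=\|f^*\circ\pi\|_X=\|f\|_X$, which is the asserted identity. When $\mu(\mathcal R)<\infty$ this formula only depends on $g^*$ restricted to $[0,\mu(\mathcal R))$, so to obtain an honest function norm on all of $[0,\infty)$ one adds a suitable tail term that vanishes on functions supported in $[0,\mu(\mathcal R))$ (hence does not disturb $\|f^*\|_{\overline X}=\|f\|_X$); this is exactly the modification of the classical Luxemburg construction carried out in \cite{PesaRepreACqN}, see also \cite{MusilovaNekvinda24}. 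The completely atomic case is the obvious discrete analogue, with $g\mapsto(g^*(kc))_{k}$ distributed over the atoms playing the role of $g\mapsto g^*\circ\pi$, again with a tail term when the number of atoms is finite.

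It remains to check that $\|\cdot\|_{\overline X}$ is an r.i.\ (quasi-)Banach function norm. Rearrangement-invariance is built in, and absolute homogeneity and non-degeneracy, i.e.\ \ref{P1} resp.\ \ref{Q1}, are immediate from the corresponding properties of $\|\cdot\|_X$. For the lattice property \ref{P2} and the Fatou property \ref{P3} one uses that $g\le h$ forces $g^*\le h^*$ and that $g_n\uparrow g$ $\lambda$-a.e.\ forces $g_n^*\uparrow g^*$ pointwise (monotone convergence for distribution functions); composing with $\pi$ preserves these pointwise relations, after which \ref{P2} resp.\ \ref{P3} for $\|\cdot\|_X$ transfer them to $\|\cdot\|_{\overline X}$; \ref{P4} and, in the Banach case, \ref{P5} transfer just as routinely via the measure-preservation of $\pi$ and the Hardy--Littlewood inequality. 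The one delicate axiom is subadditivity (up to the modulus of concavity in the quasi-Banach case): from the almost-subadditivity \eqref{EqAlmostSubAdd*} one has $(g+h)^*\le D_{1/2}g^*+D_{1/2}h^*$, so the lattice property and (quasi-)subadditivity of $\|\cdot\|_X$ give $\|g+h\|_{\overline X}\lesssim\|D_{1/2}g^*\|_{\overline X}+\|D_{1/2}h^*\|_{\overline X}$, and one then invokes boundedness of the dilation operator on r.i.\ (quasi-)Banach function spaces over $([0,\infty),\lambda)$, i.e.\ (the proof of) Theorem~\ref{TDRIS}. In the Banach case the whole step is classical and the result is the Luxemburg representation theorem of \cite[Chapter~2, Theorem~4.10]{BennettSharpley88}; the point of \cite{MusilovaNekvinda24, PesaRepreACqN}, and the place where the proof genuinely departs from the classical one, is the quasi-Banach case, where the associate-space duality $\rho''=\rho$ underlying the classical argument is unavailable — it already fails for $L^p$ with $p<1$ — so one must argue directly through \eqref{EqAlmostSubAdd*} and dilation bounds as above.

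For uniqueness when $(\mathcal R,\mu)$ is non-atomic with $\mu(\mathcal R)=\infty$: if $\|\cdot\|_{\overline X}$ and $\|\cdot\|_{\overline Y}$ both represent $\|\cdot\|_X$, then for arbitrary $g\in\mathcal M([0,\infty),\lambda)$ the function $g^*$ is non-increasing and right-continuous on $[0,\infty)=[0,\mu(\mathcal R))$, hence equals $f^*$ for $f:=g^*\circ\pi\in\mathcal M(\mathcal R,\mu)$, and therefore $\|g\|_{\overline X}=\|g^*\|_{\overline X}=\|f^*\|_{\overline X}=\|f\|_X=\|f^*\|_{\overline Y}=\|g\|_{\overline Y}$. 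When $\mu(\mathcal R)<\infty$ this breaks down because $f\mapsto f^*$ on $\mathcal M(\mathcal R,\mu)$ misses every profile carrying mass beyond $\mu(\mathcal R)$, so the values of a representing norm there are constrained only by the function-norm axioms, and different admissible tail terms yield different but equally legitimate representations. The step I expect to be the main obstacle is the construction of the parametrisation $\pi$ (equivalently, existence of functions with prescribed non-increasing rearrangement) for an \emph{abstract} non-atomic $\sigma$-finite space, which rests on Sierpi\'nski's theorem and a careful $\sigma$-finite exhaustion, together with the verification of subadditivity in the quasi-normed regime, where — unlike in the Banach case — one cannot fall back on $X''=X$.
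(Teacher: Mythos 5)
The paper does not actually prove Theorem~\ref{TheoremRepresentation}; it imports it from \cite[Section~3]{MusilovaNekvinda24} and \cite[Section~3]{PesaRepreACqN}. Your reconstruction follows essentially the same route as those sources: a measure-preserving parametrisation $\pi\colon\mathcal R\to[0,\mu(\mathcal R))$ built from Sierpi\'nski's theorem via dyadic splitting, the definition $\lVert g\rVert_{\overline X}=\lVert g^*\circ\pi\rVert_X$, and the observation that in the non-atomic infinite-measure case every non-increasing right-continuous profile is realised as $f^*$ for $f=g^*\circ\pi$, which gives both the identity $\lVert f^*\rVert_{\overline X}=\lVert f\rVert_X$ and the uniqueness statement. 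Your explanation of why uniqueness fails in the finite-measure and atomic cases matches the discussion in the paper. One caveat: you may not use Corollary~\ref{Corollary*<*=>||<||} here, since the paper derives it \emph{from} the representation theorem; fortunately your argument gets the needed monotonicity directly from $\pi$ and \ref{P2}.

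The one step that is genuinely broken as written is quasi-subadditivity. You invoke Theorem~\ref{TDRIS} (boundedness of $D_{1/2}$ on r.i.\ quasi-Banach function spaces over $([0,\infty),\lambda)$) applied to $\overline X$ --- but at that point $\overline X$ is not yet known to be an r.i.\ quasi-Banach function space, which is exactly what you are proving, so this is circular; the parenthetical ``(the proof of)'' does not resolve it, since any proof of the dilation bound uses the quasi-triangle inequality of the ambient space. The repair is to prove the needed estimate as a statement about $\lVert\cdot\rVert_X$ itself: if $v^*=D_{1/2}u^*$, then $v_*=2u_*$, so $v$ is equimeasurable with $F_0\circ\pi+F_1\circ\pi$, where $F_0,F_1$ are \emph{disjointly supported} functions on $[0,\infty)$, each equimeasurable with $u^*$ (split $[0,\infty)$ into two pieces each measure-isomorphic to $[0,\infty)$ and place a copy of $u^*$ on each; disjoint supports make the distribution functions add). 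Rearrangement-invariance and \ref{Q1c} of $\lVert\cdot\rVert_X$ then give $\lVert v\rVert_X\le C\bigl(\lVert F_0\circ\pi\rVert_X+\lVert F_1\circ\pi\rVert_X\bigr)=2C\lVert u\rVert_X$, and iterating handles $D_{2^{-N}}$, hence all $D_t$ with $t<1$. With this substitution your argument is complete in the non-atomic infinite-measure case, which is the case the theorem singles out; for the remaining cases you defer to the cited constructions exactly as the paper itself does.
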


When the space is completely atomic then the representation is not unique. The reason for this is well explained in \cite[Section~3]{MusilovaNekvinda24} but it can be summarised as follow: The original quasinorm only measures the decay of a function, as the functions on atomic spaces do not have blow-ups. Since the functions on $([0,\mu(\mathcal{R})), \lambda)$ do in general have blow-ups, the representation must measure them, but how it does so cannot be uniquely determined by the original quasinorm. Hence, a choice must be made during the construction. (The authors of \cite{MusilovaNekvinda24} chose $L^1$ as the local component, because for this choice their construction, when applied on an r.i.~Banach function norm, yields the same representation functional as the classical construction of Luxemburg, as presented in \cite{Luxemburg67} or \cite[Chapter~2, Theorem~4.10]{BennettSharpley88}).

Similarly, the lack of uniqueness in the case when $\mu(\mathcal{R})< \infty$ is due to the fact that the representation space is considered as over $([0,\infty), \lambda)$ and the argument is almost identical, only with the roles of blow-ups and decay exchanged. Were we to follow the approach of \cite[Section~3]{MusilovaNekvinda24} and consider representation over $([0,\mu(\mathcal{R})), \lambda)$, then it would always be unique for non-atomic spaces.

For the cases when the representation quasinorm is not uniquely determined, it has been observed in \cite{PesaRepreACqN} that some of the properties of the representation correspond nicely to the properties of the original quasinorm only when a proper choice of the representation quasinorm is made. The paper also provides a particular construction for which all the characterisations so far considered are valid. Notably, this construction coincides to that provided in \cite[Section~3]{MusilovaNekvinda24} provided that $\mu(\mathcal{R}) = \infty$ or to the classical construction from \cite{Luxemburg67} provided that $\lVert \cdot \rVert_X$ is an r.i.~Banach function norm. We will therefore always work with a representation quasinorm constructed in this way. However, we will never need to work with the explicit construction, we only need the information that the representation is constructed in such a way that we have the required characterisations of its properties (that is, Propositions~\ref{PropRepreP5} and \ref{PropRepreHLPP} and Theorem~\ref{ThmRepreACqN}). Hence, we choose not to provide the full construction here; we refer the interested reader to the presentation in \cite[Section~3]{PesaRepreACqN}.

\begin{definition}
	Assume that $(\mathcal{R},\mu)$ is resonant and let $\lVert \cdot \rVert_X$ be an r.i.~quasi-Banach function norm on $\mathcal{M}(\mathcal{R},\mu)$. By $\lVert \cdot \rVert_{\overline{X}}$ we always denote the representation quasinorm on $\mathcal{M}([0,\infty), \lambda)$ that has been constructed in \cite[Section~3]{PesaRepreACqN}, which we call the \emph{canonical representation quasinorm}. Similarly, the corresponding r.i.~quasi-Banach function space $\overline{X}$ will be called the \emph{canonical representation space}.
\end{definition}

Given that this construction coincides with the older ones in the relevant cases, we believe the name ``canonical'' to be justified. First of all the useful characterisation which we obtain is the following:

\begin{proposition} \label{PropRepreP5}
	Assume that $(\mathcal{R},\mu)$ is resonant, let $\lVert \cdot \rVert_X$ be an r.i.~quasi-Banach function norm on $\mathcal{M}(\mathcal{R},\mu)$, and let $\lVert \cdot \rVert_{\overline{X}}$ be the canonical representation quasinorm. Then $\lVert \cdot \rVert_{\overline{X}}$ has the property \ref{P5} if and only if $\lVert \cdot \rVert_X$ does.
\end{proposition}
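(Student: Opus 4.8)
The plan is to prove the two implications separately. That \ref{P5} for $\lVert\cdot\rVert_{\overline{X}}$ implies \ref{P5} for $\lVert\cdot\rVert_X$ is immediate from the Hardy--Littlewood inequality: given $E\subseteq\mathcal R$ with $\mu(E)<\infty$ and $f\in\mathcal M_+(\mathcal R,\mu)$, Theorem~\ref{THLI} together with $(\chi_E)^*=\chi_{[0,\mu(E))}$ yields $\int_E f\,d\mu\le\int_0^{\mu(E)}f^*\,d\lambda$, and applying \ref{P5} for $\lVert\cdot\rVert_{\overline{X}}$ to the set $[0,\mu(E))\subseteq[0,\infty)$, which has finite Lebesgue measure, and then Theorem~\ref{TheoremRepresentation}, we bound this by $C_{[0,\mu(E))}\lVert f^*\rVert_{\overline{X}}=C_{[0,\mu(E))}\lVert f\rVert_X$. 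Thus \ref{P5} holds for $\lVert\cdot\rVert_X$ with $C_E:=C_{[0,\mu(E))}$ (depending on $E$ only through $\mu(E)$).

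For the converse I would first reduce the statement. By Theorem~\ref{THLI}, for $F\subseteq[0,\infty)$ with $\lambda(F)=t<\infty$ one has $\int_F g\,d\lambda\le\int_0^t g^*\,d\lambda$ for every $g\in\mathcal M_+$, and $\lVert g^*\rVert_{\overline{X}}=\lVert g\rVert_{\overline{X}}$ since $\lVert\cdot\rVert_{\overline{X}}$ is rearrangement-invariant; hence it is enough to produce, for each $t>0$, a finite constant $C_t$ with $\int_0^t h\,d\lambda\le C_t\lVert h\rVert_{\overline{X}}$ for every non-increasing $h\in\mathcal M_+([0,\infty),\lambda)$, and we may pass to the right-continuous representative of $h$. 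I then distinguish the two cases of Theorem~\ref{TheoremCharResonance}. If $(\mathcal R,\mu)$ is non-atomic, I fix a measure-preserving map $\sigma\colon\mathcal R\to[0,\mu(\mathcal R))$ and, given such an $h$, set $f:=h\circ\sigma\in\mathcal M_+(\mathcal R,\mu)$; computing distribution functions gives $f^*=h\chi_{[0,\mu(\mathcal R))}$, so $\lVert f\rVert_X=\lVert h\chi_{[0,\mu(\mathcal R))}\rVert_{\overline{X}}\le\lVert h\rVert_{\overline{X}}$ by Theorem~\ref{TheoremRepresentation} and axiom~\ref{P2}. For $t\le\mu(\mathcal R)$, putting $E:=\sigma^{-1}([0,t))$ one has $\mu(E)=t<\infty$ and, by the change-of-variables formula for $\sigma$, $\int_E f\,d\mu=\int_0^t h\,d\lambda$; hence \ref{P5} for $\lVert\cdot\rVert_X$ gives $\int_0^t h\,d\lambda=\int_E f\,d\mu\le C_E\lVert f\rVert_X\le C_E\lVert h\rVert_{\overline{X}}$, the constant $C_E$ depending on $t$ only.

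If instead $(\mathcal R,\mu)$ is completely atomic with all atoms of measure $a$, I would invoke the explicit form of the canonical representation over atomic spaces from \cite{MusilovaNekvinda24,PesaRepreACqN} (the one with $L^1$ as local component): writing $I_k=[(k-1)a,ka)$, the quasinorm of a function supported on $[0,\mu(\mathcal R))$ equals $\lVert\cdot\rVert_X$ applied to the sequence of $L^1$-averages of its rearrangement over the $I_k$. So, given non-increasing $h$, let $\tilde f\in\mathcal M_+(\mathcal R,\mu)$ take on the $k$-th atom the value $a^{-1}\int_{I_k}h\,d\lambda$; then $\lVert\tilde f\rVert_X=\lVert h\chi_{[0,\mu(\mathcal R))}\rVert_{\overline{X}}\le\lVert h\rVert_{\overline{X}}$ as above, and for $t=Ja$ a multiple of $a$ with $t\le\mu(\mathcal R)$, taking $E$ to be the union of the first $J$ atoms we get $\int_0^{Ja}h\,d\lambda=\sum_{k=1}^{J}a\,\tilde f_k=\int_E\tilde f\,d\mu\le C_E\lVert h\rVert_{\overline{X}}$ by \ref{P5} for $\lVert\cdot\rVert_X$; a general $t\le\mu(\mathcal R)$ follows by monotonicity of $h$ upon passing to the next multiple of $a$. (One may note that over an atomic space \ref{P5} for $\lVert\cdot\rVert_X$ holds automatically --- for an atom $A$ one has $0<\lVert\chi_A\rVert_X<\infty$ by axioms~\ref{P1} and~\ref{P4}, so a nonnegative $f\in X$ takes on $A$ a value at most $\lVert f\rVert_X/\lVert\chi_A\rVert_X$ --- so that here the content of the proposition is that the canonical representation always enjoys \ref{P5}.) In both cases it remains only to handle $t>\mu(\mathcal R)$, which can occur solely when $\mu(\mathcal R)<\infty$; then $\int_0^t h\,d\lambda=\int_0^{\mu(\mathcal R)}h\,d\lambda+\int_{\mu(\mathcal R)}^t h\,d\lambda$, the first summand is controlled above, and for the second, monotonicity of $h$ gives $\int_{\mu(\mathcal R)}^t h\,d\lambda\le(t-\mu(\mathcal R))\,h(\mu(\mathcal R))\le\frac{t-\mu(\mathcal R)}{\mu(\mathcal R)}\int_0^{\mu(\mathcal R)}h\,d\lambda$.

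The step I expect to be the crux is the converse in the two cases where $\lVert\cdot\rVert_{\overline{X}}$ has to be evaluated on parts of $[0,\infty)$ not met by the non-increasing rearrangements of functions on $\mathcal R$ --- the completely atomic case and, above all, the case $\mu(\mathcal R)<\infty$ --- since there one cannot argue purely abstractly but must use the specific canonical construction of \cite{PesaRepreACqN}; this is precisely why that representation (rather than an arbitrary one satisfying Theorem~\ref{TheoremRepresentation}) is singled out. The other ingredients --- the Hardy--Littlewood reductions, the existence of a measure-preserving map out of a $\sigma$-finite non-atomic measure space, axiom~\ref{P2}, and the elementary monotonicity bound for the overhang when $\mu(\mathcal R)<\infty$ --- are routine.
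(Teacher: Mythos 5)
The paper does not actually prove this proposition: it is imported from \cite[Section~3]{PesaRepreACqN} as one of the defining features of the canonical construction, so there is no in-paper argument to compare yours against. Taken on its own, your proof is correct. The easy direction via the Hardy--Littlewood inequality is fine; in the converse direction, the reduction to non-increasing $h$, the use of a measure-preserving map onto $[0,\mu(\mathcal{R}))$ in the non-atomic case (equivalently, the Sierpi\'nski-type argument the paper itself uses in the proof of Theorem~\ref{ThmRepre(w')*}), and the overhang estimate for $t>\mu(\mathcal{R})$ via monotonicity of $h$ are all sound, and the constants depend on the test set only through its measure, as required. You are also right about where the crux lies: in the completely atomic case \ref{P5} for $\lVert\cdot\rVert_X$ is automatic, so the substance of the statement is the local integrability of $g^*$ near $0$ encoded by $\lVert\cdot\rVert_{\overline{X}}$, and this holds precisely because the canonical construction takes $L^1$ as its local component (with, say, an $L^p$ local component for $p<1$, the quantity $\int_0^a g^*\,d\lambda$ would not be controlled by $\lVert g\rVert_{\overline{X}}$ and the proposition would fail). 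Your appeal to the explicit averaging formula --- consistent with the formula reproduced in \eqref{ThmRepre(w')*:E1} and with the paper's remark that $L^1$ is the chosen local component --- is therefore not a detour but a genuine necessity; the only caveat is that this one step cannot be verified from the present paper alone and has to be checked against \cite{PesaRepreACqN}.
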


A useful corollary of the representation theorem is the following statement that has been obtained in \cite{MusilovaNekvinda24}. It is worth noting that the result follows from the representation theorem, but the actual representation is not part of the statement.

\begin{corollary} \label{Corollary*<*=>||<||}
	Assume that $(\mathcal{R},\mu)$ is resonant, let $\lVert \cdot \rVert_X$ be an r.i.~quasi-Banach function norm on $\mathcal{M}(\mathcal{R},\mu)$, and let $g_1, g_2 \in \mathcal{M}(\mathcal{R},\mu)$. If $g_1^* \leq g_2^*$ then also $\lVert g_1 \rVert_X \leq \lVert g_2 \rVert_X$.
\end{corollary}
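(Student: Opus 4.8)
The plan is to transport the statement to the canonical representation space $\overline{X}$ over $([0,\infty),\lambda)$, where it degenerates into a direct instance of the lattice property \ref{P2}. The point is that, while for Banach function norms one could argue via the associate space and the Hardy--Littlewood inequality (Theorem~\ref{THLI}), this is unavailable in the quasi-Banach setting since such a space need not be normable and its associate space may be too small to recover the quasinorm; the representation theorem is precisely the tool that circumvents this.

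First I would invoke Theorem~\ref{TheoremRepresentation}, which applies because $(\mathcal{R},\mu)$ is resonant: it furnishes an r.i.~quasi-Banach function norm $\lVert\cdot\rVert_{\overline{X}}$ on $\mathcal{M}([0,\infty),\lambda)$ with $\lVert g_i\rVert_X=\lVert g_i^*\rVert_{\overline{X}}$ for $i=1,2$, where each $g_i^*$ is now regarded as a (non-increasing, right-continuous) element of $\mathcal{M}([0,\infty),\lambda)$. The hypothesis $g_1^*\le g_2^*$ holds pointwise on $[0,\infty)$, in particular $\lambda$-a.e., so the lattice property \ref{P2} of the quasi-Banach function norm $\lVert\cdot\rVert_{\overline{X}}$ yields $\lVert g_1^*\rVert_{\overline{X}}\le\lVert g_2^*\rVert_{\overline{X}}$. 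Combining this with the two identities above gives $\lVert g_1\rVert_X\le\lVert g_2\rVert_X$, which is the claim.

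I do not expect a genuine obstacle here: once the representation theorem is in hand, everything reduces to \ref{P2}, which is part of the very definition of a quasi-Banach function norm and hence automatic. The only thing to be careful about is that we are entitled to use the representation (resonance of $(\mathcal R,\mu)$) and that $g_i^*$ is a legitimate measurable function on $([0,\infty),\lambda)$ to which $\lVert\cdot\rVert_{\overline X}$ may be applied; both are immediate. This is exactly the sense in which the result is a corollary of the representation theorem even though, as remarked in the text, the representation itself does not appear in the conclusion.
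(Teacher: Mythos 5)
Your proof is correct and is precisely the argument the paper has in mind: it presents this statement as a corollary of the representation theorem (Theorem~\ref{TheoremRepresentation}), and the reduction to the lattice property \ref{P2} of $\lVert\cdot\rVert_{\overline{X}}$ applied to the pointwise inequality $g_1^*\le g_2^*$ on $[0,\infty)$ is exactly how that corollary is obtained. No gaps.
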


Finally, we will also work with the so called Hardy--Littlewood--P\'{o}lya relation and the related Hardy--Littlewood--P\'{o}lya principle.

\begin{definition} \label{DefHLPR}
	Let $f, g \in \mathcal{M}$. We say that $g$ majorises $f$ with respect to the \emph{Hardy--Littlewood--P\'{o}lya relation}, denoted by $f \prec g$, if $f^{**} \leq g^{**}$, i.e.~if 
	\begin{equation*}
		\int_0^{t} f^* \: d\lambda \leq \int_0^{t} g^* \: d\lambda 
	\end{equation*}
	for all $t \in (0, \infty)$.
\end{definition}

\begin{definition} \label{DHLP}
	Let $\lVert \cdot \rVert$ be an r.i.~quasi-Banach function norm over a resonant measure space. We say that the \emph{Hardy--Littlewood--P\'{o}lya principle} holds for $\lVert \cdot \rVert$ if the estimate $\lVert f \rVert \leq \lVert g \rVert$ is true for any pair of functions $f, g \in \mathcal{M}$ such that $f \prec g$.
\end{definition}

Let us put this property into context:
\begin{enumerate}
	\item The property \ref{P2} of quasi-Banach function norms states that $\lVert f \rVert \leq \lVert g \rVert$ whenever $\lvert f \rvert \leq \lvert g \rvert$ $\mu$-a.e.
	\item The property that the quasinorm is rearrangement-invariant is equivalent to saying that $\lVert f \rVert \leq \lVert g \rVert$ whenever $f^* \leq g^*$ $\lambda$-a.e.~(as discussed above, see Corollary~\ref{Corollary*<*=>||<||}).
	\item The property that the Hardy--Littlewood--P\'{o}lya principle holds for the quasinorm states that $\lVert f \rVert \leq \lVert g \rVert$ whenever $f \prec g$.	
\end{enumerate}
Hence, the three properties each require that the quasinorm in question is monotone with respect to some partial ordering on $\mathcal{M}_0$ and they get progressively stronger as the partial ordering gets weaker.

Furthermore, it has been shown in \cite[Proposition~3.3]{PesaRepreACqN} that the Hardy--Littlewood--P\'{o}lya principle can be characterised using the canonical representation quasinorm:

\begin{proposition} \label{PropRepreHLPP}
	Assume that $(\mathcal{R},\mu)$ is resonant, let $\lVert \cdot \rVert_X$ be an r.i.~quasi-Banach function norm on $\mathcal{M}(\mathcal{R},\mu)$, and let $\lVert \cdot \rVert_{\overline{X}}$ be the canonical representation quasinorm. Then the Hardy--Littlewood--P\'{o}lya principle holds for $\lVert \cdot \rVert_{\overline{X}}$ if and only if it holds for $\lVert \cdot \rVert_X$.
\end{proposition}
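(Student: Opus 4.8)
The statement is an equivalence whose two implications are of rather different character. The implication ``HLPP for $\lVert\cdot\rVert_{\overline X}$ $\Rightarrow$ HLPP for $\lVert\cdot\rVert_X$'' is soft: it rests only on the representation theorem (Theorem~\ref{TheoremRepresentation}) together with the observation that, by Definition~\ref{DefHLPR}, the Hardy--Littlewood--P\'olya relation is a statement purely about non-increasing rearrangements. The reverse implication is where the real work sits, and it is precisely the point at which one is forced to invoke the actual construction of the canonical representation from \cite{PesaRepreACqN}; this is unavoidable, since (as the non-uniqueness discussion above shows) on completely atomic spaces and on spaces of finite measure the values of $\lVert\cdot\rVert_{\overline X}$ on ``excess'' functions are a matter of choice, and only the canonical choice makes the equivalence true.

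For the easy implication, let $f,g\in\mathcal M(\mathcal R,\mu)$ with $f\prec g$, i.e.\ $\int_0^t f^*\,d\lambda\le\int_0^t g^*\,d\lambda$ for every $t>0$. Viewing $f^*$ and $g^*$ as (non-increasing) elements of $\mathcal M_+([0,\infty),\lambda)$, they are their own non-increasing rearrangements, so $(f^*)^{**}=f^{**}\le g^{**}=(g^*)^{**}$, which is exactly the statement $f^*\prec g^*$ in $\mathcal M([0,\infty),\lambda)$. Applying HLPP for $\lVert\cdot\rVert_{\overline X}$ and then the representation $\lVert h\rVert_X=\lVert h^*\rVert_{\overline X}$ gives $\lVert f\rVert_X=\lVert f^*\rVert_{\overline X}\le\lVert g^*\rVert_{\overline X}=\lVert g\rVert_X$, which is HLPP for $\lVert\cdot\rVert_X$.

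For the reverse implication I would fix non-increasing $u,v\in\mathcal M_+([0,\infty),\lambda)$ with $u\prec v$ and argue toward $\lVert u\rVert_{\overline X}\le\lVert v\rVert_{\overline X}$. The favourable case is when $u,v$ admit equimeasurable copies $\hat u,\hat v\in\mathcal M(\mathcal R,\mu)$ (so $\hat u^*=u$, $\hat v^*=v$): then $\hat u\prec\hat v$, HLPP for $\lVert\cdot\rVert_X$ yields $\lVert\hat u\rVert_X\le\lVert\hat v\rVert_X$, and the representation theorem gives $\lVert u\rVert_{\overline X}=\lVert\hat u\rVert_X\le\lVert\hat v\rVert_X=\lVert v\rVert_{\overline X}$. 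By Theorem~\ref{TheoremCharResonance}, $(\mathcal R,\mu)$ is either non-atomic or completely atomic with all atoms of equal measure; if it is non-atomic with $\mu(\mathcal R)=\infty$, every non-increasing function on $[0,\infty)$ has such an equimeasurable copy and we are done. It remains to handle the ``excess'' functions — those whose rearrangement is not supported in $[0,\mu(\mathcal R))$ when $\mu(\mathcal R)<\infty$, or is not constant on the atomic grid cells in the completely atomic case — and here one uses the canonical construction of \cite{MusilovaNekvinda24, PesaRepreACqN} directly. That construction provides, for each non-increasing $h$ on $[0,\infty)$, a non-increasing function $Ph$ that is the rearrangement of some $g_h\in\mathcal M(\mathcal R,\mu)$, with $\lVert h\rVert_{\overline X}=\lVert g_h\rVert_X$; $P$ is realised locally by an $L^1$-average (in the completely atomic case with atoms of measure $a$, $Ph$ replaces $h^*$ on each cell $[ka,(k+1)a)$ by its mean) or by the analogous device ``at infinity'' when $\mu(\mathcal R)<\infty$. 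The crux of the whole proof is that $P$ is $\prec$-monotone: $u\prec v$ implies $Pu\prec Pv$. In the completely atomic case this is transparent, since $Pu=\bigl(\tfrac1a\int_{ka}^{(k+1)a}u^*\,d\lambda\bigr)_k$ is already non-increasing with partial sums $\sum_{k<m}(Pu)_k=\tfrac1a\int_0^{ma}u^*\,d\lambda$, so $u\prec v$ forces $Pu\prec Pv$ cell by cell (the behaviour near the origin being subsumed by the same inequality); the finite-measure case is analogous, the monotonicity of $P$ reducing to that of $t\mapsto\int_0^t(\cdot)^*\,d\lambda$ and to Hardy's Lemma~\ref{LemmaHardy}. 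Granting $\prec$-monotonicity of $P$, one has $(g_u)^*=Pu\prec Pv=(g_v)^*$, hence $g_u\prec g_v$, hence $\lVert g_u\rVert_X\le\lVert g_v\rVert_X$ by HLPP for $\lVert\cdot\rVert_X$, i.e.\ $\lVert u\rVert_{\overline X}\le\lVert v\rVert_{\overline X}$.

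The main obstacle is thus verifying, for each of the ``mismatched'' cases, that the specific gap-filling prescription of the canonical representation is $\prec$-compatible in the precise sense just described. This is genuinely a property of the construction and not automatic: resolving the $[0,\mu(\mathcal R))$-versus-$[0,\infty)$ mismatch with an $L^\infty$-type tail instead would already break the equivalence (it would make $\lVert\cdot\rVert_{\overline{L^1}}$ fail HLPP even though $L^1$ satisfies it), so the argument must exploit the canonical choice. Once this compatibility is in hand, the rest is a routine combination of the representation theorem, Corollary~\ref{Corollary*<*=>||<||}, and HLPP for $\lVert\cdot\rVert_X$ — entirely parallel to the proof of Proposition~\ref{PropRepreP5}.
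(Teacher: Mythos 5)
First, a point of comparison: the paper does not actually prove this proposition — it imports it verbatim from \cite[Proposition~3.3]{PesaRepreACqN} — so there is no in-paper argument to measure yours against, and I can only judge the proposal on its own terms. Your overall architecture is right. The ``soft'' direction is complete and correct, and in the hard direction the reduction to non-increasing $u,v$ (via rearrangement invariance of $\overline{X}$ over the resonant space $([0,\infty),\lambda)$, i.e.\ Corollary~\ref{Corollary*<*=>||<||}) together with Sierpi\'nski's theorem settles the non-atomic, infinite-measure case, where by Theorem~\ref{TheoremRepresentation} the representation is in any case unique. Your treatment of the completely atomic case is also essentially sound: the cell-averaging description of the canonical construction is consistent with the formula \eqref{ThmRepre(w')*:E1} that the paper itself exhibits, the averaged step function is non-increasing and hence is the rearrangement of a genuine function on the atoms, and the $\prec$-monotonicity of the averaging follows exactly as you say, because $t\mapsto\int_0^t Pu\,d\lambda$ is the piecewise-linear interpolant of $t\mapsto\int_0^t u\,d\lambda$ through the nodes $ka$, and linear interpolation preserves inequalities holding at the nodes.

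The genuine gap is the finite-measure non-atomic case, which you dismiss as ``analogous''; it is not. There the mismatch is a tail on $[\mu(\mathcal{R}),\infty)$ rather than oscillation inside cells, and the natural candidate for the extra ingredient, the functional $u\mapsto\int_{\mu(\mathcal{R})}^{\infty}u^*\,d\lambda$, is \emph{not} $\prec$-monotone on its own: with $\mu(\mathcal{R})=1$, $u^*=\tfrac12\chi_{[0,2]}$ and $v^*=\chi_{[0,1]}$ one has $u\prec v$ but $\int_1^\infty u^*\,d\lambda=\tfrac12>0=\int_1^\infty v^*\,d\lambda$. So whatever the canonical prescription does with the tail, its $\prec$-compatibility has to be verified \emph{jointly} with the local part rather than componentwise, and it does not reduce to Hardy's Lemma~\ref{LemmaHardy} in the clean way the atomic case does. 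Since the present paper deliberately withholds the explicit construction, this step cannot be completed from the material in front of you; one has to open \cite[Section~3]{PesaRepreACqN} and run the verification against that specific formula. Apart from this one case, and modulo your (correct, but unverifiable from this paper alone) guess of the atomic construction, I would accept the write-up; your side remark that an $L^\infty$-type tail would destroy the equivalence already for $L^1$ is also correct, as the pair $u^*=\chi_{[0,1+\varepsilon]}$, $v^*=(1+\varepsilon)\chi_{[0,1]}$ shows.
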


Finally, it is well known that the Hardy--Littlewood--P\'{o}lya principle holds for every r.i.~Banach function norm (see e.g.~\cite[Chapter~2, Theorem~4.6]{BennettSharpley88}). In the more general context of r.i.~quasi-Banach function spaces, there is (as far as we are aware) no known characterisation of this property; for some necessary conditions, see \cite[Lemma~2.24 and Theorem~5.9]{Pesa22}.

\subsection{Absolute continuity of the quasinorm} \label{SecACqN}

Absolute continuity of the quasinorm is an important concept in the theory of quasi-Banach function spaces because of its relation to separability (see e.g.~\cite[Section~3.3]{NekvindaPesa24}) and reflexivity (see Section~\ref{Section:AssociateSpaces} below). In our work, it is important because it determines whether the Ces\`aro means converge with respect to the norm or with respect to the weaker topology $\newtopology$ we introduce in Section~\ref{SecRITopo} (as we show in Theorem~\ref{ThmErgodicNormLocalised}).

\begin{definition} \label{DefACqN}
	Let $\lVert \cdot \rVert_X$ be a quasi-Banach function norm and let $X$ be the corresponding quasi-Banach function space. We say that $f \in X$ \emph{has absolutely continuous quasinorm} if it holds that $\lVert f \chi_{E_k} \rVert_X \rightarrow 0$ as $k \to \infty$ whenever $E_k$ is a sequence of $\mu$-measurable subsets of $\mathcal{R}$ such that $\chi_{E_k} \rightarrow 0$ $\mu$-a.e.~as $k \to \infty$. The set of all such functions is denoted $X_a$.
	
	If $X_a = X$, i.e.~every $f \in X$ has absolutely continuous quasinorm, we further say that the space $X$ itself has absolutely continuous quasinorm.
\end{definition}

Since quasi-Banach function norms are monotone with respect to the a.e.~pointwise order, it holds that $X_a$ is an order ideal with respect to the same order. This is easily verified, in a manner virtually identical to that of \cite[Chapter~1, Proof of Theorem~3.8]{BennettSharpley88}.

\begin{proposition} \label{PropXaOrdId}
	The set $X_a$ is an order ideal in $X$, i.e.~it is a closed (with respect to the quasinorm $\lVert \cdot \rVert_X$) linear subspace satisfying that $f \in X_a$ and $\lvert g \rvert \leq \lvert f \rvert$ $\mu$-a.e.~implies $g \in X_a$.
\end{proposition}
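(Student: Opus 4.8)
The plan is to verify the three asserted properties of $X_a$ in turn, following the classical argument for Banach function spaces almost verbatim while being careful that only the lattice property \ref{P2} and the quasi-triangle inequality \ref{Q1c} are used (so that the proof survives the passage to quasinorms). First I would establish the order ideal property: if $f \in X_a$ and $\lvert g \rvert \leq \lvert f \rvert$ $\mu$-a.e., then for any sequence $E_k$ of measurable sets with $\chi_{E_k} \to 0$ $\mu$-a.e.\ we have $\lvert g \chi_{E_k} \rvert \leq \lvert f \chi_{E_k} \rvert$ $\mu$-a.e., so \ref{P2} gives $\lVert g \chi_{E_k} \rVert_X \leq \lVert f \chi_{E_k} \rVert_X \to 0$; hence $g \in X_a$. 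In particular $X_a \subseteq X$ is closed under taking measurable restrictions and, together with the next point, this makes it an order ideal in the stated sense.

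Next I would check that $X_a$ is a linear subspace. Absolute homogeneity \ref{Q1a} immediately handles scalar multiples. For the sum, given $f, g \in X_a$ and $E_k$ with $\chi_{E_k} \to 0$ $\mu$-a.e., the quasi-triangle inequality \ref{Q1c} yields
\begin{equation*}
    \lVert (f+g)\chi_{E_k} \rVert_X \leq C\bigl( \lVert f\chi_{E_k} \rVert_X + \lVert g\chi_{E_k} \rVert_X \bigr) \to 0,
\end{equation*}
where $C$ is the modulus of concavity; hence $f + g \in X_a$. (Here one uses that $(f+g)\chi_{E_k} = f\chi_{E_k} + g\chi_{E_k}$ pointwise, together with \ref{P2} to pass from $\lvert (f+g)\chi_{E_k}\rvert$ to $\lvert f\chi_{E_k}\rvert + \lvert g\chi_{E_k}\rvert$ if one prefers to work with the restriction of $\lVert\cdot\rVert_X$ to $\M_+$.)

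Finally, the closedness of $X_a$ in $(X, \lVert \cdot \rVert_X)$. This is the step requiring the most care in the quasinorm setting, since the standard Banach-space argument uses the triangle inequality. Suppose $f_n \in X_a$, $f \in X$, and $\lVert f_n - f \rVert_X \to 0$; fix a sequence $E_k$ with $\chi_{E_k} \to 0$ $\mu$-a.e.\ and let $\varepsilon > 0$. Using \ref{Q1c} twice, write $\lVert f \chi_{E_k} \rVert_X \leq C\bigl( \lVert (f - f_n)\chi_{E_k} \rVert_X + \lVert f_n \chi_{E_k} \rVert_X \bigr)$, and then bound the first term using \ref{P2} by $\lVert (f - f_n)\chi_{E_k}\rVert_X \leq \lVert f - f_n \rVert_X$ (from $\lvert (f-f_n)\chi_{E_k}\rvert \leq \lvert f - f_n\rvert$ $\mu$-a.e.); choosing $n$ large makes this arbitrarily small uniformly in $k$, and then for that fixed $n$ the term $\lVert f_n \chi_{E_k}\rVert_X \to 0$ as $k \to \infty$ since $f_n \in X_a$. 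Hence $\limsup_{k} \lVert f\chi_{E_k}\rVert_X = 0$, so $f \in X_a$, as claimed. The main obstacle, such as it is, is simply the bookkeeping of the constant $C$: one must not iterate the quasi-triangle inequality uncontrollably, but a single application in each estimate suffices, and since $C$ is a fixed finite constant the argument goes through unchanged. The whole proof is thus a routine adaptation of \cite[Chapter~1, Proof of Theorem~3.8]{BennettSharpley88} and I would present it tersely, pointing to that reference.
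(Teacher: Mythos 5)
Your proposal is correct and is exactly the route the paper takes: the paper gives no details and simply notes that the statement is verified ``in a manner virtually identical to'' \cite[Chapter~1, Proof of Theorem~3.8]{BennettSharpley88}, which is precisely the adaptation you carry out (order ideal property from \ref{P2}, linearity from \ref{Q1a} and \ref{Q1c}, closedness via a single controlled application of the quasi-triangle inequality with the fixed modulus of concavity $C$). The quantifier order in your closedness step is handled correctly, so there is nothing to add.
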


We will need the following abstract version of the Lebesgue dominated convergence theorem. Its proof is precisely the same as that of \cite[Chapter~1, Proposition~3.6]{BennettSharpley88}.
\begin{proposition} \label{PropDomConv}
	Let $\lVert \cdot \rVert_X$ be a quasi-Banach function norm and let $X$ be the corresponding quasi-Banach function space. Fix $f \in X$. Then the following statements are equivalent:
	\begin{enumerate}
		\item $f \in X_a$.
		\item It holds for every sequence $g_n \in \mathcal{M}$ and every function $g \in \mathcal{M}$ such that $g_n \to g$ $\mu$-a.e.~and $\lvert g_n \rvert \leq \lvert f \rvert$ that all $g_n, g \in X$ and $g_n \to g$ in $X$. 
	\end{enumerate}
\end{proposition}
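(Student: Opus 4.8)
The plan is to follow the classical argument of \cite[Chapter~1, Proposition~3.6]{BennettSharpley88}, adapting it to the quasinorm setting by absorbing the modulus of concavity $C$ from~\ref{Q1c} into the relevant constants (which does not affect the convergence conclusion). The implication (ii)$\Rightarrow$(i) is immediate: given a sequence $E_k$ of $\mu$-measurable sets with $\chi_{E_k}\to 0$ $\mu$-a.e., apply (ii) to $g_k=f\chi_{E_k}$ and $g=0$; since $|g_k|\le|f|$ and $g_k\to 0$ $\mu$-a.e., we get $g_k\to 0$ in $X$, i.e.~$\lVert f\chi_{E_k}\rVert_X\to 0$, which is precisely the statement $f\in X_a$.

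For (i)$\Rightarrow$(ii), fix $f\in X_a$ together with $g_n\to g$ $\mu$-a.e.\ and $|g_n|\le|f|$ $\mu$-a.e. Passing to the limit gives $|g|\le|f|$ $\mu$-a.e., so the lattice property~\ref{P2} yields $g,g_n\in X$. Put $\psi_n=\sup_{k\ge n}|g_k-g|$; this is measurable, non-increasing in $n$, satisfies $\psi_n\le|g|+\sup_{k\ge n}|g_k|\le 2|f|$ $\mu$-a.e., hence $\psi_n\in X$ and in fact $\psi_n\in X_a$ by Proposition~\ref{PropXaOrdId} (using $2|f|\in X_a$), and $\psi_n\downarrow 0$ $\mu$-a.e.\ because $g_k\to g$ $\mu$-a.e. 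Since $|g_n-g|\le\psi_n$, it suffices to prove $\lVert\psi_n\rVert_X\to 0$; in other words, one is reduced to the following: a sequence $\psi_n\in X_a$ with $\psi_n\le 2|f|$ $\mu$-a.e.\ and $\psi_n\downarrow 0$ $\mu$-a.e.\ has $\lVert\psi_n\rVert_X\to 0$.

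The only genuine obstacle at this point is that $(\mathcal R,\mu)$ may have infinite measure, so Egorov's theorem cannot be invoked directly on $\mathcal R$; the remedy is to use the absolute continuity of $f$ itself to first reduce to a set of finite measure. Fix $\eta>0$. By $\sigma$-finiteness write $\mathcal R=\bigcup_j R_j$ with $R_j\uparrow\mathcal R$ and $\mu(R_j)<\infty$, set $G_j=R_j\cap\{|f|\ge 1/j\}$, and observe $\mu(G_j)<\infty$ and $\chi_{(\mathcal R\setminus G_j)\cap\{|f|>0\}}\to 0$ $\mu$-a.e.; since $f\chi_{\mathcal R\setminus G_j}=f\chi_{(\mathcal R\setminus G_j)\cap\{|f|>0\}}$, the hypothesis $f\in X_a$ gives $\lVert f\chi_{\mathcal R\setminus G_j}\rVert_X\to 0$, so we may fix $E=G_j$ with $\mu(E)<\infty$ and $\lVert f\chi_{\mathcal R\setminus E}\rVert_X<\eta$. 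On the finite-measure set $E$, apply Egorov's theorem with tolerances $2^{-m}$ to obtain $F_m\subseteq E$ with $\mu(E\setminus F_m)<2^{-m}$ and $\psi_N\to 0$ uniformly on $F_m$ as $N\to\infty$; since $\sum_m\mu(E\setminus F_m)<\infty$, Borel--Cantelli gives $\chi_{E\setminus F_m}\to 0$ $\mu$-a.e., whence $\psi_1\in X_a$ yields $\lVert\psi_1\chi_{E\setminus F_m}\rVert_X\to 0$, and we fix $m$ with $\lVert\psi_1\chi_{E\setminus F_m}\rVert_X<\eta$. Now split $\psi_N=\psi_N\chi_{F_m}+\psi_N\chi_{E\setminus F_m}+\psi_N\chi_{\mathcal R\setminus E}$ and estimate the three pieces by, respectively, $(\sup_{F_m}\psi_N)\,\lVert\chi_E\rVert_X$ (finite by~\ref{P4}, tending to $0$ as $N\to\infty$), $\lVert\psi_1\chi_{E\setminus F_m}\rVert_X<\eta$ (monotonicity~\ref{P2}), and $2\lVert f\chi_{\mathcal R\setminus E}\rVert_X<2\eta$ (absolute homogeneity and~\ref{P2}); applying the quasi-triangle inequality~\ref{Q1c} twice gives $\limsup_{N\to\infty}\lVert\psi_N\rVert_X\le(C+3C^2)\eta$. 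As $\eta>0$ was arbitrary and $C$ is fixed, $\lVert\psi_n\rVert_X\to 0$, so $\lVert g_n-g\rVert_X\le\lVert\psi_n\rVert_X\to 0$, which completes the proof. The step requiring the most care is the passage to a finite-measure set: one must organise the Egorov sets so that their complements are not merely of small measure but have characteristic functions converging to zero $\mu$-a.e., which is exactly what the summable choice of tolerances and Borel--Cantelli provide, enabling the use of absolute continuity of $\psi_1$.
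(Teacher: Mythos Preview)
Your proof is correct and follows precisely the approach indicated by the paper, which defers entirely to \cite[Chapter~1, Proposition~3.6]{BennettSharpley88}; you have simply written out the Bennett--Sharpley argument in full, carefully tracking the modulus of concavity and the passage to a set of finite measure via $\sigma$-finiteness and the absolute continuity of $f$. There is nothing to add.
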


Proposition~\ref{PropDomConv} has a special case that is worth pointing out.

\begin{corollary}
	Let $\lVert \cdot \rVert_X$ be a quasi-Banach function norm and let $X$ be the corresponding quasi-Banach function space. Assume that $f \in X_a$, $f \geq 0$, and that we have some sequence of functions $f_n \in \mathcal{M}_0$ such that $0 \leq f_n \uparrow f$ $\mu$-a.e. Then $f_n \to f$ in $X$.
\end{corollary}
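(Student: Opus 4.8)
The plan is to derive this directly from Proposition~\ref{PropDomConv}, of which it is essentially a special case. First I would observe that the hypothesis $f \in X_a$ is precisely statement~(i) of Proposition~\ref{PropDomConv}, so that the equivalent statement~(ii) is at our disposal for this particular $f$.

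Next I would check that the given sequence fits the framework of statement~(ii). Since $0 \leq f_n \uparrow f$ $\mu$-a.e., monotonicity forces $f_n \to f$ $\mu$-a.e.; moreover, from $0 \leq f_n \leq f$ $\mu$-a.e.\ together with $f \geq 0$ we obtain $\lvert f_n \rvert \leq \lvert f \rvert$ $\mu$-a.e. Hence the pair $(g_n, g) := (f_n, f)$ satisfies the hypotheses of Proposition~\ref{PropDomConv}(ii), and we conclude that all $f_n$ (and $f$) belong to $X$ — which also follows at once from the lattice property~\ref{P2}, since $f_n \leq f \in X$ — and that $f_n \to f$ in $X$, which is the assertion.

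Since the argument is merely an application of the previously established proposition, I do not expect any genuine obstacle; the only point needing a (trivial) remark is that monotone a.e.\ convergence implies a.e.\ convergence, so that the domination hypothesis of Proposition~\ref{PropDomConv} is indeed met. One could alternatively argue straight from Definition~\ref{DefACqN} by dominating the tails $f - f_n$, which decrease to $0$ $\mu$-a.e., but routing everything through Proposition~\ref{PropDomConv} is the cleanest route and keeps the statement of the corollary transparently a corollary.
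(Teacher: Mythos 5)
Your argument is correct and is exactly the route the paper intends: the corollary is stated as an immediate special case of Proposition~\ref{PropDomConv}, obtained by noting that $0 \leq f_n \uparrow f$ gives both $f_n \to f$ $\mu$-a.e.\ and $\lvert f_n \rvert \leq \lvert f \rvert$, so statement (ii) of that proposition applies with $g_n = f_n$, $g = f$.
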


Let us now move to the setting of r.i.~quasi-Banach function norms. It has been shown in \cite[Theorem~4.2]{PesaRepreACqN} that absolute continuity of the quasinorm can be characterised using the canonical representation quasinorm.

\begin{theorem} \label{ThmRepreACqN}
	Assume that $(\mathcal{R},\mu)$ is resonant, let $\lVert \cdot \rVert_X$ be an r.i.~quasi-Banach function norm on $\mathcal{M}(\mathcal{R},\mu)$, let $\lVert \cdot \rVert_{\overline{X}}$ be the canonical representation quasinorm, and let $X$ and $\overline{X}$ be the corresponding r.i.~quasi-Banach function spaces. Then $f \in X_a$ if and only if $f^* \in \left( \overline{X} \right)_a$.
\end{theorem}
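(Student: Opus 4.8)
The plan is to handle both implications through the abstract dominated convergence theorem, Proposition~\ref{PropDomConv}: absolute continuity of a single function $f$ in an r.i.\ quasi-Banach function space $Y$ is equivalent to the assertion that $\lVert g_n\rVert_Y\to0$ for every sequence of functions with $0\le g_n\le|f|$ and $g_n\downarrow0$ $\mu$-a.e.\ (the reduction from the general form of Proposition~\ref{PropDomConv} to such decreasing sequences being standard, using that $Y_a$ is a linear subspace, Proposition~\ref{PropXaOrdId}). One then transports such test sequences between $\mathcal{R}$ and $[0,\infty)$ using the representation identity $\lVert g\rVert_X=\lVert g^*\rVert_{\overline X}$ of Theorem~\ref{TheoremRepresentation}. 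Since everything in sight depends only on $|f|$, we may assume $f\ge0$. Before starting I would record the following auxiliary fact: if $g$ has absolutely continuous quasinorm in an r.i.\ quasi-Banach function space $Y$ over a resonant measure space, then $g_*(s)<\infty$ for every $s>0$. Indeed, if $g_*(s)=\infty$ then necessarily $\mu(\mathcal{R})=\infty$, and by $\sigma$-finiteness one can partition $\{g>s\}$ into countably many sets of infinite measure and thereby extract a decreasing sequence $E_k\downarrow\emptyset$ of sets of infinite measure with $E_k\subseteq\{g>s\}$; then $\chi_{E_k}\to0$ $\mu$-a.e.\ while $\lVert g\chi_{E_k}\rVert_Y\ge s\lVert\chi_{E_k}\rVert_Y$, and by rearrangement-invariance $\lVert\chi_{E_k}\rVert_Y$ is a strictly positive constant independent of $k$, contradicting $g\in Y_a$. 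Applied to $f\in X_a$ and to $f^*\in(\overline X)_a$, and combined with the identity $f_*=(f^*)_*$, this yields $\mu(\{f>s\})<\infty$ for all $s>0$ in either direction.

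For the implication $f^*\in(\overline X)_a\Rightarrow f\in X_a$ the argument is uniform in $(\mathcal{R},\mu)$. Let $E_k\subseteq\mathcal{R}$ with $\chi_{E_k}\to0$ $\mu$-a.e. Since $\mu(\{f>s\})<\infty$, dominated convergence on the finite-measure set $\{f>s\}$ gives $(f\chi_{E_k})_*(s)=\mu(E_k\cap\{f>s\})\to0$ for each $s>0$, and hence $(f\chi_{E_k})^*(t)\to0$ for each $t>0$. As moreover $(f\chi_{E_k})^*\le f^*$, Proposition~\ref{PropDomConv} applied in $\overline X$ to the function $f^*\in(\overline X)_a$ yields $\lVert(f\chi_{E_k})^*\rVert_{\overline X}\to0$, that is, $\lVert f\chi_{E_k}\rVert_X\to0$. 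Thus $f\in X_a$.

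For the converse $f\in X_a\Rightarrow f^*\in(\overline X)_a$ it suffices, by the reduction above, to show $\lVert h_n\rVert_{\overline X}\to0$ whenever $h_n\in\mathcal{M}_+([0,\infty),\lambda)$, $h_n\le f^*$ and $h_n\downarrow0$ $\lambda$-a.e. By rearrangement-invariance of $\lVert\cdot\rVert_{\overline X}$ we may replace $h_n$ by $h_n^*$; since $\mu(\{f>s\})<\infty$ for $s>0$, a continuity-of-measure argument (using $\lambda(\{h_1>s\})\le\lambda(\{f^*>s\})=\mu(\{f>s\})<\infty$) shows $h_n^*\downarrow0$, and clearly $h_n^*\le(f^*)^*=f^*$, so we may assume each $h_n$ non-increasing. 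When $(\mathcal{R},\mu)$ is non-atomic there is a measure-preserving transformation $\omega$ of $(\mathcal{R},\mu)$ onto $([0,\mu(\mathcal{R})),\lambda)$ with $f=f^*\circ\omega$ $\mu$-a.e.\ (this is part of the standard structure theory of non-atomic resonant measure spaces). Put $g_n:=h_n\circ\omega$. Then $g_n\le f^*\circ\omega=f$, $g_n\downarrow0$ $\mu$-a.e., and $g_n$ is equimeasurable with $h_n$, whence $\lVert g_n\rVert_X=\lVert g_n^*\rVert_{\overline X}=\lVert h_n\rVert_{\overline X}$ by Theorem~\ref{TheoremRepresentation} and rearrangement-invariance of $\overline X$; since $f\in X_a$, Proposition~\ref{PropDomConv} gives $\lVert g_n\rVert_X\to0$, and therefore $\lVert h_n\rVert_{\overline X}\to0$, as required.

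The remaining case, $(\mathcal{R},\mu)$ completely atomic with atoms of a common measure $c$, is the step I expect to be the main obstacle: there is no measure-preserving map onto an interval, so the last step must be rerun through the r.i.\ quasi-Banach \emph{sequence} space $X$ together with the \emph{explicit} (block-averaging) form of the canonical representation, rather than the abstract representation theorem alone. Concretely, the rank bijection $\pi$ of the positive integers onto $\supp f$ makes $f\chi_{\pi(F_k)}$ equimeasurable with $f^*\chi_{F_k}$ and carries $F_k\downarrow\emptyset$ to $\pi(F_k)\downarrow\emptyset$, so that $f\in X_a$ first yields $f^*\in X_a$ as a sequence; then, writing $b^{(n)}_k:=c^{-1}\int_{(k-1)c}^{kc}h_n^*\,d\lambda$ for the block averages (so that $\lVert h_n\rVert_{\overline X}=\lVert(b^{(n)}_k)_k\rVert_X$ by the construction of the canonical representation, whose ``local'' component is of $L^1$-type and hence automatically absolutely continuous), one has $0\le b^{(n)}_k\le f^*_k$ and $b^{(n)}_k\downarrow0$, so $\lVert h_n\rVert_{\overline X}\to0$ by $f^*\in X_a$ and Proposition~\ref{PropDomConv}. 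The delicate point throughout is the bookkeeping around the non-uniqueness of the representation: one must check that the structure which the canonical construction of \cite{PesaRepreACqN} builds in near $t=0$ (and, when $\mu(\mathcal{R})<\infty$, near $t=\infty$) contributes only absolutely continuous behaviour and therefore cannot obstruct the equivalence --- this is the analogue, for absolute continuity, of the transfer results for \ref{P5} and for the Hardy--Littlewood--P\'{o}lya principle recorded in Propositions~\ref{PropRepreP5} and \ref{PropRepreHLPP}.
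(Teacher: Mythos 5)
The paper does not actually prove Theorem~\ref{ThmRepreACqN}; it imports it verbatim from \cite[Theorem~4.2]{PesaRepreACqN}, so there is no in-paper argument to compare against. Judged on its own, your proof is essentially correct and complete in outline. The auxiliary finiteness fact ($g\in Y_a$ forces $g_*(s)<\infty$ for $s>0$, hence $g^*(\infty)=0$; cf.\ also Proposition~\ref{PropACNimpliesACR}) is proved correctly, the reduction of absolute continuity to decreasing majorised sequences via Proposition~\ref{PropDomConv} is valid, and the ``easy'' direction $f^*\in(\overline X)_a\Rightarrow f\in X_a$ goes through uniformly exactly as you write it. The converse direction is where the real content lies, and your strategy --- transport decreasing test functions from $[0,\infty)$ back to $\mathcal{R}$ by a measure-preserving rearrangement in the non-atomic case, and go through the explicit block-averaging form of the canonical representation in the atomic case --- is the natural one and works.

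Two points should be tightened. First, the measure-preserving transformation you invoke does not in general exist from all of $\mathcal{R}$ onto $[0,\mu(\mathcal{R}))$ with $f=f^*\circ\omega$ (take $f$ with $\mu(\supp f)=\infty$ and $\mu(\{f=0\})>0$: then $f^*>0$ a.e.\ on $[0,\infty)$ and no such $\omega$ exists). The correct statement (Ryff-type, \cite[Chapter~2, Section~7]{BennettSharpley88}) gives $\omega\colon\supp f\to\supp f^*$ under the hypothesis $f^*(\infty)=0$, which you have established; since $h_n\le f^*$ forces $h_n=0$ off $\supp f^*$, extending $g_n:=h_n\circ\omega$ by zero outside $\supp f$ preserves equimeasurability with $h_n$ and the bound $g_n\le f$, so your argument survives, but it should be run on the support. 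Second, the atomic case does rest on the explicit form of the canonical representation quasinorm as the $X$-norm of the block averages of $h^*$; this matches the construction the paper alludes to (compare the formula \eqref{ThmRepre(w')*:E1} and the remark following Theorem~\ref{ThmRepre(w')*}), but as stated your proof is conditional on that identification, which must be checked against \cite{PesaRepreACqN} rather than deduced from the abstract representation theorem alone --- you flag this honestly, and it is the only genuinely external input in your argument.
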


Since r.i.~quasi-Banach function norms are monotone with respect to the ordering or non-increasing rearrangements and may also be monotone with respect to the Hardy--Littlewood--P\'{o}lya relation, it is natural to ask whether the set $X_a$ is in the relevant cases still an order ideal. The answer is positive, as has been shown in \cite{PesaRepreACqN}, and we present it in a precise form as the next theorem. It is worth noting, that while the first part (originally \cite[Corollary~4.3]{PesaRepreACqN}) is a direct consequence of Theorem~\ref{ThmRepreACqN}, the second part (originally \cite[Theorem~4.4]{PesaRepreACqN}) is rather non-trivial.

\begin{theorem} \label{ThmInheritanceACqN}
	Assume that $(\mathcal{R},\mu)$ is resonant, let $\lVert \cdot \rVert_X$ be an r.i.~quasi-Banach function norm on $\mathcal{M}(\mathcal{R},\mu)$, let $X$ be the corresponding r.i.~quasi-Banach function space, and let $f \in X_a$. Then:
	\begin{enumerate}
		\item If $g \in \mathcal{M}(\mathcal{R}, \mu)$ satisfies $g^* \leq f^*$ then $g \in X_a$.
		\item Assume further that the Hardy--Littlewood--P\'{o}lya principle holds for $\lVert \cdot \rVert_{X}$. Then we have for every function $h \in \mathcal{M}(\mathcal{R}, \mu)$ which satisfies $h \prec f$ that $h \in X_a$.
	\end{enumerate}
\end{theorem}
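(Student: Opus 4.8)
Part (i) is immediate: by Theorem~\ref{ThmRepreACqN}, $f\in X_a$ is equivalent to $f^*\in\left(\overline X\right)_a$ and $g\in X_a$ to $g^*\in\left(\overline X\right)_a$; since $\left(\overline X\right)_a$ is an order ideal in $\overline X$ (Proposition~\ref{PropXaOrdId}) and $g^*\le f^*$ $\lambda$-a.e.\ with $f^*\in\left(\overline X\right)_a$, one gets $g^*\in\left(\overline X\right)_a$, hence $g\in X_a$. For part (ii) I would first pass to the canonical representation: by Proposition~\ref{PropRepreHLPP} the Hardy--Littlewood--P\'olya principle holds for $\lVert\cdot\rVert_{\overline X}$, by Theorem~\ref{ThmRepreACqN} $f^*\in\left(\overline X\right)_a$, and $h\prec f$ becomes $h^*\prec f^*$ (as $h^*$ is non-increasing, $(h^*)^{**}=h^{**}\le f^{**}=(f^*)^{**}$); so it suffices to show that whenever $g_1,g_2\in\mathcal M_+([0,\infty),\lambda)$ are non-increasing, $g_1\prec g_2$, $g_2\in\left(\overline X\right)_a$ and the Hardy--Littlewood--P\'olya principle holds for $\lVert\cdot\rVert_{\overline X}$, then $g_1\in\left(\overline X\right)_a$, after which Theorem~\ref{ThmRepreACqN} returns $h\in X_a$.

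Two preliminary observations are needed. First, since $\overline X$ is nontrivial and the Hardy--Littlewood--P\'olya principle holds, every $\varphi\in\overline X$ has $\varphi^{**}(t)<\infty$ for all $t>0$: otherwise $\varphi^{**}\equiv\infty$, so $n\chi_{(0,1)}\prec\varphi$ for all $n$, forcing $\lVert\chi_{(0,1)}\rVert_{\overline X}=0$, which is impossible; in particular $\overline X\subseteq L^1+L^\infty$ and, by the principle, $g_1\in\overline X$. Second, if $g_2\ne0$ then, being non-increasing and nonnegative, $g_2\ge\eta>0$ on some $(0,a)$, so $\chi_{(0,a)}\le\eta^{-1}g_2\in\left(\overline X\right)_a$; since $\left(\overline X\right)_a$ is dilation-invariant (by Theorem~\ref{TDRIS} together with part (i)) and $\chi_{(0,m)}=D_{a/m}\chi_{(0,a)}$, we conclude $\chi_{(0,m)}\in\left(\overline X\right)_a$ for every $m>0$, and hence that every bounded function of bounded support belongs to $\left(\overline X\right)_a$.

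Next I would prove the special case $g_2=\chi_{(0,m)}$. Then $g_1\prec\chi_{(0,m)}$ forces $g_1\in L^1$ (indeed $\int_0^t g_1\le\min(t,m)$), and $g_1\in\left(\overline X\right)_a$ follows by splitting a test sequence $E_j$ with $\chi_{E_j}\to0$ a.e.\ into its parts in $(0,\delta)$, $[\delta,R]$ and $(R,\infty)$ and using the quasi-triangle inequality: on $(0,\delta)$, $g_1\chi_{(0,\delta)}\prec\chi_{(0,\delta)}$, so $\lVert g_1\chi_{(0,\delta)}\rVert_{\overline X}\le\lVert\chi_{(0,\delta)}\rVert_{\overline X}\to0$; on $[\delta,R]$, $g_1\le g_1(\delta)<\infty$ and $g_1(\delta)\chi_{[\delta,R]}\in\left(\overline X\right)_a$ by the second observation, so Proposition~\ref{PropDomConv} yields $\lVert g_1\chi_{E_j\cap[\delta,R]}\rVert_{\overline X}\to0$; and on $(R,\infty)$, with $M_R:=\int_R^\infty g_1\to0$ one checks $g_1\chi_{(R,\infty)}\prec\chi_{(0,M_R)}$ (because $\int_R^{R+t}g_1\le\min(\int_0^t g_1,M_R)\le\min(t,M_R)$), hence $\lVert g_1\chi_{(R,\infty)}\rVert_{\overline X}\le\lVert\chi_{(0,M_R)}\rVert_{\overline X}\to0$.

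For the general case the key --- and essentially the only non-elementary --- ingredient is the Calder\'on--Mityagin factorisation: since $([0,\infty),\lambda)$ is resonant and $g_1,g_2\in L^1+L^\infty$ are nonnegative, $g_1=Tg_2$ for some positivity-preserving $T$ that is a contraction on $L^1$ and on $L^\infty$; such a $T$ satisfies $Tu\prec u$ for all $u\ge0$ (a standard consequence of Hardy's Lemma~\ref{LemmaHardy} and the Hardy--Littlewood inequality, testing against characteristic functions), so by the Hardy--Littlewood--P\'olya principle $T$ maps $\overline X$ into itself with $\lVert Tk\rVert_{\overline X}\le\lVert k\rVert_{\overline X}$. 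Choosing simple functions $g_2^{(n)}$ of bounded support with $g_2^{(n)}\uparrow g_2$, Proposition~\ref{PropDomConv} gives $\lVert g_2-g_2^{(n)}\rVert_{\overline X}\to0$, so $g_1=Tg_2^{(n)}+T(g_2-g_2^{(n)})$ with $\lVert T(g_2-g_2^{(n)})\rVert_{\overline X}\to0$, i.e.\ $g_1=\lim_n Tg_2^{(n)}$ in $\overline X$. For each $n$, writing $g_2^{(n)}=\sum_k c_k\chi_{F_k}$ with $\lambda(F_k)<\infty$, one has $T\chi_{F_k}\ge0$ and $(T\chi_{F_k})^*\prec\chi_{(0,\lambda(F_k))}\in\left(\overline X\right)_a$, so the special case (applied to the non-increasing function $(T\chi_{F_k})^*$) together with part (i) gives $T\chi_{F_k}\in\left(\overline X\right)_a$, whence $Tg_2^{(n)}\in\left(\overline X\right)_a$. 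Since $\left(\overline X\right)_a$ is closed (Proposition~\ref{PropXaOrdId}), the limit $g_1$ lies in $\left(\overline X\right)_a$, which proves (ii). The main obstacle is thus to secure the Calder\'on--Mityagin factorisation and verify that it interacts correctly with the quasi-Banach representation machinery; granting that and the two preliminary observations, the rest is the bookkeeping indicated above.
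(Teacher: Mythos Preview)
The paper does not give its own proof of this theorem; it only cites \cite[Corollary~4.3 and Theorem~4.4]{PesaRepreACqN} and remarks that part~(ii) is ``rather non-trivial''. So there is no in-paper proof to compare against, and the question is simply whether your argument is correct. It is.

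Your proof of (i) matches exactly what the paper says (``a direct consequence of Theorem~\ref{ThmRepreACqN}''). For (ii), your strategy---reduce to the canonical representation, establish that characteristic functions of finite intervals lie in $(\overline X)_a$, prove the special case $g_2=\chi_{(0,m)}$ by a three-piece splitting, then handle the general case via Calder\'on--Mityagin factorisation---is sound and all the steps check out. Two minor points: first, for the dilation-invariance of $(\overline X)_a$ you should cite Lemma~\ref{LemDilPresXa} rather than ``Theorem~\ref{TDRIS} together with part~(i)'', since part~(i) gives nothing when the dilation enlarges the support. Second, the \emph{positive} version of the Calder\'on factorisation (i.e.\ that $T$ can be taken substochastic when $g_1,g_2\geq 0$) is indeed available in the literature, but you do not actually need it: for any linear $T$ that is a simultaneous contraction on $L^1$ and $L^\infty$ one has $Tu\prec u$ for every $u\in L^1+L^\infty$ directly from the $K$-functional identity $K(t,u;L^1,L^\infty)=tu^{**}(t)$, and this is all you use (both for $\lVert Tk\rVert_{\overline X}\le\lVert k\rVert_{\overline X}$ and for $(T\chi_{F_k})^*\prec\chi_{(0,\lambda(F_k))}$). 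So you could drop the positivity hypothesis and the Hardy's-lemma justification in favour of this one-line interpolation remark, which also makes the invocation of Calder\'on's theorem cleaner.
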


Another useful fact is the following alternative concerning the subspace $X_a$.

\begin{proposition} \label{PropXaAlternative}
    Let $\lVert \cdot \rVert_X$ be an r.i.~quasi-Banach function norm and let $X$ be the corresponding r.i.~quasi-Banach function space. Then we either have that $X_a = \{0 \}$ or that the simple functions are both contained and dense in $X_a$. 
\end{proposition}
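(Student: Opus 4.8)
Since the statement offers $X_a = \{0\}$ as an explicit first alternative, the plan is to assume $X_a \neq \{0\}$ and establish the second one. Fix $f_0 \in X_a$ with $f_0 \geq 0$ and $f_0 \neq 0$, which is possible because $X_a$ is an order ideal (Proposition~\ref{PropXaOrdId}). I would first record that there are constants $c, a \in (0,\infty)$ with $c\chi_{[0,a)} \leq f_0^*$ $\lambda$-a.e. Here one uses Theorem~\ref{TheoremCharResonance}: if $(\mathcal{R},\mu)$ is non-atomic, then $f_0^*$ is non-increasing, does not vanish identically, and (being the rearrangement of an element of $X$) is finite-valued on $(0,\infty)$, so any $a \in (0, \mu(\{f_0 > 0\}))$ together with $c = f_0^*(a) \in (0,\infty)$ works; if $(\mathcal{R},\mu)$ is completely atomic with all atoms of common measure $\delta > 0$, then $f_0^*$ is constant on $[0,\delta)$, equal to $\esssup f_0$, which is finite and positive (finiteness again being a consequence of $f_0 \in X$ and the axioms), so one takes $a = \delta$.

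The first goal is to show that every simple function---here, every $\mu$-simple function, i.e.\ one taking finitely many values and vanishing off a set of finite measure---belongs to $X_a$. Since $X_a$ is a linear subspace, it suffices to check that $\chi_E \in X_a$ whenever $\mu(E) < \infty$. I would split $E$ into finitely many pairwise disjoint measurable sets $F_1, \dots, F_m$, each of measure at most $a$: in the non-atomic case this is possible because a non-atomic measure attains every value between $0$ and the measure of a given set of finite measure, and in the atomic case because $E$ is a finite union of atoms of measure $\delta = a$. For each $j$ one has $\chi_{F_j}^* = \chi_{[0,\mu(F_j))} \leq \chi_{[0,a)} \leq c^{-1} f_0^* = (c^{-1} f_0)^*$ $\lambda$-a.e., and since $c^{-1}f_0 \in X_a$, the first part of Theorem~\ref{ThmInheritanceACqN} gives $\chi_{F_j} \in X_a$; summing over $j$ yields $\chi_E \in X_a$, hence every simple function lies in $X_a$.

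It remains to show that simple functions are dense in $X_a$ for $\lVert \cdot \rVert_X$. The crucial auxiliary fact is that for every $f \in X_a$ and every $\varepsilon > 0$ the level set $\{|f| > \varepsilon\}$ has finite measure. I would prove this by contradiction: if its measure were infinite, then, using $\sigma$-finiteness, exhaust $\mathcal{R}$ by measurable sets $G_k \uparrow \mathcal{R}$ of finite measure; then $\chi_{\{|f|>\varepsilon\}\setminus G_k} \downarrow 0$ $\mu$-a.e., so absolute continuity of the quasinorm of $f$ forces $\lVert f\chi_{\{|f|>\varepsilon\}\setminus G_k}\rVert_X \to 0$, whence $\varepsilon\lVert\chi_{\{|f|>\varepsilon\}\setminus G_k}\rVert_X \to 0$ by the lattice property; but every set $\{|f|>\varepsilon\}\setminus G_k$ has infinite measure, so these characteristic functions are all equimeasurable and, being dominated by $\varepsilon^{-1}|f| \in X$, share a common finite quasinorm, which must then be zero, contradicting its positivity. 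Granting this, I would decompose an arbitrary $f \in X_a$ into the four non-negative functions $(\operatorname{Re} f)^{\pm}$, $(\operatorname{Im} f)^{\pm}$ (each $\leq |f|$, hence in $X_a$ by the order-ideal property) and reduce to non-negative $f$. For such $f$ the standard truncations $s_n = \sum_{k=1}^{n2^n}(k-1)2^{-n}\chi_{\{(k-1)2^{-n}\leq f < k2^{-n}\}} + n\chi_{\{f \geq n\}}$ satisfy $0 \leq s_n \uparrow f$ $\mu$-a.e., and every level set carrying a non-zero coefficient is contained in some $\{f \geq j2^{-n}\}$ with $j \geq 1$, hence has finite measure by the auxiliary fact, so each $s_n$ is simple. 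Since $|f - s_n| \leq f \in X_a$ and $f - s_n \to 0$ $\mu$-a.e., Proposition~\ref{PropDomConv} yields $\lVert f - s_n\rVert_X \to 0$, and reassembling the four parts (a finite sum) completes the density argument.

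The main obstacle is the auxiliary fact in the last paragraph: that membership in $X_a$ forces every level set to have finite measure. This is precisely the point where $\sigma$-finiteness of $\mu$ and rearrangement-invariance of $\lVert\cdot\rVert_X$ genuinely combine, and it is what guarantees that the truncations $s_n$ are $\mu$-simple. Everything else reduces to the order-ideal structure of $X_a$, the first part of Theorem~\ref{ThmInheritanceACqN}, and the abstract dominated convergence theorem (Proposition~\ref{PropDomConv}), together with routine bookkeeping about the structure of resonant measure spaces.
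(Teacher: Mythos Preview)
Your proof is correct and more self-contained than the paper's, which simply cites \cite[Theorem~4.15]{MusilovaNekvinda24} for the containment of simple functions in $X_a$ and then invokes Proposition~\ref{PropDomConv} for density. Your route---produce a small characteristic function in $X_a$ via Theorem~\ref{ThmInheritanceACqN}\ref{ThmInheritanceACqN}(i) and then assemble arbitrary $\chi_E$ by splitting $E$ into small pieces---is essentially what lies behind the cited result, so the two approaches agree in spirit. Two simplifications are available. First, your auxiliary fact (that $f\in X_a$ forces $\mu(\{|f|>\varepsilon\})<\infty$) is exactly Proposition~\ref{PropACNimpliesACR}: $f\in X_a$ gives $f^*(t)\to 0$, hence $f_*(\varepsilon)<\infty$. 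Second, in the atomic case you claim $\esssup f_0<\infty$, which is neither obvious from the axioms nor needed---any finite $c\in(0,f_0^*(0))$ already gives $c\chi_{[0,\delta)}\le f_0^*$. One caveat: your argument uses resonance twice (through Theorem~\ref{TheoremCharResonance} for the case split and through Theorem~\ref{ThmInheritanceACqN}), while the proposition as stated in Section~\ref{SecACqN} does not assume it explicitly; this is harmless for the paper's applications, which all live in the resonant setting, but it does narrow the scope of your proof relative to the bare statement.
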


\begin{proof}
	That we either have that $X_a = \{0 \}$ or that it contains the simple functions has been proven in \cite[Theorem~4.15]{MusilovaNekvinda24} (the same statement for r.i.~Banach functions spaces have been known earlies, see e.g.~\cite[Chapter~2, Theorems~5.4 and 5.5]{BennettSharpley88}). The density then follows from Proposition~\ref{PropDomConv} once one recalls that any function $f \in X$ is $\mu$-a.e.~the pointwise limit of some sequence of simple functions $s_n$ such that $\lvert s_n \rvert \leq \lvert f \rvert \in X_a$.
\end{proof}

Let us now introduce a property that we call absolute continuity of rearrangement of a function, as well as the related \ACR property that will play a crucial role in most of our main results.

\begin{definition} \label{DefMACR}
	We say, that $f \in \mathcal{M}_0$ has \emph{absolutely continuous rearrangement} if it holds that
	\begin{equation*} \label{PropACR:E1}
		\lim_{t \to \infty} f^*(t) = 0.
	\end{equation*}
	The set of all such functions is denoted $\mathcal{M}_{(ACR)}$.
\end{definition}

\begin{definition} \label{DefACR}
	Let $\lVert \cdot \rVert_X$ be an r.i.~quasi-Banach function norm and let $X$ be the corresponding r.i.~quasi-Banach function space. Then we say that $X$ has the \emph{\ACR property} if $X \subseteq \mathcal{M}_{(ACR)}$.
\end{definition}

The following result relates absolute continuity of the quasinorm with the \ACR property. For proof, see \cite[Proposition~4.1]{PesaRepreACqN}.

\begin{proposition} \label{PropACNimpliesACR}
	Let $\lVert \cdot \rVert_X$ be an r.i.~quasi-Banach function norm and let $X$ be the corresponding r.i.~quasi-Banach function space. If $f \in X$ has an absolutely continuous quasinorm (i.e.~$f \in X_a$), then it satisfies
	\begin{equation*}
		\lim_{t \to \infty} f^*(t) = 0.
	\end{equation*}

    Consequently, if $X$ has absolutely continuous quasinorm, then it has the \ACR property.
\end{proposition}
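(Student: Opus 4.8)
The plan is to reduce everything to a single pointwise estimate on the non-increasing rearrangement and then play absolute continuity of the quasinorm against property \ref{P4}. First I would fix $f \in X_a$ and, aiming for a contradiction, suppose that $\lim_{t \to \infty} f^*(t) = \delta > 0$; since $f^*$ is non-increasing, this means $f^*(t) \geq \delta$ for all $t \in (0, \infty)$. The idea is that a function whose rearrangement stays bounded below by $\delta$ ``contains'' arbitrarily large characteristic functions (scaled by $\delta$), and the absolute continuity of the quasinorm of $f$ will force the quasinorms of these characteristic functions to go to zero, contradicting the lattice property together with \ref{P4} applied on a fixed set of finite measure.

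To make this precise I would proceed as follows. Since $(\mathcal{R}, \mu)$ is $\sigma$-finite, write $\mathcal{R} = \bigcup_k R_k$ with $R_k \uparrow \mathcal{R}$ and $\mu(R_k) < \infty$. Consider the sets $E_k = \{x \in \mathcal{R} : |f(x)| > \delta/2\} \cap (\mathcal{R} \setminus R_k)$. Because $\lim_{t\to\infty} f^*(t) = \delta > \delta/2$, the distribution function satisfies $f_*(\delta/2) = \mu(\{|f| > \delta/2\}) = \infty$, so each $E_k$ has infinite measure; in particular $E_k \neq \emptyset$ and, crucially, $\chi_{E_k} \to 0$ $\mu$-a.e.\ as $k \to \infty$ (every point eventually lies in some $R_k$). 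Now pick, for each $k$, a measurable $F_k \subseteq E_k$ with $0 < \mu(F_k) < \infty$ (possible by $\sigma$-finiteness again). On $F_k$ we have $|f| > \delta/2$, hence $\tfrac{\delta}{2}\chi_{F_k} \leq |f|\chi_{E_k}$ pointwise, so by \ref{P2} and absolute homogeneity $\tfrac{\delta}{2}\lVert \chi_{F_k}\rVert_X \leq \lVert f\chi_{E_k}\rVert_X \to 0$ by Definition~\ref{DefACqN} applied to the sequence $E_k$. Thus $\lVert \chi_{F_k}\rVert_X \to 0$.

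It remains to derive a contradiction from $\lVert \chi_{F_k}\rVert_X \to 0$. Here I would invoke axiom \ref{P5}: fix any single set $G \subseteq \mathcal{R}$ with $0 < \mu(G) < \infty$; since the measure space is non-atomic or has all atoms of equal measure (resonance, Theorem~\ref{TheoremCharResonance}), and each $F_k$ has positive, possibly infinite, and in any case uniformly ``large'' measure, one can find subsets comparable to $G$ inside the $F_k$ — more directly, since $\mu(F_k) = \infty$ we may choose $F_k$ from the outset so that $\mu(F_k) = \mu(G)$, and then $\chi_{F_k}$ and $\chi_G$ are equimeasurable, so by rearrangement-invariance $\lVert \chi_{F_k}\rVert_X = \lVert \chi_G\rVert_X$, a fixed positive constant (positive by \ref{P1b}, since $\mu(G) > 0$). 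This contradicts $\lVert \chi_{F_k}\rVert_X \to 0$, completing the argument; the ``consequently'' clause is then immediate since $X = X_a$ forces every $f \in X$ into $\mathcal{M}_{(ACR)}$, i.e.\ $X \subseteq \mathcal{M}_{(ACR)}$. The main obstacle, and the only place requiring genuine care, is the bookkeeping around infinite-measure sets: one must use $\sigma$-finiteness to produce the finite-measure subsets $F_k$ witnessing the failure while simultaneously arranging $\chi_{E_k} \to 0$ a.e.\ to trigger absolute continuity — and the cleanest route is to fix $\mu(F_k) = \mu(G)$ so that rearrangement-invariance delivers the contradiction directly, rather than fussing with \ref{P5}.
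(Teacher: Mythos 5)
Your argument is correct, and it follows what is essentially the standard route (the paper itself only cites \cite[Proposition~4.1]{PesaRepreACqN} rather than giving a proof). The core steps are all sound: deducing $\mu(\{\lvert f\rvert>\delta/2\})=\infty$ from $\lim_{t\to\infty}f^*(t)=\delta>0$, forming $E_k=\{\lvert f\rvert>\delta/2\}\setminus R_k$ so that $\chi_{E_k}\to 0$ $\mu$-a.e.\ while $\mu(E_k)=\infty$, and playing the pointwise bound $\tfrac{\delta}{2}\chi_{(\cdot)}\le \lvert f\rvert\chi_{E_k}$ against the absolute continuity of the quasinorm of $f$. One remark, though: your final step (extracting subsets $F_k\subseteq E_k$ with $\mu(F_k)=\mu(G)$) invokes resonance via Theorem~\ref{TheoremCharResonance}, which is \emph{not} among the hypotheses of this proposition --- resonance only becomes a standing assumption from Section~\ref{section: composition operators on r.i.} onward. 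The detour is also unnecessary: the sets $E_k$ are already pairwise equimeasurable, since $\chi_{E_k}^*=\chi_{[0,\infty)}$ for every $k$, so rearrangement-invariance alone gives that $\lVert\chi_{E_k}\rVert_X$ equals a constant $c$ independent of $k$, and then $\tfrac{\delta}{2}\,c\le\lVert f\chi_{E_k}\rVert_X\to 0$ forces $c=0$, whence $\chi_{E_0}=0$ $\mu$-a.e.\ by \ref{Q1b}, contradicting $\mu(E_0)=\infty$. This shorter route needs no selection of finite-measure subsets, no resonance, and none of the axioms \ref{P4}/\ref{P5} that you gesture at in the middle of your write-up but never actually use. (Two trivial loose ends: you should also cover $\delta=\infty$, where one simply replaces $\delta/2$ by $1$; and the argument tacitly assumes $\mu(\mathcal{R})=\infty$, the finite-measure case being vacuous since then $f^*(t)=0$ for $t>\mu(\mathcal{R})$.)
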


Let us recall what we already mentioned in the introduction, i.e.~that the \ACR property is very weak: it is trivially satisfied for every r.i.~quasi-Banach function space when the underlying measure space is of finite measure, and even in the infinite-measure case there is in a sense a single counterexample, the spaces that are equal to $L^{\infty}$ ``near infinity''; see \cite[Theorem~4.16]{MusilovaNekvinda24} for the precise formulation of this statement and its proof. 

Finally, the following observation will be of use. Its proof is a simple exercise.
\begin{lemma} \label{LemDilPresXa}
	Let $X$ be an r.i.~quasi-Banach function space over $([0, \infty), \lambda)$, let $E \subseteq [0, \infty)$, and let $t \in (0, \infty)$. Then
	\begin{equation*}
		D_t \chi_E = \chi_{\{s \in [0, \infty); \; ts \in E \}},
	\end{equation*} 
	and consequently $D_t$ maps $X_a$ into itself, i.e.~$D_t(X_a) \subseteq X_a$.
\end{lemma}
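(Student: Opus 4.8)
The first identity is nothing more than unwinding the definition of the dilation operator (Definition~\ref{DDO}): for every $s \in [0,\infty)$ we have $(D_t\chi_E)(s) = \chi_E(ts)$, and $\chi_E(ts)$ equals $1$ exactly when $ts \in E$, so $D_t\chi_E = \chi_{\{s \in [0,\infty);\, ts \in E\}}$. For the inclusion $D_t(X_a) \subseteq X_a$, fix $f \in X_a$. Since $D_t$ is a continuous operator on $X$ by Theorem~\ref{TDRIS}, we automatically have $D_t f \in X$, so by Definition~\ref{DefACqN} it only remains to show that $\lVert (D_t f)\chi_{E_k}\rVert_X \to 0$ whenever $(E_k)_k$ is a sequence of $\lambda$-measurable subsets of $[0,\infty)$ with $\chi_{E_k} \to 0$ $\lambda$-a.e. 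The plan is to use the elementary intertwining relation between $D_t$ and multiplication by a characteristic function: writing $tE := \{ts;\, s \in E\}$ for $E \subseteq [0,\infty)$, a pointwise check shows that
\begin{equation*}
	(D_t f)\,\chi_E = D_t\bigl(f\,\chi_{tE}\bigr),
\end{equation*}
because $(f\chi_{tE})(ts) = f(ts)\,\chi_{tE}(ts) = f(ts)\,\chi_E(s)$.

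Now, since $t > 0$, the scaling $s \mapsto s/t$ is a bijection of $[0,\infty)$ that maps $\lambda$-null sets to $\lambda$-null sets, and $\chi_{tE_k}(s) = \chi_{E_k}(s/t)$; hence $\chi_{E_k} \to 0$ $\lambda$-a.e.\ forces $\chi_{tE_k} \to 0$ $\lambda$-a.e. As $f \in X_a$, this gives $\lVert f\,\chi_{tE_k}\rVert_X \to 0$, i.e.\ $f\,\chi_{tE_k} \to 0$ in $X$. Applying the continuous operator $D_t$ and invoking the intertwining identity, $(D_t f)\,\chi_{E_k} = D_t(f\,\chi_{tE_k}) \to 0$ in $X$, so $\lVert (D_t f)\,\chi_{E_k}\rVert_X \to 0$. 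Therefore $D_t f \in X_a$, which proves the lemma.

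As an alternative, one can argue via density: by the characteristic-function identity $D_t$ sends simple functions of $X$ to simple functions of $X$, and if $X_a \neq \{0\}$ every simple function of $X$ lies in $X_a$ (Proposition~\ref{PropXaAlternative}); since such functions are dense in $X_a$, since $X_a$ is closed (Proposition~\ref{PropXaOrdId}), and since $D_t$ is continuous (Theorem~\ref{TDRIS}), the inclusion extends to all of $X_a$, the case $X_a = \{0\}$ being trivial. In either approach the work is purely routine; the only mild point of attention is keeping the set operations $tE$ and $\{s;\, ts \in E\}$ straight and verifying the pointwise intertwining identity, and I do not anticipate any real obstacle.
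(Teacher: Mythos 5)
Your proof is correct: the pointwise identity, the intertwining relation $(D_t f)\chi_{E_k} = D_t(f\chi_{tE_k})$, the a.e.\ convergence $\chi_{tE_k}\to 0$ via the null-set-preserving scaling, and the continuity of $D_t$ from Theorem~\ref{TDRIS} together give exactly the ``simple exercise'' the paper leaves to the reader (the paper supplies no proof of its own). The density alternative via Propositions~\ref{PropXaAlternative} and~\ref{PropXaOrdId} is also valid, with the usual understanding that the simple functions involved are supported on sets of finite measure.
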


\subsection{Associate spaces} \label{Section:AssociateSpaces}
Another important concept is that of an associate space. The detailed study of associate spaces of Banach function spaces can be found in \cite[Chapter 1, Sections 2, 3 and 4]{BennettSharpley88}. We will approach the issue in a slightly more general way. The definition of an associate space requires no assumptions on the functional that defines the original space.

\begin{definition} \label{DAS}
	Let $\lVert \cdot \rVert_X \colon  \mathcal{M} \to [0, \infty]$ be some non-negative functional and put
	\begin{equation*}
		X = \{ f \in \mathcal{M}; \; \lVert f \rVert_X < \infty \}.
	\end{equation*} 
	Then the functional $\lVert \cdot \rVert_{X'}$ defined for $f \in \mathcal{M}$ by 
	\begin{equation*}
		\lVert f \rVert_{X'} = \sup_{g \in X} \frac{1}{\lVert g \rVert_X} \int_\mathcal{R} \lvert f g \rvert \: d\mu, \label{DAS1}
	\end{equation*}
	where we interpret $\frac{0}{0} = 0$ and $\frac{a}{0} = \infty$ for any $a>0$, will be called the associate functional of $\lVert \cdot \rVert_X$ while the set
	\begin{equation*}
		X' = \left \{ f \in \mathcal{M}; \; \lVert f \rVert_{X'} < \infty \right \}
	\end{equation*}
	will be called the associate space of $X$.
\end{definition}

As suggested by the notation, we will be interested mainly in the case when  $\lVert \cdot \rVert_X$ is at least a quasinorm, but we wanted to indicate that such assumption is not necessary for the definition. In fact, it is not even required for the following result, which is the Hölder inequality for associate spaces.

\begin{theorem} \label{THAS}
	Let $\lVert \cdot \rVert_X \colon \mathcal{M} \to [0, \infty]$ be some non-negative functional and denote by $\lVert \cdot \rVert_{X'}$ its associate functional. Then it holds for all $f,g \in \mathcal{M}$ that
	\begin{equation*}
		\int_\mathcal{R} \lvert f g \rvert \: d\mu \leq \lVert g \rVert_X \lVert f \rVert_{X'}
	\end{equation*}
	provided that we interpret $0 \cdot \infty = -\infty \cdot \infty = \infty$ on the right-hand side.
\end{theorem}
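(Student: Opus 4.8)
The plan is to argue directly from the definition of the associate functional, handling the degenerate cases separately so that the main computation takes place with all quantities finite and positive. First I would dispose of the trivial cases: if $\lVert g \rVert_X = \infty$ or $\lVert f \rVert_{X'} = \infty$, then by the stated conventions the right-hand side equals $\infty$ (we must be careful with the convention $0 \cdot \infty = \infty$, which is exactly why the statement spells it out), so the inequality holds vacuously. Similarly, if $\lVert g \rVert_X = 0$ then, since $\lVert\cdot\rVert_X$ need not be a genuine quasinorm, I cannot immediately conclude $g = 0$ a.e.; instead I would observe that $0 \in X$ with $\lVert 0 \rVert_X = 0$, so $g$ itself witnesses the value $\tfrac{0}{0} = 0$ in the supremum defining $\lVert f \rVert_{X'}$, and more to the point: for any $\varepsilon > 0$ the function $\varepsilon g$ also has $\lVert \varepsilon g \rVert_X = 0$ (by the convention forced on us, or simply because $0 \le \lVert \varepsilon g\rVert_X$ and it cannot be positive without contradicting $\lVert g\rVert_X=0$ through homogeneity if homogeneity is assumed — but homogeneity is \emph{not} assumed here), so I should instead note that $g \in X$ directly gives, from the definition of $\lVert f\rVert_{X'}$ as a supremum over $g \in X$, that $\int_{\mathcal R} \lvert fg\rvert\,d\mu \le \lVert f\rVert_{X'}\lVert g\rVert_X$ term-by-term is \emph{not} what the definition says; rather $\tfrac{1}{\lVert g\rVert_X}\int \lvert fg\rvert \le \lVert f\rVert_{X'}$, i.e. $\int\lvert fg\rvert \le \lVert f\rVert_{X'}\lVert g\rVert_X$, and when $\lVert g\rVert_X = 0$ this reads $\int \lvert fg\rvert \le 0$ under the convention $\tfrac{a}{0} = \infty$ for $a > 0$ — so the supremum forces $\int\lvert fg\rvert = 0$ unless $\lVert f\rVert_{X'} = \infty$, and either way the claimed inequality holds. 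I would write this cleanly rather than as the stream above.

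The core of the argument is then the generic case $0 < \lVert g \rVert_X < \infty$ and $\lVert f \rVert_{X'} < \infty$, where I simply unwind the definition: since $g \in X$ and $\lVert g\rVert_X \in (0,\infty)$, $g$ is an admissible competitor in the supremum defining $\lVert f \rVert_{X'}$, so
\begin{equation*}
	\frac{1}{\lVert g \rVert_X} \int_{\mathcal R} \lvert f g \rvert \: d\mu \le \sup_{h \in X} \frac{1}{\lVert h \rVert_X}\int_{\mathcal R} \lvert f h\rvert\,d\mu = \lVert f \rVert_{X'},
\end{equation*}
and multiplying through by $\lVert g \rVert_X > 0$ yields $\int_{\mathcal R}\lvert fg\rvert\,d\mu \le \lVert g\rVert_X \lVert f\rVert_{X'}$, which is the assertion. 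This is essentially a tautology once the definition is in place; no deeper structure (not even the axioms \ref{P1}–\ref{P5} of a Banach function norm) is used, which is precisely the point the authors are emphasizing.

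The only real subtlety — and the step I would be most careful about — is the bookkeeping of extended-real arithmetic in the degenerate cases, since the functional $\lVert\cdot\rVert_X$ is allowed to be completely arbitrary: it might vanish on a nonzero function, might be infinite everywhere, or might fail homogeneity, so I cannot lean on any of the usual reflexes. The convention $0\cdot\infty = \infty$ on the right-hand side is what makes the statement true when $\lVert g\rVert_X = 0$ but $\lVert f\rVert_{X'} = \infty$ (or vice versa), and I would make explicit reference to it. So the proof is short, but I would structure it as: (i) reduce to $\lVert g\rVert_X, \lVert f\rVert_{X'} \in (0,\infty)$ by checking the boundary cases against the stated conventions; (ii) in the remaining case, plug $g$ into the defining supremum and clear denominators. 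I expect no genuine obstacle here, only the need for care with the conventions.
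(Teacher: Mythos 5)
Your proposal is correct and is exactly the argument the paper has in mind: the paper gives no proof of Theorem~\ref{THAS}, treating it as immediate from Definition~\ref{DAS} (plug $g$ into the defining supremum and clear the denominator), with the only substance being the extended-real conventions, which the paper itself discusses in the paragraph following the statement and which you handle correctly. The case analysis for $\lVert g\rVert_X\in\{0,\infty\}$ and $\lVert f\rVert_{X'}\in\{-\infty,\infty\}$ checks out; only the write-up of the $\lVert g\rVert_X=0$ case needs the cleanup you already promise.
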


The convention concerning the products at the end of this theorem is necessary precisely because we put no restrictions on $\lVert \cdot \rVert_X$ and thus there occur some pathological cases which need to be taken care of. Specifically, $0 \cdot \infty = \infty$ is required because we allow $\lVert g \rVert_X = 0$ even for non-zero $g$ while $-\infty \cdot \infty = \infty$ is required because Definition~\ref{DAS} allows $X = \emptyset$ which implies $\lVert f \rVert_{X'} = \sup \emptyset = -\infty$.

In order for the associate functional to be well behaved some assumptions on $\lVert \cdot \rVert_X$ are needed. The following result, due to Gogatishvili and Soudsk{\'y} in \cite{GogatishviliSoudsky14}, provides a sufficient condition for the associate functional to be a Banach function norm. It also provides a characterisation of functionals that are equivalent to Banach function norms. We note that the final stronger statement for Banach function norms is a classical result of Lorentz and Luxemburg, proof of which can be found for example in \cite[Theorem~4]{Luxemburg55} or \cite[Chapter~1, Theorem~2.7]{BennettSharpley88}.

\begin{theorem} \label{TFA}
	Let $\lVert \cdot \rVert_X \colon \mathcal{M} \to [0, \infty]$ be a functional that satisfies the axioms \ref{P4} and \ref{P5} from the definition of Banach function spaces and which also satisfies for all functions $f \in \mathcal{M}$ that $\lVert f \rVert_X$ = $\lVert \, \lvert f \rvert \, \rVert_X$. Then the functional $\lVert \cdot \rVert_{X'}$ is a Banach function norm. In addition, $\lVert \cdot \rVert_X$ is equivalent to a Banach function norm if and only if $\lVert \cdot \rVert_X \approx \lVert \cdot \rVert_{X''}$, where $\lVert \cdot \rVert_{X''}$ denotes the associate functional of $\lVert \cdot \rVert_{X'}$.
	
	Furthermore, when $\lVert \cdot \rVert_X$ is a Banach function norm then in fact $\lVert \cdot \rVert_X = \lVert \cdot \rVert_{X''}$.
\end{theorem}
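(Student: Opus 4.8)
The plan is to treat the three assertions of the theorem separately, in increasing order of difficulty. First I would check that $\lVert\cdot\rVert_{X'}$ is a Banach function norm by verifying the axioms \ref{P1}--\ref{P5} directly from the definition of the associate functional in Definition~\ref{DAS}. Absolute homogeneity \ref{P1a}, the identity $\lVert\,\lvert f\rvert\,\rVert_{X'}=\lVert f\rVert_{X'}$, and subadditivity \ref{P1c} are immediate from the corresponding properties of $f\mapsto\int_{\mathcal{R}}\lvert fg\rvert\,d\mu$ together with elementary facts about suprema; the lattice property \ref{P2} is clear; and the Fatou property \ref{P3} follows by combining the monotone convergence theorem with an interchange of two suprema. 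The only point at which $\sigma$-finiteness is needed is the nondegeneracy \ref{P1b}: if $f$ is nonzero on a set of positive measure, one extracts by $\sigma$-finiteness a set $E$ with $0<\mu(E)<\infty$ on which $\lvert f\rvert$ is bounded below, and tests against $g=\chi_E$, which satisfies $0<\lVert\chi_E\rVert_X<\infty$ by \ref{P4} and \ref{P5}. Finally, \ref{P4} for $\lVert\cdot\rVert_{X'}$ is a direct restatement of \ref{P5} for $\lVert\cdot\rVert_X$, while \ref{P5} for $\lVert\cdot\rVert_{X'}$ follows by testing against $\chi_E$ and invoking \ref{P4} for $\lVert\cdot\rVert_X$. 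I anticipate no difficulty here.

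Next, the core identity $\lVert\cdot\rVert_X=\lVert\cdot\rVert_{X''}$, valid whenever $\lVert\cdot\rVert_X$ is a Banach function norm. The inequality $\lVert f\rVert_{X''}\le\lVert f\rVert_X$ is immediate from Theorem~\ref{THAS} in the form $\int_{\mathcal{R}}\lvert fg\rvert\,d\mu\le\lVert f\rVert_X\lVert g\rVert_{X'}$. For the reverse inequality I would first reduce the class of functions to be considered: since $\lVert\cdot\rVert_{X'}$, and hence $\lVert\cdot\rVert_{X''}$, is again a Banach function norm by the first assertion applied to $\lVert\cdot\rVert_{X'}$, both $\lVert\cdot\rVert_X$ and $\lVert\cdot\rVert_{X''}$ enjoy the Fatou property \ref{P3}, so with the help of $\sigma$-finiteness it is enough to establish $\lVert f\rVert_X\le\lVert f\rVert_{X''}$ for $f\in\mathcal{M}_+$ that is bounded and supported on a set $E$ with $\mu(E)<\infty$ (for such $f$ one has $\lVert f\rVert_X<\infty$, and the case $\lVert f\rVert_X=0$ is trivial).

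Fix such an $f$ with $0<\lVert f\rVert_X<\infty$ and suppose, for contradiction, that $\lVert f\rVert_{X''}<\lambda<\lVert f\rVert_X$ (we may assume $\lVert f\rVert_{X''}<\infty$, otherwise there is nothing to prove). The heart of the argument is a Hahn--Banach separation carried out in the Banach space $L^1(E)$, whose dual is $L^\infty(E)$ because the measure on $E$ is finite: I would separate the point $f$ from the set $\Omega=\{h\in L^1(E)\colon h\ge 0,\ \lVert h\rVert_X\le\lambda\}$, which is convex, solid by \ref{P2}, closed in $L^1(E)$ by the ``Fatou lemma'' $\lVert\liminf_n h_n\rVert_X\le\liminf_n\lVert h_n\rVert_X$ (itself a consequence of \ref{P2} and \ref{P3}), and does not contain $f$ since $\lVert f\rVert_X>\lambda$. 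Separation produces $\psi\in L^\infty(E)$ and $\alpha\ge 0$ with $\int_E f\psi\,d\mu>\alpha\ge\int_E h\psi\,d\mu$ for every $h\in\Omega$; replacing $\psi$ by $\psi^+$ and using the solidity of $\Omega$ preserves both inequalities, so the separating functional may be taken to be a nonnegative function $g$, which I extend by zero to all of $\mathcal{R}$. A short homogeneity argument exploiting $\supp g\subseteq E$ then upgrades $\int_E hg\,d\mu\le\alpha$ (valid for $h\in\Omega$) to $\int_{\mathcal{R}}hg\,d\mu\le\alpha/\lambda$ for every $h\in\mathcal{M}_+$ with $\lVert h\rVert_X\le 1$, so that $\lVert g\rVert_{X'}\le\alpha/\lambda<\infty$; on the other hand $0<\alpha<\int_{\mathcal{R}}fg\,d\mu$, whence $\int_{\mathcal{R}}fg\,d\mu/\lVert g\rVert_{X'}>\lambda$ and therefore $\lVert f\rVert_{X''}>\lambda$, the desired contradiction. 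The passage back to arbitrary $f\in\mathcal{M}$ is then routine, via homogeneity and the invariance under $f\mapsto\lvert f\rvert$. I expect the construction of $g$ to be the main obstacle: separating $f$ from the $\lambda$-ball of $X$ directly inside $X$ would only deliver an element of the dual $X^*$, which in general is strictly larger than $X'$; it is precisely the reduction to a finite-measure support --- which makes the ambient dual concrete --- together with the a~posteriori bound $\lVert g\rVert_{X'}<\infty$ that pins the functional down as a nonnegative element of $X'$.

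The characterisation then follows formally. If $\lVert\cdot\rVert_X\approx\lVert\cdot\rVert_{X''}$, then $\lVert\cdot\rVert_X$ is equivalent to the Banach function norm $\lVert\cdot\rVert_{X''}$ (the first assertion, applied to $\lVert\cdot\rVert_X$ and then to $\lVert\cdot\rVert_{X'}$). Conversely, if $\lVert\cdot\rVert_X\approx\sigma$ for some Banach function norm $\sigma$, then, since the associate functional is a supremum of ratios, equivalence of two nonnegative $\lvert\cdot\rvert$-invariant functionals is inherited by their associates; hence $\lVert\cdot\rVert_{X'}\approx\sigma'$ and, applying this once more, $\lVert\cdot\rVert_{X''}\approx\sigma''$. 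But $\sigma''=\sigma$ by the core identity just discussed, so $\lVert\cdot\rVert_{X''}\approx\sigma\approx\lVert\cdot\rVert_X$, as desired.
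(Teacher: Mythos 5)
Your proposal is correct, but note that the paper does not prove Theorem~\ref{TFA} at all: it cites \cite{GogatishviliSoudsky14} for the general statement and \cite{Luxemburg55} and \cite[Chapter~1, Theorem~2.7]{BennettSharpley88} for the Banach-function-norm case. What you have written is essentially a faithful reconstruction of that classical Lorentz--Luxemburg argument --- verification of \ref{P1}--\ref{P5} for $\lVert\cdot\rVert_{X'}$ directly from Definition~\ref{DAS}, reduction via the Fatou property to bounded functions of finite-measure support, and Hahn--Banach separation in $L^1(E)$ with $L^1(E)^*=L^\infty(E)$ --- together with the formal deduction of the equivalence criterion from $\sigma''=\sigma$ and Proposition~\ref{PEASG}. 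The only point worth tightening is your assertion that $\alpha>0$: a priori the separation only gives $\alpha\geq 0$ (from $0\in\Omega$), but strict separation of the point $f$ from the closed convex set $\Omega$ yields $\sup_{h\in\Omega}\int_E h\psi\,d\mu\leq\alpha_1<\alpha_2<\int_E f\psi\,d\mu$ with $\alpha_1\geq 0$, so taking $\alpha=\alpha_2$ secures $\alpha>0$ (alternatively, $\alpha=0$ forces $\lVert g\rVert_{X'}=0$ while $\int fg\,d\mu>0$, which contradicts $\lVert f\rVert_{X''}<\infty$ via the convention $a/0=\infty$).
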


As a special case, we get that the associate functional of any quasi-Banach function space that also satisfies the axiom \ref{P5} is a Banach function norm. This has been observed earlier in \cite[Remark~2.3.(iii)]{EdmundsKerman00}.

Let us point out that even in the case when $\lVert \cdot \rVert_X$, satisfying the assumptions of Theorem~\ref{TFA}, is not equivalent to any Banach function norm we still have the embedding of the space into its second associate space, as formalised in the following statement. The proof is an easy exercise.

\begin{proposition} \label{PESSAS}
	Let $\lVert \cdot \rVert_X$ satisfy the assumptions of Theorem~\ref{TFA}. Then it holds for all $f \in \mathcal{M}$ that
	\begin{equation*}
		\lVert f \rVert_{X''} \leq  \lVert f \rVert_X,
	\end{equation*}
	where $\lVert \cdot \rVert_{X''}$ denotes the associate functional of $\lVert \cdot \rVert_{X'}$. Consequently, the respective spaces satisfy $X \hookrightarrow X''$.
\end{proposition}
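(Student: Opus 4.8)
The plan is to read off the inequality $\lVert f \rVert_{X''} \leq \lVert f \rVert_X$ directly from the Hölder inequality for associate spaces (Theorem~\ref{THAS}), the consequence $X \hookrightarrow X''$ being then immediate; the only genuine care required is the bookkeeping of the degenerate cases encoded in the conventions of Definition~\ref{DAS}. If $\lVert f \rVert_X = \infty$ there is nothing to prove, so I would assume $f \in X$ from the outset.

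First I would invoke Theorem~\ref{TFA}: since $\lVert \cdot \rVert_X$ satisfies \ref{P4} and \ref{P5} and is invariant under taking moduli, its associate functional $\lVert \cdot \rVert_{X'}$ is a genuine Banach function norm; in particular it obeys \ref{P1b}, so the only $g \in \mathcal{M}$ with $\lVert g \rVert_{X'} = 0$ is $g = 0$ $\mu$-a.e. Next, fixing an arbitrary $g \in X'$ (so $\lVert g \rVert_{X'} < \infty$ by definition of $X'$), I would split into two cases. If $\lVert g \rVert_{X'} = 0$, then $g = 0$ $\mu$-a.e., hence $\int_{\mathcal{R}} \lvert f g \rvert \, d\mu = 0$ and the corresponding term in the supremum defining $\lVert f \rVert_{X''}$ is $\frac{0}{0} = 0 \leq \lVert f \rVert_X$. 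If $\lVert g \rVert_{X'} > 0$, then Theorem~\ref{THAS} applied to the pair $f, g$ gives $\int_{\mathcal{R}} \lvert f g \rvert \, d\mu \leq \lVert f \rVert_X \lVert g \rVert_{X'}$, an honest product of finite numbers since $f \in X$ and $g \in X'$; dividing by $\lVert g \rVert_{X'}$ yields $\frac{1}{\lVert g \rVert_{X'}} \int_{\mathcal{R}} \lvert f g \rvert \, d\mu \leq \lVert f \rVert_X$. Taking the supremum over all $g \in X'$ gives $\lVert f \rVert_{X''} \leq \lVert f \rVert_X$.

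For the embedding statement, the inequality just obtained shows that $f \in X$ forces $\lVert f \rVert_{X''} < \infty$, i.e.\ $X \subseteq X''$; and since $\lVert \cdot \rVert_{X''}$ is again a Banach function norm (Theorem~\ref{TFA}, now applied to $\lVert \cdot \rVert_{X'}$) while the inclusion map is bounded by the same inequality, in the cases of interest — when $\lVert \cdot \rVert_X$ is at least a quasi-Banach function norm — continuity is automatic (directly, or via Theorem~\ref{TEQBFS}), so $X \hookrightarrow X''$.

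I do not expect any real obstacle here: the argument is pure bookkeeping. The only point that deserves a moment's thought is why the potentially degenerate values $\lVert g \rVert_{X'} \in \{0, \infty\}$ do not corrupt the supremum defining $\lVert f \rVert_{X''}$, and this is handled precisely by the range of the supremum excluding $\lVert g \rVert_{X'} = \infty$ together with the appeal to Theorem~\ref{TFA} that $\lVert \cdot \rVert_{X'}$ is a bona fide Banach function norm, which disposes of $\lVert g \rVert_{X'} = 0$.
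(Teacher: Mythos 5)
Your proof is correct and is precisely the ``easy exercise'' the paper has in mind: for $f\in X$ and $g\in X'$ one bounds each term $\frac{1}{\lVert g\rVert_{X'}}\int_{\mathcal{R}}\lvert fg\rvert\,d\mu$ by $\lVert f\rVert_X$ via Theorem~\ref{THAS}, with the degenerate case $\lVert g\rVert_{X'}=0$ disposed of by Theorem~\ref{TFA} (property \ref{P1b} of the Banach function norm $\lVert\cdot\rVert_{X'}$), and then takes the supremum. The handling of the conventions from Definition~\ref{DAS} and the passage from the norm inequality to the continuous embedding are both accurate, so nothing needs to be added.
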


Next, we include two simple but useful observations. The first one describes how embeddings of spaces translate to embeddings of the corresponding associate spaces.  We formulate it in its full generality to showcase that it does not require any assumptions on the functionals, but we are of course mostly interested in the case when they are quasi-Banach function norms. The proof is an easy modification of \cite[Chapter~2, Proposition~2.10]{BennettSharpley88}.

\begin{proposition} \label{PEASG}
	Let $\lVert \cdot \rVert_X \colon  \mathcal{M} \to [0, \infty]$ and $\lVert \cdot \rVert_Y \colon  \mathcal{M} \to [0, \infty]$ be two non-negative functionals satisfying that there is a constant $C>0$ such that it holds for all $f \in \mathcal{M}$ that
	\begin{equation*}
		\lVert f \rVert_X \leq C \lVert f \rVert_Y.
	\end{equation*}
	Then the associate functionals $\lVert \cdot \rVert_{X'}$ and $\lVert \cdot \rVert_{Y'}$ satisfy, with the same constant $C$,
	\begin{equation*}
		\lVert f \rVert_{Y'} \leq C \lVert f \rVert_{X'}
	\end{equation*}
	for all $f \in \mathcal{M}$.
\end{proposition}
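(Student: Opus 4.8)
The plan is to unwind Definition~\ref{DAS} and exploit the elementary fact that a supremum can only increase when the set it is taken over is enlarged. First I would record the set inclusion that the hypothesis produces: since $\lVert f\rVert_X\le C\lVert f\rVert_Y$ holds for every $f\in\mathcal M$, finiteness of $\lVert f\rVert_Y$ forces finiteness of $\lVert f\rVert_X$, so $Y\subseteq X$.

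Next, fix $f\in\mathcal M$. For an arbitrary $g\in Y$ I would verify the comparison (pointwise in the test function $g$)
\[
\frac{1}{\lVert g\rVert_Y}\int_{\mathcal R}\lvert fg\rvert\,d\mu\;\le\;\frac{C}{\lVert g\rVert_X}\int_{\mathcal R}\lvert fg\rvert\,d\mu,
\]
read in the extended arithmetic fixed in Definition~\ref{DAS} (so $\tfrac00=0$ and $\tfrac a0=\infty$ for $a>0$). When $\lVert g\rVert_Y>0$ this is just the hypothesis $\lVert g\rVert_X\le C\lVert g\rVert_Y$ rearranged; when $\lVert g\rVert_Y=0$ the hypothesis forces $\lVert g\rVert_X=0$ as well, and a direct check shows the two sides coincide (both equal $0$ or both equal $\infty$ according to whether $\int_{\mathcal R}\lvert fg\rvert\,d\mu$ vanishes).

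Taking the supremum over $g\in Y$ and then passing to the larger index set $g\in X$ — legitimate by the inclusion from the first step, and monotone for suprema (with the convention $\sup\emptyset=-\infty$ rendering the edge case $Y=\emptyset$ trivial) — gives
\[
\lVert f\rVert_{Y'}=\sup_{g\in Y}\frac{1}{\lVert g\rVert_Y}\int_{\mathcal R}\lvert fg\rvert\,d\mu\;\le\;\sup_{g\in X}\frac{C}{\lVert g\rVert_X}\int_{\mathcal R}\lvert fg\rvert\,d\mu=C\lVert f\rVert_{X'},
\]
which is the claim. The whole argument is a direct transcription of the proof of \cite[Chapter~2, Proposition~2.10]{BennettSharpley88}; the only place demanding attention — and hence the \emph{main obstacle}, though a very mild one — is bookkeeping the degenerate situations left open by the generality of Definition~\ref{DAS}, namely $\lVert g\rVert_X$ or $\lVert g\rVert_Y$ vanishing on a nonzero $g$, or $X$ or $Y$ being empty. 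No analytic input is used beyond the given pointwise comparison of the two functionals.
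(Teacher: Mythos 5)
Your argument is correct and is exactly the route the paper intends: it gives no proof beyond citing \cite[Chapter~2, Proposition~2.10]{BennettSharpley88}, and your write-up — the inclusion $Y\subseteq X$, the termwise comparison of $\tfrac{1}{\lVert g\rVert_Y}$ with $\tfrac{C}{\lVert g\rVert_X}$ under the stated conventions, and monotonicity of the supremum under enlarging the index set — is precisely that easy modification, with the degenerate cases handled properly.
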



The second statement shows, that in the case when the underlying measure space is resonant, the associate functional of an r.i.~quasi-Banach function norm can be expressed in terms of non-increasing rearrangement. The proof is the same as in \cite[Chapter~2, Proposition~4.2]{BennettSharpley88}.

\begin{proposition} \label{PAS}
	Let $\lVert \cdot \rVert_X$ be an r.i.~quasi-Banach function norm over a resonant measure space. Then its associate functional $\lVert \cdot \rVert_{X'}$ satisfies
	\begin{equation*}
		\lVert f \rVert_{X'} = \sup_{\lVert g \rVert_X \leq 1} \int_0^{\infty} f^* g^* \: d\lambda.
	\end{equation*}
\end{proposition}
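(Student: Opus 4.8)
The plan is to establish the two inequalities separately: the bound ``$\leq$'' is a direct consequence of the Hardy--Littlewood inequality and requires neither rearrangement-invariance nor resonance, whereas ``$\geq$'' is precisely where the resonance hypothesis enters. As a preliminary reformulation I would first note that, by absolute homogeneity of the quasinorm (rescaling any test function with $0 < \lVert g \rVert_X < \infty$ to unit quasinorm, and observing that a test function with $\lVert g \rVert_X = 0$ vanishes $\mu$-a.e.\ and hence contributes nothing), the associate functional from Definition~\ref{DAS} can be written as
\[
	\lVert f \rVert_{X'} = \sup_{\substack{g \in \mathcal{M} \\ \lVert g \rVert_X \leq 1}} \int_{\mathcal{R}} \lvert fg \rvert \: d\mu ;
\]
here $\{g \in \mathcal{M} : \lVert g \rVert_X \leq 1\}$ coincides with $\{g \in X : \lVert g \rVert_X \leq 1\}$, and the only point needing attention is the bookkeeping with the conventions $\tfrac{0}{0} = 0$, $\tfrac{a}{0} = \infty$ of Definition~\ref{DAS}.

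For the upper bound I would fix $g \in \mathcal{M}$ with $\lVert g \rVert_X \leq 1$ and apply the Hardy--Littlewood inequality (Theorem~\ref{THLI}), which gives $\int_{\mathcal{R}} \lvert fg \rvert \: d\mu \leq \int_0^{\infty} f^* g^* \: d\lambda$; taking the supremum over all such $g$ in the displayed formula yields $\lVert f \rVert_{X'} \leq \sup_{\lVert g \rVert_X \leq 1} \int_0^{\infty} f^* g^* \: d\lambda$.

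For the lower bound I would again fix $g \in \mathcal{M}$ with $\lVert g \rVert_X \leq 1$ and invoke the resonance identity of Definition~\ref{DefResonant} applied to the pair $f, g$, namely $\int_0^{\infty} f^* g^* \: d\lambda = \sup\{ \int_{\mathcal{R}} \lvert f \tilde{g} \rvert \: d\mu : \tilde{g} \in \mathcal{M}, \ \tilde{g}^* = g^* \}$. Every such $\tilde{g}$ is equimeasurable with $g$, so rearrangement-invariance of $\lVert \cdot \rVert_X$ forces $\lVert \tilde{g} \rVert_X = \lVert g \rVert_X \leq 1$; hence $\tilde{g}$ is an admissible test function in the preliminary formula for $\lVert f \rVert_{X'}$, and $\int_{\mathcal{R}} \lvert f \tilde{g} \rvert \: d\mu \leq \lVert f \rVert_{X'}$. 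Taking first the supremum over $\tilde{g}$ and then over $g$ with $\lVert g \rVert_X \leq 1$ gives $\sup_{\lVert g \rVert_X \leq 1} \int_0^{\infty} f^* g^* \: d\lambda \leq \lVert f \rVert_{X'}$, and combining this with the upper bound proves the identity. I do not expect a genuine obstacle: this is the classical argument of \cite[Chapter~2, Proposition~4.2]{BennettSharpley88}, and the only two spots requiring a little care are the conventions used in the preliminary reformulation and the fact that it is exactly the combination of resonance and rearrangement-invariance that delivers the reverse inequality.
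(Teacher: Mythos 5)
Your proof is correct and follows exactly the argument the paper points to, namely the classical one of \cite[Chapter~2, Proposition~4.2]{BennettSharpley88}: Hardy--Littlewood for the upper bound, and resonance combined with rearrangement-invariance (so that every $\tilde g$ with $\tilde g^*=g^*$ stays in the unit ball) for the lower bound. The preliminary rescaling to the unit ball is handled correctly, since property \ref{Q1b} guarantees that $\lVert g\rVert_X=0$ forces $g=0$ $\mu$-a.e., so the conventions in Definition~\ref{DAS} cause no trouble.
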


An obvious consequence of Proposition~\ref{PAS} is that an associate space of an r.i.~quasi-Banach function space (over a resonant measure space) is also rearrangement-invariant.

We will also need the following characterisation that follows immediately by combining \cite[Theorem~3.11]{NekvindaPesa24} with the Hardy--Littlewood inequality (Theorem~\ref{THLI}) and Proposition~\ref{PAS}.

\begin{proposition} \label{PropLandauRes}
	Let $\lVert \cdot \rVert_X$ be an r.i.~quasi-Banach function norm over a resonant measure space, let $X$ be the corresponding r.i.~quasi-Banach function space and $X'$ the respective associate space. Then $f \in \mathcal{M}$ belongs to $X'$ if and only if it satisfies
	\begin{equation*}
		\int_0^{\infty} f^* g^* \: d\lambda < \infty
	\end{equation*}
	for all $g \in X$.
\end{proposition}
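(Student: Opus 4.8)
The plan is to reduce the claimed equivalence to the cited characterisation of membership in an associate space of a quasi-Banach function space by a Landau-type resonance argument, combined with Proposition~\ref{PAS} and the Hardy--Littlewood inequality. The reference \cite[Theorem~3.11]{NekvindaPesa24} should give, for a general quasi-Banach function norm $\lVert \cdot \rVert_X$, that $f \in X'$ if and only if $\int_{\mathcal{R}} \lvert f g \rvert \, d\mu < \infty$ for every $g \in X$ (the ``Landau resonance'' principle: finiteness of all the relevant pairings already forces $\lVert f \rVert_{X'} = \sup_g \lVert g \rVert_X^{-1} \int_{\mathcal{R}} \lvert f g \rvert \, d\mu < \infty$, not merely that each individual integral is finite). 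So the task is only to replace the pairing $\int_{\mathcal{R}} \lvert f g \rvert \, d\mu$ by $\int_0^{\infty} f^* g^* \, d\lambda$ in both directions, using that the measure space is resonant and that $\lVert \cdot \rVert_X$ is rearrangement-invariant.

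First I would prove the forward implication: suppose $f \in X'$. Fix any $g \in X$. By the Hardy--Littlewood inequality (Theorem~\ref{THLI}) and Proposition~\ref{PAS} (or directly the definition of $\lVert \cdot \rVert_{X'}$ together with Proposition~\ref{PAS}), we get
\begin{equation*}
    \int_0^{\infty} f^* g^* \, d\lambda \leq \lVert f \rVert_{X'} \lVert g \rVert_X < \infty,
\end{equation*}
since $\lVert g \rVert_X < \infty$; this is immediate. For the converse, suppose $\int_0^{\infty} f^* g^* \, d\lambda < \infty$ for every $g \in X$. The point is that resonance of $(\mathcal{R}, \mu)$ lets us realise this integral as an actual pairing on $\mathcal{R}$. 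Given $g \in X$, by Definition~\ref{DefResonant} (equality in \eqref{HLI_sup}) there is a sequence $\tilde{g}_k \in \mathcal{M}$ with $\tilde{g}_k^* = g^*$ — hence $\lVert \tilde{g}_k \rVert_X = \lVert g \rVert_X$ since $\lVert \cdot \rVert_X$ is r.i.~— and $\int_{\mathcal{R}} \lvert f \tilde{g}_k \rvert \, d\mu \to \int_0^{\infty} f^* g^* \, d\lambda < \infty$; in particular the supremum on the left-hand side of \eqref{DefResonant:E1} equals $\int_0^\infty f^* g^* \, d\lambda$, which is finite. Thus $\int_{\mathcal{R}} \lvert f h \rvert \, d\mu < \infty$ for every $h \in \mathcal{M}$ that is equimeasurable with some $g \in X$; but since $X$ is rearrangement-invariant, $h$ equimeasurable with $g \in X$ already means $h \in X$, so in fact $\int_{\mathcal{R}} \lvert f h \rvert \, d\mu < \infty$ for \emph{every} $h \in X$. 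By \cite[Theorem~3.11]{NekvindaPesa24} (applied with the Hardy--Littlewood inequality and Proposition~\ref{PAS} as indicated in the statement), this yields $f \in X'$.

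I do not expect a genuine obstacle here; the only point requiring mild care is making sure that the cited characterisation from \cite{NekvindaPesa24} is being invoked in the correct form — namely that finiteness of $\int_{\mathcal{R}} \lvert f g \rvert \, d\mu$ for all $g \in X$ implies $\lVert f \rVert_{X'} < \infty$ (which is exactly the content the proposition attributes to combining that theorem with Theorem~\ref{THLI} and Proposition~\ref{PAS}), and that the equimeasurable-realisation step in the converse uses resonance in the form of Definition~\ref{DefResonant} rather than merely the Hardy--Littlewood inequality. Both of these are already spelled out among the prerequisites, so the proof is essentially a bookkeeping argument chaining Theorem~\ref{THLI}, Proposition~\ref{PAS}, Definition~\ref{DefResonant}, rearrangement-invariance, and \cite[Theorem~3.11]{NekvindaPesa24}.
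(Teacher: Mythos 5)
Your proposal is correct and follows exactly the route the paper intends, which states only that the proposition ``follows immediately by combining \cite[Theorem~3.11]{NekvindaPesa24} with the Hardy--Littlewood inequality and Proposition~\ref{PAS}''; your forward direction via Proposition~\ref{PAS} and your converse via the Landau-type characterisation are precisely that combination. The only remark worth making is that your converse is slightly over-engineered: the resonance/equimeasurable-realisation step is superfluous there, since the Hardy--Littlewood inequality alone already gives $\int_{\mathcal{R}} \lvert f g \rvert \, d\mu \leq \int_0^{\infty} f^* g^* \, d\lambda < \infty$ for each $g \in X$, which is all that \cite[Theorem~3.11]{NekvindaPesa24} requires (resonance is genuinely needed only in the forward direction, through Proposition~\ref{PAS}).
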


Another fact that is important for our purposes is the following result from \cite[Proposition~3.4]{PesaRepreACqN} that shows that the concept of associate spaces interacts reasonably with the canonical representation functional.

\begin{proposition} \label{PropRepreAS}
	Assume that $(\mathcal{R},\mu)$ is resonant, let $\lVert \cdot \rVert_X$ be an r.i.~quasi-Banach function norm on $\mathcal{M}(\mathcal{R},\mu)$, and let $X$ be the corresponding r.i.~quasi-Banach function space. Let further $\lVert \cdot \rVert_{\overline{X}}$ be the canonical representation quasinorm and let $\overline{X}$ be the corresponding r.i.~quasi-Banach function space. Finally, denote by $\lVert \cdot \rVert_{X'}$ and $\lVert \cdot \rVert_{\left( \overline{X} \right)'}$ the respective associate norms (on $\mathcal{M}(\mathcal{R},\mu)$ and $\mathcal{M}([0,\infty), \lambda)$, respectively). Then it holds for every $f \in \mathcal{M}(\mathcal{R},\mu)$ that
	\begin{equation*} \label{PropRepreAS:E1}
		\lVert f \rVert_{X'} = \lVert f^* \rVert_{\left( \overline{X} \right)'}.
	\end{equation*}
\end{proposition}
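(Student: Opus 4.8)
The plan is to express both sides as suprema of Hölder pairings against $f^*$ and then compare the admissible test profiles. First I would apply Proposition~\ref{PAS} over the resonant space $(\mathcal{R},\mu)$ to get
\[
\lVert f \rVert_{X'} = \sup_{\lVert g \rVert_X \leq 1} \int_0^{\infty} f^* g^* \: d\lambda,
\]
and, since $([0,\infty),\lambda)$ is non-atomic and hence resonant by Theorem~\ref{TheoremCharResonance}, the same proposition over $\overline{X}$ together with the identity $(f^*)^* = f^*$ to get
\[
\lVert f^* \rVert_{(\overline{X})'} = \sup_{\lVert h \rVert_{\overline{X}} \leq 1} \int_0^{\infty} f^* h^* \: d\lambda .
\]
Because the integral on the right depends on $h$ only through $h^*$ and $\lVert h \rVert_{\overline{X}} = \lVert h^* \rVert_{\overline{X}}$, that supremum may be restricted to non-increasing $h$.

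The inequality $\lVert f \rVert_{X'} \leq \lVert f^* \rVert_{(\overline{X})'}$ is then immediate: for every $g \in X$ with $\lVert g \rVert_X \leq 1$ the representation theorem (Theorem~\ref{TheoremRepresentation}) gives $\lVert g^* \rVert_{\overline{X}} = \lVert g \rVert_X \leq 1$, so $g^*$ is an admissible test function on the right-hand side and $\int_0^{\infty} f^* g^* \: d\lambda = \int_0^{\infty} f^* (g^*)^* \: d\lambda$ is already accounted for there; taking the supremum over $g$ finishes this direction.

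For the reverse inequality I would fix a non-increasing $\psi$ on $[0,\infty)$ with $\lVert \psi \rVert_{\overline{X}} \leq 1$ and produce $g \in X$ with $\lVert g \rVert_X \leq 1$ and $\int_0^{\infty} f^* g^* \: d\lambda \geq \int_0^{\infty} f^* \psi \: d\lambda$; the supremum over such $\psi$ then yields $\lVert f \rVert_{X'} \geq \lVert f^* \rVert_{(\overline{X})'}$. By Theorem~\ref{TheoremCharResonance} the space $(\mathcal{R},\mu)$ is either non-atomic or completely atomic with all atoms of a common measure $a \in (0,\infty)$, and I would split accordingly. If $(\mathcal{R},\mu)$ is non-atomic with $\mu(\mathcal{R}) = \infty$, then there is $g \in \mathcal{M}(\mathcal{R},\mu)$ with $g^* = \psi$, and $\lVert g \rVert_X = \lVert \psi \rVert_{\overline{X}} \leq 1$ by the representation theorem. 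If $(\mathcal{R},\mu)$ is non-atomic with $\mu(\mathcal{R}) = m < \infty$, then $f^*$ vanishes on $[m,\infty)$, so replacing $\psi$ by $\psi \chi_{[0,m)}$ leaves the pairing with $f^*$ unchanged, does not increase the $\overline{X}$-norm (lattice property \ref{P2}), and gives a profile realised by some $g \in \mathcal{M}(\mathcal{R},\mu)$. If $(\mathcal{R},\mu)$ is completely atomic with atom measure $a$, then $f^*$ is constant on each interval $[ka,(k+1)a)$; replacing $\psi$ by the step function $P_a\psi$ equal to $\frac{1}{a}\int_{ka}^{(k+1)a} \psi \: d\lambda$ on $[ka,(k+1)a)$ again leaves the pairing with $f^*$ unchanged and yields a non-increasing step function (averages of a non-increasing function over consecutive intervals of equal length are non-increasing), hence a profile realised by some $g \in \mathcal{M}(\mathcal{R},\mu)$ — using, when there are only finitely many atoms, that $f^*$ then has bounded support.

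The main obstacle is the completely atomic case, where one still must verify $\lVert P_a\psi \rVert_{\overline{X}} \leq 1$. One has $P_a\psi \prec \psi$, so this would follow from the Hardy--Littlewood--P\'{o}lya principle for $\lVert \cdot \rVert_{\overline{X}}$ (equivalently, by Proposition~\ref{PropRepreHLPP}, for $\lVert \cdot \rVert_X$), but that principle is not available for a general r.i.~quasi-Banach function norm. Instead I would invoke the explicit construction of the canonical representation from \cite[Section~3]{PesaRepreACqN}: for a completely atomic space its local component is $L^1$, which makes $\lVert \psi \rVert_{\overline{X}}$ depend on the non-increasing $\psi$ only through its averages over the atom-intervals, i.e.~through $P_a\psi$, whence $\lVert P_a\psi \rVert_{\overline{X}} = \lVert \psi \rVert_{\overline{X}}$. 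This is the single step where the abstract properties of the canonical representation recorded earlier (Proposition~\ref{PropRepreP5}, Proposition~\ref{PropRepreHLPP}, Theorem~\ref{ThmRepreACqN}) do not suffice and the specific construction is genuinely used, which is also why the statement is attributed to \cite[Proposition~3.4]{PesaRepreACqN}.
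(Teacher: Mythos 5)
Your argument is essentially correct, but note that the paper itself does not prove this statement at all: it is imported verbatim from \cite[Proposition~3.4]{PesaRepreACqN}, so there is no in-paper proof to compare against. Your reduction of both sides to suprema of $\int_0^\infty f^*\,\psi\,d\lambda$ over admissible non-increasing profiles via Proposition~\ref{PAS}, the easy inequality via $\lVert g^*\rVert_{\overline{X}}=\lVert g\rVert_X$ and $(g^*)^*=g^*$, and the case split according to Theorem~\ref{TheoremCharResonance} for the converse are all sound; the truncation to $[0,\mu(\mathcal{R}))$ in the finite non-atomic case and the averaging operator $P_a$ in the atomic case are the right devices, and your observations that $P_a\psi$ is non-increasing and that the pairing with $f^*$ is preserved are correct.

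You have also correctly located the one step that cannot be closed from the material in this paper: the bound $\lVert P_a\psi\rVert_{\overline{X}}\leq\lVert\psi\rVert_{\overline{X}}$ in the atomic case, which for a quasi-norm does not follow from the Hardy--Littlewood--P\'{o}lya principle and must come from the explicit construction of the canonical representation. Your claim about that construction is consistent with what the paper does reveal — the formula \eqref{ThmRepre(w')*:E1} and the remark following Theorem~\ref{ThmRepre(w')*} confirm that, for infinitely many atoms, the representation quasinorm of a non-increasing function is computed from its averages over the intervals $[ka,(k+1)a)$, so that $\lVert P_a\psi\rVert_{\overline{X}}=\lVert\psi\rVert_{\overline{X}}$ there. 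One caution for the finitely-atomic (hence finite-measure) case: the canonical representation of \cite{PesaRepreACqN} is modified near infinity, so the "depends only on the averages" identity need not hold for the full function $\psi$ on $[0,\infty)$; you should first truncate $\psi$ to $[0,Na)$ (which does not increase the norm by \ref{P2} and does not change the pairing with $f^*$) and only then average, verifying the norm inequality for the truncated function from the construction. With that ordering made explicit, the argument goes through; as written, the finite-atomic subcase is slightly under-justified but not wrong in spirit.
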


Note that this does not imply anything about the functional $\lVert \cdot \rVert_{\left( \overline{X} \right)'}$ being the canonical representation of $\lVert \cdot \rVert_{X'}$. Indeed, it is generally a different representation functional; see the commentary after \cite[Proposition~3.4]{PesaRepreACqN} for details.

Let us now consider the relationship between the associate space $X'$ and the classical topological dual $X^*$ (of a given quasi-Banach function space $X$ equipped with the quasi-Banach function norm $\lVert \cdot \rVert_X$), which will play an important role in our work. It is quite clear that $X' \hookrightarrow X^*$, in the sense that each $\varphi \in X'$ naturally induces a continuous linear functional on $X$ via the formula
\begin{align} \label{EqFunctionToFuntional}
	f &\to \int_{\mathcal{R}} \varphi f \: d\mu &f \in X,
\end{align}
with the norm of this functional in $X^*$ being equal to $\lVert \varphi \rVert_{X'}$ and with distinct functions in $X'$ inducing distinct functionals in $X^*$ (since $X$ contains the simple functions).

The converse embedding does not hold in general and its validity is tightly tied to the absolute continuity of the quasinorm. We present this relationship in the next theorem for the wider context of quasi-Banach function spaces, and note that it is more complicated than in the classical Banach function space setting. The result is not exactly classical, but follows by a straightforward modification of known methods. We will discuss this in more detail after the statement of the theorem.

\begin{theorem} \label{ThmACNDual}
	Let $\lVert \cdot \rVert_X$ be a quasi-Banach function norm and let $X$ be the corresponding quasi-Banach function space. Denote by $X'$ and $X^*$, respectively, the corresponding associate space and the topological dual space of $X$. When $X$ has absolutely continuous quasinorm, then $X' = X^*$, i.e.~every linear functional $\alpha$ on $X$ that is continuous (i.e.~bounded) with respect to the quasinorm $\lVert \cdot \rVert_X$ can be uniquely represented by some $\varphi \in X'$ via the formula
	\begin{align*}
		\alpha(f) &= \int_{\mathcal{R}} \varphi f \: d\mu &f \in X
	\end{align*}
	and we have $\lVert \alpha \rVert_{X^*} = \lVert \varphi \rVert_{X'}$.
	
	Furthermore, when $X$ is a Banach function space, then the converse statement also holds, i.e.~$X' = X^*$ (in the same sense as above) implies that $X$ has absolutely continuous norm.
	
	Finally, consider the set $X_a$ equipped with the appropriate restriction of the quasinorm $\lVert \cdot \rVert_X$, which is itself a quasi-Banach space. When $X_a$ contains the simple functions, then we have $X_a^* = X'$ in the same sense as above.
\end{theorem}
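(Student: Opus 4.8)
The plan is to establish the three assertions of Theorem~\ref{ThmACNDual} in turn; the first and third rest on the same Radon--Nikodym construction, while the second (the converse for Banach function spaces) uses a weak-$*$ compactness argument. For the \emph{first assertion}, fix $\alpha \in X^*$ and use $\sigma$-finiteness to write $\mathcal R = \bigcup_n \mathcal R_n$ with $\mathcal R_n$ increasing and $\mu(\mathcal R_n) < \infty$; by axiom~\ref{P4} one has $\chi_E \in X$ whenever $\mu(E) < \infty$. On such sets $\nu(E) = \alpha(\chi_E)$ is finitely additive, and for a finite measurable partition $\{E_k\}$ of $\mathcal R_n$, choosing unimodular constants $\theta_k$ with $\theta_k \nu(E_k) = |\nu(E_k)|$ gives $\sum_k |\nu(E_k)| = \alpha\big(\sum_k \theta_k \chi_{E_k}\big) \le \|\alpha\|_{X^*}\lVert \chi_{\mathcal R_n}\rVert_X$, so $\nu$ has finite total variation on each $\mathcal R_n$; it is plainly $\mu$-continuous, and it is countably additive because $E_k \downarrow \emptyset$ forces $\chi_{E_k} \to 0$ $\mu$-a.e., hence $\lVert\chi_{E_k}\rVert_X \to 0$ by absolute continuity of the quasinorm, hence $\nu(E_k)\to 0$. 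The Radon--Nikodym theorem then produces (consistently over the $\mathcal R_n$) a function $\varphi \in L^1_{\mathrm{loc}}$ with $\alpha(\chi_E) = \int_E \varphi\,d\mu$ for $\mu(E) < \infty$, and by linearity $\alpha(s) = \int_{\mathcal R} \varphi s\,d\mu$ for every simple $s$. The key estimate $\lVert\varphi\rVert_{X'} \le \|\alpha\|_{X^*}$ is obtained as follows: for a simple $g = \sum_j c_j \chi_{F_j}$ with disjoint $F_j$, refining the $F_j$ and inserting unimodular constants exactly as in the total-variation bound above produces, for every $\varepsilon > 0$, a simple $h$ with $|h| = |g|$ and $\int_{\mathcal R} |\varphi g|\,d\mu \le \alpha(h) + \varepsilon \le \|\alpha\|_{X^*}\lVert g\rVert_X + \varepsilon$; letting $\varepsilon \to 0$ and then using that every $g \in X$ is an increasing limit of simple functions dominated by $|g|$ (monotone convergence), one gets $\lVert\varphi\rVert_{X'} = \sup\{\int_{\mathcal R}|\varphi g|\,d\mu / \lVert g\rVert_X : g \text{ simple}\} \le \|\alpha\|_{X^*}$. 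In particular $\varphi \in X'$, so Hölder's inequality (Theorem~\ref{THAS}) gives $\varphi f \in L^1$ and $|\int_{\mathcal R}\varphi(s_n - f)\,d\mu| \le \lVert\varphi\rVert_{X'}\lVert s_n - f\rVert_X$ for $f \in X$; since absolute continuity of the quasinorm makes the simple functions dense in $X$ (Proposition~\ref{PropXaAlternative}), the identity $\alpha(f) = \int_{\mathcal R} \varphi f\,d\mu$ passes to all $f \in X$. Uniqueness of $\varphi$ is immediate from $\sigma$-finiteness, and $\|\alpha\|_{X^*} = \lVert\varphi\rVert_{X'}$ follows by combining the estimate just proved with its converse, which is Hölder's inequality once more.

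For the \emph{third assertion}, the very same construction proves $X_a^* = X'$ provided $X_a$ contains the simple functions. Indeed $X_a$ is then a closed subspace of $X$ (Proposition~\ref{PropXaOrdId}) containing every $\chi_E$ with $\mu(E) < \infty$, so $\nu(E) = \alpha(\chi_E)$ is well defined for $\alpha \in X_a^*$ and is countably additive by the abstract dominated convergence theorem (Proposition~\ref{PropDomConv}, with dominating function $\chi_{E_1} \in X_a$); Radon--Nikodym and the phase argument yield $\varphi \in X'$ with $\lVert\varphi\rVert_{X'} \le \|\alpha\|_{X_a^*}$, and density of the simple functions in $X_a$ (Proposition~\ref{PropDomConv} again) extends $\alpha(f) = \int_{\mathcal R}\varphi f\,d\mu$ to all of $X_a$. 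For the reverse inclusion with matching norm, observe that for $\varphi \in X'$ and $g \in X_a$ the function $h = |g|\,\overline{\sgn \varphi}$ satisfies $|h| \le |g|$ and hence lies in $X_a$ because $X_a$ is an order ideal (Proposition~\ref{PropXaOrdId}), while $\int_{\mathcal R}\varphi h\,d\mu = \int_{\mathcal R}|\varphi g|\,d\mu$, so the operator norm of $f \mapsto \int_{\mathcal R}\varphi f\,d\mu$ on $X_a$ equals $\sup\{\int_{\mathcal R}|\varphi g|\,d\mu/\lVert g\rVert_X : g \in X_a\} = \lVert\varphi\rVert_{X'}$, the last equality being the monotone-convergence reduction to simple functions used above.

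For the \emph{second assertion} I would argue by contradiction. Suppose $X$ is a Banach function space with $X^* = X'$ that does not have absolutely continuous norm. Then there exist $0 \le f \in X$ and measurable $E_k$ with $\chi_{E_k} \to 0$ $\mu$-a.e.\ but $\lVert f\chi_{E_k}\rVert_X \not\to 0$; passing to a subsequence with $\lVert f\chi_{E_k}\rVert_X \ge \varepsilon$ and replacing $E_k$ by the tail $G_k = \bigcup_{j \ge k} E_j$, the sets $G_k$ decrease to a $\mu$-null set, so $f_k := f\chi_{G_k}$ satisfies $f_k \downarrow 0$ $\mu$-a.e.\ and $\lVert f_k\rVert_X \ge \varepsilon$. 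By Hahn--Banach choose $\varphi_k \in X' = X^*$ with $\lVert\varphi_k\rVert_{X'} \le 1$ and $\int_{\mathcal R}\varphi_k f_k\,d\mu = \lVert f_k\rVert_X \ge \varepsilon$, and replace $\varphi_k$ by $|\varphi_k|$ (the $X'$-norm is unchanged and $\int_{\mathcal R}|\varphi_k| f_k\,d\mu \ge \varepsilon$ since $f_k \ge 0$). By Banach--Alaoglu the bounded sequence $(|\varphi_k|)$ has a weak-$*$ cluster point $\varphi \in X'$; fixing $k_0$, there are arbitrarily large $k \ge k_0$ for which $\int_{\mathcal R}|\varphi_k| f_{k_0}\,d\mu$ is arbitrarily close to $\int_{\mathcal R}\varphi f_{k_0}\,d\mu$, and since $0 \le f_k \le f_{k_0}$ and $|\varphi_k| \ge 0$ we have $\int_{\mathcal R}|\varphi_k| f_{k_0}\,d\mu \ge \int_{\mathcal R}|\varphi_k| f_k\,d\mu \ge \varepsilon$; hence $\int_{\mathcal R}\varphi f_{k_0}\,d\mu \ge \varepsilon$ for every $k_0$. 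But $0 \le f_{k_0} \le f_1$ with $\varphi f_1 \in L^1$ (Hölder) and $f_{k_0} \to 0$ $\mu$-a.e., so dominated convergence forces $\int_{\mathcal R}\varphi f_{k_0}\,d\mu \to 0$, a contradiction.

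The main obstacle throughout is obtaining the \emph{sharp} bound $\lVert\varphi\rVert_{X'} \le \|\alpha\|_{X^*}$ (resp.\ $\le \|\alpha\|_{X_a^*}$) in the first and third assertions, rather than a version with a multiplicative constant: this is what forces the careful bookkeeping with phases, refinements of the partition of $\mathcal R_n$, the total variation of $\nu$, and the monotone-convergence passage from simple functions to general $g$. The remaining ingredients---Radon--Nikodym, extension by density, and the weak-$*$ compactness argument---are routine once the properties of $X_a$ (order ideal, density of simple functions, abstract dominated convergence) recorded earlier in the section are in hand.
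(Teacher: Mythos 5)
Your argument is correct. For the two sufficiency assertions ($X^{*}=X'$ under absolute continuity, and $X_a^{*}=X'$ when $X_a$ contains the simple functions) you follow exactly the route the paper itself takes: it defers to \cite[Chapter~1, Theorem~4.1]{BennettSharpley88} and to the parallel argument written out in Theorem~\ref{ThmACRDual}, namely the set function $E\mapsto\alpha(\chi_E)$ on the pieces of a $\sigma$-finite exhaustion, countable additivity from absolute continuity of the quasinorm, Radon--Nikodym, the phase/total-variation bookkeeping to get the sharp bound $\lVert\varphi\rVert_{X'}\le\lVert\alpha\rVert_{X^*}$ on simple functions, and monotone convergence plus density to finish. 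One small citation slip: Proposition~\ref{PropXaAlternative} is stated only for \emph{r.i.}~quasi-Banach function spaces, whereas Theorem~\ref{ThmACNDual} makes no rearrangement-invariance assumption; the density of simple functions you need follows instead directly from Proposition~\ref{PropDomConv} together with \ref{P4} (when $X$ has absolutely continuous quasinorm one has $X=X_a$, and every $f\in X_a$ is the $X$-limit of simple functions dominated by $\lvert f\rvert$). This does not affect the substance.

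Where you genuinely diverge from the cited classical proof is the necessity direction for Banach function spaces. The proof the paper points to constructs, when the norm is not absolutely continuous, a singular functional (via Hahn--Banach against a sublinear majorant built from $\limsup_n\lVert g\chi_{E_n}\rVert_X$) and uses the Lorentz--Luxemburg identity $X=X''$ to see it cannot be an integral functional. You instead argue by contradiction inside the hypothesis $X^{*}=X'$: norming functionals for the decreasing tails $f_k=f\chi_{G_k}$ exist by Hahn--Banach, are by hypothesis given by functions $\varphi_k\in X'$ with $\lVert\varphi_k\rVert_{X'}\le 1$, a weak-$*$ cluster point $\varphi$ of $(\lvert\varphi_k\rvert)$ inherits $\int_{\mathcal R}\varphi f_{k_0}\,d\mu\ge\varepsilon$ for every $k_0$, and dominated convergence (with majorant $\lvert\varphi\rvert f_1\in L^1$ by H\"older) gives the contradiction. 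I checked the details: the cluster-point step is used correctly (for each fixed $f_{k_0}$ one only needs arbitrarily large $k$ with $\int\lvert\varphi_k\rvert f_{k_0}\,d\mu$ close to $\int\varphi f_{k_0}\,d\mu$, and $\int\lvert\varphi_k\rvert f_{k_0}\,d\mu\ge\int\lvert\varphi_k\rvert f_k\,d\mu\ge\varepsilon$ for $k\ge k_0$), and the only place the Banach (rather than quasi-Banach) hypothesis enters is the existence of norming functionals. This version is arguably more self-contained, avoiding any explicit appeal to $X=X''$, at the price of invoking Banach--Alaoglu; both approaches fail for general quasi-Banach function norms for the same underlying reason, namely that Hahn--Banach no longer produces norming functionals.
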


The proof of sufficiency (for both the characterisations of $X^*$ and $X_a^*$) is basically identical to \cite[Chapter~1, Theorem~4.1]{BennettSharpley88} (see also \cite[Proposition~3.15]{LoristNieraeth23}, we further present a similar argument in Theorem~\ref{ThmACRDual}). The classical proof of necessity (again, \cite[Chapter~1, Theorem~4.1]{BennettSharpley88}) uses $X = X''$, i.e.~it works precisely for Banach function spaces (see Theorem~\ref{TFA}). The necessity fails for quasi-Banach function spaces in general (consider $L^{p,\infty}$, $p \in (0,1)$); as far as we know, no full characterisation is available in this wider context.

Furthermore, we note that the claim that $X_a$ equipped with the appropriate restriction of the quasinorm $\lVert \cdot \rVert_X$ is a quasi-Banach space follows from it being closed, see Proposition~\ref{PropXaOrdId}. We also stress that $X_a$ is in general not a quasi-Banach function space.

For Banach function spaces, it follows that reflexivity can be characterised via the absolute continuity of norm. However, this conclusion is not immediate; see \cite[Chapter~1, Corollary~4.4]{BennettSharpley88} for details.

\begin{corollary} \label{CorReflexBFS}
	A Banach function space $X$ is reflexive if and only if both itself and the associate space $X'$ have absolutely continuous norm.
\end{corollary}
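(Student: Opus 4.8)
The plan is to derive both implications from the duality description in Theorem~\ref{ThmACNDual} together with the bidual identity $X'' = X$ for Banach function spaces from Theorem~\ref{TFA}; the delicate point, and the reason the corollary is not immediate, is that $X^{**} = X$ as Banach spaces does not by itself give reflexivity — one must check that the canonical embedding realises this identification.

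\emph{Sufficiency.} Assume both $X$ and $X'$ have absolutely continuous norm. Since $\lVert\cdot\rVert_X$ is a Banach function norm it satisfies the hypotheses of Theorem~\ref{TFA}, so $X'$ is itself a Banach function space and $X'' = X$. Absolute continuity of the norm of $X$ gives, via Theorem~\ref{ThmACNDual}, $X^* = X'$ isometrically through the integral pairing; applying the same theorem to $X'$ (which also has absolutely continuous norm) gives $(X')^* = X'' = X$, again through the integral pairing. Composing, $X^{**} = (X^*)^* = (X')^* = X$. To finish one unwinds these identifications: $J(f)$ is the functional $\varphi \mapsto \int_{\mathcal{R}} f\varphi\,d\mu$ on $X' = X^*$, which under $(X')^* = X''$ corresponds to the unique $\psi \in X''$ with $\int f\varphi\,d\mu = \int \psi\varphi\,d\mu$ for all $\varphi \in X'$; testing against characteristic functions of sets of finite measure forces $\psi = f$. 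Hence the canonical embedding $J \colon X \to X^{**}$ is onto, so $X$ is reflexive.

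\emph{Necessity.} Assume $X$ is reflexive. I first show $X$ has absolutely continuous norm, by contradiction. If not, there are $f \in X$ and measurable sets $E_k$ with $\chi_{E_k} \to 0$ $\mu$-a.e.\ but $\lVert f\chi_{E_k}\rVert_X \not\to 0$; passing to a subsequence so that $\lVert f\chi_{E_k}\rVert_X \ge \delta > 0$ and then replacing $E_k$ by $\bigcup_{j \ge k} E_j$ (which still tends to $0$ a.e.\ because $\chi_{E_j} \to 0$ a.e.\ means a.e.\ point lies in $\limsup_j E_j$ on a null set, while the lattice property preserves the lower bound $\delta$), we may assume the $E_k$ are decreasing. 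The sequence $g_k := f\chi_{E_k}$ is bounded in $X$, so by reflexivity and the Eberlein--{\v S}mulian theorem a subsequence $g_{k_j}$ converges weakly to some $h \in X$. For every $\varphi \in X'$ the product $\varphi f$ lies in $L^1$ by Theorem~\ref{THAS}, so $\int_{\mathcal{R}} \varphi g_{k_j}\,d\mu = \int_{E_{k_j}} \varphi f\,d\mu \to 0$ by dominated convergence; since $X' \hra X^*$, this yields $\int_{\mathcal{R}} \varphi h\,d\mu = 0$ for all $\varphi \in X'$, and testing against $\chi_E$ with $\mu(E) < \infty$ gives $h = 0$. Thus $g_{k_j} \rightharpoonup 0$, so by Mazur's theorem some finite convex combinations $h_m = \sum_i \lambda^{(m)}_i g_{k_i}$ converge to $0$ in norm. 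But if $i_r$ is the largest index appearing in $h_m$, then on the innermost set $E_{k_{i_r}}$ every relevant characteristic function equals $1$, so $h_m = f$ there and $\lVert h_m\rVert_X \ge \lVert f\chi_{E_{k_{i_r}}}\rVert_X \ge \delta$, contradicting $\lVert h_m\rVert_X \to 0$. Hence $X$ has absolutely continuous norm, so $X^* = X'$ by Theorem~\ref{ThmACNDual}; as $X^*$ is reflexive whenever $X$ is, $X'$ is reflexive, and the argument just given — applied to the Banach function space $X'$ — shows that $X'$ has absolutely continuous norm as well.

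The main obstacle is the necessity direction: converting a failure of absolute continuity of the norm into a contradiction with weak sequential compactness. The crucial step is the reduction to a \emph{nested} family of supports $E_k$, since nesting is exactly what makes the weak limit of $f\chi_{E_k}$ vanish (it is annihilated by every $\varphi \in X'$) while simultaneously forcing each convex combination $h_m$ to coincide with $f$ on the smallest of the sets it involves, and hence to have norm bounded below by $\delta$.
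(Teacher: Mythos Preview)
Your proof is correct. The paper itself does not supply a proof of this corollary but simply refers the reader to \cite[Chapter~1, Corollary~4.4]{BennettSharpley88}; your argument is essentially the standard one found there, carried out in full detail---in particular, the nested-sets/Mazur-lemma contradiction you use for necessity is precisely the mechanism underlying the Bennett--Sharpley treatment of weak compactness in Banach function spaces.
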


\subsection{Classical locally convex topologies on quasi-Banach function spaces} \label{SecTopologies}

The default topology on a quasi-Banach function space is the one induced by the quasinorm that defines it. Hence, if we make a statement about convergence, continuity or boundedness without specifying topology, then such statement is always to be interpreted as referring to this topology.

\begin{definition}
    Let $\lVert \cdot \rVert_X$ be a quasi-Banach function norm and let $X$ be the corresponding quasi-Banach function space. Then the topology on $X$ induced by $\lVert \cdot \rVert_X$ will, when necessary, also be denoted by $\lVert \cdot \rVert_X$. Hence, the space $X$ equipped with this topology will be denoted $(X, \lVert \cdot \rVert_X)$.
\end{definition}

In addition to the default topology on a quasi-Banach function space, there are several additional natural locally convex topologies. When $\lVert \cdot \rVert_X$ is a quasi-Banach function norm satisfying \ref{P5} then the corresponding associate space $X'$ is an r.i.~Banach function space, specifically it contains $\chi_{E}$ for every $E \subseteq \mathcal{R}$ with $\mu(E) < \infty$. Since each function $\varphi \in X'$ naturally induces a continuous linear functional on $X$ via the formula \eqref{EqFunctionToFuntional}, it is clear that the topological dual $X^*$ is in this case also non-trivial and separates points. Hence, it makes good sense to consider the weak topology on $X$ and the weak$^*$-topology on $X^*$, as well as the weak topology on $X$ induced by $X'$.

\begin{definition}
	Let $\lVert \cdot \rVert_X$ be a quasi-Banach function norm satisfying \ref{P5} and let $X$ be the corresponding quasi-Banach function space. Denote by $X'$ and $X^*$, respectively, the corresponding associate space and the topological dual space of $X$.
	\begin{enumerate}
		\item The classical weak topology on $X$ (i.e.~the one induced by $X^*$) will be denoted $w$.
		\item The weak topology on $X^*$ induced by $X$ will be denoted $w^*$.
		\item The weak topology on $X$ induced by $X'$ will be denoted $w'$.
	\end{enumerate}
\end{definition}

Since the three topologies introduced above are abstract weak topologies, alternative notation is $w=\sigma(X, X^*)$, $w^* = \sigma(X^*, X)$, and $w' = \sigma(X, X')$. Additionally, we may also consider the respective topologies on the dual and associate spaces, e.g.~$(X^*, w) = (X^*, \sigma(X^*, X^{**}))$ or $(X', w') = (X', \sigma(X', X''))$. For these locally convex topologies and the respective topological duals we have $(X, w)^* = X^* (= (X, \lVert \cdot \rVert_X)^*)$, $(X^*, w^*)^* = X$, $(X, w')^* = X'$, see e.g.~\cite[Chapter~23]{MeiseVogt97}.

For the spaces $X$ that are also r.i., we introduce another locally convex topology in Section~\ref{SecRITopo} which, for infinite-dimensional $X$, in general lies strictly between $w'$ and $\lVert \cdot \rVert_X$. Under the mild additional assumption that $X$ has the \ACR property, it turns out that the topological dual space of $X$, when equipped with this new topology, equals its associate space $X'$ (see Theorem~\ref{ThmACRDual}).

We will also need to work with the space $\mathcal{M}_0$ equipped with the topology of convergence in measure on sets of finite measure.

\begin{definition}
    We shall denote by $(\mathcal{M}_0, \mu_{\textup{loc}})$ the space $\mathcal{M}_0$ of all (equivalence classes of) complex valued $\mu$-a.e.~finite functions equipped with the topology of convergence in measure on sets of finite measure.
\end{definition}

The following proposition is well known, see e.g.~\cite[Theorem~2.30]{Folland1999}.

\begin{proposition} \label{PropM0CompMetr}
    The space $(\mathcal{M}_0, \mu_{\textup{loc}})$ is a completely metrisable topological vector space.
\end{proposition}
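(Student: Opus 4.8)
The plan is to exhibit an explicit translation-invariant metric $d$ on $\mathcal{M}_0$ whose topology is exactly $\mu_{\textup{loc}}$, and then to check, in order, that (a) $d$ is a metric, (b) its topology coincides with convergence in measure on sets of finite measure, (c) addition and scalar multiplication are $d$-continuous, and (d) $(\mathcal{M}_0,d)$ is complete. The result is classical (cf.\ \cite[Theorem~2.30]{Folland1999}), so what follows is only a sketch. Using $\sigma$-finiteness, fix an increasing exhaustion $\mathcal{R}_k \uparrow \mathcal{R}$ with $\mu(\mathcal{R}_k) < \infty$, and for $h \in \mathcal{M}_0$ set $\rho_k(h) = \int_{\mathcal{R}_k} \min\{|h|,1\}\,d\mu \in [0,\mu(\mathcal{R}_k)]$ and
\[
d(f,g) = \sum_{k \geq 1} 2^{-k}\min\bigl\{1,\rho_k(f-g)\bigr\}, \qquad f,g \in \mathcal{M}_0 .
\]
That $d$ is a translation-invariant metric would follow from the pointwise bound $\min\{|a+b|,1\} \leq \min\{|a|,1\}+\min\{|b|,1\}$ (which gives $\rho_k(f-h) \leq \rho_k(f-g)+\rho_k(g-h)$ upon integrating over $\mathcal{R}_k$) together with monotonicity and subadditivity of $t\mapsto\min\{t,1\}$; definiteness uses that functions are identified $\mu$-a.e.\ and that $\mathcal{R} = \bigcup_k \mathcal{R}_k$.

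For the identification of the topology I would use the elementary two-sided estimate
\[
\min\{\varepsilon,1\}\,\mu\bigl(\{x \in \mathcal{R}_k : |h(x)| > \varepsilon\}\bigr) \;\leq\; \rho_k(h) \;\leq\; \varepsilon\,\mu(\mathcal{R}_k) + \mu\bigl(\{x \in \mathcal{R}_k : |h(x)| > \varepsilon\}\bigr),
\]
which shows that, for fixed $k$, smallness of $\rho_k(f-g)$ and smallness of $\mu(\{x\in\mathcal{R}_k: |f(x)-g(x)|>\varepsilon\})$ control one another. Since $d(f_n,f)\to 0$ is equivalent to $\rho_k(f_n-f)\to 0$ for every $k$, and since $\mu(E\setminus\mathcal{R}_k)\to 0$ for any $E$ with $\mu(E)<\infty$ — so that convergence in measure on all the $\mathcal{R}_k$ is the same as convergence in measure on every finite-measure set — one then checks that the $d$-balls around $f$ and the basic $\mu_{\textup{loc}}$-neighbourhoods $\{g : \mu(\{x\in E: |f(x)-g(x)|>\varepsilon\})<\delta\}$ are mutually absorbing, so that the two topologies agree. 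This matching of an arbitrary finite-measure test set against the countable exhausting family $\{\mathcal{R}_k\}$ is the one point that needs genuine care.

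For the vector space structure it suffices to verify that $f\mapsto d(f,0)$ satisfies the usual $F$-norm axioms: subadditivity is the triangle inequality; $d(\alpha f,0)\leq d(f,0)$ for $|\alpha|\leq 1$ because $\min\{|\alpha||h|,1\}\leq\min\{|h|,1\}$ pointwise; and $d(\alpha_n f,0)\to 0$ when $\alpha_n\to 0$, since $\rho_k(\alpha_n f)\to 0$ by dominated convergence on $\mathcal{R}_k$ (the integrands are dominated by $\chi_{\mathcal{R}_k}\in L^1$ and tend to $0$ a.e., using $f\in\mathcal{M}_0$). Combined with $\rho_k(\alpha f_n)\leq\max\{|\alpha|,1\}\,\rho_k(f_n)$, these axioms yield joint continuity of $(f,g)\mapsto f+g$ and $(\alpha,f)\mapsto\alpha f$, hence $(\mathcal{M}_0,\mu_{\textup{loc}})$ is a topological vector space.

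Finally, for completeness, given a $d$-Cauchy sequence $(f_n)$ I would pass to a subsequence with $d(f_{n_{j+1}},f_{n_j})<2^{-j}$. Then $\sum_j\rho_k(f_{n_{j+1}}-f_{n_j})<\infty$ for every $k$, so $\sum_j\min\{|f_{n_{j+1}}(x)-f_{n_j}(x)|,1\}<\infty$ for $\mu$-a.e.\ $x\in\mathcal{R}_k$; as $\mathcal{R}=\bigcup_k\mathcal{R}_k$, the telescoping series $\sum_j(f_{n_{j+1}}-f_{n_j})$ converges absolutely $\mu$-a.e.\ on $\mathcal{R}$, so $f_{n_j}$ converges $\mu$-a.e.\ to some $f\in\mathcal{M}_0$. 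Dominated convergence on each $\mathcal{R}_k$ gives $\rho_k(f_{n_j}-f)\to 0$, and feeding this together with the Cauchy property into $\rho_k(f_n-f)\leq\rho_k(f_n-f_{n_j})+\rho_k(f_{n_j}-f)$ yields $\rho_k(f_n-f)\to 0$ for every $k$, i.e.\ $d(f_n,f)\to 0$. Thus every $d$-Cauchy sequence converges, which for a metrisable space means completeness. There is no serious obstacle here — this is a textbook fact; the only genuinely delicate point, as noted, is the topology identification in the second step.
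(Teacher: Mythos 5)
Your proof is correct; the paper itself gives no argument for this proposition but simply cites \cite[Theorem~2.30]{Folland1999}, and your construction of the $F$-norm $d(f,g)=\sum_k 2^{-k}\min\{1,\rho_k(f-g)\}$ via a $\sigma$-finite exhaustion is exactly the standard argument behind that citation. The one point you rightly flag as delicate — reducing convergence in measure on an arbitrary finite-measure set $E$ to convergence on the exhausting sets $\mathcal{R}_k$ via $\mu(E\setminus\mathcal{R}_k)\to 0$ — is handled correctly, so nothing is missing.
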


It is important for our purposes that every quasi-Banach function space equipped with the default quasinormed topology is continuously embedded into $(\mathcal{M}_0, \mu_{\textup{loc}})$. This is a well known result for Banach function spaces (see e.g.~\cite[Chapter~1, Theorem~1.4]{BennettSharpley88}) which has recently been expanded to the wider class in \cite[Theorem~3.4]{NekvindaPesa24}.

\begin{theorem} \label{ThmEmbM0}
	Let $X$ be an quasi-Banach function space. Then $(X, \lVert \cdot \rVert_X) \hookrightarrow (\mathcal{M}_0, \mu_{\textup{loc}})$.
\end{theorem}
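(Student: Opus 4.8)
The plan is to verify the two ingredients hidden in the notation $(X,\lVert\cdot\rVert_X)\hookrightarrow(\mathcal{M}_0,\mu_{\textup{loc}})$: that $X\subseteq\mathcal{M}_0$ as sets, and that the (linear) inclusion map is continuous. Since the quasinorm balls form a neighbourhood base of $0$ in $(X,\lVert\cdot\rVert_X)$, and the sets $V_{E,\varepsilon,\delta}=\{g:\mu(\{x\in E:\lvert g(x)\rvert>\varepsilon\})<\delta\}$ with $\mu(E)<\infty$ and $\varepsilon,\delta>0$ form a neighbourhood base of $0$ in $(\mathcal{M}_0,\mu_{\textup{loc}})$, continuity amounts to showing: for every such $E,\varepsilon,\delta$ there is $\eta>0$ with $\lVert g\rVert_X<\eta\Rightarrow g\in V_{E,\varepsilon,\delta}$.

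First, $X\subseteq\mathcal{M}_0$: if $f\in X$ had $\mu(\{\lvert f\rvert=\infty\})>0$, then by $\sigma$-finiteness there would be $E\subseteq\{\lvert f\rvert=\infty\}$ with $0<\mu(E)<\infty$; from $n\chi_E\le\lvert f\rvert$ and \ref{P2}, \ref{Q1a} we get $n\lVert\chi_E\rVert_X\le\lVert f\rVert_X<\infty$ for all $n$, while $0<\lVert\chi_E\rVert_X<\infty$ by \ref{Q1b} and \ref{P4}; letting $n\to\infty$ is absurd. Now suppose the above continuity condition fails for some $E$ ($\mu(E)<\infty$) and $\varepsilon,\delta>0$: then there are $g_k\in X$ with $\lVert g_k\rVert_X\to0$ while the sets $F_k=\{x\in E:\lvert g_k(x)\rvert>\varepsilon\}$ satisfy $\mu(F_k)\ge\delta$. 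From $\varepsilon\chi_{F_k}\le\lvert g_k\rvert$ and \ref{P2}, \ref{Q1a} we obtain $\lVert\chi_{F_k}\rVert_X\to0$, so after passing to a subsequence we may assume $\lVert\chi_{F_k}\rVert_X\le(2C)^{-k}$, where $C\ge1$ is the modulus of concavity of $\lVert\cdot\rVert_X$. Put $G_m=\bigcup_{k\ge m}F_k\subseteq E$ and $G=\bigcap_m G_m=\limsup_k F_k$. Since $\bigcup_k F_k\subseteq E$ has finite measure, the reverse Fatou lemma for measures gives $\mu(G)\ge\limsup_k\mu(F_k)\ge\delta>0$, hence $\lVert\chi_G\rVert_X>0$ by \ref{Q1b}. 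On the other hand $\chi_{\bigcup_{k=m}^{m+N}F_k}\uparrow\chi_{G_m}$ as $N\to\infty$, so the Fatou property \ref{P3} together with the iterated quasi-triangle inequality $\lVert\sum_{i=1}^n a_i\rVert_X\le\sum_{i=1}^n C^{i}\lVert a_i\rVert_X$ yields
\[
\lVert\chi_{G_m}\rVert_X=\sup_{N}\Bigl\lVert\chi_{\bigcup_{k=m}^{m+N}F_k}\Bigr\rVert_X\le\sum_{k\ge m}C^{k-m+1}\lVert\chi_{F_k}\rVert_X\le\sum_{k\ge m}C^{k-m+1}(2C)^{-k}=(2C)^{1-m}.
\]
Since $\chi_G\le\chi_{G_m}$, \ref{P2} gives $0<\lVert\chi_G\rVert_X\le\lVert\chi_{G_m}\rVert_X\le(2C)^{1-m}\to0$, a contradiction; this proves continuity.

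The main obstacle is that, unlike for Banach function norms, a quasi-Banach function norm need not satisfy \ref{P5}, so one cannot simply embed the restriction $X|_E$ into $L^1(E)$ and conclude convergence in measure on $E$ from that; the replacement is the measure-theoretic/Fatou argument above, whose quantitative core is that the norms of the characteristic functions of the ``bad'' sets can be forced (after passing to a subsequence) to decay fast enough to beat the geometric growth of the powers of the modulus of concavity. A secondary point to handle with care is that \ref{P3} and \ref{P4} guarantee that all the norms occurring in the computation are finite, so the monotone-limit (Fatou) step is legitimate; and the reduction of continuity to the neighbourhood-base condition uses only that both spaces are topological vector spaces with the stated bases at the origin.
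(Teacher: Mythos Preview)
Your proof is correct. The paper does not actually supply its own proof of this theorem; it merely records the result and cites \cite[Theorem~3.4]{NekvindaPesa24} for the extension from Banach to quasi-Banach function spaces. Your argument---extracting a subsequence with $\lVert\chi_{F_k}\rVert_X\le(2C)^{-k}$, forming $G=\limsup_k F_k$, using reverse Fatou for measures to obtain $\mu(G)>0$, and then combining the Fatou property \ref{P3} with the iterated quasi-triangle inequality to force $\lVert\chi_{G_m}\rVert_X\to 0$---is exactly the natural replacement for the classical proof via \ref{P5} (which, as you correctly note, is unavailable in the quasi-Banach setting). This Borel--Cantelli-type geometric-series trick is the standard way to handle the lack of local $L^1$ control, and it is essentially the approach taken in the cited reference.

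One cosmetic remark: in the displayed estimate you could simplify slightly by observing that $C^{k-m+1}(2C)^{-k}=C^{1-m}2^{-k}$, so the sum is $C^{1-m}\sum_{k\ge m}2^{-k}=C^{1-m}2^{1-m}=(2C)^{1-m}$, making the final bound transparent. Otherwise the write-up is clean and complete.
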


Let us stress that this result fails in general when $X$ is equipped with the $w$ or $w'$ topologies, as can be seen by considering the Hilbert space $L^2$ over $([0,1], \lambda)$ for which there is an orthonormal basis consisting of functions $e_n$ such that $\lvert e_n \rvert = 1$ $\lambda$-a.e.~for every $n$.

Let us conclude this section with a useful result characterising $\lVert \cdot \rVert_X$-boundedness in Banach function spaces; the proof can be found in \cite[Chapter~1, Lemma~5.1]{BennettSharpley88}.

\begin{proposition} \label{PropEquivBoundedness}
	Let $\lVert \cdot \rVert_X$ be an r.i.~Banach function norm, let $X$ be the corresponding r.i.~Banach function space. Then a set $A \subseteq X$ is $\lVert \cdot \rVert_X$-bounded if and only if it is $w'$-bounded.
\end{proposition}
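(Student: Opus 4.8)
The plan is to prove the two implications separately: one is an immediate consequence of the Hölder inequality for associate spaces, while the other rests on the Lorentz--Luxemburg identity $X = X''$ together with the Uniform Boundedness Principle.

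\emph{Norm-boundedness implies $w'$-boundedness.} The topology $w' = \sigma(X, X')$ is, by definition, generated by the seminorms $f \mapsto \bigl\lvert\int_{\mathcal{R}} f\varphi\,d\mu\bigr\rvert$, $\varphi \in X'$, so a set $A \subseteq X$ is $w'$-bounded exactly when $\sup_{f\in A}\bigl\lvert\int_{\mathcal{R}} f\varphi\,d\mu\bigr\rvert < \infty$ for each $\varphi \in X'$. If $\sup_{f\in A}\lVert f\rVert_X = M < \infty$, then Theorem~\ref{THAS} yields $\bigl\lvert\int_{\mathcal{R}} f\varphi\,d\mu\bigr\rvert \le \int_{\mathcal{R}}\lvert f\varphi\rvert\,d\mu \le \lVert f\rVert_X\lVert\varphi\rVert_{X'} \le M\lVert\varphi\rVert_{X'}$ for every $f\in A$, which is exactly the required bound.

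\emph{$w'$-boundedness implies norm-boundedness.} Here I would first record that, since $\lVert\cdot\rVert_X$ is a Banach function norm, it satisfies \ref{P4} and \ref{P5}, so the final assertion of Theorem~\ref{TFA} gives $\lVert f\rVert_X = \lVert f\rVert_{X''} = \sup_{\lVert\varphi\rVert_{X'}\le 1}\int_{\mathcal{R}}\lvert f\varphi\rvert\,d\mu$ for every $f \in X$. The point is then to replace the quantity $\int\lvert f\varphi\rvert$, which this Köthe double-norm controls, by $\lvert\int f\varphi\rvert$, which the weak topology $w'$ controls. This is achieved by a sign trick: for $f \in X$ and $\varphi \in X'$ put $\psi = \lvert\varphi\rvert\,\sgn(\bar f)$; then $\lvert\psi\rvert \le \lvert\varphi\rvert$ $\mu$-a.e., so $\lVert\psi\rVert_{X'}\le\lVert\varphi\rVert_{X'}$ by \ref{P2}, while $f\psi = \lvert f\varphi\rvert \ge 0$, so that $\int_{\mathcal{R}}\lvert f\varphi\rvert\,d\mu = \bigl\lvert\int_{\mathcal{R}} f\psi\,d\mu\bigr\rvert$. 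Taking the supremum over $\lVert\varphi\rVert_{X'}\le 1$ (and using Theorem~\ref{THAS} for the opposite inequality) we obtain
\[
\lVert f\rVert_X = \sup_{\lVert\varphi\rVert_{X'}\le 1}\Bigl\lvert\int_{\mathcal{R}} f\varphi\,d\mu\Bigr\rvert ,
\]
i.e.\ $\lVert f\rVert_X$ is precisely the norm of $f$ when $f$ is regarded, via \eqref{EqFunctionToFuntional}, as a continuous linear functional on $X'$.

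Finally I would invoke the Uniform Boundedness Principle. The associate space $X'$ is a Banach function space by Theorem~\ref{TFA}, hence a complete normed space, so Banach--Steinhaus applies to it. Given a $w'$-bounded set $A \subseteq X$, the characterisation in the first part says precisely that the family $\{f : f\in A\} \subseteq (X')^*$ is pointwise bounded on $X'$; therefore $\sup_{f\in A}\lVert f\rVert_{(X')^*} < \infty$, and by the displayed identity this is $\sup_{f\in A}\lVert f\rVert_X < \infty$. The only genuinely delicate point is the sign trick: it is what legitimises the passage between the two ``sizes'' $\int\lvert f\varphi\rvert$ and $\lvert\int f\varphi\rvert$ and thereby links $w'$-boundedness to boundedness in $(X')^*$; everything else is bookkeeping of classical facts (Lorentz--Luxemburg, completeness of Banach function spaces, Banach--Steinhaus). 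I note in passing that rearrangement-invariance plays no role in this argument, which is valid for arbitrary Banach function spaces.
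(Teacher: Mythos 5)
Your proof is correct and is essentially the argument the paper refers to (Bennett--Sharpley, Chapter~1, Lemma~5.1): H\"older's inequality for one direction, and for the converse the identification $\lVert f\rVert_X=\lVert f\rVert_{X''}=\sup_{\lVert\varphi\rVert_{X'}\le1}\lvert\int_{\mathcal R}f\varphi\,d\mu\rvert$ via the Lorentz--Luxemburg theorem and the sign trick, followed by Banach--Steinhaus on the Banach space $X'$. Your closing observation that rearrangement-invariance is not needed is also accurate; the cited lemma is stated for general Banach function spaces.
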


\subsection{Linear operators on quasi-Banach function spaces}

We will work exclusively with linear operators whose domain and range reside in the same space.

\begin{definition}
    Let $X$ be a quasi-Banach function space. We say that a linear operator $T$ defined on a subset of $\mathcal{M}$ and with values in $\mathcal{M}$ is an \emph{operator on $X$} if it is well-defined on $X$ and we have for every $f \in X$ that $Tf \in X$.
\end{definition}

When we consider several topologies on a quasi-Banach function space $X$ it will always be clear from the context to which of the topologies we refer when we consider properties like continuity, power-boundedness, or mean ergodicity of an operator $T$ on $X$ (recall that power-boundedness means that the set of the iterates $\{ T^n; \; n \in \mathbb{N} \}$ is equicontinuous).

We will work extensively with the so-called associate operators. It is convenient for our purposes to work with a somewhat more restricted approach than is usuall; we will discuss the differences and our justifications for this approach in Remark~\ref{RemAssociateOperatorOnAssociateSpace} at the end of this section.

\begin{definition} \label{DefAssocOperator}
	Let $\lVert \cdot \rVert_X$ be a quasi-Banach function norm satisfying \ref{P5}, $X$ be the corresponding quasi-Banach function space, and let $X'$ be the corresponding associate space. Let $T$ be an operator defined on $X$ and $T'$ be an operator defined on $X'$ (both with values in $\mathcal{M}$). We say that $T$ and $T'$ are mutually associate if it holds for every $f \in X$ and every $\varphi \in X'$ that
	\begin{equation*}
		\int_{\mathcal{R}} T(f) \varphi \: d\mu = \int_{\mathcal{R}} f T'(\varphi) \: d\mu.
	\end{equation*}
\end{definition}

\begin{remark}
    Let $T$ be an operator on a quasi-Banach function space $X$. Let $X^+$ be the algebraic dual space of $X$ and $T^+$ the algebraic transpose of $T$. If $T^+(X')\subset X'$, clearly $T$ and $T':=T^+|_{X'}$ are mutually associate and we call $T'$ the associate operator of $T$.
\end{remark}

The following proposition is a special case of \cite[Lemma 23.28]{MeiseVogt97}.

\begin{proposition} \label{PropW'T'qBFS}
	Let $\lVert \cdot \rVert_X$ be a quasi-Banach function norm satisfying \ref{P5}, $X$ be the corresponding quasi-Banach function space, and let $X'$ be the corresponding associate space. Let $T$ be an operator on $X$. Then $T \colon (X, w') \to (X, w')$ is continuous if and only if there exists the associate operator $T'$ on $X'$.
\end{proposition}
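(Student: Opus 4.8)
The plan is to deduce the equivalence directly from the identification $(X,w')^{*}=X'$ under the natural pairing $\langle f,\varphi\rangle=\int_{\mathcal{R}}f\varphi\,d\mu$; this is precisely the content of the general duality lemma \cite[Lemma~23.28]{MeiseVogt97} applied with $E=F=X$ carrying the weak topology $\sigma(X,X')$, but it is just as quick to spell out. First I would record that the pairing is well defined and separating on $X\times X'$: by Hölder's inequality (Theorem~\ref{THAS}) one has $\int_{\mathcal{R}}\lvert f\varphi\rvert\,d\mu\le\lVert f\rVert_X\lVert\varphi\rVert_{X'}<\infty$, so it is finite and bilinear; and it separates points on both sides because, given $0\neq f\in X$, the $\sigma$-finiteness of $(\mathcal{R},\mu)$ lets me pick $E\subseteq\{\lvert f\rvert>0\}$ with $0<\mu(E)<\infty$, so that $\varphi:=\overline{\sgn f}\,\chi_E$ lies in $X'$ (indeed $\lVert\varphi\rVert_{X'}=\lVert\chi_E\rVert_{X'}<\infty$ since $\lVert\cdot\rVert_{X'}$ is a Banach function norm by Theorem~\ref{TFA}, hence satisfies \ref{P4}) and $\langle f,\varphi\rangle=\int_E\lvert f\rvert\,d\mu\in(0,\infty)$; the symmetric statement uses that $X$ contains $\chi_E$ for every $E$ of finite measure by \ref{P4}. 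Consequently $w'=\sigma(X,X')$ is a Hausdorff locally convex topology with $(X,w')^{*}=X'$ realised via \eqref{EqFunctionToFuntional}, as already recalled in Section~\ref{SecTopologies}.

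For the ``only if'' direction I would fix $\varphi\in X'$ and note that $f\mapsto\int_{\mathcal{R}}T(f)\varphi\,d\mu$ is the composition of $T$ with the $w'$-continuous functional $g\mapsto\int_{\mathcal{R}}g\varphi\,d\mu$, hence is $w'$-continuous and so lies in $(X,w')^{*}=X'$; this produces $\psi\in X'$ with $\int_{\mathcal{R}}T(f)\varphi\,d\mu=\int_{\mathcal{R}}f\psi\,d\mu$ for all $f\in X$. Since the pairing separates the points of $X'$, this $\psi$ is unique, so $T'(\varphi):=\psi$ defines a map $X'\to X'$, linear by the uniqueness together with the linearity of $T$ and of the integral, and $T,T'$ are then mutually associate in the sense of Definition~\ref{DefAssocOperator}, i.e.~the associate operator $T'$ on $X'$ exists. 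For the ``if'' direction I would use that a linear map $S\colon E\to F$ from a locally convex space $E$ into a space $F$ with a weak topology $\sigma(F,G)$ is continuous exactly when $g\circ S\in E^{*}$ for every $g\in G$; taking $E=F=(X,w')$ and $G=X'=E^{*}$, it then suffices to observe that for each $\varphi\in X'$ the functional $f\mapsto\int_{\mathcal{R}}T(f)\varphi\,d\mu=\int_{\mathcal{R}}f\,T'(\varphi)\,d\mu$ is $w'$-continuous, which holds because $T'(\varphi)\in X'=(X,w')^{*}$.

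The main (indeed the only) non-routine ingredient is the identification $(X,w')^{*}=X'$, which the paper has already recorded; granting it, everything after is soft duality theory together with bookkeeping for the pairing. In particular it is immaterial here whether $\lVert\cdot\rVert_X$ is a norm or merely a quasinorm, since $w'$ is locally convex in either case and the argument never refers to $\lVert\cdot\rVert_X$ itself---only to the duality between $X$ and $X'$, which is secured by axioms \ref{P4} and \ref{P5} (the latter via Theorem~\ref{TFA}) and the presence of simple functions in $X$. Thus the proof runs word for word as in the classical Banach-function-space case, and indeed the statement is a special case of \cite[Lemma~23.28]{MeiseVogt97}.
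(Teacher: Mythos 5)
Your proof is correct and follows exactly the route the paper takes: the paper simply cites \cite[Lemma~23.28]{MeiseVogt97}, and your argument is a careful unpacking of that lemma for the dual pair $(X,X')$, using the identification $(X,w')^*=X'$ that the paper records in Section~\ref{SecTopologies}. The verification that the pairing separates points (via \ref{P4} for $X$ and Theorem~\ref{TFA} for $X'$) is the only bookkeeping needed, and you supply it correctly.
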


	

The associate operator $T'$ on $X'$ is obviously continuous with respect to the topology induced by $X$ (i.e.~$\sigma(X', X)$); however, this topology may in general be distinct from the $w'$ topology (which is induced by $X''$, i.e.~$\sigma(X', X'')$). However, when $X$ is a Banach function space, we have $X'' = X$ (see Theorem~\ref{TFA}) and we thus obtain the following stronger statements. We include the proofs for the sake of completeness.

\begin{proposition} \label{PropT'toNormBFS}
	Let $\lVert \cdot \rVert_X$ be an r.i.~Banach function norm, let $X$ be the corresponding r.i.~Banach function space, and let $X'$ be the corresponding associate space. Let $T$ be an operator defined on $X$ such that the associate operator $T'$ exists as an operator on $X'$. Then $T \colon X \to X$ is continuous, $T' \colon X' \to X'$ is continuous, and $\lVert T \rVert_{X \to X} = \lVert T' \rVert_{X' \to X'}$. 
\end{proposition}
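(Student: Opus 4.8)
The plan is to first bootstrap the norm-continuity of $T$ from the weak-continuity criterion, and then to obtain both the norm-continuity of $T'$ and the equality of operator norms from a pair of symmetric duality estimates resting on the identity $X''=X$, which is valid for Banach function spaces (Theorem~\ref{TFA}).

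For the norm-continuity of $T\colon X\to X$: since the associate operator $T'$ exists on $X'$, Proposition~\ref{PropW'T'qBFS} (applicable because an r.i.~Banach function norm satisfies \ref{P5}) gives that $T\colon(X,w')\to(X,w')$ is continuous, and therefore maps $w'$-bounded subsets of $X$ to $w'$-bounded subsets. By Proposition~\ref{PropEquivBoundedness} these are exactly the $\lVert\cdot\rVert_X$-bounded sets, so $T$ carries the closed unit ball of $X$ into a $\lVert\cdot\rVert_X$-bounded set and is hence a bounded, thus continuous, operator on $(X,\lVert\cdot\rVert_X)$.

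Once $T$ is known to be bounded, the remainder is two short Hölder computations. Using Definition~\ref{DAS} applied to $X$ together with a routine sign adjustment one has $\lVert\psi\rVert_{X'}=\sup_{\lVert g\rVert_X\le1}\lvert\int_{\mathcal{R}}g\psi\,d\mu\rvert$ for $\psi\in\mathcal{M}$, and, since $X=X''$ with equal norms (Theorem~\ref{TFA}), also $\lVert h\rVert_X=\sup_{\lVert\varphi\rVert_{X'}\le1}\lvert\int_{\mathcal{R}}h\varphi\,d\mu\rvert$ for $h\in X$. Feeding the mutual-associativity identity $\int_{\mathcal{R}}(Tf)\varphi\,d\mu=\int_{\mathcal{R}}f(T'\varphi)\,d\mu$ into the first expression (with $\psi=T'\varphi$) and invoking the Hölder inequality of Theorem~\ref{THAS} together with the previous paragraph gives
\[
\lVert T'\varphi\rVert_{X'}=\sup_{\lVert f\rVert_X\le1}\Bigl\lvert\int_{\mathcal{R}}(Tf)\,\varphi\,d\mu\Bigr\rvert\le\lVert T\rVert_{X\to X}\,\lVert\varphi\rVert_{X'},
\]
so $T'\colon X'\to X'$ is bounded with $\lVert T'\rVert_{X'\to X'}\le\lVert T\rVert_{X\to X}$. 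Feeding the same identity into the second expression (with $h=Tf$) and using the bound just obtained for $T'$ gives symmetrically $\lVert Tf\rVert_X\le\lVert f\rVert_X\,\lVert T'\rVert_{X'\to X'}$, i.e.~the reverse inequality, whence $\lVert T\rVert_{X\to X}=\lVert T'\rVert_{X'\to X'}$.

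The only genuinely non-routine step is getting norm-continuity of $T$ off the ground: the Hölder estimates are circular until one knows that $T$ (equivalently $T'$) is bounded to begin with, and this is precisely where the r.i.~Banach function space structure is used, through Propositions~\ref{PropW'T'qBFS} and \ref{PropEquivBoundedness} and the identity $X=X''$. I do not anticipate any further obstacle.
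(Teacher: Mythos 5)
Your proof is correct and follows essentially the same route as the paper's: establish that $T$ maps the unit ball to a $w'$-bounded set (the paper does this by a direct Hölder computation rather than by citing Proposition~\ref{PropW'T'qBFS}, but the substance is identical), upgrade to norm-boundedness via Proposition~\ref{PropEquivBoundedness}, and then derive the two operator-norm inequalities from the dual supremum formulas, the second of which uses $X=X''$ exactly as you indicate.
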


\begin{proof}
	Lef $\varphi \in X'$. Then we have for every $f \in X$ such that $\lVert f \rVert_X \leq 1$ that
	\begin{equation*}
		\left \lvert \int_{\mathcal{R}} \varphi T(f) \: d\mu \right \rvert = \left \lvert \int_{\mathcal{R}} T'(\varphi) f \: d\mu \right \rvert \leq \lVert T'(\varphi) \rVert_{X'} < \infty.
	\end{equation*}
	Whence, $T(B_X)$, the image of the unit ball of $X$, is $w'$-bounded. Proposition~\ref{PropEquivBoundedness} thus yields that it is also $\lVert \cdot \rVert_X$-bounded, i.e.~that $T \colon X \to X$ is continuous. It follows that we have for every $\varphi \in X'$ that
	\begin{equation*}
		\lVert T'(\varphi) \rVert_{X'} = \sup_{\lVert f \rVert_X \leq 1} \left \lvert \int_{\mathcal{R}} T'(\varphi) f \: d\mu \right \rvert = \sup_{\lVert f \rVert_X \leq 1} \left \lvert \int_{\mathcal{R}} \varphi T(f) \: d\mu \right \rvert \leq \lVert \varphi \rVert_{X'} \lVert T \rVert_{X \to X}.
	\end{equation*}
	Whence, $T' \colon X' \to X'$ is also continuous and we have $\lVert T \rVert_{X \to X} \geq \lVert T' \rVert_{X' \to X'}$. The converse inequality is shown analogously.
\end{proof}

\begin{proposition} \label{PropW'T'BFS}
	Let $\lVert \cdot \rVert_X$ be a Banach function norm, $X$ be the corresponding Banach function space, and let $X'$ be the corresponding associate space. Let $T$ be an operator defined on $X$. Then $T \colon (X, w') \to (X, w')$ is continuous if and only if the associate operator $T' \colon (X', w') \to (X', w')$ is continuous.
\end{proposition}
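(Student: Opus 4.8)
The plan is to deduce the statement from Proposition~\ref{PropW'T'qBFS} by exploiting the symmetry of the ``mutually associate'' relation together with the fact that, for a Banach function space $X$, the second associate space $X''$ coincides with $X$. First I would record the setup: since $\lVert\cdot\rVert_X$ is a Banach function norm, it satisfies \ref{P4} and \ref{P5}, so by Theorem~\ref{TFA} the associate norm $\lVert\cdot\rVert_{X'}$ is again a Banach function norm---in particular a quasi-Banach function norm satisfying \ref{P5}---and one has $\lVert\cdot\rVert_X=\lVert\cdot\rVert_{X''}$, i.e.\ $X''=X$ with equal norms. Consequently Proposition~\ref{PropW'T'qBFS} is available both for the pair $(X,X')$ and for the pair $(X',(X')')=(X',X)$.

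For the forward implication, assume $T\colon(X,w')\to(X,w')$ is continuous. Proposition~\ref{PropW'T'qBFS} applied to $X$ then provides the associate operator $T'$ on $X'$, which by definition satisfies $\int_{\mathcal{R}}T(f)\varphi\,d\mu=\int_{\mathcal{R}}f\,T'(\varphi)\,d\mu$ for all $f\in X$ and $\varphi\in X'$. Reading this identity with the two operators interchanged, and using $X''=X$, it says precisely that $T$, viewed as an operator on $(X')'=X$, is the associate operator of $T'$. Hence Proposition~\ref{PropW'T'qBFS}, applied this time to the Banach function space $X'$ and the operator $T'$ on it, yields that $T'\colon(X',w')\to(X',w')$ is continuous. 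The converse is even more immediate: the hypothesis ``the associate operator $T'\colon(X',w')\to(X',w')$ is continuous'' already presupposes that $T'$ exists as an operator on $X'$, and by Proposition~\ref{PropW'T'qBFS} applied to $X$ the mere existence of such an associate operator is equivalent to $T\colon(X,w')\to(X,w')$ being continuous; the $w'$-continuity of $T'$ is not actually used in this direction.

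I do not anticipate a genuine obstacle; the argument is essentially a two-fold application of Proposition~\ref{PropW'T'qBFS}. The only points that need a little care are the symmetry of the ``mutually associate'' relation---so that $T$ genuinely qualifies as the associate operator of $T'$---and the bookkeeping of second associates, namely verifying that $\lVert\cdot\rVert_{X'}$ meets the hypotheses of Proposition~\ref{PropW'T'qBFS} and that $(X')'=X$; this last point is exactly where the assumption that $X$ is a Banach function space (and not merely a quasi-Banach function space) is used.
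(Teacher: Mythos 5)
Your overall strategy---a two-fold application of Proposition~\ref{PropW'T'qBFS} hinging on $X''=X$---is the same as the paper's, and your forward implication is correct: once Proposition~\ref{PropW'T'qBFS} (applied to $X$) produces $T'$, the symmetry of Definition~\ref{DefAssocOperator} shows that $T$ is the associate operator of $T'$ on $(X')'=X''=X$, so Proposition~\ref{PropW'T'qBFS} applied to the Banach function space $X'$ yields the $w'$-continuity of $T'$. This is just a slightly more explicit phrasing of what the paper does (the paper says the continuity of $T'$ is ``proved in exactly the same way'' since the $w'$ topology on $X'$ is $\sigma(X',X)$).

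The converse direction, however, has a gap. Proposition~\ref{PropW'T'qBFS} is stated under the hypothesis that $T$ is \emph{an operator on $X$}, i.e.\ that $T(X)\subseteq X$; in Proposition~\ref{PropW'T'BFS} one only assumes that $T$ is \emph{defined} on $X$, with values a priori merely in $\mathcal{M}$. In the forward direction this is harmless, because $w'$-continuity of $T$ on $X$ already presupposes $T(X)\subseteq X$. But in the converse direction you start only from the existence and $w'$-continuity of $T'$ on $X'$, and you must first establish that $T$ maps $X$ into $X$ before Proposition~\ref{PropW'T'qBFS} becomes applicable. This is not a formality one can wave away: from the identity $\int_{\mathcal{R}} T(f)\varphi\,d\mu=\int_{\mathcal{R}} fT'(\varphi)\,d\mu$ one gets, for fixed $f\in X$, the bound $\bigl\lvert\int_{\mathcal{R}} T(f)\varphi\,d\mu\bigr\rvert\leq\lVert f\rVert_X\lVert T'(\varphi)\rVert_{X'}$, but to conclude $\lVert Tf\rVert_{X''}<\infty$ (and hence $Tf\in X''=X$) one needs this to be uniform over the unit ball of $X'$, i.e.\ one needs a boundedness argument (ultimately Proposition~\ref{PropEquivBoundedness}, the equivalence of $w'$-boundedness and norm-boundedness in a Banach function space). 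This is exactly the content of Proposition~\ref{PropT'toNormBFS}, which the paper invokes at precisely this point: it shows that the mere existence of the associate operator $T'$ on $X'$ forces $T$ to be a (norm-continuous) operator on $X$, after which the rest follows from Proposition~\ref{PropW'T'qBFS} as you say. Inserting that one citation repairs your argument.
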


\begin{proof}
	As for the sufficiency, it follows from Proposition~\ref{PropT'toNormBFS} that $T$ is an operator on $X$ and thus the rest of the statement follows from Proposition~\ref{PropW'T'qBFS}.
	
	As for the necessity, the only claim not covered by Proposition~\ref{PropW'T'qBFS} is the continuity of $T' \colon  (X', w') \to (X', w')$, which is, however, proved in exactly the same way as the continuity of $T \colon (X, w') \to (X, w')$ since $X'' = X$, i.e.~the $w'$ topology on $X'$ is in fact induced by $X$ (i.e.~it is equal to $\sigma(X', X)$).
\end{proof}

\begin{remark} \label{RemAssociateOperatorOnAssociateSpace}
    Associate operators are usually defined for $T$ and $T'$ on $\mathcal{M}_0$, our approach is somewhat more restricted. However, it is well compatible, since every quasi-Banach function space contains the simple functions. Indeed, whenever $T$ is defined on some subset of $\mathcal{M}_0$ containing some Banach function space and $T'(f)$ is defined for some function $f$ via Definition~\ref{DefAssocOperator}, i.e.~through some pair of quasi-Banach function spaces $X$ and $X'$, then $T'(f)$ is uniquely determined $\mu$-a.e.~by $T$ and $f$ only, i.e.~it does not depend on the spaces $X$ and $X'$. We have opted for this approach because it is both more flexible and very well suited for our purposes, as we will always consider the associate operator in situations when $X$ and $X'$ are given.
\end{remark}

\subsection{Sublinear operators quasi-Banach function spaces} \label{SecSublinearOps}

In Section~\ref{SectionPointwise} we will also need to work with sublinear operators.

\begin{definition}
    An operator $S \colon \mathcal{M} \to \mathcal{M}$ is sublinear, if it satisfies
    \begin{align*}
        \lvert S(f+g) \rvert &\leq \lvert Sf \rvert + \lvert Sg \rvert & \mu\text{-a.e.~for every } f,g \in \mathcal{M}, \\
        \lvert S(af) \rvert &= \lvert a \rvert \, \lvert Sf \rvert &\mu\text{-a.e.~for every } f \in \mathcal{M} \text{ and } a \in \mathbb{C}.
    \end{align*}

    If $A, B \subseteq \mathcal{M}$ are some Banach spaces, then we say that a sublinear operator $S\colon A \to B$ is bounded provided that
    \begin{align*}
        \lVert Sf \rVert_B &\lesssim \lVert f \rVert_A &\text{for every } f \in A.
    \end{align*}
\end{definition}

We recall that a continuous sublinear operator $T \colon A \to B$ is always bounded, but the converse implication does not hold in general, as boundedness only implies continuity at $0$.

\subsection{Fundamental function and Marcinkiewicz endpoint spaces}\label{Sec:FundamentalFunction} 

In this section, we give a very brief overview of the concepts of fundamental function and Marcinkiewicz endpoint spaces (both strong and weak). We chose to omit the full presentation as it is of considerable length and the matter is not too deeply related to the topic at hand; deep understanding is definitely not required in order to read the paper. For this reason, we also restrict ourselves for the most part to the case when the underlying measure space is $([0, \infty), \lambda)$; only at the very end will we mention how to extend some of the concepts to the general case.

Let us start by defining the fundamental function of an r.i.~quasi-Banach function space.

\begin{definition}
    Let $\lVert \cdot \rVert_X$ be an r.i.~quasi-Banach function norm and $X$ the corresponding r.i.~quasi-Banach function space. We define $\varphi_X$, \emph{the fundamental function of $X$}, by
    \begin{align*}
        \varphi_X(t) &= \lVert \chi_{E_t} \rVert_X &\text{for } t \in [0, \infty),
    \end{align*}
    where $E_t$ is an arbitrary subset of $[0, \infty)$ satisfying $\lambda(E_t) = t$.
\end{definition}

The properties of fundamental functions of r.i.~quasi-Banach function spaces have been studied recently in \cite[Section~4]{MusilovaNekvinda24}, while the same topic for r.i.~Banach function spaces is classical, see e.g.~\cite[Chapter~2, Section~5]{BennettSharpley88}. For our purposes, we shall be content with stating that the fundamental function of an arbitrary r.i.~quasi-Banach function space is \emph{admissible} (see \cite[Definition~4.2]{MusilovaNekvinda24}) while that of an r.i.~Banach function space is \emph{quasiconcave} (see e.g.~\cite[Definition~5.6]{BennettSharpley88}).

On the other hand, let us consider the following functionals:
\begin{definition}
    Let $\Phi\colon [0, \infty) \to [0, \infty)$ be a function. We define the functionals $\lVert \cdot \rVert_{m_{\Phi}}$ and $\lVert \cdot \rVert_{M_{\Phi}}$ for arbitrary $f \in \mathcal{M}([0, \infty), \lambda)$ by
    \begin{align*}
        \lVert f \rVert_{m_{\Phi}} &= \sup_{t \in [0, \infty)} \Phi(t) f^*(t), \\
        \lVert f \rVert_{M_{\Phi}} &= \sup_{t \in [0, \infty)} \Phi(t) f^{**}(t).
    \end{align*}
\end{definition}

As it turns out, if the function $\Phi$ is admissible, then $\lVert \cdot \rVert_{m_{\Phi}}$ is an r.i.~quasi-Banach function norm satisfying $\varphi_{m_{\Phi}} \approx \Phi$ and every r.i.~quasi-Banach function space $X$ such that $\varphi_X \approx \Phi$ satisfies $X \hookrightarrow m_{\Phi}$. Similarly, if the function $\Phi$ is quasiconcave, then $\lVert \cdot \rVert_{M_{\Phi}}$ is an r.i.~Banach function norm satisfying $\varphi_{M_{\Phi}} \approx \Phi$ and every r.i.~Banach function space $X$ such that $\varphi_X \approx \Phi$ satisfies $X \hookrightarrow M_{\Phi}$. For proofs, see e.g.~\cite[Propositions~4.3 and 4.4]{MusilovaNekvinda24} and \cite[Chapter~2, Propositions~5.8 and 5.9]{BennettSharpley88}, respectively.

Hence, the property of being admissible characterises fundamental functions of r.i.~quasi-Banach function spaces and for every such function $m_{\Phi}$ is the largest such space. Similarly, the property of being quasiconcave characterises fundamental functions of r.i.~Banach function spaces and for every such function $M_{\Phi}$ is the largest such space. For this reason, the spaces $m_{\Phi}$ and $M_{\Phi}$ are called the weak and strong Marcinkiewicz endpoint spaces, respectively (with the word ``strong'' often being omitted in the case of $M_{\Phi}$ when there is no need to distinguish it from $m_{\Phi}$).

Finally, let us note that while the discussion above was restricted to the measure space $([0, \infty), \lambda)$, the spaces themselves, once constructed, can be easily lifted to an arbitrary resonant measure space $(\mathcal{R}, \mu)$, by simply applying their definitions onto $f^*$ for arbitrary $f \in \mathcal{M}(\mathcal{R}, \mu)$. Denoting the such constructed spaces also $m_{\Phi}$ and $M_{\Phi}$, we get that $m_{\Phi}$ is still an r.i.~quasi-Banach function space (see \cite[Proposition~3.3]{MusilovaNekvinda24}) while $M_{\Phi}$ remains an r.i.~Banach function space (see \cite[Chapter~2, Theorem~4.9]{BennettSharpley88}). Similarly, the definition of a fundamental function can also be applied to r.i.~quasi-Banach function spaces over arbitrary resonant measure spaces; the only difference is that the function will then be defined only for $t$ in the range of $\mu$.

\section{Composition operators on r.i.~quasi-Banach function spaces}\label{section: composition operators on r.i.}

From now on, our standing assumption is that the underlying measure space $(\mathcal{R}, \mu)$ is $\sigma$-finite and resonant, as is usual when working with rearrangement-invariant spaces.

\subsection{Elementary properties of composition operators}

Let us begin by presenting some basic definitions related to the composition operators in the context of r.i.~quasi-Banach function spaces as well as some easy observations about their properties. The proofs are, at best, simple exercises (see also \cite[Section~2.1]{SinghManhas93}).

\begin{definition} \label{DefCompo}
	Let $\phi \colon \mathcal{R} \to \mathcal{R}$ be measurable. We then define the composition operator $T_{\phi} \colon \mathcal{M}(\mathcal{R}, \mu) \to \mathcal{M}(\mathcal{R}, \mu)$ for every $f \in \mathcal{M}(\mathcal{R}, \mu)$ by
	\begin{equation*}
		T_{\phi}f = f \circ \phi.
	\end{equation*}
\end{definition}

\begin{remark}
	Let $E \subseteq \mathcal{R}$ be measurable. Then
	\begin{equation*}
		T_{\phi} \chi_E = \chi_{\phi^{-1}(E)}.
	\end{equation*}
\end{remark}

\begin{definition}
	A measurable map $\phi \colon \mathcal{R} \to \mathcal{R}$ is said to be \emph{non-singular} provided that
	\begin{align*}
		\mu(\phi^{-1}(E)) &= 0 &\text{for every measurable } E \subseteq \mathcal{R} \text{ satisfying } \mu(E) = 0.
	\end{align*}
    If for every measurable $E \subseteq \mathcal{R}$ it holds $\mu(E)=0$ precisely when $\mu(\phi^{-1}(E))=0$ we call $\phi$ \emph{strictly non-singular}.
\end{definition}

\begin{remark} 
    Our terminology for non-singular maps is based on \cite{SinghManhas93} and from there we derive the term for strictly non-singular maps. However, we note that there are alternative approaches. In the contexts of the theory of integration and Sobolev spaces, one considers the \textit{Luzin $N$ and $N^{-1}$ conditions} (for the precise definitions, an overview of their applications, and further references see e.g.~\cite{DolezalovaHrubesova21}). The Luzin $N^{-1}$ condition is equivalent to non-singularity and if a map satisfies both the Luzin $N$ and $N^{-1}$ conditions then it is strictly non-singular; however, the converse implication does not hold as there are strictly non-singular (or even measure-preserving) maps that fail the Luzin $N$ condition (a concrete example can be constructed e.g.~via the Cantor staircase).
\end{remark}

\begin{proposition} \label{PropCharLuzin}
	Let $\phi$ and $T_\phi$ be as in Definition~\ref{DefCompo}. Then the following three statements are equivalent:
	\begin{enumerate}
		\item $\phi$ is non-singular. \label{PropCharLuzin:i}
		\item $T_{\phi}$ is a well-defined mapping $\mathcal{M}(\mathcal{R}, \mu) \to \mathcal{M}(\mathcal{R}, \mu)$. \label{PropCharLuzin:ii}
		\item $T_{\phi}$ is a well-defined mapping $\mathcal{M}_0(\mathcal{R}, \mu) \to \mathcal{M}_0(\mathcal{R}, \mu)$. \label{PropCharLuzin:iib}
		\item $T_{\phi} \colon L^{\infty} \to L^{\infty}$ is a contraction. \label{PropCharLuzin:iii}
	\end{enumerate}
\end{proposition}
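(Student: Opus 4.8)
The plan is to run the cycle of implications (i) $\Rightarrow$ (ii) $\Rightarrow$ (iii) $\Rightarrow$ (iv) $\Rightarrow$ (i). The whole argument rests on one trivial but decisive observation: for any two representatives $f,g$ of elements of $\mathcal{M}(\mathcal{R},\mu)$,
\begin{equation*}
	\{ x \in \mathcal{R}; \; (f \circ \phi)(x) \neq (g \circ \phi)(x) \} = \phi^{-1}\bigl( \{ y \in \mathcal{R}; \; f(y) \neq g(y) \} \bigr),
\end{equation*}
and, analogously, $(f \circ \phi)^{-1}(B) = \phi^{-1}(f^{-1}(B))$ for every Borel set $B$, which together with measurability of $\phi$ shows that $f \circ \phi$ is measurable whenever $f$ is. The displayed identity says precisely that $T_\phi$ carries $\mu$-equivalent representatives to $\mu$-equivalent representatives --- i.e.\ descends to a well-defined self-map of the equivalence classes in $\mathcal{M}(\mathcal{R},\mu)$ --- exactly when the preimage under $\phi$ of every $\mu$-null set is $\mu$-null, that is, exactly when $\phi$ is non-singular. (Linearity of $T_\phi$ is immediate and I shall not mention it again.)

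Granting this, (i) $\Rightarrow$ (ii) is immediate. For each of the remaining three implications I would first extract non-singularity from the hypothesis by the following device: if $T_\phi$ is assumed to be a well-defined map on $\mathcal{M}(\mathcal{R},\mu)$, on $\mathcal{M}_0(\mathcal{R},\mu)$, or on $L^{\infty}$ --- note that for every $\mu$-null set $N$ the function $\chi_N$ belongs to all three and represents the zero class --- then, since $\chi_N$ and $0$ represent the same class, well-definedness forces $\chi_N \circ \phi$ and $0 \circ \phi = 0$ to agree $\mu$-a.e.; as $\chi_N \circ \phi = \chi_{\phi^{-1}(N)}$, this says $\mu(\phi^{-1}(N)) = 0$. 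Hence each of (ii), (iii), (iv) already entails non-singularity, and it remains only to check the additional content as one moves down the chain.

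Assume therefore that $\phi$ is non-singular. For (ii) $\Rightarrow$ (iii) one only needs that $T_\phi$ preserves $\mu$-a.e.\ finiteness, which holds because $\{ |f \circ \phi| = \infty \} = \phi^{-1}(\{ |f| = \infty \})$ is $\mu$-null whenever $f$ is finite $\mu$-a.e., and a restriction of a well-defined map is well defined. For (iii) $\Rightarrow$ (iv) one uses $\{ |f \circ \phi| > \lVert f \rVert_{\infty} \} = \phi^{-1}(\{ |f| > \lVert f \rVert_{\infty} \})$ together with the fact that $\{ |f| > \lVert f \rVert_{\infty} \} = \bigcup_{k=1}^{\infty} \{ |f| > \lVert f \rVert_{\infty} + 1/k \}$ is a countable union of $\mu$-null sets, hence itself $\mu$-null; consequently $\lVert T_\phi f \rVert_{\infty} \leq \lVert f \rVert_{\infty}$ for every $f \in L^{\infty}$, so $T_\phi$ maps $L^{\infty}$ into itself and is a contraction there. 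Finally (iv) $\Rightarrow$ (i) is just one further application of the device of the preceding paragraph.

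I do not expect any genuine obstacle here: the proof is a short succession of one-line set identities combined with monotonicity and countable subadditivity of $\mu$. The only point that needs a little care is keeping the two flavours of ``well-definedness'' apart --- the purely algebraic requirement that the value $T_\phi f$ not depend on the chosen representative, as opposed to the requirement that $T_\phi f$ actually land in the prescribed space (finite $\mu$-a.e.\ for (iii), or in $L^{\infty}$ with the norm bound for (iv)) --- and recognising that the first of these is, in every case, merely non-singularity in disguise, while the second costs only the elementary pullback computations just indicated.
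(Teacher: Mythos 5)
Your proof is correct: the identity $\{T_\phi f \neq T_\phi g\} = \phi^{-1}(\{f \neq g\})$ does all the work, the extraction of non-singularity by testing with $\chi_N$ for null $N$ is exactly right in each of (ii)--(iv), and the pullback computations for a.e.-finiteness and for the essential supremum close the cycle. The paper itself leaves this as a "simple exercise" (citing Singh--Manhas), and your argument is precisely the intended one, with the added merit of cleanly separating representative-independence from the question of landing in the target space.
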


Note that \ref{PropCharLuzin:iii} contains implicitly the information that $T_{\phi}$ is well-defined.

\begin{remark} \label{RemarkTrivNiceProp}
	Let the measurable map $\phi \colon \mathcal{R} \to \mathcal{R}$ be non-singular. Then the operator $T_{\phi}$, defined as above, is a positive operator, i.e.~$T_{\phi}(\mathcal{M}_+) \subseteq \mathcal{M}_+$. Furthermore, it has the properties that $\lvert T_{\phi}f \rvert = T_{\phi}(\lvert f \rvert)$, that $f \leq g$ $\mu$-a.e.~implies $T_{\phi}f \leq T_{\phi}g$ $\mu$-a.e., and that $f_n \to f$ $\mu$-a.e.~implies $T_{\phi}f_n \to T_{\phi}f$ $\mu$-a.e. Consequently, \cite[Theorem~3.9]{NekvindaPesa24} implies that it is continuous on an r.i.~quasi-Banach function space $X$ if and only if we have $T_{\phi}f \in X$ for every $f \in X$.
\end{remark}

\begin{proposition} \label{PropCharInjectivity}
    Let $\phi$ and $T_\phi$ be as in Definition~\ref{DefCompo} and let $\phi$ be non-singular. Then the following are equivalent.
    \begin{enumerate}
        \item $\phi$ is strictly non-singular. \label{PropCharInjectivity:i}
        \item $T_\phi \colon \mathcal{M}(\mathcal{R}, \mu) \to \mathcal{M}(\mathcal{R}, \mu)$ is injective. \label{PropCharInjectivity:ii}
    \end{enumerate}
    Moreover, if $\phi$ is strictly non-singular, then $T_\phi f \geq 0$ $\mu$-a.e.~if and only if $f\geq 0$ $\mu$-a.e., where $f\in \mathcal{M}(\mathcal{R}, \mu)$; i.e.~the inverse operator $T_{\phi}^{-1} \colon T_\phi ( \mathcal{M} ) \to \mathcal{M}$ is under this assumption positive.
\end{proposition}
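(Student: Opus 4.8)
The plan is to reduce every claim to the single identity $T_\phi\chi_E=\chi_{\phi^{-1}(E)}$ (the remark following Definition~\ref{DefCompo}) together with the elementary translation: for $f,g\in\mathcal{M}(\mathcal{R},\mu)$, the relation $T_\phi f=T_\phi g$ $\mu$-a.e.\ holds precisely when $\mu(\phi^{-1}(E))=0$, where $E=\{y\in\mathcal{R};\ f(y)\neq g(y)\}$, a measurable set; indeed $\{x;\ f(\phi(x))\neq g(\phi(x))\}=\phi^{-1}(E)$. With this dictionary in hand the equivalence is immediate in both directions.

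For \eqref{PropCharInjectivity:ii}$\Rightarrow$\eqref{PropCharInjectivity:i}, recall that $\phi$ is already assumed non-singular, so strict non-singularity only requires that $\mu(\phi^{-1}(E))=0$ force $\mu(E)=0$. Given such an $E$, one has $T_\phi\chi_E=\chi_{\phi^{-1}(E)}=0=T_\phi 0$ $\mu$-a.e., and injectivity of $T_\phi$ yields $\chi_E=0$ $\mu$-a.e., i.e.\ $\mu(E)=0$. Conversely, for \eqref{PropCharInjectivity:i}$\Rightarrow$\eqref{PropCharInjectivity:ii}, if $T_\phi f=T_\phi g$ $\mu$-a.e.\ then the set $E=\{y\in\mathcal{R};\ f(y)\neq g(y)\}$ satisfies $\mu(\phi^{-1}(E))=0$, so strict non-singularity gives $\mu(E)=0$, that is, $f=g$ $\mu$-a.e.

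For the ``moreover'' part I would fix $f\in\mathcal{M}(\mathcal{R},\mu)$ and argue analogously with the exceptional set $E=\{y\in\mathcal{R};\ f(y)\notin[0,\infty]\}$, which is measurable. The implication ``$f\geq 0$ $\mu$-a.e.'' $\Rightarrow$ ``$T_\phi f\geq 0$ $\mu$-a.e.'' is just positivity of $T_\phi$ (Remark~\ref{RemarkTrivNiceProp}) and uses only non-singularity. For the reverse, $T_\phi f\geq 0$ $\mu$-a.e.\ says exactly $\mu(\phi^{-1}(E))=0$, whence strict non-singularity forces $\mu(E)=0$, i.e.\ $f\geq 0$ $\mu$-a.e. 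Finally, strict non-singularity makes $T_\phi$ injective by the first part, so $T_\phi^{-1}$ is well defined on $T_\phi(\mathcal{M})$; applying the equivalence just proved to $g=T_\phi f$ shows $g\geq 0$ $\mu$-a.e.\ iff $T_\phi^{-1}g\geq 0$ $\mu$-a.e., which is the asserted positivity of $T_\phi^{-1}$.

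The whole argument is essentially bookkeeping and I do not anticipate a genuine obstacle. The only point that calls for a little care is the passage between statements about composed functions being $\mu$-a.e.\ equal (or $\mu$-a.e.\ non-negative) and statements about a preimage being $\mu$-null: this rests on the measurability of sets such as $\{f\neq g\}$ and $\{f\notin[0,\infty]\}$ for $f,g\in\mathcal{M}(\mathcal{R},\mu)$ and on the fact that, by non-singularity, replacing the chosen representatives changes these sets (hence their $\phi$-preimages) only by $\mu$-null sets, so that everything is well defined on equivalence classes in the first place.
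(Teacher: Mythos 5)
Your proposal is correct and follows essentially the same route as the paper, which reduces everything to the preimage identities $\{T_\phi f=0\}=\phi^{-1}(\{f=0\})$ and $\{T_\phi f\geq 0\}=\phi^{-1}(\{f\geq 0\})$; your formulation via the sets $\{f\neq g\}$ and $\{f\notin[0,\infty]\}$ is the same argument. The extra care you take about well-definedness on equivalence classes is a sensible (if routine) addition.
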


\begin{proof}
    The claims follow easily from the obvious equalities
    \begin{equation*}
        \{ x \in \mathcal{R}; \; T_\phi f(x)=0\}=\phi^{-1}\left(\{x \in \mathcal{R}; \; f(x)=0\}\right)
    \end{equation*}
    and \begin{equation*}
        \{ x \in \mathcal{R}; \; T_\phi f(x)\geq 0\}=\phi^{-1}\left(\{x\in\mathcal{R}; \; f(x)\geq 0\}\right)
    \end{equation*}
    which hold for $f\in \mathcal{M}(\mathcal{R}, \mu)$.
\end{proof}

Next, we present the natural condition on the symbol $\phi$ that characterises continuity on $L^p$, $p \in [1, \infty)$ (see \cite[Theorem~2.1.1]{SinghManhas93}).

\begin{definition}  \label{DefMeasureBounded}
	   A measurable map $\phi \colon \mathcal{R} \to \mathcal{R}$ is said to be \emph{measure bounded}, if there is some $A \in (0, \infty)$ for which it holds that
	\begin{align}
		\mu(\phi^{-1}(E)) &\leq A \mu(E) &\text{for every measurable } E \subseteq \mathcal{R}. \label{DefMeasureBounded:E1}
	\end{align}
	Furthermore, the smallest $A$ for which \eqref{DefMeasureBounded:E1} holds will be called the \emph{measure-bound of $\phi$}.

    Additionally, $\phi$ is said to be \emph{measure-bounded from below}, if there is some $C \in (0, \infty)$ for which it holds that
	\begin{align}
		 \mu(E) &\leq C \mu(\phi^{-1}(E)) &\text{for every measurable } E \subseteq \mathcal{R} \text{ with }\mu(E)<\infty. \label{DefMeasureBoundedBelow:E1}
	\end{align}
    Furthermore, the smallest $C$ for which \eqref{DefMeasureBoundedBelow:E1} holds will be called the \emph{measure-bound from below of $\phi$}.
\end{definition}

\begin{remark}
	It is clear the every measure-bounded map $\phi$ is non-singular. On the other hand, if $\phi$ is non-singular then the measure $\nu$ given by $\nu(E) = \mu(\phi^{-1}(E))$ is absolutely continuous with respect to $\mu$ and so there exists a Radon--Nikodym derivative. Measure-boundedness is then clearly equivalent to this derivative belonging to $L^{\infty}(\mu)$, with its norm being equal to the measure-bound of $\phi$. Similarly, measure-boundedness from below implies that $\mu$ has a Radon-Nikodym derivative with respect to the measure $\nu$ that belongs to $L^\infty(\nu)$.  
\end{remark}

As it turns out, measure-boundedness is very useful when examining the behaviour on the entire class of r.i.~quasi-Banach function spaces. This is a consequence of the estimate \ref{LemmaDilEst_i} by the dilation operator in the following Lemma. The second estimate will be used in the proof of Theorem \ref{ThmInjectiveAndClosedRangeQBFS} below.

\begin{lemma}\label{LemmaDilEst}
	Let $\phi \colon \mathcal{R} \to \mathcal{R}$ be measurable.
    \begin{enumerate}
        \item \label{LemmaDilEst_i} Assume that $\phi$ is measure-bounded and denote its measure-bound by $A$. Then we have for every $f \in \mathcal{M}(\mathcal{R}, \mu)$ that
	       \begin{align*}
		      (T_{\phi} f)^* &\leq D_{A^{-1}} f^* &\text{on $[0, \infty)$}.
	       \end{align*}
        \item \label{LemmaDilEst_ii} Assume that $\phi$ is measure-bounded from below and denote its measure-bound by $C$. Then we have for every $f \in \mathcal{M}(\mathcal{R}, \mu)$ that
	       \begin{align*}
		      (T_{\phi} f)^* &\geq D_{C} f^* &\text{on $[0, \infty)$}.
	       \end{align*}
    \end{enumerate}
\end{lemma}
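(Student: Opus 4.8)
The strategy for both parts is the same: pass to distribution functions, where the hypotheses on $\phi$ apply verbatim, and then transfer the resulting inequality to non-increasing rearrangements by means of the formula $g^*(t)=\inf\{s\in[0,\infty);\ g_*(s)\le t\}$ from Definition~\ref{DefNIR}. The observation that makes this work is that for every $s\in[0,\infty)$ one has $\{x\in\mathcal{R};\ \lvert T_\phi f(x)\rvert>s\}=\phi^{-1}(E_s)$, where $E_s:=\{y\in\mathcal{R};\ \lvert f(y)\rvert>s\}$ satisfies $\mu(E_s)=f_*(s)$; hence $(T_\phi f)_*(s)=\mu(\phi^{-1}(E_s))$.

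For part~\ref{LemmaDilEst_i}, applying \eqref{DefMeasureBounded:E1} with $E=E_s$ gives $(T_\phi f)_*(s)\le A f_*(s)$ for all $s\ge 0$. Therefore $f_*(s)\le A^{-1}t$ implies $(T_\phi f)_*(s)\le t$, so that
\[
\{s\in[0,\infty);\ f_*(s)\le A^{-1}t\}\subseteq\{s\in[0,\infty);\ (T_\phi f)_*(s)\le t\}.
\]
Taking infima (which reverses the inclusion) and recalling $D_{A^{-1}}f^*(t)=f^*(A^{-1}t)$ then yields $(T_\phi f)^*(t)\le f^*(A^{-1}t)=D_{A^{-1}}f^*(t)$ for every $t$.

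For part~\ref{LemmaDilEst_ii}, I would first establish $(T_\phi f)_*(s)\ge C^{-1}f_*(s)$ for all $s\ge 0$. If $\mu(E_s)<\infty$, this is exactly \eqref{DefMeasureBoundedBelow:E1} applied with $E=E_s$. If $\mu(E_s)=\infty$, I would exhaust $E_s$ by sets $F_k\uparrow E_s$ of finite measure (using the standing $\sigma$-finiteness assumption) and combine $\phi^{-1}(F_k)\subseteq\phi^{-1}(E_s)$ with \eqref{DefMeasureBoundedBelow:E1} to obtain $\mu(\phi^{-1}(E_s))\ge C^{-1}\mu(F_k)\to\infty$, so that both sides of the desired inequality are infinite. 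Once $(T_\phi f)_*(s)\ge C^{-1}f_*(s)$ is in hand, $(T_\phi f)_*(s)\le t$ forces $f_*(s)\le Ct$, whence
\[
\{s\in[0,\infty);\ (T_\phi f)_*(s)\le t\}\subseteq\{s\in[0,\infty);\ f_*(s)\le Ct\},
\]
and taking infima now gives $(T_\phi f)^*(t)\ge f^*(Ct)=D_C f^*(t)$ for every $t$.

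The computation is essentially routine; the one point requiring a little care is the case $\mu(E_s)=\infty$ in part~\ref{LemmaDilEst_ii}, where \eqref{DefMeasureBoundedBelow:E1} is not directly applicable and one must fall back on $\sigma$-finiteness. One should also keep in mind that the two estimates point in opposite directions precisely because passing to infima reverses the inclusion of the relevant sublevel sets.
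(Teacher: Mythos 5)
Your proof is correct and follows essentially the same route as the paper: compute $(T_\phi f)_*(s)=\mu(\phi^{-1}(\{\lvert f\rvert>s\}))$, compare with $f_*(s)$ via the measure-bound, and transfer to rearrangements through the infimum formula of Definition~\ref{DefNIR}. The paper dismisses part~\ref{LemmaDilEst_ii} as ``mutatis mutandis''; your explicit treatment of the case $\mu(E_s)=\infty$ via $\sigma$-finite exhaustion is a small but genuine point of care that the condition \eqref{DefMeasureBoundedBelow:E1} (stated only for sets of finite measure) indeed requires.
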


\begin{proof}
	We only give a proof of \ref{LemmaDilEst_i} as the proof of \ref{LemmaDilEst_ii} is mutatis mutandis the same. It follows from our assumptions on $\phi$ and $A$ that we have
	\begin{equation*}
		(T_{\phi}f)_*(s) = \mu \left( \left \{ x \in \mathcal{R} ; \; \lvert f(\phi(x)) \rvert > s \right \} \right) = \mu \left( \phi^{-1} \left( \left \{ x \in \mathcal{R} ; \; \lvert f(x) \rvert > s \right \} \right) \right) \leq A f_*(s)
	\end{equation*}
	for all $s \in [0, \infty)$. Thence,
	\begin{equation*}
		(T_{\phi} f)^* (t) = \inf \{ s \in [0, \infty); \; (T_{\phi} f)_*(s) \leq t \} \leq \inf \{ s \in [0, \infty); \; Af_*(s) \leq t \} = f^* (A^{-1} t)
	\end{equation*}
	for all $t \in [0, \infty)$.
\end{proof}

\begin{theorem} \label{ThmBoundedQBFS}
	Let $\phi$ and $T_\phi$ be as in Definition~\ref{DefCompo}. Then the following three statements are equivalent:
	\begin{enumerate}
		\item $\phi$ is measure-bounded. \label{ThmBoundedQBFS:i}
		\item $T_{\phi} \colon L^1 \to L^1$ is continuous. \label{ThmBoundedQBFS:ii}
		\item $T_{\phi} \colon X \to X$ is continuous for every r.i.~quasi-Banach function space $X$. \label{ThmBoundedQBFS:iii}
	\end{enumerate}
\end{theorem}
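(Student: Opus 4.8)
The equivalences will be established by the cyclic scheme $\ref{ThmBoundedQBFS:i} \Rightarrow \ref{ThmBoundedQBFS:iii} \Rightarrow \ref{ThmBoundedQBFS:ii} \Rightarrow \ref{ThmBoundedQBFS:i}$, as the middle implication is trivial (note $L^1$ is itself an r.i.~quasi-Banach function space).

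\emph{The implication $\ref{ThmBoundedQBFS:i} \Rightarrow \ref{ThmBoundedQBFS:iii}$.} This is the heart of the matter and is where Lemma~\ref{LemmaDilEst} does the work. Assume $\phi$ is measure-bounded with measure-bound $A$, and let $X$ be an arbitrary r.i.~quasi-Banach function space. Fix $f \in X$; we must show $T_\phi f \in X$, for then continuity follows from Remark~\ref{RemarkTrivNiceProp}, since $T_\phi$ is positive, preserves the a.e.~order, and preserves a.e.~convergence. By Lemma~\ref{LemmaDilEst}\ref{LemmaDilEst_i} we have $(T_\phi f)^* \leq D_{A^{-1}} f^*$ pointwise on $[0,\infty)$. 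Passing to the canonical representation (Theorem~\ref{TheoremRepresentation}), it suffices to show $D_{A^{-1}} f^* \in \overline{X}$. But $f \in X$ means $f^* \in \overline{X}$, and Theorem~\ref{TDRIS} tells us the dilation operator $D_{A^{-1}}$ is a continuous operator on $\overline{X}$; in particular $D_{A^{-1}} f^* \in \overline{X}$. Hence $(T_\phi f)^* \in \overline{X}$ by the lattice property, so $\lVert T_\phi f \rVert_X = \lVert (T_\phi f)^* \rVert_{\overline{X}} < \infty$, i.e.~$T_\phi f \in X$. (A small point to address: $\phi$ measure-bounded is non-singular, so $T_\phi$ is genuinely a well-defined self-map of $\mathcal{M}$, which is needed before Remark~\ref{RemarkTrivNiceProp} applies.)

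\emph{The implication $\ref{ThmBoundedQBFS:ii} \Rightarrow \ref{ThmBoundedQBFS:i}$.} Assume $T_\phi \colon L^1 \to L^1$ is continuous, with operator norm $A := \lVert T_\phi \rVert_{L^1 \to L^1}$. (Implicit here is that $T_\phi$ is a well-defined self-map of $\mathcal{M}_0$, hence $\phi$ is non-singular by Proposition~\ref{PropCharLuzin}.) Let $E \subseteq \mathcal{R}$ be measurable with $\mu(E) < \infty$; then $\chi_E \in L^1$ and $T_\phi \chi_E = \chi_{\phi^{-1}(E)}$, whence
\[
	\mu(\phi^{-1}(E)) = \lVert \chi_{\phi^{-1}(E)} \rVert_{L^1} = \lVert T_\phi \chi_E \rVert_{L^1} \leq A \lVert \chi_E \rVert_{L^1} = A\mu(E).
\]
For $E$ with $\mu(E) = \infty$, write $E = \bigcup_k E_k$ as an increasing union of sets of finite measure (using $\sigma$-finiteness); then $\phi^{-1}(E) = \bigcup_k \phi^{-1}(E_k)$ is likewise increasing, and $\mu(\phi^{-1}(E)) = \lim_k \mu(\phi^{-1}(E_k)) \leq \lim_k A\mu(E_k) = A\mu(E) = \infty$, so the inequality holds trivially. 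Thus \eqref{DefMeasureBounded:E1} holds with this $A$, i.e.~$\phi$ is measure-bounded.

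\emph{Main obstacle.} There is no deep obstacle; the substance is packaged in Lemma~\ref{LemmaDilEst} and Theorem~\ref{TDRIS}. The only genuine care needed is bookkeeping: ensuring that $T_\phi$ is well-defined before invoking Remark~\ref{RemarkTrivNiceProp} in the first implication, and handling infinite-measure sets in the last implication via $\sigma$-finiteness. One could alternatively route $\ref{ThmBoundedQBFS:ii} \Rightarrow \ref{ThmBoundedQBFS:i}$ through the Radon--Nikodym description of measure-boundedness noted in the remark after Definition~\ref{DefMeasureBounded}, but the direct computation on characteristic functions above is cleaner.
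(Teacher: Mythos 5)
Your proof is correct and follows essentially the same route as the paper: the substantive implication \ref{ThmBoundedQBFS:i}$\Rightarrow$\ref{ThmBoundedQBFS:iii} rests, exactly as in the paper, on the pointwise estimate of Lemma~\ref{LemmaDilEst}~\ref{LemmaDilEst_i} combined with the continuity of the dilation operator on the canonical representation space (Theorem~\ref{TDRIS}), while the remaining implications are handled by testing on characteristic functions. The only cosmetic difference is that the paper writes the quantitative chain $\lVert T_\phi f\rVert_X=\lVert (T_\phi f)^*\rVert_{\overline{X}}\leq \lVert D_{A^{-1}}\rVert_{\overline{X}\to\overline{X}}\lVert f\rVert_X$ directly, obtaining continuity with an explicit norm bound, whereas you first establish mere membership $T_\phi f\in X$ and then invoke the automatic-continuity statement of Remark~\ref{RemarkTrivNiceProp}; both are valid.
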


\begin{proof}
	It is clear that \ref{ThmBoundedQBFS:iii} implies \ref{ThmBoundedQBFS:ii} which in turn implies \ref{ThmBoundedQBFS:i}. 
	
	It remains to prove that \ref{ThmBoundedQBFS:i} implies \ref{ThmBoundedQBFS:iii}. Recall that it follows from Theorem~\ref{TDRIS} that the dilation operator $D_{t}$ is for any $t \in (0, \infty)$ and any r.i.~quasi-Banach function space $X$ continuous on the canonical representation space $\overline{X}$; in particular, this holds for $t = A^{-1}$. Hence, we may use Lemma~\ref{LemmaDilEst} \ref{LemmaDilEst_i} to compute
	 \begin{equation*}
	 	\lVert T_{\phi} f \rVert_X = \lVert (T_{\phi} f)^* \rVert_{\overline{X}} \leq \lVert D_{A^{-1}} \rVert_{\overline{X} \to \overline{X}} \lVert f^* \rVert_{\overline{X}} = \lVert D_{A^{-1}} \rVert_{\overline{X} \to \overline{X}} \lVert f \rVert_X.
	 \end{equation*}
\end{proof}

It is clear that $L^1$ is not a unique space with the property that continuity of $T_{\phi}$ implies measure-boundedness of $\phi$; we chose it because it makes the implications in one direction obvious. Indeed, the same argument (i.e.~testing with characteristic functions) works for every $L^{p}$, $p \in (0, \infty)$, among others; this is of course a known fact, see e.g.~\cite[Theorem~2.1.1]{SinghManhas93}. We shall now prove the necessity of the measure-boundedness of the symbol for r.i.~quasi-Banach function spaces over completely atomic measure spaces (except $\ell^{\infty}$, of course).

\begin{theorem} \label{ThmNecessityAtomic}
	Let $(\mathcal{R}, \mu)$ be completely atomic with all atoms having the same measure and let $X$ be an r.i.~quasi-Banach function space (of sequences) over $(\mathcal{R}, \mu)$ such that $X \neq \ell^{\infty}$. If $T_{\phi} \colon X \to X$ is continuous, then $\phi$ is measure-bounded.
\end{theorem}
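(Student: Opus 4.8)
The plan is to argue by contradiction and to extract the contradiction by testing $T_{\phi}$ on characteristic functions of atoms, using the fundamental function of $X$. First I would dispose of the trivial reductions: since in a purely atomic space the only $\mu$-null set is the empty set, every measurable $\phi$ is automatically non-singular, so $T_{\phi}$ is a well-defined map $\mathcal{M}\to\mathcal{M}$ and $T_{\phi}\chi_E=\chi_{\phi^{-1}(E)}$ for every measurable $E$. If $\mathcal{R}$ consists of only finitely many atoms then $\mu(\phi^{-1}(E))\leq\mu(\mathcal{R})<\infty$ forces measure-boundedness, so the statement is trivial; hence I may assume $(\mathcal{R},\mu)=(\mathbb{N},m)$. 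In this setting measure-boundedness is equivalent to $\sup_{y}\#\phi^{-1}(\{y\})<\infty$: by additivity of $\mu$ and of preimages, a ratio $\mu(\phi^{-1}(E))/\mu(E)$ never exceeds the largest such ratio over singletons, i.e.\ the supremal fibre size. So, assuming $\phi$ is \emph{not} measure-bounded, there are atoms $y_k$ whose fibres $\phi^{-1}(\{y_k\})$ have size $n_k\to\infty$ (using the lattice property \ref{P2} to pass to a finite subset of a fibre, should it be infinite, I may take each $n_k$ finite).

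The key lemma I would isolate is: \emph{if $\varphi_X$ is bounded on $\mathbb{N}$, then $X=\ell^{\infty}$.} Suppose $\varphi_X(n)\leq M<\infty$ for all $n$. On the one hand, for $f\in\ell^{\infty}$ the functions $|f|\chi_{\{1,\dots,N\}}\leq\|f\|_{\infty}\chi_{\{1,\dots,N\}}$ increase pointwise to $|f|$, so \ref{P2} together with the Fatou property \ref{P3} yields $\|f\|_X=\lim_N\||f|\chi_{\{1,\dots,N\}}\|_X\leq M\|f\|_{\infty}$, whence $\ell^{\infty}\hookrightarrow X$. On the other hand, for every atom $y$ we have $|f(y)|\,\varphi_X(1)=\|\,|f|\chi_{\{y\}}\,\|_X\leq\|f\|_X$ by \ref{P2}, and $\varphi_X(1)=\|\chi_{\{1\}}\|_X>0$ by \ref{Q1b}, so $\|f\|_{\infty}\leq\|f\|_X/\varphi_X(1)$ and $X\hookrightarrow\ell^{\infty}$. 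Thus $X$ and $\ell^{\infty}$ coincide as sets (with equivalent quasinorms, by Theorem~\ref{TEQBFS}), contradicting $X\neq\ell^{\infty}$. Consequently $\varphi_X$ is unbounded, and since it is non-decreasing this means $\varphi_X(n)\to\infty$.

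To conclude, I would test $T_{\phi}$ on $f_k:=\chi_{\{y_k\}}$. These satisfy $\|f_k\|_X=\varphi_X(1)$ for all $k$, while $T_{\phi}f_k=\chi_{\phi^{-1}(\{y_k\})}$ has $\|T_{\phi}f_k\|_X=\varphi_X(n_k)\to\infty$ by rearrangement invariance and the previous step. Hence $\sup_k\|T_{\phi}f_k\|_X/\|f_k\|_X=\infty$, so the linear operator $T_{\phi}$ is unbounded on $X$ and therefore not continuous — contradicting the hypothesis. Therefore $\phi$ is measure-bounded.

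The only genuinely non-trivial ingredient is the lemma characterising $\ell^{\infty}$ among the r.i.\ quasi-Banach function spaces over an atomic measure space through boundedness of its fundamental function; everything else is bookkeeping with characteristic functions. The points to handle with a little care are that the Fatou property must be applied to the increasing sequence $|f|\chi_{\{1,\dots,N\}}$ rather than to $f$ itself, and that $\varphi_X(1)>0$, which rules out the degenerate possibility $\varphi_X\equiv 0$ and makes the test functions $f_k$ have uniformly positive, bounded quasinorm.
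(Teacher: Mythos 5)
Your proof is correct and follows essentially the same route as the paper: reduce non-measure-boundedness to the existence of single atoms with arbitrarily large fibres, test $T_{\phi}$ on their characteristic functions, and derive a contradiction from the fact that $X\neq\ell^{\infty}$ forces the fundamental function (equivalently, the norms of $\chi_{[0,n)}$ in the representation space) to be unbounded. The only cosmetic difference is that you prove the characterisation of $\ell^{\infty}$ via boundedness of $\varphi_X$ directly from \ref{P2} and \ref{P3}, whereas the paper invokes the Fatou property to get $\chi_{\mathcal{R}}\in X$ and then cites \cite[Theorem~4.16]{MusilovaNekvinda24}.
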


\begin{proof}
	For the simplicity of notation we will assume that all the atoms have measure one. Assume that $\phi$ is not measure-bounded, i.e.~that we have for every $n$ some set $E_n$ for which $\mu(\phi^{-1}(E_n)) > n \mu(E_n)$. This estimate implies that $\mu(E_n) < \infty$, whence it follows that $E_n$ contains finitely many atoms. A standard combinatorial argument now yields that there is some atom $e_n$ such that $\mu(\phi^{-1}(e_n)) > n$. Since the sequence $\lVert \chi_{e_n} \rVert_X$ is constant with respect to $n$ (because $X$ is~r.i.), assuming that $T_{\phi}\colon X \to X$ is continuous yields that $\chi_{\phi^{-1}(e_n)}^* \geq \chi_{[0, n)}$ is a bounded sequence in $\overline{X}$, the canonical representation space of $X$. Consequently, the Fatou property \ref{P3} of the canonical representation quasinorm implies that $\chi_{[0, \infty)} \in \overline{X}$, which means that $\chi_{\mathcal{R}} \in X$. Finally, when the underlying space is completely atomic, then it is clear that this property uniquely characterises $\ell^\infty$; see also~\cite[Theorem~4.16]{MusilovaNekvinda24} for further context.
\end{proof}

For non-atomic measure spaces, the situation is more complicated, as the following example shows that $L^{\infty}$ is not the only space that admits continuous composition operators induced by non-measure-bounded $\phi$. Furthermore, the space we construct is even an r.i.~Banach function space and the underlying measure space is of finite measure.

\begin{example} \label{CounterExSV}
	Consider the measure space $([0,1], \lambda)$ and the function
	\begin{align} \label{CounterExSV:E1}
		\Phi(t) &= \begin{cases}
			0 & t = 0, \\
			\frac{1}{1 - \log(t)} & t \in (0, 1), \\
			1 & t \in [1, \infty).
		\end{cases}
	\end{align}
	Then $\Phi$ is continuous and quasiconcave on $[0, \infty)$, whence the weak Marcinkiewicz endpoint space $m_{\Phi}$, induced by the functional
	\begin{align*} 
		\lVert f \rVert_{m_{\Phi}} &= \sup_{t \in [0, \infty)} \Phi(t) f^*(t), &f \in \mathcal{M}([0,1], \lambda),
	\end{align*}
	is an r.i.~quasi-Banach function space over $([0,1], \lambda)$. This follows from the abstract theory contained in e.g.~\cite[Section~4]{MusilovaNekvinda24}. It is also easy to verify that $m_{\Phi} = M_{\Phi}$, where $M_{\Phi}$ is the strong Marcinkiewicz endpoint space corresponding to $\Phi$ (see e.g.~\cite[Theorem~5.1]{GogatishviliSoudsky14}, \cite[Proposition~7.10.5]{FucikKufner13}, or \cite[Theorem~4.11]{MusilovaNekvinda24}). $M_{\Phi}$ is an r.i.~Banach function space (see e.g.~\cite[Chapter~2, Section~5]{BennettSharpley88}), but it is more convenient for our purposes to work with the weaker quasinorm. It is also perhaps worth noting that $\Phi$ is a slowly varying function in the sense of e.g.~\cite[Definition~2.15]{PesaLK} and thus $m_{\Phi}$ can also be described as the Lorentz--Karamata space $L^{\infty, \infty, \Phi}$ (with equal quasinorm); it is precisely this very slow decay of $\Phi$ which makes it interesting to our purposes. Finally, let us note that $f \in \mathcal{M}([0,1], \lambda)$ belongs to $m_{\Phi}$ if and only if $f^*(t) \lesssim 1 - \log(t)$ on $[0,1]$.
	
	Let us now consider the mapping $\phi \colon [0,1] \to [0,1]$ given by $\phi(t) = t^n$, where $n \in \mathbb{N}$, $n\geq 2$ is arbitrary. It is clear that $\phi$ is not measure-bounded, as $\phi^{-1}([0, \varepsilon]) = [0, \varepsilon^{\frac{1}{n}}]$. It is also rather clear that we have for every $E \subseteq [0,1]$ that $\lambda(\phi^{-1}(E)) \leq \lambda(\phi^{-1}([0, \lambda(E) ) ) = \lambda(E)^{\frac{1}{n}}$ (since the Radon--Nikodym derivative of the measure $\lambda(\phi^{-1}(\cdot))$ is non-increasing).
	
	Consider now a non-negative simple function $s$,
	\begin{equation*}
		s = \sum_{i=0}^{k} a_i \chi_{E_i},
	\end{equation*}
	where $a_i > 0$ and $E_k \subseteq E_{k-1} \subseteq \dots \subseteq E_1 \subseteq E_0$. We easily observe that
	\begin{equation*}
		(T_{\phi}s)^* = \sum_{i=0}^{k} a_i \chi_{[0, \lambda(\phi^{-1}(E_i)))} \leq \sum_{i=0}^{k} a_i \chi_{[0, \lambda(E)^{\frac{1}{n}})} = T_{\phi}(s^*).
	\end{equation*}
	By approximating arbitrary non-negative $f \in m_{\Phi}$ $\lambda$-a.e.~by an increasing sequence of simple functions and recalling that the non-increasing rearrangement depends only on the modulus of the given function, we conclude that for every function $f \in m_{\Phi}$
	\begin{equation*}
		(T_{\phi} f)^* \leq T_{\phi}(f^*) \lesssim T_{\phi} \left( \frac{1}{\Phi} \right) \leq n \frac{1}{\Phi} \in m_{\Phi}.
	\end{equation*}
	Here, $\frac{1}{\Phi}(t) = 1- \log(t)$ on $[0, 1]$ is the largest function belonging to the space, as observed above. Hence, $T_{\phi}(f) \in m_{\Phi}$ for every $f \in m_{\Phi}$.  The continuity follows by Remark~\ref{RemarkTrivNiceProp}.
	
	On the other hand, if we consider the mapping $\widetilde{\phi} \colon [0,1] \to [0,1]$ given by $\widetilde{\phi}(t) = \exp \left( 1-\frac{1}{t} \right)$ then
	\begin{equation*}
		T_{\widetilde{\phi}} \left( \frac{1 }{\Phi}\right) (t) = \frac{1}{t} \notin m_{\Phi}.
	\end{equation*}
    Hence, unlike $L^{\infty}$, it is not true that every non-singular $\phi$ induces a bounded operator on $m_{\Phi}$.
\end{example}

The space constructed in Example~\ref{CounterExSV} is, in a sense, very close to $L^{\infty}$. By this we mean that it is the (strong) Marcinkiewicz endpoint space for a quasiconcave function that converges to zero at zero, but does so very slowly (see \cite[Theorem~4.15]{MusilovaNekvinda24} and \cite[Chapter~2, Section~5]{BennettSharpley88} for further context; we omit the lengthy presentation as this matter is only tangential to our main results). This observation motivates the following result.

\begin{proposition} \label{PropNecessityXX'}
	Let $\lVert \cdot \rVert_X$ be an r.i.~quasi-Banach function norm satisfying \ref{P5}, let $X$ be the corresponding r.i.~quasi-Banach function space, and let $X'$ be the corresponding associate space. If $T_{\phi}$ is continuous as both $T_{\phi} \colon X \to X$ and $T_{\phi} \colon X' \to X'$, then $\phi$ is measure-bounded.
\end{proposition}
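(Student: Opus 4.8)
The plan is to exploit the duality between the two composition operators $T_\phi \colon X \to X$ and $T_\phi \colon X' \to X'$ via the Hardy--Littlewood inequality, so as to produce a two-sided control on the measures $\mu(\phi^{-1}(E))$ in terms of $\mu(E)$. The crucial observation is that $T_\phi$ is (formally) its own associate operator in the following sense: for a non-negative simple function $f$ supported on a set of finite measure and an arbitrary $\varphi \in \mathcal{M}_+$, the change-of-variables identity for preimages gives $\int_{\mathcal{R}} (T_\phi f)\,\varphi \, d\mu = \int_{\mathcal{R}} f \, (S\varphi)\, d\mu$, where $S\varphi(x) = \sum_{y \in \phi^{-1}(\{x\})} \varphi(y)$ is the ``Frobenius--Perron'' push-forward. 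More usefully, I will avoid $S$ altogether and argue directly with characteristic functions. First I would fix a measurable set $E$ with $0 < \mu(E) < \infty$; since $T_\phi \colon X \to X$ and $T_\phi \colon X' \to X'$ are both continuous (and $X$, $X'$ are Banach function spaces satisfying \ref{P5}, so $\chi_E \in X$ and $\chi_E \in X'$ by \ref{P4}), the quantities $\lVert \chi_{\phi^{-1}(E)} \rVert_X = \lVert T_\phi \chi_E \rVert_X$ and $\lVert \chi_{\phi^{-1}(E)} \rVert_{X'} = \lVert T_\phi \chi_E \rVert_{X'}$ are both finite, with the obvious norm bounds $\lVert \chi_{\phi^{-1}(E)}\rVert_X \le \lVert T_\phi\rVert_{X\to X}\,\varphi_X(\mu(E))$ and similarly for $X'$.

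The next step is to convert these two norm bounds into a bound on $\mu(\phi^{-1}(E))$. Here I would invoke the Hardy--Littlewood inequality (Theorem~\ref{THLI}) together with the Hölder inequality for associate spaces (Theorem~\ref{THAS}) applied to the pair $\chi_{\phi^{-1}(E)}$, $\chi_{\phi^{-1}(E)}$: writing $F = \phi^{-1}(E)$, we have
\begin{equation*}
	\mu(F) = \int_{\mathcal{R}} \chi_F \chi_F \, d\mu \le \lVert \chi_F \rVert_X \lVert \chi_F \rVert_{X'} = \varphi_X(\mu(F))\,\varphi_{X'}(\mu(F)).
\end{equation*}
On the other hand, the product of the fundamental functions $\varphi_X(t)\varphi_{X'}(t)$ is known to be comparable to $t$ (this is the classical identity $\varphi_X \varphi_{X'} = \mathrm{id}$ for Banach function norms; in the quasi-Banach setting one has $\varphi_X \varphi_{X'} \approx \mathrm{id}$ — cf.\ the properties of fundamental functions recalled in Section~\ref{Sec:FundamentalFunction}), so the displayed inequality carries essentially no information by itself. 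The useful direction is obtained instead by combining the two operator bounds: from $\lVert \chi_F\rVert_X \le C_1 \varphi_X(\mu(E))$ and $\lVert \chi_F\rVert_{X'} \le C_2 \varphi_{X'}(\mu(E))$, where $C_1 = \lVert T_\phi\rVert_{X\to X}$ and $C_2 = \lVert T_\phi\rVert_{X'\to X'}$, the Hölder-type lower bound $\mu(F) \le \lVert\chi_F\rVert_X\lVert\chi_F\rVert_{X'}$ gives
\begin{equation*}
	\mu(\phi^{-1}(E)) \le C_1 C_2\, \varphi_X(\mu(E))\,\varphi_{X'}(\mu(E)) \approx C_1 C_2\, \mu(E),
\end{equation*}
which is precisely measure-boundedness with $A \approx C_1 C_2$.

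The main obstacle — and the step requiring the most care — is justifying the comparability $\varphi_X(t)\varphi_{X'}(t) \approx t$ in the r.i.\ quasi-Banach function space setting, since $X$ is only assumed to be a quasi-Banach function norm satisfying \ref{P5} (so its associate $X'$ is a genuine Banach function norm by Theorem~\ref{TFA}, but $X$ itself need not be). One clean way around this is to note that, since $X' $ is an r.i.\ Banach function space, $\chi_F \in X'$ with $\lVert\chi_F\rVert_{X'} = \varphi_{X'}(\mu(F))$, and one always has the elementary estimate $\varphi_{X}(t)\varphi_{X'}(t) \ge t$ coming directly from Hölder applied to $\chi_{E_t}$ (valid for any functional via Theorem~\ref{THAS}); the reverse comparability $\varphi_X(t)\varphi_{X'}(t) \lesssim t$ for the quasi-Banach case can be extracted from the results on admissible fundamental functions in \cite[Section~4]{MusilovaNekvinda24}. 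Alternatively — and this is probably the more robust route — one avoids the fundamental-function product entirely: apply Hölder in the form $\mu(F) = \int \chi_F\chi_F\,d\mu \le \lVert \chi_F\rVert_{X'}\lVert\chi_F\rVert_X$, bound $\lVert\chi_F\rVert_{X'}$ using continuity of $T_\phi$ on $X'$ by $\lVert T_\phi\rVert_{X'\to X'}\lVert\chi_E\rVert_{X'} = \lVert T_\phi\rVert_{X'\to X'}\varphi_{X'}(\mu(E))$, and bound $\lVert\chi_F\rVert_X$ similarly; then use that for the fixed set $E$, $\lVert\chi_E\rVert_X \lVert\chi_E\rVert_{X'} \ge \mu(E)$ is the wrong direction, so instead one should pair the $X'$-bound on $\chi_F$ with the crude bound $\lVert\chi_F\rVert_X \le \lVert T_\phi\rVert_{X\to X}\lVert\chi_E\rVert_X$ and then use the reverse Hölder-type inequality $\lVert\chi_E\rVert_X\lVert\chi_E\rVert_{X'}\approx\mu(E)$ only for the \emph{single} set $E$, which is still the comparability of fundamental functions. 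Thus the fundamental-function comparability genuinely is the heart of the matter, and I would state it as a lemma (or cite it) before running the short computation above.
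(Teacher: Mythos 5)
There is a genuine gap, and it sits exactly where you locate ``the heart of the matter'': the comparability $\varphi_X(t)\varphi_{X'}(t)\approx t$ is \emph{false} for general r.i.~quasi-Banach function norms satisfying \ref{P5}, so the lemma you propose to state or cite cannot be proved. A concrete counterexample is $X=\ell^{1,\infty}$ over $(\mathbb{N},m)$, i.e.~$\lVert a\rVert_X=\sup_{t>0}t\,a^*(t)$: this is an r.i.~quasi-Banach function norm satisfying \ref{P5} (with $C_E\approx 1+\log(1+m(E))$), its fundamental function is $\varphi_X(n)=n$, but its associate norm is $\lVert b\rVert_{X'}\approx\sum_{k\geq 1}b^*_k/k$, whose fundamental function is $\varphi_{X'}(n)\approx\log n$; hence $\varphi_X(n)\varphi_{X'}(n)\approx n\log n$, and your final display only yields $\mu(\phi^{-1}(E))\lesssim\mu(E)\log\mu(E)$, which is not measure-boundedness. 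The identity $\varphi_X\varphi_{X'}=\operatorname{id}$ is a genuinely Banach phenomenon: its proof passes from $g$ to $g^{**}(t)\chi_{E_t}\prec g$ and so needs the Hardy--Littlewood--P\'olya principle for $\lVert\cdot\rVert_X$, which may fail in the quasi-Banach setting --- this is precisely what $\ell^{1,\infty}$ exploits. Only the trivial direction $\varphi_X\varphi_{X'}\geq\operatorname{id}$ (which, as you correctly note, is the useless one here) survives in general, and replacing $\lVert\chi_F\rVert_X$ by $\lVert\chi_F\rVert_{X''}$ does not repair the argument, since the hypothesis only gives continuity of $T_\phi$ on $X$, not on $X''$.

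Your computation is correct and does give a clean, elementary proof in the special case where $\lVert\cdot\rVert_X$ is an r.i.~\emph{Banach} function norm (there $\varphi_X(t)\varphi_{X'}(t)=t$ holds exactly, by \cite[Chapter~2, Theorem~5.2]{BennettSharpley88}), and in that case it is simpler than the paper's argument. For the full quasi-Banach statement, however, the paper avoids characteristic functions altogether: it tests the H\"older inequality (Theorem~\ref{THAS}) against \emph{arbitrary} $f\in X$ and $\varphi\in X'$, writes $\int_{\mathcal{R}} T_\phi(f)T_\phi(\varphi)\,d\mu=\int_{\mathcal{R}} f\varphi h_\nu\,d\mu$ with $h_\nu$ the Radon--Nikodym derivative of $\mu\circ\phi^{-1}$, uses resonance together with Proposition~\ref{PropLandauRes} to conclude that $h_\nu$ is a pointwise multiplier on the Banach function space $X'$, and then invokes \cite[Theorem~1]{MaligrandaPersson89} to get $h_\nu\in L^\infty$. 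That route never needs any information about the fundamental function of $X$ itself, which is what lets it survive the quasi-Banach degeneracy.
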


\begin{proof}
	Denote by $\nu$ the measure given by $\nu(E) = \mu(\phi^{-1}(E))$. Our assumption that $T_{\phi}$ is continuous implies that $\phi$ is non-singular, hence $\nu << \mu$ and there exists the appropriate Radon--Nikodym derivative, which we denote $h_{\nu}$. We have to show that $h_{\nu} \in L^{\infty}$.
	
	Fix arbitrary $f \in X$, $\varphi \in X'$. Our assumptions on continuity of $T_{\phi}$ guarantee that
	\begin{equation*}
		\infty > \lVert  f \rVert_X \lVert \varphi \rVert_{X'} \gtrsim \lVert T_{\phi} f \rVert_X \lVert T_{\phi} \varphi \rVert_{X'} \geq\int_{\mathcal{R}} T_{\phi}(f) T_{\phi}(\varphi) \: d\mu = \int_{\mathcal{R}} T_{\phi} (f\varphi) \: d\mu = \int_{\mathcal{R}} f \varphi h_{\nu} \: d\mu.
	\end{equation*}
    Since $(\mathcal{R}, \mu)$ is assumed to be resonant and $X$ is r.i., we get
    \begin{equation*}
        \infty > \lVert  f \rVert_X \lVert \varphi \rVert_{X'} \gtrsim \int_0^{\infty} f^* (\varphi h_{\nu})^* \: d\mu.
    \end{equation*}
	As $f \in X$ was arbitrary, Proposition~\ref{PropLandauRes} implies that $\varphi h_{\nu} \in X'$. Since $\varphi \in X'$ was also arbitrary, this implies that $h_{\nu}$ is a pointwise multiplier on $X'$. \cite[Theorem~1]{MaligrandaPersson89} then yields $h_{\nu} \in L^{\infty}$ (we note the associate space $X'$ is an r.i.~Banach function space, see Theorem~\ref{TFA}).
\end{proof}

We have already observed in Proposition~\ref{PropCharInjectivity} that strict non-singularity of the symbol characterises injectivity of the corresponding composition operator. We shall now explore this behaviour more broadly.

\begin{theorem} \label{ThmInjectiveAndClosedRangeQBFS}
	Let $\phi$ and $T_\phi$ be as in Definition~\ref{DefCompo} and let $\phi$ be measure-bounded. Then the following three statements are equivalent:
	\begin{enumerate}
		\item $\phi$ is measure-bounded from below. \label{ThmInjectiveAndClosedRangeQBFS:i}
		\item $T_{\phi} \colon L^1 \to L^1$ is injective and has closed range. \label{ThmInjectiveAndClosedRangeQBFS:ii}
		\item For every r.i.~quasi-Banach function space $X$ there is a constant $c>0$ such that
        \begin{align*}
            \lVert f \rVert_X &\leq c \lVert T_\phi f\rVert_X & \text{for every } f \in X.
        \end{align*}
        In particular, $T_{\phi} \colon X \to X$ is injective and has closed range. \label{ThmInjectiveAndClosedRangeQBFS:iii}
	\end{enumerate}
\end{theorem}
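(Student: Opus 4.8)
The plan is to prove the cycle $\ref{ThmInjectiveAndClosedRangeQBFS:i}\Rightarrow\ref{ThmInjectiveAndClosedRangeQBFS:iii}\Rightarrow\ref{ThmInjectiveAndClosedRangeQBFS:ii}\Rightarrow\ref{ThmInjectiveAndClosedRangeQBFS:i}$. Throughout, the standing hypothesis that $\phi$ is measure-bounded is available, so by Theorem~\ref{ThmBoundedQBFS} the operator $T_\phi$ is already continuous on every r.i.~quasi-Banach function space; what is at stake is only the reverse, lower, estimate, which is why the relevant tool is the lower dilation bound rather than the upper one.

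For $\ref{ThmInjectiveAndClosedRangeQBFS:i}\Rightarrow\ref{ThmInjectiveAndClosedRangeQBFS:iii}$ I would run the argument of Theorem~\ref{ThmBoundedQBFS} backwards, using Lemma~\ref{LemmaDilEst}\ref{LemmaDilEst_ii} in place of Lemma~\ref{LemmaDilEst}\ref{LemmaDilEst_i}. Let $C$ be the measure-bound from below of $\phi$, so that $(T_\phi f)^*\ge D_C f^*$ on $[0,\infty)$ for all $f$. Both sides are non-increasing functions, so passing to the canonical representation quasinorm $\lVert\cdot\rVert_{\overline X}$ and using its lattice property~\ref{P2} gives $\lVert T_\phi f\rVert_X=\lVert (T_\phi f)^*\rVert_{\overline X}\ge\lVert D_C f^*\rVert_{\overline X}$. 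Now $f^*=D_{1/C}(D_C f^*)$, and $D_{1/C}$ is bounded on $\overline X$ by Theorem~\ref{TDRIS}, whence
\[
\lVert f\rVert_X=\lVert f^*\rVert_{\overline X}=\bigl\lVert D_{1/C}(D_C f^*)\bigr\rVert_{\overline X}\le\lVert D_{1/C}\rVert_{\overline X\to\overline X}\,\lVert D_C f^*\rVert_{\overline X}\le\lVert D_{1/C}\rVert_{\overline X\to\overline X}\,\lVert T_\phi f\rVert_X,
\]
which is the asserted inequality with $c=\lVert D_{1/C}\rVert_{\overline X\to\overline X}$. The ``in particular'' clause is then routine: the inequality forces $\operatorname{Ker}(T_\phi)=\{0\}$; and if $(T_\phi f_n)$ is Cauchy in $X$, the same inequality makes $(f_n)$ Cauchy, hence $f_n\to f\in X$ by completeness, and continuity of $T_\phi$ gives $T_\phi f_n\to T_\phi f$, so $\operatorname{Im}(T_\phi)$ is closed.

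The implication $\ref{ThmInjectiveAndClosedRangeQBFS:iii}\Rightarrow\ref{ThmInjectiveAndClosedRangeQBFS:ii}$ is obtained by simply specialising to $X=L^1$, which is an r.i.~Banach (hence quasi-Banach) function space: the estimate $\lVert f\rVert_1\le c\lVert T_\phi f\rVert_1$ says that $T_\phi$ is bounded below on $L^1$, which for a continuous operator is precisely injectivity together with closed range. For $\ref{ThmInjectiveAndClosedRangeQBFS:ii}\Rightarrow\ref{ThmInjectiveAndClosedRangeQBFS:i}$, the closed-range hypothesis makes $\operatorname{Im}(T_\phi)$ a closed subspace of $L^1$, hence a Banach space, so the bounded inverse theorem applied to the continuous bijection $T_\phi\colon L^1\to\operatorname{Im}(T_\phi)$ yields a constant $c$ with $\lVert f\rVert_1\le c\lVert T_\phi f\rVert_1$ for all $f\in L^1$. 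Testing on $f=\chi_E$ with $\mu(E)<\infty$ and recalling $T_\phi\chi_E=\chi_{\phi^{-1}(E)}$ gives $\mu(E)\le c\,\mu(\phi^{-1}(E))$, i.e.~$\phi$ is measure-bounded from below.

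The only step requiring genuine care is $\ref{ThmInjectiveAndClosedRangeQBFS:ii}\Rightarrow\ref{ThmInjectiveAndClosedRangeQBFS:i}$, namely the justification for invoking the bounded inverse theorem: this is legitimate precisely because ``closed range'' turns $\operatorname{Im}(T_\phi)$ into a Banach space and the injectivity hypothesis makes $T_\phi$ a continuous bijection onto it. A secondary (and trivial) point in $\ref{ThmInjectiveAndClosedRangeQBFS:i}\Rightarrow\ref{ThmInjectiveAndClosedRangeQBFS:iii}$ is to observe that $D_C f^*$ is non-increasing, so the elementary lattice property~\ref{P2} already suffices and no appeal to the stronger rearrangement monotonicity of Corollary~\ref{Corollary*<*=>||<||} is needed.
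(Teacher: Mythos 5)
Your proposal is correct and follows essentially the same route as the paper: the same cycle of implications, with \ref{ThmInjectiveAndClosedRangeQBFS:i}$\Rightarrow$\ref{ThmInjectiveAndClosedRangeQBFS:iii} via Lemma~\ref{LemmaDilEst}\ref{LemmaDilEst_ii} and the boundedness of $D_{C^{-1}}$ on $\overline{X}$, and \ref{ThmInjectiveAndClosedRangeQBFS:ii}$\Rightarrow$\ref{ThmInjectiveAndClosedRangeQBFS:i} via the open mapping/bounded inverse theorem followed by testing on characteristic functions of finite measure.
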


\begin{proof}
    It is clear that \ref{ThmInjectiveAndClosedRangeQBFS:iii} implies \ref{ThmInjectiveAndClosedRangeQBFS:ii}. Moreover, by an easy consequence (cf.~\cite[Corollary 8.7]{MeiseVogt97}) of the Open Mapping Theorem, if \ref{ThmInjectiveAndClosedRangeQBFS:ii} holds, there is $c>0$ such that $\lVert f\rVert_{L^1}\leq c\lVert T_\phi f\rVert_{L^1}$ for each $f\in L^1$, in particular
    \begin{equation*}
        \mu(E)=\lVert \chi_E\rVert_{L^1}\leq c \lVert T_\phi \chi_E\rVert_{L^1}=c\mu(\phi^{-1}(E))
    \end{equation*}
    for every measurable subset $E$ of $\mathcal{R}$ with finite measure. Hence, \ref{ThmInjectiveAndClosedRangeQBFS:ii} implies \ref{ThmInjectiveAndClosedRangeQBFS:i}.

    Finally, let $\phi$ be measure-bounded from below by $C>0$. As usual, let $\overline{X}$ be the canonical representation of the arbitrary r.i.~quasi-Banach function space $X$. Applying Lemma \ref{LemmaDilEst} \ref{LemmaDilEst_ii} and taking into account the continuity of the Dilation operators $D_t$ for any $t\in (0,\infty)$ (Theorem \ref{TDRIS}), we conclude that for each $f\in X$
    \begin{equation*}
        \lVert f\rVert_X=\lVert f^*\rVert_{\overline{X}}\leq \lVert D_{C^{-1}}\rVert_{\overline{X} \to \overline{X}}\lVert D_C f^*\rVert_{\overline{X}} \leq \lVert D_{C^{-1}}\rVert_{\overline{X} \to \overline{X}}\lVert (T_\phi f)^*\rVert_{\overline{X}}= \lVert D_{C^{-1}}\rVert_{\overline{X} \to \overline{X}} \lVert T_\phi f\rVert_{X},
    \end{equation*}
    so that \ref{ThmInjectiveAndClosedRangeQBFS:iii} follows from \ref{ThmInjectiveAndClosedRangeQBFS:i}.
\end{proof}

\begin{corollary} \label{CorPositiveInverse}
    Let $\phi$ and $T_\phi$ be as in Definition~\ref{DefCompo} and let $\phi$ be both measure-bounded and measure-bounded from below. Then for every r.i.~quasi-Banach function space $X$, $T_\phi$ is injective in $X$ and its inverse $T_\phi^{-1} \colon T_\phi(X) \to X$ is a linear, norm-continuous, and positive mapping, i.e.~$T_\phi^{-1} f\geq 0$ for every $f\in X, f \geq 0$.
\end{corollary}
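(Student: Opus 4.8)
The plan is to deduce the statement from the three preceding results: Theorem~\ref{ThmBoundedQBFS}, Theorem~\ref{ThmInjectiveAndClosedRangeQBFS}, and Proposition~\ref{PropCharInjectivity}, with only a small elementary addition to handle the domain of the inverse.

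First, since $\phi$ is measure-bounded, Theorem~\ref{ThmBoundedQBFS} shows that $T_\phi\colon X\to X$ is a continuous linear operator on every r.i.~quasi-Banach function space $X$. Next, as $\phi$ is both measure-bounded and measure-bounded from below, the implication \ref{ThmInjectiveAndClosedRangeQBFS:i}$\Rightarrow$\ref{ThmInjectiveAndClosedRangeQBFS:iii} of Theorem~\ref{ThmInjectiveAndClosedRangeQBFS} yields a constant $c>0$ with $\lVert f\rVert_X\le c\lVert T_\phi f\rVert_X$ for all $f\in X$. In particular $T_\phi$ is injective on $X$, so $T_\phi^{-1}\colon T_\phi(X)\to X$ is a well-defined linear map, with linearity immediate from linearity of $T_\phi$. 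Substituting $g=T_\phi f$ (equivalently $f=T_\phi^{-1}g$), the same inequality reads $\lVert T_\phi^{-1}g\rVert_X\le c\lVert g\rVert_X$ for every $g\in T_\phi(X)$, which is precisely the asserted norm-continuity of $T_\phi^{-1}$.

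For positivity I would first note that the hypotheses make $\phi$ strictly non-singular: measure-boundedness already gives non-singularity, and if $\mu(\phi^{-1}(E))=0$ then, writing $E$ as a countable union of sets of finite measure via $\sigma$-finiteness and applying \eqref{DefMeasureBoundedBelow:E1} to each piece, we conclude $\mu(E)=0$. Proposition~\ref{PropCharInjectivity} then applies and gives that $T_\phi f\ge 0$ $\mu$-a.e.~if and only if $f\ge 0$ $\mu$-a.e., i.e.~$T_\phi^{-1}$ is positive on $T_\phi(\mathcal{M})$. Restricting to $X\subseteq\mathcal{M}$: if $f\in T_\phi(X)$ with $f\ge 0$, write $f=T_\phi g$ with $g\in X$; then $T_\phi g\ge 0$ forces $g\ge 0$, that is $T_\phi^{-1}f\ge 0$.

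Since each step merely quotes an already-proved statement, there is no genuine obstacle here; the only point needing a little care is the bookkeeping of the domain $T_\phi(X)$ of $T_\phi^{-1}$ and the observation that the positivity statement of Proposition~\ref{PropCharInjectivity}, phrased on all of $\mathcal{M}$, restricts cleanly to that domain.
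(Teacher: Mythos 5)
Your proposal is correct and follows essentially the same route as the paper: injectivity and the norm estimate from Theorem~\ref{ThmInjectiveAndClosedRangeQBFS}, positivity from Proposition~\ref{PropCharInjectivity} after noting that the two measure bounds imply strict non-singularity. Your extra care in deriving strict non-singularity from \eqref{DefMeasureBoundedBelow:E1} via $\sigma$-finiteness (since that condition is only stated for sets of finite measure) is a detail the paper glosses over with ``clearly'', and it is handled correctly.
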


\begin{proof}
    The claim follows immediately from the positivity of $T_\phi$, and Theorem \ref{ThmInjectiveAndClosedRangeQBFS} combined with Proposition \ref{PropCharInjectivity}, since measure-boundedness and measure-boundedness from below clearly imply strict non-singularity. 
\end{proof}

The following example shows that the composition operator induced by a measure-bounded symbol which is also measure-bounded from below need not be bijective.

\begin{example} \label{ExampleNonSurjectiveButMeasureBoundedFromBelow}
    Consider the measure space $(\mathbb{N}, m)$; i.e.~the non-negative integers equipped with the counting measure. Then $\phi \colon \mathbb{N} \to \mathbb{N}$, given as
    \begin{align*}
        \phi(n) &= \begin{cases}
            0, &\text{for } n = 0, \\
            n-1, &\text{for } n \geq 1,
        \end{cases}
    \end{align*}
    is measure-bounded as well as measure-bounded from below. By Theorem \ref{ThmInjectiveAndClosedRangeQBFS}, $T_\phi$ has closed range and is injective on any r.i.~quasi-Banach function space of sequences, but since
    \begin{equation*}
    \chi_{\{0\}}\notin T_\phi(X)=\{ f \in X; \; f(0)=f(1)\},
    \end{equation*}
    $T_\phi$ is not surjective.
\end{example}

\subsection{Power-boundedness}

Let us now move on to the iterates of $T_{\phi}$. The situation is quite similar.

\begin{remark} \label{RemarkPowerCompo}
	It follows directly from the definition that $T^n_{\phi} = T_{\phi^n}$.
\end{remark}

\begin{definition}  \label{DefPowerMeasureBounded}
	A measurable map $\phi \colon \mathcal{R} \to \mathcal{R}$ is said to be \emph{power-measure-bounded}, if there is some $A \in (0, \infty)$ such that it holds for every $n \in \mathbb{N}$, $n \geq 1$, that
	\begin{align}
		\mu(\phi^{-n}(E)) &\leq A \mu(E) &\text{for every measurable } E \subseteq \mathcal{R}. \label{DefPowerMeasureBounded:E1}
	\end{align}
	
	Furthermore, the smallest $A$ for which \eqref{DefPowerMeasureBounded:E1} holds for every $n \in \mathbb{N}$, $n \geq 1$ will be called the \emph{power-measure-bound of} $\phi$.
\end{definition}

\begin{lemma}\label{LemmaDilPowerEst}
	Let $\phi$ be power-measure-bounded and denote its power-measure-bound by $A$. Then we have for every $n \in \mathbb{N}$, $n\geq 1$, and every $f \in \mathcal{M}(\mathcal{R}, \mu)$ that
	\begin{align*}
		(T^n_{\phi} f)^* &\leq D_{A^{-1}} f^* &\text{on $[0, \infty)$}.
	\end{align*}
\end{lemma}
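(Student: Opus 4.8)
The plan is to reduce the statement to the single-step estimate already established in Lemma~\ref{LemmaDilEst}~\ref{LemmaDilEst_i}, applied to the iterated symbol $\phi^n$ in place of $\phi$. The key observation is that power-measure-boundedness is precisely the assertion that \emph{every} iterate $\phi^n$, $n\geq 1$, is measure-bounded, with a common bound $A$; in particular, denoting by $A_n$ the measure-bound of $\phi^n$ (i.e.\ the smallest constant in \eqref{DefMeasureBounded:E1} for the map $\phi^n$), we have $A_n\leq A$ for every $n\geq 1$.

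First I would fix $n\geq 1$ and recall from Remark~\ref{RemarkPowerCompo} that $T_\phi^n=T_{\phi^n}$, so that $(T_\phi^n f)^*=(T_{\phi^n}f)^*$. Since $\phi^n$ is measure-bounded with measure-bound $A_n$, Lemma~\ref{LemmaDilEst}~\ref{LemmaDilEst_i} applied to $\phi^n$ gives
\begin{align*}
    (T_{\phi^n}f)^* &\leq D_{A_n^{-1}}f^* &\text{on }[0,\infty).
\end{align*}
Next I would use $A_n\leq A$, hence $A_n^{-1}\geq A^{-1}$, together with the fact that $f^*$ is non-increasing on $[0,\infty)$, to obtain for every $s\in[0,\infty)$ that $D_{A_n^{-1}}f^*(s)=f^*(A_n^{-1}s)\leq f^*(A^{-1}s)=D_{A^{-1}}f^*(s)$. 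Chaining these two inequalities yields $(T_\phi^n f)^*\leq D_{A^{-1}}f^*$ on $[0,\infty)$, and since $n\geq 1$ was arbitrary this is the claim.

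There is essentially no obstacle here: the lemma is a direct corollary of Lemma~\ref{LemmaDilEst}~\ref{LemmaDilEst_i} once one notes that the uniform bound $A$ in Definition~\ref{DefPowerMeasureBounded} dominates the individual measure-bounds $A_n$ of the iterates, the only (entirely routine) point being the monotonicity argument that lets one replace $A_n^{-1}$ by the possibly larger $A^{-1}$ inside the dilation. Alternatively, one could bypass Lemma~\ref{LemmaDilEst} and repeat its short computation verbatim: for $s\in[0,\infty)$,
\begin{equation*}
    (T_\phi^n f)_*(s)=\mu\bigl(\phi^{-n}(\{\,|f|>s\,\})\bigr)\leq A\,f_*(s),
\end{equation*}
whence $(T_\phi^n f)^*(t)=\inf\{s\geq 0;\ (T_\phi^n f)_*(s)\leq t\}\leq\inf\{s\geq 0;\ A f_*(s)\leq t\}=f^*(A^{-1}t)=D_{A^{-1}}f^*(t)$ for all $t\in[0,\infty)$.
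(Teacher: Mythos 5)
Your proof is correct and is essentially identical to the paper's own argument: both reduce to Lemma~\ref{LemmaDilEst}~\ref{LemmaDilEst_i} applied to $\phi^n$ via $T_\phi^n=T_{\phi^n}$, note that the measure-bound $A_n$ of $\phi^n$ satisfies $A_n\leq A$, and then pass from $D_{A_n^{-1}}f^*$ to $D_{A^{-1}}f^*$ using that $f^*$ is non-increasing. Nothing is missing.
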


\begin{proof}
	It follows from our assumption of power-measure-boundedness of $\phi$ that we have for every $n \in \mathbb{N}$, $n\geq 1$, that $\phi^n$ is measure-bounded and its measure-bound $A_n$ satisfies $A_n \leq A$. Applying Lemma~\ref{LemmaDilEst} and Remark~\ref{RemarkPowerCompo}, we obtain for every such $n$ and every $f \in \mathcal{M}(\mathcal{R}, \mu)$ that
	\begin{align*}
		(T^n_{\phi}f)^* &= (T_{\phi^n} f)^* \leq D_{A_n^{-1}} f^* \leq D_{A^{-1}} f^* &\text{on $[0, \infty)$},
	\end{align*}
	where the last estimate is due to $f^*$ being non-increasing.
\end{proof}

\begin{theorem} \label{ThmPowerBoundedQBFS}
	Let $\phi$ and $T_\phi$ be as in Definition~\ref{DefCompo}. Then the following three statements are equivalent:
	\begin{enumerate}
		\item $\phi$ is power-measure-bounded. \label{ThmPowerBoundedQBFS:i}
		\item $T_{\phi} \colon L^1 \to L^1$ is power-bounded. \label{ThmPowerBoundedQBFS:ii}
		\item $T_{\phi} \colon X \to X$ is power-bounded for every r.i.~quasi-Banach function space $X$. \label{ThmPowerBoundedQBFS:iii}
	\end{enumerate}
\end{theorem}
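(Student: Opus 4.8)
The plan is to follow the same strategy as in the proof of Theorem~\ref{ThmBoundedQBFS}, with the single-step dilation estimate of Lemma~\ref{LemmaDilEst}~\ref{LemmaDilEst_i} replaced by its uniform-in-$n$ analogue, Lemma~\ref{LemmaDilPowerEst}. The implication \ref{ThmPowerBoundedQBFS:iii}$\Rightarrow$\ref{ThmPowerBoundedQBFS:ii} is immediate, since $L^1$ is itself an r.i.~quasi-Banach function space.

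For \ref{ThmPowerBoundedQBFS:ii}$\Rightarrow$\ref{ThmPowerBoundedQBFS:i}, I would put $A := \sup_{n \geq 1} \lVert T_\phi^n \rVert_{L^1 \to L^1}$, which is finite by assumption, and test on characteristic functions. Using Remark~\ref{RemarkPowerCompo} and the fact that $T_{\phi^n}\chi_E = \chi_{\phi^{-n}(E)}$, one gets for every measurable $E \subseteq \mathcal{R}$ with $\mu(E) < \infty$ and every $n \geq 1$ that
\begin{equation*}
	\mu(\phi^{-n}(E)) = \lVert \chi_{\phi^{-n}(E)} \rVert_{L^1} = \lVert T_\phi^n \chi_E \rVert_{L^1} \leq A \lVert \chi_E \rVert_{L^1} = A\,\mu(E);
\end{equation*}
for sets of infinite measure the inequality \eqref{DefPowerMeasureBounded:E1} is vacuous. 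Hence $\phi$ is power-measure-bounded, with power-measure-bound at most $A$.

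The core of the argument is \ref{ThmPowerBoundedQBFS:i}$\Rightarrow$\ref{ThmPowerBoundedQBFS:iii}. First, taking $n=1$ in \eqref{DefPowerMeasureBounded:E1} shows that $\phi$ is measure-bounded, hence non-singular, so by Theorem~\ref{ThmBoundedQBFS} the operator $T_\phi$ --- and therefore, by Remark~\ref{RemarkPowerCompo}, each iterate $T_\phi^n = T_{\phi^n}$ --- is a well-defined continuous operator on every r.i.~quasi-Banach function space $X$. Let $\overline{X}$ be the canonical representation space of $X$ and let $A$ be the power-measure-bound of $\phi$. By Theorem~\ref{TDRIS} the dilation operator $D_{A^{-1}}$ is continuous on $\overline{X}$, and the constant $\lVert D_{A^{-1}} \rVert_{\overline{X} \to \overline{X}}$ depends only on $X$ and $A$ --- in particular \emph{not} on $n$. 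Combining Lemma~\ref{LemmaDilPowerEst} with the representation identity $\lVert g \rVert_X = \lVert g^* \rVert_{\overline{X}}$ from Theorem~\ref{TheoremRepresentation} and the lattice property \ref{P2} of $\lVert \cdot \rVert_{\overline{X}}$, I would compute, for every $n \geq 1$ and every $f \in X$,
\begin{equation*}
	\lVert T_\phi^n f \rVert_X = \lVert (T_\phi^n f)^* \rVert_{\overline{X}} \leq \lVert D_{A^{-1}} f^* \rVert_{\overline{X}} \leq \lVert D_{A^{-1}} \rVert_{\overline{X} \to \overline{X}} \lVert f^* \rVert_{\overline{X}} = \lVert D_{A^{-1}} \rVert_{\overline{X} \to \overline{X}} \lVert f \rVert_X,
\end{equation*}
and taking the supremum over $n \in \mathbb{N}$ (the case $n=0$ being trivial) yields that $\{ T_\phi^n ; \; n \in \mathbb{N} \}$ is equicontinuous, i.e.~$T_\phi$ is power-bounded on $X$.

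I do not expect a genuine obstacle here; the argument is essentially a uniform version of the proof of Theorem~\ref{ThmBoundedQBFS}. The one point that truly matters is that the dilation factor $A^{-1}$ in Lemma~\ref{LemmaDilPowerEst} is independent of $n$ --- this is precisely what upgrades ``each $T_\phi^n$ bounded'' to ``$T_\phi$ power-bounded'' --- and it is already encoded in the statement of that lemma, whose proof in turn reduces, via Lemma~\ref{LemmaDilEst} and Remark~\ref{RemarkPowerCompo}, to the fact that the measure-bound of $\phi^n$ is dominated by $A$ for all $n$. The only minor care needed is the vacuous case $\mu(E) = \infty$ in \ref{ThmPowerBoundedQBFS:ii}$\Rightarrow$\ref{ThmPowerBoundedQBFS:i}.
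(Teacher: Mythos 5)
Your proposal is correct and follows essentially the same route as the paper: the implications \ref{ThmPowerBoundedQBFS:iii}$\Rightarrow$\ref{ThmPowerBoundedQBFS:ii}$\Rightarrow$\ref{ThmPowerBoundedQBFS:i} are handled by the same trivial specialisation and testing on characteristic functions, and \ref{ThmPowerBoundedQBFS:i}$\Rightarrow$\ref{ThmPowerBoundedQBFS:iii} is exactly the paper's argument, combining Lemma~\ref{LemmaDilPowerEst} with the representation identity and the $n$-independent bound $\lVert D_{A^{-1}} \rVert_{\overline{X} \to \overline{X}}$ from Theorem~\ref{TDRIS}. Your emphasis on the uniformity of the dilation parameter in $n$ is precisely the point the paper highlights as well.
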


\begin{proof}
	It is again clear that \ref{ThmPowerBoundedQBFS:iii} implies \ref{ThmPowerBoundedQBFS:ii} which in turn implies \ref{ThmPowerBoundedQBFS:i}. As for the remaining implication, we proceed analogously to the proof of Theorem~\ref{ThmBoundedQBFS} and use Lemma~\ref{LemmaDilPowerEst} to show for every $n \in \mathbb{N}$, $n\geq 1$, and every $f \in \mathcal{M}(\mathcal{R}, \mu)$ that
	\begin{equation*}
		\lVert T^n_{\phi} f \rVert_X \leq \lVert D_{A^{-1}} \rVert_{\overline{X} \to \overline{X}}  \lVert f \rVert_X.
	\end{equation*}
	The conclusion now follows, as $\lVert D_{A^{-1}} \rVert_{\overline{X} \to \overline{X}}$ does not depend on $n$.
\end{proof}

Again, the necessity of power-measure-boundedness of the symbol $\phi$ for power-boundedness of $T_{\phi}$ on a given r.i.~quasi-Banach function space is not restricted to the spaces $L^p$, $p < \infty$ (and much less $L^1$ which we chose in the theorem for convenience). The argument for the next result is almost identical to that of Theorem~\ref{ThmNecessityAtomic}.

\begin{theorem} \label{ThmNecessityAtomicPower}
	Let $(\mathcal{R}, \mu)$ be completely atomic with all atoms having the same measure and let $X$ be an r.i.~quasi-Banach function space over $(\mathcal{R}, \mu)$ such that $X \neq \ell^{\infty}$. If $T_{\phi} \colon X \to X$ is power-bounded, then $\phi$ is power-measure-bounded.
\end{theorem}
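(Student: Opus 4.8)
The plan is to mirror the proof of Theorem~\ref{ThmNecessityAtomic} verbatim, replacing the single map $\phi$ by its iterates $\phi^{n}$ and invoking the power-boundedness hypothesis in place of mere continuity. As there, I would normalise so that every atom has measure one, and argue by contraposition: suppose $\phi$ is \emph{not} power-measure-bounded. Unravelling Definition~\ref{DefPowerMeasureBounded}, this means that for every $k \in \mathbb{N}$ there is some $n_k \in \mathbb{N}$, $n_k \geq 1$, and some measurable $E_k \subseteq \mathcal{R}$ with $\mu(\phi^{-n_k}(E_k)) > k\,\mu(E_k)$. In particular $\mu(E_k) < \infty$, so $E_k$ consists of finitely many atoms, and the same elementary combinatorial (pigeonhole) argument used in Theorem~\ref{ThmNecessityAtomic} produces a single atom $e_k \subseteq E_k$ with $\mu(\phi^{-n_k}(e_k)) > k$.

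Next I would exploit Remark~\ref{RemarkPowerCompo}, which gives $T_{\phi}^{n_k}\chi_{e_k} = T_{\phi^{n_k}}\chi_{e_k} = \chi_{\phi^{-n_k}(e_k)}$, a characteristic function of a set containing at least $k$ atoms; hence $\bigl(T_{\phi}^{n_k}\chi_{e_k}\bigr)^{*} \geq \chi_{[0,k)}$ on $[0,\infty)$. Since $X$ is r.i., the quantity $\lVert \chi_{e_k} \rVert_X = \varphi_X(1)$ does not depend on $k$, so power-boundedness of $T_{\phi}$ (i.e.\ $\sup_{n} \lVert T_{\phi}^{n} \rVert_{X \to X} =: M < \infty$) yields
\begin{equation*}
	\lVert \chi_{[0,k)} \rVert_{\overline{X}} \leq \bigl\lVert \bigl(T_{\phi}^{n_k}\chi_{e_k}\bigr)^{*} \bigr\rVert_{\overline{X}} = \lVert T_{\phi}^{n_k}\chi_{e_k} \rVert_X \leq M\,\varphi_X(1)
\end{equation*}
for every $k$, using Corollary~\ref{Corollary*<*=>||<||} and the representation $\lVert g \rVert_X = \lVert g^* \rVert_{\overline{X}}$ for the first two relations. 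Thus $\chi_{[0,k)}$ is a bounded sequence in the canonical representation space $\overline{X}$ which increases pointwise to $\chi_{[0,\infty)}$.

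Now the Fatou property~\ref{P3} of the canonical representation quasinorm forces $\chi_{[0,\infty)} \in \overline{X}$, equivalently $\chi_{\mathcal{R}} \in X$. For a completely atomic measure space this property characterises $\ell^{\infty}$ (see the end of the proof of Theorem~\ref{ThmNecessityAtomic} and \cite[Theorem~4.16]{MusilovaNekvinda24}), contradicting the assumption $X \neq \ell^{\infty}$. Hence $\phi$ must be power-measure-bounded. I do not anticipate any genuine obstacle here; the only point requiring a little care is the logical negation of power-measure-boundedness — one must not just negate measure-boundedness of a single $\phi^{n}$, but extract, for each $k$, a witness level $n_k$ together with a witness set $E_k$ — after which the combinatorial step and the Fatou argument go through exactly as before.
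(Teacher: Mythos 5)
Your proof is correct and follows exactly the route the paper intends: the paper gives no separate proof of this theorem but states that the argument is "almost identical to that of Theorem~\ref{ThmNecessityAtomic}", and your adaptation — negating power-measure-boundedness to extract a level $n_k$ and an atom $e_k$ with $\mu(\phi^{-n_k}(e_k))>k$, then using $T^{n_k}_\phi=T_{\phi^{n_k}}$, the uniform bound $M$ from power-boundedness, and the Fatou property to force $\chi_{\mathcal R}\in X$ — is precisely that adaptation. The one point you flag as needing care (the correct quantifier structure in the negation, with a witness pair $(n_k,E_k)$ for each $k$) is indeed the only place the argument differs from the single-iterate case, and you handle it correctly.
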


The analogue of Proposition~\ref{PropNecessityXX'} requires a small amount of extra work.

\begin{proposition} \label{PropNecessityXX'Power}
	Let $\lVert \cdot \rVert_X$ be an r.i.~quasi-Banach function norm satisfying \ref{P5}, let $X$ be the corresponding r.i.~quasi-Banach function space, and let $X'$ be the corresponding associate space. If $T_{\phi}$ is power-bounded as both $T_{\phi} \colon X \to X$ and $T_{\phi} \colon X' \to X'$, then $\phi$ is power-measure-bounded.
\end{proposition}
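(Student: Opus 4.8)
The plan is to follow the proof of Proposition~\ref{PropNecessityXX'} for the maps $\phi^n$, while tracking the constants carefully so that the resulting measure-bound of $\phi^n$ is controlled \emph{uniformly in $n$}. Exactly as there, continuity of $T_\phi$ on $X$ forces $\phi$ to be non-singular (Proposition~\ref{PropCharLuzin}), hence every $\phi^n$ is non-singular, so the measure $\nu_n(E):=\mu(\phi^{-n}(E))$ satisfies $\nu_n\ll\mu$; let $h_n\geq 0$ be its Radon--Nikodym derivative, so that $\mu(\phi^{-n}(E))=\int_E h_n\,d\mu$. Power-boundedness on $X$ and on $X'$ provides finite constants $M_X:=\sup_{n}\lVert T_\phi^n\rVert_{X\to X}$ and $M_{X'}:=\sup_{n}\lVert T_\phi^n\rVert_{X'\to X'}$; note that $X'$ is an r.i.~Banach function space by Theorem~\ref{TFA}, since $\lVert\cdot\rVert_X$ is a quasi-Banach function norm satisfying~\ref{P4} and~\ref{P5}, and that $(\mathcal{R},\mu)$ is $\sigma$-finite and resonant by our standing assumptions.

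Next I would fix $n\geq 1$, $f\in X$ and $\varphi\in X'$, and run the computation of Proposition~\ref{PropNecessityXX'} with $\phi^n$ in place of $\phi$. Using $T_\phi^n=T_{\phi^n}$ (Remark~\ref{RemarkPowerCompo}), the multiplicativity and positivity of composition operators (Remark~\ref{RemarkTrivNiceProp}), the definition of $h_n$, and Hölder's inequality for associate spaces (Theorem~\ref{THAS}),
\begin{equation*}
\int_{\mathcal{R}}\lvert f\varphi\rvert\, h_n\,d\mu=\int_{\mathcal{R}}T_\phi^n\big(\lvert f\varphi\rvert\big)\,d\mu=\int_{\mathcal{R}}\lvert T_\phi^n f\rvert\,\lvert T_\phi^n\varphi\rvert\,d\mu\leq\lVert T_\phi^n f\rVert_X\,\lVert T_\phi^n\varphi\rVert_{X'}\leq M_X M_{X'}\lVert f\rVert_X\,\lVert\varphi\rVert_{X'}.
\end{equation*}
Since $(\mathcal{R},\mu)$ is resonant and $\lVert\cdot\rVert_X$ is rearrangement-invariant, replacing $f$ by functions equimeasurable with it and taking the supremum turns the left-hand side into $\int_0^\infty f^*(\varphi h_n)^*\,d\lambda$, while the right-hand side is unchanged; by Proposition~\ref{PAS} the supremum of $\int_0^\infty g^*(\varphi h_n)^*\,d\lambda$ over $\lVert g\rVert_X\leq 1$ equals $\lVert\varphi h_n\rVert_{X'}$, so
\[
\lVert\varphi h_n\rVert_{X'}\leq M_X M_{X'}\,\lVert\varphi\rVert_{X'}\qquad\text{for every }\varphi\in X'.
\]
Thus each $h_n$ is a pointwise multiplier on $X'$ with multiplier norm at most $M_X M_{X'}$, with this bound \emph{independent of $n$}.

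Finally I would convert this uniform multiplier bound into a uniform $L^\infty$ bound. For an r.i.~Banach function space the pointwise-multiplier norm coincides with the $L^\infty$ norm: testing the displayed estimate on $g=\chi_E$ with $0<\mu(E)<\infty$ gives $(\operatorname{ess\,inf}_E\lvert h_n\rvert)\,\varphi_{X'}(\mu(E))\leq\lVert h_n\chi_E\rVert_{X'}\leq M_X M_{X'}\,\varphi_{X'}(\mu(E))$ by the lattice property~\ref{P2}, with $\varphi_{X'}(\mu(E))\in(0,\infty)$ by~\ref{P4}; dividing and exhausting $\mathcal{R}$ by sets of finite measure ($\sigma$-finiteness) yields $\lVert h_n\rVert_{L^\infty}\leq M_X M_{X'}$ for every $n$ (alternatively one invokes \cite[Theorem~1]{MaligrandaPersson89} for membership in $L^\infty$ and supplies the norm bound as above). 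Consequently $\mu(\phi^{-n}(E))=\int_E h_n\,d\mu\leq M_X M_{X'}\,\mu(E)$ for every $n\geq 1$ and every measurable $E\subseteq\mathcal{R}$, which is precisely power-measure-boundedness of $\phi$, with power-measure-bound at most $M_X M_{X'}$. The only step that differs from Proposition~\ref{PropNecessityXX'}, and the one requiring care, is propagating the constants explicitly through Hölder's inequality, Proposition~\ref{PAS} and the multiplier-to-$L^\infty$ passage so that the bound on $\lVert h_n\rVert_{L^\infty}$ does not deteriorate as $n\to\infty$; once this is done the argument is a routine transcription of the earlier proof.
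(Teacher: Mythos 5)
Your proof is correct and follows essentially the same route as the paper's: form the Radon--Nikodym derivatives $h_n$ of $\mu(\phi^{-n}(\cdot))$, use the power-bounds on $X$ and $X'$ together with H\"older's inequality to show each $h_n$ is a pointwise multiplier on $X'$ with norm at most $M_X M_{X'}$ uniformly in $n$, and conclude $\lVert h_n\rVert_{L^\infty}\leq M_X M_{X'}$. The only (harmless) difference is that you supply an elementary characteristic-function argument for the multiplier-to-$L^\infty$ passage where the paper simply cites \cite[Theorem~1]{MaligrandaPersson89}.
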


\begin{proof}
    Denote by $A$ some constant for which we have both
    \begin{align*}
        \lVert T_{\phi}^n f \rVert_X &\leq A \lVert f \rVert_X &\text{for every } f \in X \text{ and every } n \in \mathbb{N}, \\
        \lVert T_{\phi}^n \varphi \rVert_{X'} &\leq A \lVert \varphi \rVert_{X'} &\text{for every } \varphi \in X' \text{ and every } n \in \mathbb{N}.
    \end{align*}    
    Consider the functions $h_{\nu_n}$, Radon--Nikodym derivatives of the measures $\nu_n( \cdot ) = \mu(\phi^{-n}( \cdot ))$, obtained analogously as in the proof of Proposition~\ref{PropNecessityXX'}. 

	Fix arbitrary $f \in X$, $\lVert f \rVert_X \leq 1$, $\varphi \in X'$, and $n \in \mathbb{N}$. We compute
	\begin{equation*}
    \begin{split}
		\infty &> A^2 \lVert f \rVert_X \lVert \varphi \rVert_{X'} \\ 
        &\geq \lVert T_{\phi}^n f \rVert_X \lVert T_{\phi}^n \varphi \rVert_{X'} \\
        &\geq \int_{\mathcal{R}} T^n_{\phi}(f) T^n_{\phi}(\varphi) \: d\mu \\
        &= \int_{\mathcal{R}} T^n_{\phi} (f\varphi) \: d\mu \\
        &= \int_{\mathcal{R}} f \varphi h_{\nu_n} \: d\mu.
    \end{split}
	\end{equation*}
	Hence, $\lVert \varphi h_{\nu_n} \rVert_{X'} \leq A^2 \lVert \varphi \rVert_{X'}$. Since $\varphi \in X'$ was arbitrary, this implies that $h_{\nu_n}$ is a pointwise multiplier on $X'$ and the norm of the corresponding operator is at most $A^2$. \cite[Theorem~1]{MaligrandaPersson89} then yields $h_{\nu_n} \in L^{\infty}$, $\lVert h_{\nu_n} \rVert_{L^{\infty}} \leq A^2$. Thence, $\phi$ is power-measure-bounded.
\end{proof}

The analogue of Example~\ref{CounterExSV} is more complicated. It is obvious from the there presented arguments that the rather simple and well-behaved symbol we used does not lead to a power-bounded composition operator. We fix this issue by introducing a shift into the symbol, which ensures that the increase of mass the composition creates does not propagate through the iterations. However, this introduces new technical challenges and it also forces us to require that the underlying measure space has infinite measure.

\begin{example} \label{CounterExSVPower}
	Consider the measure space $([0, \infty), \lambda)$ and the function $\Phi \colon [0, \infty) \to [0, \infty)$ given by \eqref{CounterExSV:E1}. Since we have changed the underlying measure space, the desired weak Marcinkiewicz endpoint space $m_{\Phi}$ is now induced by a functional given by the same formula, but with a different domain:
	\begin{align*}
		\lVert f \rVert_{m_{\Phi}} &= \sup_{t \in [0, \infty)} \Phi(t) f^*(t), &f \in \mathcal{M}([0,\infty), \lambda).
	\end{align*}
	However, all the considerations presented in Example~\ref{CounterExSV} remain valid, i.e.~it is an r.i.~quasi-Banach function space that is equivalent to the r.i.~Banach function space $M_{\Phi}$ and, notably, $f \in \mathcal{M}([0,\infty), \lambda)$ belongs to $m_{\Phi}$ if and only if $f^*(t) \lesssim 1 - \log(t)$ on $[0,1]$ (since then automatically $\sup_{t \in (1, \infty)} \Phi(t) f^*(t) = f^*(1) < \infty$). This time, however, we will also need the quantitative version of the statement, that is
    \begin{align} \label{CounterExSVPower:E1}
        f^* &\leq \lVert f \rVert_{m_{\Phi}} \frac{1}{\Phi} & \text{for every } f \in m_{\Phi}.
    \end{align}
    
    As for the symbol for the composition operator, the construction gets more complicated. We consider the transformation $\phi\colon [0, \infty) \to [0, \infty)$ given by
    \begin{align*}
        \phi(t) &= \begin{cases}
            1 + t^n &\text{for } t \in [0, 1), \\
            n(t-1) + 2 &\text{for } t \in [1, \infty),
        \end{cases}
    \end{align*}
    where $n \in \mathbb{N}$, $n \geq 2$ is arbitrary. It is then clear that $\phi$ is not even measure-bounded, much less power-measure bounded, since $\phi^{-1}([1, 1+\varepsilon]) = [0, \varepsilon^{\frac{1}{n}}]$, $\varepsilon \in (0,1)$. Furthermore, we may compute the Radon--Nikodym derivative of the measure $\nu( \cdot ) = \mu(\phi^{-1}( \cdot ))$, which we denote $h_{\nu}$, obtaining
    \begin{align} \label{CounterExSVPower:E2}
        h_{\nu}(s) &= \begin{cases}
            0 &\text{for } s \in [0, 1), \\
            \frac{1}{n} (s-1)^{\frac{1}{n} - 1} &\text{for } s \in (1, 2), \\
            \frac{1}{n} &\text{for } s \in [2, \infty).
        \end{cases}
    \end{align}    
    Consider now for arbitrary $E \subseteq [0, \infty)$ the set $\widetilde{E} = E \cap [1, \infty)$. Then \eqref{CounterExSVPower:E2} implies that
    \begin{align*}
        \lambda(\phi^{-1}(E)) &= \lambda(\phi^{-1}(\widetilde{E})) \leq \lambda(\phi^{-1}([1, 1+\lambda(\widetilde{E})))) = \begin{cases}
            \lambda(\widetilde{E})^{\frac{1}{n}} &\text{ if }  \lambda(\widetilde{E}) \in [0, 1], \\
            \frac{\lambda(\widetilde{E})-1}{n} + 1 &\text{ if }  \lambda(\widetilde{E}) \in [1, \infty],
        \end{cases}
    \end{align*}
    since $h_{\nu}$ is zero on $[0, 1)$ and non-increasing on $(1, \infty)$. Consequently, given a simple function $s$ and writing it down as
	\begin{equation*}
		s = \sum_{i=0}^{k} a_i \chi_{E_i} + \overline{s},
	\end{equation*}
	where $a_i > 0$ and $E_k \subseteq E_{k-1} \subseteq \dots \subseteq E_1 \subseteq E_0 \subseteq [1, \infty)$, while $\overline{s} = s\chi_{[0,1)}$, we easily observe that
	\begin{equation*}
		(T_{\phi}s)^* = \sum_{i=0}^{k} a_i \chi_{[0, \lambda(\phi^{-1}(E_i)))} \leq \sum_{i=0}^{k} a_i \chi_{[0, \lambda(\phi^{-1}( [1, 1+\lambda(E_i)) )) )} = T_{\phi} \left(\widetilde{s} \right),
	\end{equation*}
    where
    \begin{align} \label{CounterExSVPower:E3}
        \widetilde{s}(t) &= \begin{cases}
            0 &\text{for } t \in [0, 1), \\
            (s \chi_{[1, \infty)})^*(t -1) &\text{for } t \in [1, \infty).
        \end{cases}
    \end{align}
    By approximating arbitrary non-negative $f \in m_{\Phi}$ $\lambda$-a.e.~by an increasing sequence of simple functions and recalling that the non-increasing rearrangement depends only on the modulus of the given function, we conclude that for every function $f \in m_{\Phi}$
    \begin{equation*}
        (T_{\phi}f)^* \leq T_{\phi} \left( \widetilde{f} \right),
    \end{equation*}
    where $\widetilde{f}$ is defined for a given function $f$ as in \eqref{CounterExSVPower:E3}. Consequently, considering the function $\Psi \colon [0, \infty) \to [0, \infty)$ given by the formula
    \begin{align*} 
        \Psi(t) &= \begin{cases}
            0 &\text{for } t \in [0, 1), \\
            \frac{1}{\Phi}(t-1) &\text{for } t \in [1, \infty),
        \end{cases}
    \end{align*}
    we observe that $\Psi = \widetilde{\Psi}$ and $\Psi^* = \frac{1}{\Phi}$, so $\Psi$ is at the same time the essentially largest function in $m_{\Phi}$ and optimally rearranged in order to maximise $(T_{\phi}\Psi)^*$. Thence, we get for every $f \in m_{\Phi}$ that \eqref{CounterExSVPower:E1} implies $\widetilde{f} \leq \lVert f \rVert_{m_{\Phi}} \Psi$ and thus 
	\begin{equation*}
		(T_{\phi} f)^* \leq T_{\phi} \left( \widetilde{f} \right) \leq \lVert f \rVert_{m_{\Phi}}  T_{\phi} \left( \Psi \right) \leq \lVert f \rVert_{m_{\Phi}} ( n(1- \log(\cdot)) \chi_{(0, 1)} + \chi_{[1, \infty)} )\leq \lVert f \rVert_{m_{\Phi}} n \frac{1}{\Phi} \in m_{\Phi}.
	\end{equation*}
    It follows that $T_{\phi}$ is continuous on $m_{\Phi}$ and $\lVert T_{\phi} \rVert_{m_{\Phi} \to m_{\Phi}} \leq n$.

    However, we need to show that $T_{\phi}$ is power-bounded. We will show that the norm in fact does not increase.

    Let $k \geq 2$ be fixed. Let us define the following sequence of numbers:
    \begin{align*}
        a_i &= \begin{cases}
            0& \text{for } i = 0, \\
            1& \text{for } i = 1, \\
            2& \text{for } i = 2, \\
            2 + \sum_{j = 1}^{i - 2} n^j & \text{for } i \geq 3;
        \end{cases}
    \end{align*}
    and of sets:
    \begin{equation*}
        E_i = [a_i, a_{i+1}].
    \end{equation*}
    Then it holds for $i \geq 1$ that $\phi^{-1}(E_i) = E_{i-1}$, while $\phi^{-1}(E_0) = \emptyset$. Thus, by iteration,
    \begin{align} \label{CounterExSVPower:E4}
        \phi^{-(k-1)} (E_i) &= \begin{cases}
            E_{i - (k-1)} &\text{for } i \geq k-1, \\
            \emptyset &\text{for } i \leq k-2.
        \end{cases} 
    \end{align}

    Let now $f \in m_{\Phi}$ be arbitrary and denote
    \begin{align*}
        f_0 &= f \chi_{\cup_{i = 0}^{k-2} E_i}, \\
        f_1 &= f \chi_{E_{k-1}}, \\
        f_2 &= f \chi_{\cup_{i = k}^{\infty} E_i}.
    \end{align*}
    Then of course $f = f_0 + f_1 + f_2$. Investigating those three functions separately (in the light of \eqref{CounterExSVPower:E4}), we observe that
    \begin{itemize}
        \item $T_{\phi}^{k-1} f_0 = 0$,
        \item $T_{\phi}^{k-1} f_1$ is supported on a subset of $E_0$ and thus $T_{\phi} (T_{\phi}^{k-1} f_1) = 0$,
        \item and finally $T_{\phi}^{k-1} f_2$ has support inside $[1, \infty)$, which implies that $T_{\phi}^{k-1} f_2 = D_{n^{k-1}} f_2$, where $D_{n^{k-1}}$ is a dilation operator as introduced in Definition~\ref{DDO}.
    \end{itemize}
    We recall that $D_{n^{k-1}}$ is a contraction on every r.i.~quasi-Banach function space (because $n^{k-1} \geq 1$; the argument for the values of the dilation parameter larger than one is rather trivial,  see \cite[Thorem~3.23]{NekvindaPesa24}). Thus,
    \begin{equation*}
        \lVert T_{\phi}^{k} f \rVert_{m_{\Phi}} \leq 0 + 0 + \lVert T_{\phi} ( D_{n^{k-1}} f_2 ) \rVert_{m_{\Phi}} \leq n \lVert f \rVert_{m_{\Phi}}.
    \end{equation*}
    That is, $T_{\phi}$ is power-bounded on $m_{\Phi}$.
\end{example}

\subsection{Absolute continuity of the quasinorm}

We conclude this section by observing that measure-boundedness of $\phi$ ensures that $T_{\phi}$ preserves absolute continuity of the quasinorm, while power-measure-boundedness of $\phi$ ensures that this preservation is in a way uniform for the iterates $\{ T_{\phi}^n; \; n \in \mathbb{N} \}$.

\begin{corollary} \label{CorPresXa}
	Let $\phi$ and $T_\phi$ be as in Definition~\ref{DefCompo}, let $X$ be an r.i.~quasi-Banach function space. Then:
	\begin{enumerate}
		\item \label{CorPresXa_i} If $\phi$ is measure-bounded and $f \in X$ has absolutely continuous quasinorm, then $T_{\phi}f \in X$ also has absolutely continuous quasinorm.
		\item \label{CorPresXa_ii} If $\phi$ is power-measure-bounded and $f \in X$ has absolutely continuous quasinorm, then the sequence $(T^n_{\phi}f)^* \in \overline{X}$ has uniformly absolutely continuous quasinorm, i.e. it holds for every sequence $E_k$ of subsets of $[0, \infty)$ satisfying $\chi_{E_k} \to 0$ $\lambda$-a.e.~as $k \to \infty$ that
		\begin{align*}
			\sup_{n \in \mathbb{N}} \lVert (T^n_{\phi}f)^* \chi_{E_k} \rVert_{\overline{X}} &\to 0  &\text{as } k \to \infty.
		\end{align*}
	\end{enumerate}
\end{corollary}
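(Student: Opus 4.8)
The plan is to pass to the canonical representation space $\overline{X}$ over $([0,\infty),\lambda)$, where both dilation estimates of Lemmas~\ref{LemmaDilEst} and \ref{LemmaDilPowerEst} become a \emph{pointwise} domination of $(T_\phi^n f)^*$ by a single fixed function, and then exploit that $\left(\overline{X}\right)_a$ is an order ideal. For part~\ref{CorPresXa_i}, I would first note that $\phi$ is non-singular and, by Theorem~\ref{ThmBoundedQBFS}, $T_\phi f \in X$. Let $A$ be the measure-bound of $\phi$. Lemma~\ref{LemmaDilEst}\ref{LemmaDilEst_i} gives $(T_\phi f)^* \leq D_{A^{-1}} f^*$ on $[0,\infty)$. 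Since $f \in X_a$, Theorem~\ref{ThmRepreACqN} yields $f^* \in \left(\overline{X}\right)_a$, and Lemma~\ref{LemDilPresXa} then gives $D_{A^{-1}} f^* \in \left(\overline{X}\right)_a$. As $\left(\overline{X}\right)_a$ is an order ideal in $\overline{X}$ (Proposition~\ref{PropXaOrdId}) and $0 \leq (T_\phi f)^* \leq D_{A^{-1}} f^*$ $\lambda$-a.e., we conclude $(T_\phi f)^* \in \left(\overline{X}\right)_a$, and one more application of Theorem~\ref{ThmRepreACqN} gives $T_\phi f \in X_a$.

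For part~\ref{CorPresXa_ii}, let $A$ be the power-measure-bound of $\phi$; after replacing $A$ by $\max\{A,1\}$ we may assume $A \geq 1$, so that $D_{A^{-1}} f^*(s) = f^*(A^{-1}s) \geq f^*(s)$ because $f^*$ is non-increasing. By Lemma~\ref{LemmaDilPowerEst} we then have $(T_\phi^n f)^* \leq D_{A^{-1}} f^*$ on $[0,\infty)$ for every $n \geq 1$, and the same bound holds for $n = 0$ by the preceding inequality, so $(T_\phi^n f)^* \leq D_{A^{-1}} f^*$ for all $n \in \mathbb{N}$. Exactly as in part~\ref{CorPresXa_i}, $D_{A^{-1}} f^* \in \left(\overline{X}\right)_a \subseteq \overline{X}$, so in particular each $(T_\phi^n f)^* \in \overline{X}$. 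Finally, for any sequence $E_k \subseteq [0,\infty)$ with $\chi_{E_k} \to 0$ $\lambda$-a.e., the lattice property~\ref{P2} of $\lVert \cdot \rVert_{\overline{X}}$ gives
\[
	\sup_{n \in \mathbb{N}} \lVert (T_\phi^n f)^* \chi_{E_k} \rVert_{\overline{X}} \leq \lVert D_{A^{-1}} f^* \chi_{E_k} \rVert_{\overline{X}},
\]
and the right-hand side tends to $0$ as $k \to \infty$ precisely because $D_{A^{-1}} f^*$ has absolutely continuous quasinorm in $\overline{X}$; this is the asserted uniform absolute continuity.

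The argument is essentially an assembly of results established earlier in the paper, so I do not expect a genuine obstacle. The only points that need care are bookkeeping which statements are formulated on $(\mathcal{R},\mu)$ and which on $([0,\infty),\lambda)$ — which is why one routes the conclusion through the representation theorem (Theorem~\ref{ThmRepreACqN}) together with the order-ideal property of $\left(\overline{X}\right)_a$, rather than appealing directly to Theorem~\ref{ThmInheritanceACqN} — and, in part~\ref{CorPresXa_ii}, making sure the dominating function $D_{A^{-1}} f^*$ also controls the $n=0$ term, which is handled by normalising $A \geq 1$.
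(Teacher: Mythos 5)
Your proof is correct and follows essentially the same route as the paper: pass to the canonical representation, dominate $(T_\phi^n f)^*$ pointwise by $D_{A^{-1}}f^*\in\left(\overline{X}\right)_a$ via Lemmas~\ref{LemmaDilEst}/\ref{LemmaDilPowerEst}, \ref{LemDilPresXa} and Theorem~\ref{ThmRepreACqN}, and conclude by the lattice/order-ideal structure. Your normalisation $A\geq 1$ to cover the $n=0$ term is a detail the paper's one-line proof glosses over (it handles the analogous issue explicitly only later, in Proposition~\ref{PropHLPEstimate}), and your use of Proposition~\ref{PropXaOrdId} in $\overline{X}$ in place of Theorem~\ref{ThmInheritanceACqN} is an interchangeable substitute here, since the dominating function is already non-increasing.
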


\begin{proof}
	\ref{CorPresXa_i} follows by combining Lemma~\ref{LemmaDilEst}, Lemma~\ref{LemDilPresXa}, and Theorem~\ref{ThmInheritanceACqN}. As for \ref{CorPresXa_ii}, we are now working with rearrangements, for which we have a uniform pointwise estimate by the function $D_{A^{-1}}f^* \in \left( \overline{X} \right)_a$ (see Lemma~\ref{LemmaDilPowerEst}, Lemma~\ref{LemDilPresXa}, and Theorem~\ref{ThmRepreACqN}), hence it follows directly from the lattice property of $\lVert \cdot \rVert_{\overline{X}}$ that
	\begin{align*}
		\sup_{n \in \mathbb{N}} \lVert (T^n_{\phi}f)^* \chi_{E_k} \rVert_{\overline{X}} \leq \lVert D_{A^{-1}}f^* \chi_{E_k} \rVert_{\overline{X}} &\to 0  &\text{as } k \to \infty.
	\end{align*}
\end{proof}

\section{A rearrangement-invariant topology on r.i.~quasi-Banach function spaces} \label{SecRITopo}

We continue working under the standing assumption that the underlying measure space $(\mathcal{R}, \mu)$ is $\sigma$-finite and resonant.

\subsection{Absolute continuity of rearrangements}\label{SectionACR}

We begin by briefly examining the properties of functions that have absolutely continuous rearrangement. These properties are analogues to those of functions having absolutely continuous quasinorm in some quasi-Banach function space, which is the motivation for our choice of terminology.

\begin{proposition} \label{PropACR}
	Let $f \in \mathcal{M}_{(ACR)}$ and assume that the sequence $E_k \subseteq \mathcal{R}$ satisfies $\chi_{E_k} \to 0$ $\mu$-a.e.~as $k \to \infty$. Then
	\begin{align*}
		\lim_{k \to \infty} (f \chi_{E_k})^*(t) &= 0 & \text{for } t \in (0, \infty).
	\end{align*}
\end{proposition}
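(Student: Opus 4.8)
The plan is to reduce the statement to the classical dominated convergence theorem applied on a set of finite measure, using only the defining formulas for the distribution function $f_*$ and the non-increasing rearrangement $f^*$ from Definition~\ref{DefNIR}.

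First I would fix $t \in (0, \infty)$ and $\varepsilon > 0$, and exploit the hypothesis $f \in \mathcal{M}_{(ACR)}$, i.e.\ $f^*(s) \to 0$ as $s \to \infty$, to find some $t_0 \in (0, \infty)$ with $f^*(t_0) < \varepsilon$. The point to check carefully (the only mildly delicate step) is that this forces $f_*(\varepsilon) \leq t_0$: indeed $f^*(t_0) < \varepsilon$ means there is $s < \varepsilon$ with $f_*(s) \leq t_0$, and $f_*$ is non-increasing, so $f_*(\varepsilon) \leq f_*(s) \leq t_0 < \infty$. Writing $A_\varepsilon = \{x \in \mathcal{R}; \; |f(x)| > \varepsilon\}$, this says $\mu(A_\varepsilon) \leq t_0 < \infty$.

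Then I would note that $\{x \in E_k; \; |f(x)| > \varepsilon\} = A_\varepsilon \cap E_k$, so that $\chi_{A_\varepsilon \cap E_k} = \chi_{A_\varepsilon}\chi_{E_k} \to 0$ $\mu$-a.e.\ as $k \to \infty$ while being dominated by the integrable function $\chi_{A_\varepsilon}$. The dominated convergence theorem then gives $(f\chi_{E_k})_*(\varepsilon) = \mu(A_\varepsilon \cap E_k) \to 0$, so there is $K$ with $(f\chi_{E_k})_*(\varepsilon) \leq t$ for all $k \geq K$; by the definition of the non-increasing rearrangement as an infimum this means $(f\chi_{E_k})^*(t) \leq \varepsilon$ for all $k \geq K$. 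As $\varepsilon > 0$ was arbitrary and $(f\chi_{E_k})^*(t) \geq 0$, we conclude $(f\chi_{E_k})^*(t) \to 0$, which is the claim. There is no real obstacle beyond the bookkeeping with distribution functions; the whole argument mirrors the classical proof that functions with absolutely continuous norm satisfy $\lVert f\chi_{E_k}\rVert_X \to 0$.
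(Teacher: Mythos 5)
Your proposal is correct and follows essentially the same route as the paper's proof: both reduce the claim to observing that $\{\lvert f\rvert>\varepsilon\}$ has finite measure (a consequence of $f^*(s)\to 0$), applying dominated convergence to $\chi_{\{\lvert f\rvert>\varepsilon\}\cap E_k}$, and translating $(f\chi_{E_k})_*(\varepsilon)\leq t$ back into $(f\chi_{E_k})^*(t)\leq\varepsilon$. The only cosmetic difference is that the paper gets $\mu(\{\lvert f\rvert>\varepsilon\})<\infty$ via the identity $\mu(\{\lvert f\rvert>\varepsilon\})=\lambda(\{f^*>\varepsilon\})$, whereas you derive the bound $f_*(\varepsilon)\leq t_0$ directly from the infimum definition of $f^*$; both are valid.
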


\begin{proof}
	Fix $f$ and $E_k$ as in the assumptions and arbitrary $\varepsilon > 0$. Since $f \in \mathcal{M}_{(ACR)}$, we have for $F \subseteq \mathcal{R}$ given by
	\begin{equation*}
		F = \{ \lvert f \rvert > \varepsilon\}
	\end{equation*}
	that
	\begin{equation*}
		\mu(F) = \lambda(\{ f^* > \varepsilon\}) < \infty.
	\end{equation*}
	Hence, for any $t \in (0,\infty)$ there is an $k_0$ such that for every $k \geq k_0$ we have 
	\begin{equation*}
		\lambda(\{ (f \chi_{E_k})^* > \varepsilon \}) = \mu(F\cap E_k) < t,
	\end{equation*}
	which implies that $(f \chi_{E_k})^*(t) \leq \varepsilon$.
\end{proof}

\begin{proposition} \label{PropACRDom}
	Let $g \in \mathcal{M}_{(ACR)}$, $g \geq 0$, and assume that we have a sequence of functions $f_n \in \mathcal{M}_0$ such that $\lvert f_n \rvert \leq g$ $\mu$-a.e.~and that there is some function $f \in \mathcal{M}_0$ such that $f_n \to f$ $\mu$-a.e. Then 
	\begin{align*}
		\lim_{n \to \infty} (f - f_n)^*(t) &= 0 & \text{for } t \in (0, \infty).
	\end{align*}
\end{proposition}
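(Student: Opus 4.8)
The plan is to combine the pointwise domination $|f-f_n|\le 2g$ (obtained by passing to the limit in $|f_n|\le g$ to get $|f|\le g$ $\mu$-a.e.) with the already established Proposition~\ref{PropACR}, which is tailor-made for the situation of an $\mathcal{M}_{(ACR)}$-function cut by characteristic functions of sets shrinking to the empty set. The overall structure mirrors the classical dominated-convergence argument for Banach function norms (cf.~\cite[Chapter~1, Proposition~3.6]{BennettSharpley88}), only performed at the level of non-increasing rearrangements rather than of a norm.

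Fix $t\in(0,\infty)$; it suffices to prove $\limsup_{n\to\infty}(f-f_n)^*(t)\le\delta$ for every $\delta>0$. Given such a $\delta$, set $E_n=\{x\in\mathcal{R};\ |f(x)-f_n(x)|>\delta\}$ and split $f-f_n=(f-f_n)\chi_{E_n}+(f-f_n)\chi_{\mathcal{R}\setminus E_n}$. Since $|(f-f_n)\chi_{\mathcal{R}\setminus E_n}|\le\delta$ $\mu$-a.e., its rearrangement is bounded by $\delta$ everywhere, so the almost-subadditivity \eqref{EqAlmostSubAdd*} of the rearrangement gives
\[
	(f-f_n)^*(t)\le\bigl((f-f_n)\chi_{E_n}\bigr)^*\!\left(\tfrac{t}{2}\right)+\delta .
\]

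To control the remaining term, observe that the a.e.~convergence $f_n\to f$ forces $\chi_{E_n}\to 0$ $\mu$-a.e.; indeed, for $\mu$-a.e.~$x$ one has $|f(x)-f_n(x)|\le\delta$ for all large $n$. Moreover $|(f-f_n)\chi_{E_n}|\le 2g\chi_{E_n}$ $\mu$-a.e.\ and $2g\in\mathcal{M}_{(ACR)}$, so monotonicity of the rearrangement together with Proposition~\ref{PropACR} applied to $2g$ and the sequence $E_n$ yields
\[
	\bigl((f-f_n)\chi_{E_n}\bigr)^*\!\left(\tfrac{t}{2}\right)\le(2g\chi_{E_n})^*\!\left(\tfrac{t}{2}\right)\longrightarrow 0\qquad(n\to\infty).
\]
Combining the two displays gives $\limsup_{n\to\infty}(f-f_n)^*(t)\le\delta$, and since $\delta>0$ and $t\in(0,\infty)$ were arbitrary, the proposition follows.

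The argument is short, and the only step that genuinely uses the hypotheses — hence the only conceivable obstacle — is the passage from a.e.~convergence to $(2g\chi_{E_n})^*(t/2)\to 0$: the underlying space need not have finite measure, so Egorov's theorem is not directly available, and it is precisely the absolute continuity of the rearrangement of the dominating function $g$ (packaged in Proposition~\ref{PropACR}) that makes this step go through. The bookkeeping with the factor $\tfrac12$ coming from \eqref{EqAlmostSubAdd*} and the strict-versus-nonstrict inequalities relating the distribution function and the rearrangement is entirely routine.
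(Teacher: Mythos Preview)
Your proof is correct and follows essentially the same approach as the paper's: both define $E_n=\{|f-f_n|>\varepsilon\}$, use the domination $|f-f_n|\le 2g$ together with $\chi_{E_n}\to 0$ $\mu$-a.e., and reduce to Proposition~\ref{PropACR} applied to (a multiple of) $g$. The only cosmetic difference is that you invoke the almost-subadditivity~\eqref{EqAlmostSubAdd*} to split the rearrangement, whereas the paper carries out the same estimate directly at the level of distribution functions.
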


\begin{proof}
	Fix $t \in (0, \infty)$ and $\varepsilon > 0$. Put
	\begin{equation*}
		E_n = \{ \lvert f - f_n \rvert > \varepsilon \}.
	\end{equation*}
	As $\lvert f - f_n \rvert \leq 2g$ $\mu$-a.e., we get
	\begin{equation*}
		\lambda \left( \left\{ (f - f_n)^* > \varepsilon \right\} \right) = \mu \left( \left \{ \lvert f - f_n \rvert \chi_{E_n} > \varepsilon \right \} \right) \leq \mu \left( \left \{ g \chi_{E_n} > \frac{\varepsilon}{2} \right \} \right) = \lambda \left( \left\{ (g \chi_{E_n})^* > \frac{\varepsilon}{2} \right\} \right).
	\end{equation*}
	
	Since $g \in \mathcal{M}_{(ACR)}$ and $\chi_{E_n} \to 0$ $\mu$-a.e.~as $n \to \infty$, by Proposition~\ref{PropACR} $\lim_{n \to \infty} (g \chi_{E_n})^*(t) = 0$. Hence, there is $n_0$ such that we have for every $n \geq n_0$ that
	\begin{equation*}
		\lambda \left( \left\{ (f - f_n)^* > \varepsilon \right\} \right) < t,
	\end{equation*}
	which implies that $(f - f_n)^*(t) \leq \varepsilon$.
\end{proof}

As with Proposition~\ref{PropDomConv}, Proposition~\ref{PropACRDom} has a corollary that is worth mentioning separately.

\begin{corollary} \label{PropACRMono}
	Let $f \in \mathcal{M}_{(ACR)}$, $f \geq 0$, and assume that we have a sequence of functions $f_n \in \mathcal{M}_0$ such that $0 \leq f_n \uparrow f$ $\mu$-a.e. Then 
	\begin{align*}
		\lim_{n \to \infty} (f - f_n)^*(t) &= 0 & \text{for } t \in (0, \infty).
	\end{align*}
\end{corollary}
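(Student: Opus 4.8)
The plan is to obtain this statement as an immediate consequence of Proposition~\ref{PropACRDom}, exactly in parallel to how the corollary of Proposition~\ref{PropDomConv} was deduced earlier. First I would set $g := f$. Since $f \in \mathcal{M}_{(ACR)}$ and $f \geq 0$, the function $g$ is a legitimate dominating function in the sense required by Proposition~\ref{PropACRDom}; note also that $g = f \in \mathcal{M}_0$, because $\mathcal{M}_{(ACR)} \subseteq \mathcal{M}_0$ by Definition~\ref{DefMACR}. Next I would verify the remaining hypotheses for the sequence $(f_n)$ together with the candidate limit function $f$: the pointwise bound $\lvert f_n \rvert \leq g$ $\mu$-a.e.\ follows from $0 \leq f_n \uparrow f = g$, which forces $0 \leq f_n \leq f$ $\mu$-a.e., while the a.e.\ convergence $f_n \to f$ is precisely the assumption $f_n \uparrow f$ $\mu$-a.e.

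Having matched all the hypotheses, Proposition~\ref{PropACRDom} applied with this choice of $g$ delivers directly that $\lim_{n \to \infty} (f - f_n)^*(t) = 0$ for every $t \in (0, \infty)$, which is the claim. I do not anticipate any genuine obstacle; the only point requiring a moment's attention is the bookkeeping of notation, namely recognising that a monotone sequence $0 \le f_n \uparrow f$ is dominated by its own limit, so that the envelope $g$ in Proposition~\ref{PropACRDom} may be taken to be $f$ itself.
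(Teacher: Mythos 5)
Your proposal is correct and is exactly the argument the paper intends: the corollary is stated as an immediate consequence of Proposition~\ref{PropACRDom}, obtained by taking the dominating function $g$ to be $f$ itself, since $0 \leq f_n \uparrow f$ gives both $\lvert f_n \rvert \leq f$ $\mu$-a.e.\ and $f_n \to f$ $\mu$-a.e. No gaps.
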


\subsection{The rearrangement-invariant locally convex topology} We are now in a position to introduce the new locally convex topology generated by a family of rearrangement-invariant norms that is the key ingredient of the proofs of our main results.

\begin{definition}
	Let $\lVert \cdot \rVert_X$ be an r.i.~quasi-Banach function norm satisfying \ref{P5}, let $X$ be the corresponding r.i.~quasi-Banach function space, and let $X'$ be the corresponding associate space. We denote for every $\varphi \in X'$ by $\lvert \cdot \rvert_{\varphi}$ the functional that is defined for every $f \in X$ by
	\begin{equation*}
		\lvert f \rvert_{\varphi} = \int_0^{\infty} \varphi^* f^* \: d\lambda.
	\end{equation*}
\end{definition}

\begin{remark} \label{RemPropPhi*Norm}
	The functional $\lvert \cdot \rvert_{\varphi}$ is for every $\varphi \in X'$, $\varphi \neq 0$, a norm on $X$ that satisfies $\lvert \cdot \rvert_{\varphi} \lesssim \lVert \cdot \rVert_X$. Indeed, the estimate is due to Proposition~\ref{PAS}, the triangle inequality follows from the subadditivity of the elementary maximal function (Proposition~\ref{PropSubAdd**}) and Hardy's lemma (Lemma~\ref{LemmaHardy}), absolute homogeneity is obvious, and finally $\lvert f \rvert_{\varphi} = 0$ is clearly true if and only if $f^* = 0$ $\lambda$-a.e.~which in turns holds if and only if $f = 0$ $\mu$-a.e. This of course means that the family of norms $\{\lvert \cdot \rvert_{\varphi}; \; \varphi \in X'\}$ separates points.	
	
	Furthermore, our assumption that the measure space $(\mathcal{R}, \mu)$ is resonant guarantees (see Remark~\ref{RemResonant}) that it holds for every $\varphi \in X'$ that
	\begin{align} \label{RemPropPhi*Norm:E1}
		\lvert f \rvert_{\varphi} &= \sup_{\substack{\psi \in X' \\ \psi ^* = \varphi^*}} \left \lvert \int_{\mathcal{R}} \psi f \: d\mu \right \rvert &\text{for all } f \in X.
	\end{align}
	On the other hand, Proposition~\ref{PAS} implies that
	\begin{align} \label{RemPropPhi*Norm:E2}
		\lVert f \rVert_{X''} &= \sup_{\lVert \varphi \rVert_{X'} \leq 1} \lvert f \rvert_{\varphi} &\text{for all } f \in \mathcal{M}_0,
	\end{align}
	and in the case when $\lVert \cdot \rVert_X$ is an r.i.~Banach function norm the same equality holds for $\lVert \cdot \rVert_X$, as follows from Theorem~\ref{TFA}.
	
	Finally, let us note that we have for every $\varphi_1, \varphi_2 \in X'$ that both $\lvert \varphi_1 \rvert + \lvert \varphi_2 \rvert \in X'$ and $ (\lvert \varphi_1 \rvert + \lvert \varphi_2 \rvert)^* \geq \max \{ \varphi_1^*, \, \varphi_2^*\}$, whence $\lvert \cdot \rvert_{\lvert \varphi_1 \rvert + \lvert \varphi_2 \rvert} \geq \max \{\lvert \cdot \rvert_{\varphi_1}, \, \lvert \cdot \rvert_{\varphi_2}  \}$. That is, $\{\lvert \cdot \rvert_{\varphi}; \; \varphi \in X'\}$ is a saturated family of norms.
\end{remark}

\begin{definition}
	Let $\lVert \cdot \rVert_X$ be an r.i.~quasi-Banach function norm satisfying \ref{P5}, let $X$ be the corresponding r.i.~quasi-Banach function space, and let $X'$ be the corresponding associate space. By the \textit{rearrangement-invariant locally convex topology} on $X$, denoted $\newtopology$, we mean the locally convex Hausdorff topology induced by the saturated family of norms
	\begin{equation*}
		\left \{ \lvert \cdot \rvert_{\varphi} ; \; \varphi \in X' \right \}.
	\end{equation*}
\end{definition}

\begin{remark} \label{RemEmbeTopo}
	It follows directly from Remark~\ref{RemPropPhi*Norm} and \cite[Lemma 22.5]{MeiseVogt97} that $\newtopology$ is indeed a Hausdorff locally convex topology on $X$ and we always have $(X, \lVert \cdot \rVert_X) \hookrightarrow (X, \newtopology) \hookrightarrow (X, w')$.
\end{remark}

\begin{remark}\label{RemNormalTopo}
    There is another natural topology which, at least for sequence spaces, is studied in the literature, e.g.~in \cite{Koethe69}. The so-called \emph{normal topology} $\mathfrak{n}$ on an r.i.~Banach function space $X$ is defined by the apparently saturated family of seminorms $\{p_\varphi; \; \varphi\in X'\}$, where for every $f\in X$
    \begin{equation*}
        p_\varphi(f)=\int_{\mathcal{R}} \lvert \varphi f \rvert \: d\mu.
    \end{equation*}
    As follows immediately from the Hardy--Littlewood inequality, $(X,\newtopology)\hookrightarrow (X,\mathfrak{n})$. Although it is quite obviously true for $X=\ell^1$ (so that $X'=\ell^\infty$) that $\mathfrak{n}$---and thus also $\xi$---coincides with the norm topology on $\ell^1$, cf.~\cite[§30.2(3)]{Koethe69}, this is not true for any other r.i.~Banach function space of sequences; in every other case, $\mathfrak{n}$ is strictly coarser than $\xi$. Indeed, equip $\N$ with the counting measure $m$ and denote by $e_j$, $j\in\N$, the sequence $(\delta_{k,j})_{k\in\N}$ (Kronecker's $\delta$). Then $e_j^*=\chi_{[0,1]}$ for every $j \in \N$ and for any r.i.~Banach function space $X$ over $(\N,m)$ and $\varphi\in X'$ we get
    \begin{equation*}
        \lvert e_j \rvert_\varphi=\int_0^1 \varphi^* \: d\lambda.
    \end{equation*}
    Therefore, assuming $(X,\mathfrak{n})\hookrightarrow (X,\xi)$, for each $\varphi\in X'$ there are $C>0$ and $g=(g_k)_{k\in\N}\in X'$ such that
    \begin{align*}
        \int_0^1 \varphi^* \: d\lambda &= \lvert e_j \rvert_\varphi \leq C p_g(e_j)=C \lvert g_j \rvert &\text{for every } j \in \mathbb{N},
    \end{align*}
    which, in case of $\varphi\neq 0$, implies that $g \in X' \setminus c_0$. Using an argument similar to that of \cite[Proof of Theorem~4.16]{MusilovaNekvinda24}, this implies that $\chi_{\mathbb{N}} \in X'$, whence $X'=\ell^\infty$ and thus $X = (\ell^\infty)' = \ell^1$ (see Theorem~\ref{TFA}; in this remark we work in the context of r.i.~Banach function spaces).
\end{remark}

We commence the examination of the properties of the $\newtopology$ topology by the following result which shows that it interacts well with the canonical representation of $X$.

\begin{theorem} \label{ThmRepre(w')*}
	Let $\lVert \cdot \rVert_X$ be an r.i.~quasi-Banach function norm satisfying \ref{P5}, let $\lVert \cdot \rVert_{\overline{X}}$ be the canonical representation quasinorm, let $X$ and $\overline{X}$ be the corresponding r.i.~quasi-Banach function spaces, and let $X'$ and $\left( \overline{X} \right)'$ be the corresponding associate spaces. Then $(X, \newtopology)$ is represented by $\left( \overline{X}, \newtopology \right)$, i.e.~ it holds for every $\varphi \in X'$ that there is some $\psi \in  \left( \overline{X} \right)'$ such that we have for every $f \in X$
	\begin{equation} \label{ThmRepre(w')*:E0}
		\lvert f \rvert_{\varphi} = \lvert f^* \rvert_{\psi}
	\end{equation}
	and, conversely, it also holds for every $\psi \in  \left( \overline{X} \right)'$ that there is some $\varphi \in X'$ such that \eqref{ThmRepre(w')*:E0} holds for every $f \in X$.
\end{theorem}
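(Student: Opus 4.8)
The plan is to unwind the definition of $\lvert \cdot \rvert_\varphi$ in terms of the non-increasing rearrangement and to exploit the fact, recorded in Proposition~\ref{PropRepreAS}, that $\lVert \varphi \rVert_{X'} = \lVert \varphi^* \rVert_{\left( \overline{X} \right)'}$; the same proof that establishes Proposition~\ref{PropRepreAS} in fact shows more, namely that $f \in X'$ if and only if $f^* \in \left( \overline{X} \right)'$, and that the rearrangements of functions in $X'$ are precisely the rearrangements of functions in $\left( \overline{X} \right)'$. The key observation is that, by the very definition of $\lvert \cdot \rvert_\varphi$, we have $\lvert f \rvert_\varphi = \int_0^\infty \varphi^* f^* \, d\lambda$, and this quantity depends on $\varphi$ only through its rearrangement $\varphi^*$; similarly $\lvert f^* \rvert_\psi = \int_0^\infty \psi^* (f^*)^* \, d\lambda = \int_0^\infty \psi^* f^* \, d\lambda$, since $(f^*)^* = f^*$. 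Hence the identity \eqref{ThmRepre(w')*:E0} will hold as soon as $\varphi^* = \psi^*$ ($\lambda$-a.e.).

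The proof then splits into the two asserted directions. For the forward direction, fix $\varphi \in X'$. I claim $\varphi^*$, viewed as a function on $([0,\infty), \lambda)$, belongs to $\left( \overline{X} \right)'$: indeed, by Proposition~\ref{PropRepreAS} applied to $\varphi$ we get $\lVert \varphi^* \rVert_{\left( \overline{X} \right)'} = \lVert \varphi \rVert_{X'} < \infty$ (here I use that $(\varphi^*)^* = \varphi^*$ so that $\varphi^*$ is its own rearrangement and Proposition~\ref{PropRepreAS} applies to it directly over the measure space $([0,\infty), \lambda)$, which is resonant). Setting $\psi = \varphi^*$, we have $\psi^* = \varphi^*$, and therefore for every $f \in X$,
\begin{equation*}
	\lvert f \rvert_\varphi = \int_0^\infty \varphi^* f^* \, d\lambda = \int_0^\infty \psi^* f^* \, d\lambda = \int_0^\infty \psi^* (f^*)^* \, d\lambda = \lvert f^* \rvert_\psi,
\end{equation*}
which is \eqref{ThmRepre(w')*:E0}. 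For the converse direction, fix $\psi \in \left( \overline{X} \right)'$. Now I need a $\varphi \in X'$ with $\varphi^* = \psi^*$; to produce it, invoke the standard fact that over any resonant (in particular $\sigma$-finite non-atomic, or completely atomic with equal atoms) measure space $(\mathcal{R}, \mu)$ there exists, for any prescribed non-increasing right-continuous function $h$ on $[0, \mu(\mathcal{R}))$, a measurable function on $\mathcal{R}$ whose non-increasing rearrangement equals $h$ — apply this to $h = \psi^*$ to get $\varphi \in \mathcal{M}(\mathcal{R},\mu)$ with $\varphi^* = \psi^*$. Then $\lVert \varphi \rVert_{X'} = \lVert \varphi^* \rVert_{\left( \overline{X} \right)'} = \lVert \psi^* \rVert_{\left( \overline{X} \right)'} = \lVert \psi \rVert_{\left( \overline{X} \right)'} < \infty$ by Proposition~\ref{PropRepreAS} (and the rearrangement-invariance of $\lVert \cdot \rVert_{\left( \overline{X} \right)'}$), so $\varphi \in X'$, and the same chain of equalities as above gives \eqref{ThmRepre(w')*:E0}.

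The only genuinely delicate point — and the step I would flag as the main obstacle — is the existence claim in the converse direction: that a prescribed rearrangement is attained by some function on $(\mathcal{R}, \mu)$. When $\mu(\mathcal{R}) = \infty$ and the space is non-atomic this is completely standard, but one must handle the completely atomic case and the finite-measure case, where $\psi^*$ (a function on all of $[0,\infty)$) may be supported beyond $[0, \mu(\mathcal{R}))$ — however, since $\psi \in \left( \overline{X} \right)'$ and $\left( \overline{X} \right)'$ is itself an r.i.\ (quasi-)Banach function space, $\psi^*$ restricted to $[\mu(\mathcal{R}), \infty)$ contributes nothing to any of the integrals $\int_0^\infty \psi^* f^* \, d\lambda$ for $f \in X$ once one recalls that $f^*$ vanishes there, so it suffices to realize $\psi^*\big|_{[0, \mu(\mathcal{R}))}$, and the axiom \ref{P5} plus resonance of $(\mathcal{R},\mu)$ ensure this is possible. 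I would carry this realization argument out carefully, citing Theorem~\ref{TheoremCharResonance} to reduce to the two canonical cases, and otherwise keep the exposition brief since the remaining manipulations are the elementary identities displayed above.
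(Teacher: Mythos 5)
Your forward direction and your treatment of the non-atomic case are correct and coincide with the paper's argument: there one realizes $\psi^*\chi_{[0,\mu(\mathcal{R}))}$ as the rearrangement of some $\varphi\in\mathcal{M}(\mathcal{R},\mu)$ via the Sierpi\'nski theorem, checks $\varphi\in X'$ by the H\"older-type estimate, and the identity \eqref{ThmRepre(w')*:E0} follows because $(f^*)^*=f^*$. However, the step you yourself flag as the main obstacle --- realizing a prescribed rearrangement over $(\mathcal{R},\mu)$ in the converse direction --- contains a genuine gap in the completely atomic case, and your claim that ``axiom \ref{P5} plus resonance ensure this is possible'' is false there. If $(\mathcal{R},\mu)$ is completely atomic with atoms of measure $\beta$, then the non-increasing rearrangement of \emph{any} function on $\mathcal{R}$ is constant on each block $[n\beta,(n+1)\beta)$, whereas an arbitrary $\psi\in\left(\overline{X}\right)'$ is a function on $[0,\infty)$ whose rearrangement need not be piecewise constant (e.g.\ $\psi^*(t)=e^{-t}$ over $(\mathbb{N},m)$). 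So no $\varphi\in\mathcal{M}(\mathcal{R},\mu)$ with $\varphi^*=\psi^*$ exists in general, and your construction breaks down.

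The paper's fix is to abandon exact realization of $\psi^*$ and instead define $\varphi$ on the atoms by block averages, $\varphi(e_n)=\beta^{-1}\int_{\beta n}^{\beta(n+1)}\psi^*\,d\lambda$, so that $\varphi^*$ is the block-wise average of $\psi^*$ rather than $\psi^*$ itself. One then recovers
\begin{equation*}
	\int_0^{\infty}\varphi^* f^*\,d\lambda=\sum_{n} f^*(n\beta)\int_{\beta n}^{\beta(n+1)}\psi^*\,d\lambda=\int_0^{\infty}\psi^* f^*\,d\lambda
\end{equation*}
precisely because $f^*$ \emph{is} constant on the blocks for every $f\in X\subseteq\mathcal{M}(\mathcal{R},\mu)$. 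This is also why the theorem is stated so that \eqref{ThmRepre(w')*:E0} is only tested against $f\in X$ and its rearrangement (a point the paper stresses right after the statement): the identity is an equality of pairings against rearrangements of functions on $\mathcal{R}$, not an identification $\varphi^*=\psi^*$. To repair your proof you would need to replace the realization claim in the atomic case by this averaging construction (or an equivalent one) and verify $\varphi\in X'$ from the resulting estimate.
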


We want to stress that the theorem only allows for testing the equality in \eqref{ThmRepre(w')*:E0} by $f \in X$ and its non-increasing rearrangement, i.e.~when one wishes to examine some $g \in \overline{X}$, then one needs to know a~priori that $g$ is the non-increasing rearrangement of some function in $X$.

\begin{proof}
	We first note that $\lVert \cdot \rVert_X$ satisfying \ref{P5} implies the same for $\lVert \cdot \rVert_{\overline{X}}$ (by Proposition~\ref{PropRepreP5}), hence all the spaces, norms, and topologies are well defined. Furthermore, Proposition~\ref{PropRepreAS} shows that $\varphi^* \in \left( \overline{X} \right)'$ for every $\varphi \in X'$, whence we only need to prove the converse statement. We will perform the proof separately for the two cases of resonant measure spaces.
	
	When $(\mathcal{R}, \mu)$ is non-atomic, then it follows from the classical Sierpiński theorem (see e.g.~\cite[Chapter~2, Corollary~7.8]{BennettSharpley88}), that there is for every $\psi \in \mathcal{M}([0,\infty), \lambda)$ some function $\varphi \in \mathcal{M}(\mathcal{R}, \mu)$ such that $\varphi^* = \psi^* \chi_{[0, \mu(\mathcal{R}))}$. Whence, it clearly holds for every $f \in X \subseteq \mathcal{M}(\mathcal{R}, \mu)$ that
	\begin{equation*}
		\int_0^{\infty} \varphi^* f^* \: d\lambda = \int_0^{\mu(\mathcal{R})} \psi^* f^* \: d\lambda = \int_0^{\infty} \psi^* f^* \: d\lambda \leq \lVert \psi \rVert_{\left( \overline{X} \right)'} \lVert f^* \rVert_{\overline{X}} = \lVert \psi \rVert_{\left( \overline{X} \right)'} \lVert f \rVert_{X}.
	\end{equation*}
	This shows both that $\varphi \in X'$ whenever $\psi \in \left( \overline{X} \right)'$ and that \eqref{ThmRepre(w')*:E0} holds.
	
	Assume now that $(\mathcal{R}, \mu)$ is completely atomic and all atoms have the same measure $\beta \in (0, \infty)$. Then the non-increasing rearrangement of any function from $\mathcal{M}(\mathcal{R}, \mu)$ is necessarily constant on the intervals $[n\beta, (n+1)\beta)$, $n \in \mathbb{N}$, $(n+1)\beta \leq \mu(\mathcal{R})$. It therefore holds for every $\psi \in \left( \overline{X} \right)'$ and every $f \in X$ that
	\begin{equation*}
		\int_0^{\infty} \psi^* f^* \: d\lambda = \sum_{\substack{n \in \mathbb{N} \\ (n+1) \beta \leq \mu(\mathcal{R})}} f^*(n) \int_{\beta n}^{\beta (n+1)} \psi^* \: d\lambda.
	\end{equation*}
	Note that all the integrals on the right-hand side are well defined and finite by the property \ref{P5} of $\left( \overline{X} \right)'$. Consider now some enumeration $\{e_i\}$ of atoms in $\mathcal{R}$, recalling that $(\mathcal{R}, \mu)$ is $\sigma$-finite and thus the set is at most countable. Then the function $\varphi \colon \mathcal{R} \to \mathbb{C}$ defined pointwise on $\mathcal{R}$ by
	\begin{equation} \label{ThmRepre(w')*:E1}
		\varphi(e_n) = \beta^{-1} \int_{\beta n}^{\beta (n+1)} \psi^* \: d\lambda
	\end{equation}
	clearly belongs to $\mathcal{M}(\mathcal{R}, \mu)$ and satisfies for every $f \in X$ that
	\begin{equation*}
    \begin{split}
		\int_0^{\infty} \varphi^* f^* \: d\lambda &= \sum_{\substack{n \in \mathbb{N} \\ (n+1) \beta \leq \mu(\mathcal{R})}} f^*(n) \int_{\beta n}^{\beta (n+1)} \psi^* \: d\lambda \\
        &= \int_0^{\infty} \psi^* f^* \: d\lambda \\
        &\leq \lVert \psi \rVert_{\left( \overline{X} \right)'} \lVert f^* \rVert_{\overline{X}} \\
        &= \lVert \psi \rVert_{\left( \overline{X} \right)'} \lVert f \rVert_{X}.
    \end{split}
	\end{equation*}
	As before, this shows both that the function $\varphi$ we have constructed satisfies $\varphi \in X'$ and that \eqref{ThmRepre(w')*:E0} holds.
\end{proof}

We note that the construction in \eqref{ThmRepre(w')*:E1} is the same as the one employed in \cite[Proof of Theorem~3.1]{MusilovaNekvinda24} to construct the representation functional for the case of completely atomic measure.

Next, we examine the convergence with respect to the $\newtopology$ topology. We will show, that although this convergence is weaker than the one with respect to the original quasinorm, it is strong enough to have some interesting consequences.

\begin{proposition} \label{Prop(w')*Implies*}
	Let $\lVert \cdot \rVert_X$ be an r.i.~quasi-Banach function norm satisfying \ref{P5} and let $X$ be the corresponding r.i.~quasi-Banach function space. Assume that a net $(f_{\iota})_{\iota}$ in $X$ satisfies
	\begin{align} \label{Prop(w')*Implies*:E1}
		f_{\iota} &\to 0 &\text{in } (X, \newtopology).
	\end{align}
	Then
	\begin{align*}
		f_{\iota}^*(t) &\to 0 & \text{for } t \in (0, \infty).
	\end{align*}
	Consequently, $f_{\iota} \to 0$ in measure.
\end{proposition}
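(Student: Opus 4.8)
The plan is to use that, by definition, $\newtopology$ is the locally convex topology generated by the family of norms $\{\lvert\cdot\rvert_\varphi;\ \varphi\in X'\}$, so that the hypothesis \eqref{Prop(w')*Implies*:E1} is equivalent to $\lvert f_\iota\rvert_\varphi\to0$ for \emph{every} $\varphi\in X'$. It therefore suffices to test against one convenient $\varphi$, and I would take $\varphi=\chi_E$ for a fixed measurable set $E$ with $0<\mu(E)<\infty$. Such a set exists because $(\mathcal R,\mu)$ is $\sigma$-finite (if $\mu(\mathcal R)=0$ then $X=\{0\}$ and there is nothing to prove), and $\chi_E\in X'$ because $X'$ is a Banach function space by Theorem~\ref{TFA}; this is precisely where the standing assumption \ref{P5} on $\lVert\cdot\rVert_X$ is used.

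Next comes a short computation. Since $(\chi_E)^*=\chi_{[0,\mu(E))}$, we have $\lvert f\rvert_{\chi_E}=\int_0^{\mu(E)}f^*\,d\lambda$ for every $f\in X$. Writing $M=\mu(E)$ and using that $f^*$ is non-increasing, for any fixed $t\in(0,\infty)$ it holds that
\begin{equation*}
	\lvert f\rvert_{\chi_E}=\int_0^{M}f^*\,d\lambda\ \geq\ \int_0^{\min\{t,M\}}f^*\,d\lambda\ \geq\ \min\{t,M\}\,f^*(t),
\end{equation*}
because $f^*(s)\geq f^*(t)$ for all $s\leq\min\{t,M\}\leq t$. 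Consequently $f_\iota^*(t)\leq\frac{1}{\min\{t,M\}}\lvert f_\iota\rvert_{\chi_E}\to0$, which is the claimed pointwise convergence of the rearrangements on $(0,\infty)$.

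Finally, to deduce convergence in measure I would exploit the generalised-inverse relationship between $f^*$ and the distribution function $f_*$: if $f^*(t)<\varepsilon$, then by the definition of $f^*$ there is $s<\varepsilon$ with $f_*(s)\leq t$, and since $f_*$ is non-increasing this forces $\mu(\{\lvert f\rvert>\varepsilon\})=f_*(\varepsilon)\leq t$. Applying this with $t=\delta$ to the functions $f_\iota$---for which $f_\iota^*(\delta)\to0$ by the previous step---shows that, for arbitrary $\varepsilon,\delta>0$, one has $\mu(\{\lvert f_\iota\rvert>\varepsilon\})\leq\delta$ for all $\iota$ beyond some index; that is, $f_\iota\to0$ in measure, and in particular in $(\mathcal M_0,\mu_{\textup{loc}})$. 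I do not anticipate a genuine obstacle here: the argument is essentially the choice of the single test function $\chi_E$ together with two elementary monotonicity facts, and the only step that needs a little care is the last one, where the passage from rearrangements back to the distribution function must be handled correctly.
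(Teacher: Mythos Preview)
Your proof is correct and follows essentially the same approach as the paper: both test the $\newtopology$-convergence against a characteristic function $\chi_E\in X'$ and use monotonicity of $f^*$ to extract a pointwise bound on $f_\iota^*(t)$. The only cosmetic difference is that the paper argues by contradiction and chooses $E$ (with $\mu(E)\geq t$) depending on $t$, whereas you fix a single $E$ once and for all and handle all $t$ via the factor $\min\{t,M\}$; your treatment of the ``convergence in measure'' part is also more explicit than the paper's, which simply declares it clear.
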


\begin{proof}
	Assume the contrary, i.e.~that there is some $t \in (0, \infty)$ and some $\varepsilon > 0$  such that for every index $\iota$ there is some index $\kappa_{\iota} \geq \iota$ for which $f_{\kappa_{\iota}}^*(t) > \varepsilon$. Clearly, we can assume $t \leq \mu(\mathcal{R})$. Using this assumption (and $\sigma$-finiteness of $\mathcal{R}$), we may find some $E \subseteq \mathcal{R}$ such that $t \leq \mu(E) < \infty$. Then $\chi_E \in X'$ and we have for every $\iota$ that
	\begin{equation*}
		\int_0^{\infty} \chi_E^* f_{\kappa_{\iota}}^* \: d\lambda \geq \int_0^{t} f_{\kappa_{\iota}}^* \: d\lambda > t \varepsilon > 0,
	\end{equation*}
	which implies that \eqref{Prop(w')*Implies*:E1} fails.
	
	The remaining part is clear. 
\end{proof}

\begin{corollary} \label{CorEmbM0}
	Let $\lVert \cdot \rVert_X$ be an r.i.~quasi-Banach function norm satisfying \ref{P5} and let $X$ be the corresponding r.i.~quasi-Banach function space. Then $(X, \newtopology) \hookrightarrow (\mathcal{M}_0, \mu_{\textup{loc}})$.
\end{corollary}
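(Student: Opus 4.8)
The plan is to read this off from Proposition~\ref{Prop(w')*Implies*}, which already does essentially all of the work. First I would record that the set inclusion $X \subseteq \mathcal{M}_0$ is nothing new: it is part of Theorem~\ref{ThmEmbM0} (the embedding $(X, \lVert \cdot \rVert_X) \hookrightarrow (\mathcal{M}_0, \mu_{\textup{loc}})$ in particular yields $X \subseteq \mathcal{M}_0$). Hence the identity map $j \colon (X, \newtopology) \to (\mathcal{M}_0, \mu_{\textup{loc}})$ is a well-defined linear map, and the only thing left to prove is that it is continuous.

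Since $\newtopology$ and $\mu_{\textup{loc}}$ are vector space topologies and $j$ is linear, it suffices to prove continuity of $j$ at the origin; and as $(\mathcal{M}_0, \mu_{\textup{loc}})$ is a topological vector space (Proposition~\ref{PropM0CompMetr}), continuity at the origin is equivalent to the assertion that $f_\iota \to 0$ in $(X, \newtopology)$ implies $f_\iota \to 0$ in $(\mathcal{M}_0, \mu_{\textup{loc}})$ for every net $(f_\iota)_\iota$ in $X$ (note that one cannot reduce to sequences here, since $(X, \newtopology)$ need not be metrisable). But Proposition~\ref{Prop(w')*Implies*} states precisely that such a net converges to $0$ in measure, i.e.\ $(f_\iota)_*(\varepsilon) \to 0$ for every $\varepsilon > 0$; and from this one gets, for every $E \subseteq \mathcal{R}$ with $\mu(E) < \infty$ and every $\varepsilon>0$, that $\mu(\{x \in E \colon \lvert f_\iota(x) \rvert > \varepsilon\}) \leq (f_\iota)_*(\varepsilon) \to 0$, which is exactly convergence in the topology $\mu_{\textup{loc}}$. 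This finishes the proof.

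There is no real obstacle here; the only points requiring a little care are the (standard) fact that a linear map between topological vector spaces is continuous as soon as it is continuous at $0$, and that one must argue with nets rather than sequences because $\newtopology$ is not metrisable in general. If one prefers an argument that avoids nets entirely, one can instead exhibit the relevant neighbourhoods directly: for $E$ with $\mu(E) < \infty$ we have $\chi_E \in X'$, and by the Hardy--Littlewood inequality (Theorem~\ref{THLI}) any $f \in X$ with $\lvert f \rvert_{\chi_E} < \varepsilon \delta$ satisfies $\int_E \lvert f \rvert \, d\mu \leq \int_0^{\mu(E)} f^* \, d\lambda = \lvert f \rvert_{\chi_E} < \varepsilon\delta$, hence $\mu(\{x \in E \colon \lvert f(x) \rvert > \varepsilon\}) < \delta$ by Chebyshev's inequality; thus the $\newtopology$-neighbourhood $\{ f \in X \colon \lvert f \rvert_{\chi_E} < \varepsilon\delta \}$ of $0$ is carried into a basic $\mu_{\textup{loc}}$-neighbourhood of $0$, which again gives continuity of $j$ at the origin.
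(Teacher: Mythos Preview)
Your proposal is correct and takes essentially the same approach as the paper, which treats the corollary as an immediate consequence of Proposition~\ref{Prop(w')*Implies*} without giving any further argument. Your additional remarks (reducing to continuity at the origin, the need for nets, and the alternative direct estimate via Hardy--Littlewood and Chebyshev) are all correct but go beyond what the paper writes out.
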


The embedding $(X, \newtopology) \hookrightarrow (\mathcal{M}_0, \mu_{\textup{loc}})$ already sets the $\newtopology$ topology apart from the $w'$ topology. However, if we add absolute continuity of the rearrangement to the mix, we can prove more. 

\begin{proposition} \label{Prop(w')*DomConv}
	Let $\lVert \cdot \rVert_X$ be an r.i.~quasi-Banach function norm satisfying \ref{P5} and let $X$ be the corresponding r.i.~quasi-Banach function space. Fix $g \in X \cap \mathcal{M}_{(ACR)}$, $g \geq 0$. Consider a sequence $f_n \in \mathcal{M}_0$ such that $\lvert f_n \rvert \leq g$ $\mu$-a.e.~and that there is some function $f \in \mathcal{M}_0$ such that $f_n \to f$ $\mu$-a.e. Then all the functions $f_n, f \in X$ and
	\begin{align*}
		f_n &\to f &\text{in } (X, \newtopology).
	\end{align*}
\end{proposition}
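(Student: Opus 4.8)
The plan is to first dispose of the membership claim and then reduce the assertion $f_n \to f$ in $(X,\newtopology)$ to an application of the dominated convergence theorem on $([0,\infty),\lambda)$, where Proposition~\ref{PropACRDom} supplies the pointwise convergence of the rearrangements and the integrability of $\varphi^* g^*$ supplies the dominating function.

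First I would note that $\lvert f_n \rvert \leq g$ $\mu$-a.e.\ together with the lattice property \ref{P2} gives $\lVert f_n \rVert_X \leq \lVert g \rVert_X < \infty$, so $f_n \in X$; passing to the $\mu$-a.e.\ limit yields $\lvert f \rvert \leq g$ $\mu$-a.e., hence $f \in X$ as well. For the convergence it suffices, by the definition of $\newtopology$, to fix an arbitrary $\varphi \in X'$ and show that $\lvert f - f_n \rvert_{\varphi} = \int_0^\infty \varphi^* (f-f_n)^* \, d\lambda \to 0$. Here the two ingredients are: (i) the elementary bound $\lvert f - f_n \rvert \leq \lvert f \rvert + \lvert f_n \rvert \leq 2g$ $\mu$-a.e., which by monotonicity of the non-increasing rearrangement gives $(f-f_n)^* \leq (2g)^* = 2g^*$ on $[0,\infty)$, so that the integrands are dominated by the function $2\varphi^* g^*$, which lies in $L^1([0,\infty),\lambda)$ because $\int_0^\infty \varphi^* g^* \, d\lambda = \lvert g \rvert_{\varphi} \lesssim \lVert g \rVert_X < \infty$ by Remark~\ref{RemPropPhi*Norm}; and (ii) the pointwise statement $(f-f_n)^*(t) \to 0$ for every $t \in (0,\infty)$, which is exactly Proposition~\ref{PropACRDom} applied to $g$, $f_n$, $f$ (its hypotheses hold verbatim).

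Combining (i) and (ii), $\varphi^*(f-f_n)^* \to 0$ $\lambda$-a.e.\ on $(0,\infty)$: on $\{g^* > 0\}$ because $(f-f_n)^*(t) \to 0$ while $\varphi^*(t) < \infty$ for $\lambda$-a.e.\ such $t$ (since $\varphi^* g^*$ is integrable), and on $\{g^* = 0\}$ because there $(f-f_n)^* \leq 2g^* \equiv 0$, so the integrand vanishes identically (with the usual convention $\infty \cdot 0 = 0$). The dominated convergence theorem then gives $\int_0^\infty \varphi^*(f-f_n)^* \, d\lambda \to 0$, i.e.\ $\lvert f - f_n \rvert_{\varphi} \to 0$; as $\varphi \in X'$ was arbitrary, this is precisely $f_n \to f$ in $(X,\newtopology)$.

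I do not anticipate a serious obstacle: the only genuinely delicate point is the pointwise decay $(f-f_n)^*(t) \to 0$, and that work has already been done in Proposition~\ref{PropACRDom}, where it relies crucially on $g \in \mathcal{M}_{(ACR)}$. Everything else — the lattice estimate for membership, the bound $(f-f_n)^* \le 2g^*$, and the bookkeeping around $\infty\cdot 0$ on the null-set of $g^*$ — is routine, so the proof is short.
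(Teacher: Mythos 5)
Your argument is correct and coincides with the paper's own proof: both apply Proposition~\ref{PropACRDom} to get $(f-f_n)^*(t)\to 0$ pointwise, dominate $\varphi^*(f-f_n)^*$ by the integrable function $2\varphi^* g^*$, and conclude by dominated convergence. Your additional remarks (the lattice-property argument for $f_n,f\in X$ and the bookkeeping on $\{g^*=0\}$) are fine but routine; the paper leaves them implicit.
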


\begin{proof}
	Fix $\varphi \in X'$. Since $f_n \leq g$ $\mu$-a.e., Proposition~\ref{PropACRDom} implies that
	\begin{align*}
		2\varphi^*(t) g^*(t) &\geq \varphi^*(t) (f - f_n)^*(t) \to 0 & \text{for } t \in (0, \infty).
	\end{align*}
	The conclusion thus follows by the classical Lebesgue dominated convergence theorem.
\end{proof}

Yet again, we single out the case of monotone convergence.

\begin{corollary} \label{CorMonotoneImplies(w')*}
	Let $\lVert \cdot \rVert_X$ be an r.i.~quasi-Banach function norm satisfying \ref{P5} and let $X$ be the corresponding r.i.~quasi-Banach function space. Fix $f \in X \cap \mathcal{M}_{(ACR)}$, $f \geq 0$. Then every sequence  $f_n \in X$ such that $0 \leq f_n \uparrow f$ $\mu$-a.e.~satisfies
	\begin{align*}
		f_n &\to f &\text{in } (X, \newtopology).
	\end{align*}
\end{corollary}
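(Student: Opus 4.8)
This is an immediate consequence of Proposition~\ref{Prop(w')*DomConv}. The plan is to verify that a monotone sequence $0 \leq f_n \uparrow f$ satisfies the hypotheses of that proposition with the dominating function $g$ taken to be $f$ itself.

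First I would observe that since $0 \leq f_n \uparrow f$ $\mu$-a.e., we have in particular $\lvert f_n \rvert = f_n \leq f$ $\mu$-a.e.~for every $n$, and $f_n \to f$ $\mu$-a.e.~(the pointwise limit of an increasing sequence being its pointwise supremum, which equals $f$ by assumption). Thus, taking $g = f$, which by hypothesis lies in $X \cap \mathcal{M}_{(ACR)}$ and is non-negative, all the assumptions of Proposition~\ref{Prop(w')*DomConv} are met. Applying that proposition directly yields that all the $f_n$ lie in $X$ (which is also immediate here since $0 \leq f_n \leq f \in X$ together with the lattice property~\ref{P2}) and that $f_n \to f$ in $(X, \newtopology)$, which is exactly the claim.

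There is no genuine obstacle: the only thing to check is that the pointwise domination $\lvert f_n \rvert \leq g$ and the $\mu$-a.e.~convergence hold, both of which are built into the monotonicity hypothesis. The corollary is stated separately only because this monotone-convergence special case is the form in which the result will be invoked later.

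\begin{proof}
	Since $0 \leq f_n \uparrow f$ $\mu$-a.e., we have $\lvert f_n \rvert = f_n \leq f$ $\mu$-a.e.~for every $n$ and $f_n \to f$ $\mu$-a.e. The claim therefore follows from Proposition~\ref{Prop(w')*DomConv} applied with $g = f$, which by assumption belongs to $X \cap \mathcal{M}_{(ACR)}$ and is non-negative.
\end{proof}
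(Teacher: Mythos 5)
Your proof is correct and is exactly the argument the paper intends: the corollary is stated as an immediate consequence of Proposition~\ref{Prop(w')*DomConv}, obtained by taking the dominating function $g$ to be $f$ itself, with the hypotheses $\lvert f_n\rvert \leq f$ and $f_n \to f$ $\mu$-a.e.\ built into the monotonicity assumption. Nothing is missing.
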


The importance of Proposition~\ref{Prop(w')*DomConv} and Corollary~\ref{CorMonotoneImplies(w')*} lies in the fact that it allows us to deduce from the \ACR property such statements for the $\newtopology$ topology, whose analogues for the quasinormed topology require the much stronger absolute continuity of the quasinorm. The first such statement is that the simple functions are sequentially dense in $(X, \newtopology)$.

\begin{corollary} \label{CorSimple(w')^*dense}
	Let $\lVert \cdot \rVert_X$ be an r.i.~quasi-Banach function norm satisfying \ref{P5} and let $X$ be the corresponding r.i.~quasi-Banach function space. Assume that $X$ has the \ACR property. Then the simple functions are sequentially dense in $(X, \newtopology)$.
\end{corollary}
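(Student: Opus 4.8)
The plan is to reduce the statement to Proposition~\ref{Prop(w')*DomConv} by means of the standard pointwise approximation of a measurable function by simple functions, using the \ACR property only to guarantee that the approximating functions actually live in $X$. Fix $f \in X$. Since $X$ has the \ACR property, $f \in \mathcal{M}_{(ACR)}$, i.e.~$f^*(t) \to 0$ as $t \to \infty$; consequently, for every $s > 0$ the set $\{\lvert f \rvert \ge s\}$ has finite measure, since it is contained in $\{\lvert f \rvert > s/2\}$ and $\mu(\{\lvert f \rvert > s/2\}) = \lambda(\{f^* > s/2\})$, the latter set being bounded (it lies in $[0, t_0)$ for any $t_0$ with $f^*(t_0) \le s/2$).

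Next I would invoke the elementary fact recalled in the proof of Proposition~\ref{PropXaAlternative}, namely that $f$ is the $\mu$-a.e.~pointwise limit of a sequence of simple functions $s_n$ with $\lvert s_n \rvert \le \lvert f \rvert$ $\mu$-a.e. The one point that needs checking is that each $s_n$ genuinely belongs to $X$: writing $s_n = \sum_j a_j \chi_{A_j}$ with $a_j \ne 0$ and the $A_j$ pairwise disjoint and measurable, the inequality $\lvert s_n \rvert \le \lvert f \rvert$ forces $A_j \subseteq \{\lvert f \rvert \ge \lvert a_j \rvert\}$, which has finite measure by the previous paragraph; hence $\chi_{A_j} \in X$ by axiom \ref{P4} and so $s_n \in X$.

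Finally, I would apply Proposition~\ref{Prop(w')*DomConv} with the dominating function $g = \lvert f \rvert$. Indeed $g \ge 0$, $\lVert g \rVert_X = \lVert f \rVert_X < \infty$ so $g \in X$, and $g^* = f^* \to 0$ at infinity so $g \in \mathcal{M}_{(ACR)}$; since $\lvert s_n \rvert \le g$ $\mu$-a.e.~and $s_n \to f$ $\mu$-a.e., Proposition~\ref{Prop(w')*DomConv} yields $s_n \to f$ in $(X, \newtopology)$, which is the assertion.

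There is no real obstacle here; the only step requiring a moment's attention is the verification that the approximating simple functions belong to $X$, and this is exactly where the \ACR property is used—without it (for instance for $X = L^\infty$ over a measure space of infinite measure) a bounded function need not be approximable by finitely supported simple functions in any reasonable topology. As a variant, one could instead run the argument through Corollary~\ref{CorMonotoneImplies(w')*} applied separately to the positive and negative parts of $\operatorname{Re} f$ and $\operatorname{Im} f$, at the harmless cost of replacing the dominating function $\lvert f \rvert$ by a fixed multiple of it.
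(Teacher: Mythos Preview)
Your proof is correct and follows essentially the same route as the paper: approximate $f$ pointwise by simple functions $s_n$ with $\lvert s_n\rvert\le\lvert f\rvert$ and then invoke Proposition~\ref{Prop(w')*DomConv} with $g=\lvert f\rvert\in X\cap\mathcal{M}_{(ACR)}$. Your separate verification that $s_n\in X$ is harmless but unnecessary, since this already follows from the lattice property \ref{P2} (and is in fact part of the conclusion of Proposition~\ref{Prop(w')*DomConv}); the \ACR property is thus really used for the convergence, not for the membership.
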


\begin{proof}
	The statement follows from Proposition~\ref{Prop(w')*DomConv} once one recalls that any function $f \in X$ is $\mu$-a.e.~the pointwise limit of some sequence of simple functions $s_n$ such that
    \begin{equation*}
        \lvert s_n \rvert \leq \lvert f \rvert \in X \subseteq \mathcal{M}_{(ACR)}.
    \end{equation*}
\end{proof}

The second such statement is even more interesting. As it turns out, Corollary~\ref{CorMonotoneImplies(w')*} allows us to employ a technique similar to \cite[Chapter~1, Theorem~4.1]{BennettSharpley88} to show that the topological dual space of $(X, \newtopology)$ equals $X'$. This property is one of the two crucial components of our main ergodicity results.

\begin{theorem} \label{ThmACRDual}
	Let $\lVert \cdot \rVert_X$ be an r.i.~quasi-Banach function norm satisfying \ref{P5} and let $X$ be the corresponding r.i.~quasi-Banach function space. Assume that $X$ has the \ACR property. Then $(X, \newtopology)^* = X'$, in the sense that a linear functional $\alpha$ on $X$ is continuous with respect to the $\newtopology$ topology if and only if it can be represented by some uniquely determined function $\varphi \in X'$ via the formula
	\begin{align}
		\alpha(f) &= \int_{\mathcal{R}} \varphi f \: d\mu, &f \in X. \label{ThmACRDual:E1}
	\end{align}
\end{theorem}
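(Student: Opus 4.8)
The plan is to establish the two inclusions $X' \subseteq (X,\newtopology)^*$ and $(X,\newtopology)^* \subseteq X'$ separately, with the representation formula and its uniqueness coming along for free. The inclusion $X' \subseteq (X,\newtopology)^*$ is immediate: for $\varphi \in X'$ the integral in \eqref{ThmACRDual:E1} is finite by the Hölder inequality (Theorem~\ref{THAS}), and by the Hardy--Littlewood inequality (Theorem~\ref{THLI}) the resulting functional $\alpha$ satisfies $\lvert \alpha(f) \rvert \le \int_{\mathcal R} \lvert \varphi f \rvert\,d\mu \le \int_0^\infty \varphi^* f^*\,d\lambda = \lvert f \rvert_\varphi$, so it is dominated by one of the seminorms generating $\newtopology$ and hence is $\newtopology$-continuous; since $X$ contains $\chi_E$ for every $E$ of finite measure (property~\ref{P4}), distinct functions in $X'$ induce distinct functionals. (One could equally invoke $\newtopology \supseteq w'$ together with $(X,w')^* = X'$.)

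For the converse, let $\alpha \in (X,\newtopology)^*$. Since $\{\lvert\cdot\rvert_\varphi;\ \varphi \in X'\}$ is saturated (Remark~\ref{RemPropPhi*Norm}), there are $\varphi_0 \in X'$ and $C>0$ with $\lvert \alpha(f)\rvert \le C\lvert f\rvert_{\varphi_0} \le C'\lVert f\rVert_X$ for all $f \in X$, the second estimate again by Remark~\ref{RemPropPhi*Norm}; in particular $\alpha$ is norm-bounded. Set $\nu(E) = \alpha(\chi_E)$ for $E$ of finite measure. Then $\nu$ vanishes on $\mu$-null sets, and it is countably additive: if $E = \bigsqcup_k E_k$ with $E$ of finite measure, then $\chi_{\bigcup_{k\le n}E_k} \uparrow \chi_E$, and since $\chi_E \in X \subseteq \mathcal M_{(ACR)}$, Corollary~\ref{CorMonotoneImplies(w')*} gives $\chi_{\bigcup_{k\le n}E_k} \to \chi_E$ in $(X,\newtopology)$, so $\sum_{k\le n}\nu(E_k) = \alpha(\chi_{\bigcup_{k\le n}E_k}) \to \alpha(\chi_E) = \nu(E)$. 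Restricted to the subsets of any fixed set of finite measure, $\nu$ is therefore a complex measure, hence of finite total variation, and absolutely continuous with respect to $\mu$; by the Radon--Nikodym theorem and $\sigma$-finiteness there is a locally integrable $\varphi$ with $\nu(E) = \int_E \varphi\,d\mu$ for all such $E$, and consequently $\alpha(s) = \int_{\mathcal R}\varphi s\,d\mu$ for every simple function $s$.

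The crux, and the step I expect to be the main obstacle, is to promote $\varphi$ to an element of $X'$ without absolute continuity of the quasinorm; this is exactly where the \ACR property together with the topology $\newtopology$ replaces the identity $X = X''$ used in the classical argument of \cite[Chapter~1, Theorem~4.1]{BennettSharpley88}. Fix $h \in X$ with $h \ge 0$ and $\lVert h\rVert_X \le 1$, choose finite-measure sets $R_k \uparrow \mathcal R$, and put $h_k = \min(h,k)\chi_{R_k}$ and $g_k = (\bar\varphi/\lvert\varphi\rvert)\,h_k$ (with $0/0 := 0$), so that $\lvert g_k\rvert = h_k \le h$, hence $g_k \in X$ with $\lVert g_k\rVert_X \le 1$, and $\varphi g_k = \lvert\varphi\rvert h_k \ge 0$. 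Approximating $g_k$ $\mu$-a.e.\ by simple functions dominated by $h \in X \cap \mathcal M_{(ACR)}$, Proposition~\ref{Prop(w')*DomConv} gives convergence of these simple functions to $g_k$ in $(X,\newtopology)$; since $\lvert\varphi\rvert h_k \le k\,\lvert\varphi\chi_{R_k}\rvert \in L^1$, the dominated convergence theorem on the integral side together with $\newtopology$-continuity of $\alpha$ on the functional side give $\int_{\mathcal R}\lvert\varphi\rvert h_k\,d\mu = \alpha(g_k)$, whence $\int_{\mathcal R}\lvert\varphi\rvert h_k\,d\mu \le C'\lVert g_k\rVert_X \le C'$. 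Letting $k \to \infty$ and using monotone convergence ($h_k \uparrow h$) yields $\int_{\mathcal R}\lvert\varphi\rvert h\,d\mu \le C'$, and taking the supremum over admissible $h$ gives $\lVert\varphi\rVert_{X'} \le C' < \infty$. Finally, for arbitrary $f \in X$ one approximates $f$ in $(X,\newtopology)$ by simple functions $s_j$ with $\lvert s_j\rvert \le \lvert f\rvert$ (Proposition~\ref{Prop(w')*DomConv}, as in the proof of Corollary~\ref{CorSimple(w')^*dense}) and passes to the limit in $\alpha(s_j) = \int_{\mathcal R}\varphi s_j\,d\mu$, using $\newtopology$-continuity of $\alpha$ on one side and dominated convergence with the $L^1$ majorant $\lvert\varphi f\rvert$ (Theorem~\ref{THAS}) on the other, obtaining $\alpha(f) = \int_{\mathcal R}\varphi f\,d\mu$; uniqueness of $\varphi$ follows once more by testing against characteristic functions of finite-measure sets.
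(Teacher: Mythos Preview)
Your proof is correct and follows essentially the same route as the paper's: the easy inclusion via Hardy--Littlewood, then Radon--Nikodym on finite-measure pieces (using the \ACR property through Corollary~\ref{CorMonotoneImplies(w')*} for countable additivity), a truncation argument to show $\varphi\in X'$, and finally the extension of the representation to all of $X$ via Proposition~\ref{Prop(w')*DomConv}. The only cosmetic difference is that the paper splits $\varphi$ into real and imaginary parts and uses $\operatorname{sgn}(\varphi_{\operatorname{Re}})$, whereas you handle the complex case in one stroke with the unimodular factor $\bar\varphi/\lvert\varphi\rvert$; your version is slightly cleaner but otherwise equivalent.
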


\begin{proof}
	It is clear from the Hardy--Littlewood inequality (Theorem~\ref{THLI}) that every function $\varphi \in X'$ induces (via the formula \eqref{ThmACRDual:E1}) a continuous linear functional on $(X, \newtopology)$, so we focus on the converse.
	
	Fix $\alpha \in (X, \newtopology)^*$. Then, because $\{\lvert\cdot\rvert_\psi ; \; \psi\in X'\}$ is saturated, there are $\psi_{\alpha}\in X'$ and $C>0$ such that we have for every $f \in X$ that
	\begin{equation*}
		\lvert \alpha(f) \rvert \leq C \lvert f \rvert_{\psi_{\alpha}}.
	\end{equation*}		
	Fix further some sequence of pairwise disjoint sets $\mathcal{R}_n$ such that $\mu(\mathcal{R}_n) < \infty$ and $\mathcal{R} = \bigcup_n \mathcal{R}_n$. We now define for fixed $n$ and every measurable $E \subseteq \mathcal{R}_n$ the set-function $\mu_n$ given by the formula
	\begin{equation*}
		\mu_n(E) = \alpha(\chi_E).
 	\end{equation*}
 	Note that $\mu_n$ is well defined and finite for every appropriate $E$ since $\mu(\mathcal{R}_n) < \infty$. Furthermore, it is a complex measure. Indeed, the only property that requires verification is the $\sigma$-additivity, which follows from our assumption that $X$ has the \ACR property via Corollary~\ref{CorMonotoneImplies(w')*}. It is also clear that $\mu_n$ is absolutely continuous with respect to $\mu$, hence we may use the classical Radon--Nikod\'{y}m theorem (see e.g.~\cite[Theorem~6.10]{Rudin87}) to obtain a measurable function $\varphi_n \colon \mathcal{R}_n \to \mathbb{C}$ such that
 	\begin{align*}
 		\mu_n(E) &= \int_{E} \varphi_n \: d\mu &\text{for every measurable } E \subseteq \mathcal{R}_n.
 	\end{align*}
 	Since $\mathcal{R}_n$ are pairwise disjoint, we may now use the functions $\varphi_n$ in the obvious way to construct a measurable function $\varphi \colon \mathcal{R} \to \mathbb{C}$ satisfying
 	\begin{align*}
 		\int_{\mathcal{R}} \varphi \chi_E \: d\mu = \alpha(\chi_E) &\text{ for every measurable $E$ satisfying } E \subseteq \mathcal{R}_n \text{ for some $n$.}
 	\end{align*} 	
 	It remains to show that $\varphi \in X'$ and that \eqref{ThmACRDual:E1} holds for all $f \in X$ (instead of just for the characteristic functions of some sets). We begin with the former.
 	
 	The argument has to be performed separately for the real and imaginary part of $\varphi$. As it is the same in both cases, we will present only the former case, denoting the real part of $\varphi$ by $\varphi_{\operatorname{Re}}$.
 	
 	Let $g \in X$ be a non-negative simple function such that
 	\begin{align*}
 		\supp g &\subseteq \bigcup_{i = 0}^n \mathcal{R}_i & \text{for some } n \in \mathbb{N}.
 	\end{align*}
 	Then $\sgn(\varphi_{\operatorname{Re}}) g$ is also a simple function belonging to $X$ and having the same support. It follows that it can be written as a finite linear combination of functions for which \eqref{ThmACRDual:E1} holds, whence
 	\begin{equation} \label{ThmACRDual:E2}
 		\int_{\mathcal{R}} \lvert \varphi_{\operatorname{Re}}\rvert g \: d\mu = \int_{\mathcal{R}} \varphi_{\operatorname{Re}} \sgn(\varphi_{\operatorname{Re}}) g \: d\mu \leq \lvert g \rvert_{\psi_{\alpha}} \leq \lVert \psi_{\alpha} \rVert_{X'} \lVert g \rVert_X.
 	\end{equation}
 	
 	Let now $h \in X$ be arbitrary, then it is clear that there is some sequence of functions $g_k$ of the type considered in the previous step such that $g_k \uparrow \lvert h \rvert$ $\mu$-a.e. Applying monotone convergence theorem and the Fatou property \ref{P3} on the left-hand and right-hand sides of \eqref{ThmACRDual:E2}, respectively, we obtain 	
 	\begin{equation*}
 		\int_{\mathcal{R}} \lvert \varphi_{\operatorname{Re}} h \rvert\: d\mu \leq \lVert \psi_{\alpha} \rVert_{X'} \lVert h \rVert_X,
 	\end{equation*}
 	i.e.~$\varphi_{\operatorname{Re}} \in X'$ and $\lVert \varphi_{\operatorname{Re}} \rVert_{X'} \leq \lVert \psi_{\alpha} \rVert_{X'}$.
 	
 	Having established that $\varphi \in X'$, we now show that \eqref{ThmACRDual:E1} holds for all $f \in X$. 
 	 	
 	We first assume that $f$ is real-valued. As in the previous step, we consider a sequence of non-negative simple functions $g_k$ satisfying
 	\begin{align*}
 		\supp g_k &\subseteq \bigcup_{i = 0}^{n_k} \mathcal{R}_i & \text{for some } n_k \in \mathbb{N}
 	\end{align*}
 	and $g_k \uparrow \lvert f \rvert$ $\mu$-a.e. Putting $f_k = g_k \sgn(f)$, we observe that $f_k \to f$ $\mu$-a.e.~and, thanks to Proposition~\ref{Prop(w')*DomConv}, also in $(X, \newtopology)$ (here we again employ the \ACR property of $X$). Furthermore, $f_k$ is for every $k$ a linear combination of functions for which \eqref{ThmACRDual:E1} holds. Therefore, \eqref{ThmACRDual:E1} holds also for $f_k$. By applying the dominated convergence theorem on the right-hand side (with $\lvert \varphi f \rvert$ serving as the required majorant) and the continuity of $\alpha$ on the left-hand side we conclude that \eqref{ThmACRDual:E1} holds for $f$, too.
 	
    Finally, when $f \in X$ is arbitrary, then we have already proved that \eqref{ThmACRDual:E1} holds for both its real and imaginary parts, so the validity for $f$ follows.

    It remains only to consider the uniqueness of the representation, which follows immediately from the fact that $X$ contains the simple functions.
\end{proof}

\begin{corollary} \label{CorACRContinuity}
	Let $\lVert \cdot \rVert_X$ be an r.i.~quasi-Banach function norm satisfying \ref{P5} and let $X$ be the corresponding r.i.~quasi-Banach function space. Assume that $X$ has the \ACR property and an operator $T \colon (X, \newtopology) \to (X, \newtopology)$ is continuous. Then it is also continuous when considered as an operator $T \colon (X, w') \to (X, w')$ or, equivalently, there exists the associate operator $T'$ on $X'$.
	\end{corollary}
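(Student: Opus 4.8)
The plan is to deduce the statement from two facts already established in the excerpt: on the one hand, Theorem~\ref{ThmACRDual}, which under the \ACR assumption identifies $(X,\newtopology)^* = X'$ (this is the only place where absolute continuity of rearrangements will be used), and on the other hand Proposition~\ref{PropW'T'qBFS}, which — using only \ref{P5}, assumed here — states that $T\colon (X,w')\to(X,w')$ is continuous if and only if the associate operator $T'$ on $X'$ exists. Hence it suffices to show that the algebraic transpose $T^+$ of $T$ maps $X'$ into itself, so that $T':=T^+|_{X'}$ is a well-defined associate operator of $T$.

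First I would fix $\varphi\in X'$. By the Hardy--Littlewood inequality (Theorem~\ref{THLI}) one has $\lvert\int_{\mathcal R}\varphi f\,d\mu\rvert\leq\lvert f\rvert_{\varphi}$ for all $f\in X$, and since $\lvert\cdot\rvert_{\varphi}$ is one of the seminorms generating $\newtopology$, the linear functional $f\mapsto\int_{\mathcal R}\varphi f\,d\mu$ is $\newtopology$-continuous on $X$; that is, $\varphi\in(X,\newtopology)^*$. As $T\colon (X,\newtopology)\to(X,\newtopology)$ is continuous by hypothesis, the composition $T^+(\varphi)=\varphi\circ T$ is again a $\newtopology$-continuous linear functional on $X$, i.e.\ $T^+(\varphi)\in(X,\newtopology)^*$. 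Now I would invoke Theorem~\ref{ThmACRDual}: under the \ACR assumption it gives $(X,\newtopology)^*=X'$, so there is a (unique) function $\psi_{\varphi}\in X'$ such that $(\varphi\circ T)(f)=\int_{\mathcal R}\psi_{\varphi}f\,d\mu$ for every $f\in X$. This is precisely the statement that $T^+(\varphi)\in X'$. Since $\varphi\in X'$ was arbitrary, $T^+(X')\subseteq X'$, and therefore the associate operator $T'=T^+|_{X'}$ exists on $X'$.

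Finally, applying Proposition~\ref{PropW'T'qBFS} to the operator $T$ on $X$ converts the existence of $T'$ into the continuity of $T\colon (X,w')\to(X,w')$, which — together with the equivalence just recalled — is exactly the assertion of the corollary.

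I do not expect a genuine obstacle here: once Theorem~\ref{ThmACRDual} is available, the whole argument is the formal chain ``$\newtopology$-continuity of $T$ $\Rightarrow$ $T^+$ preserves $(X,\newtopology)^*$ $\Rightarrow$ $T^+$ preserves $X'$''. The point worth emphasising is that the \ACR hypothesis enters in exactly one step, namely the identification $(X,\newtopology)^*=X'$; without it $(X,\newtopology)^*$ may be strictly larger than $X'$, so that $\varphi\circ T$, although $\newtopology$-continuous, need not be represented by a function of $X'$, and the conclusion can genuinely fail. Moreover, since $\newtopology$ is in general strictly finer than $w'$ (Remark~\ref{RemEmbeTopo}), the conclusion cannot be obtained by a soft ``finer topology'' argument and really requires this duality input.
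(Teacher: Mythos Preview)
Your proof is correct and is essentially the same as the paper's: both use Theorem~\ref{ThmACRDual} to identify $(X,\newtopology)^*$ with $X'$, observe that the transpose of a $\newtopology$-continuous operator therefore maps $X'$ into itself (yielding the associate operator), and then invoke Proposition~\ref{PropW'T'qBFS}. The paper's version is just slightly more compressed, omitting the explicit Hardy--Littlewood verification that you spell out.
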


\begin{proof}
	It follows directly from Theorem~\ref{ThmACRDual} that the dual operator $T^* \colon (X, \newtopology)^* \to (X, \newtopology)^*$ can be interpreted as an operator on $X'$ and that it satisfies for every $\varphi \in X'$ that
	\begin{equation*}
		\int_{\mathcal{R}} \varphi T(f) \: d\mu = \int_{\mathcal{R}} T^*(\varphi) f \: d\mu.
	\end{equation*}
	Hence, $T^*$ is the associate operator of $T$ and the continuity $T \colon (X, w') \to (X, w')$ follows by Proposition~\ref{PropW'T'qBFS}. 
\end{proof}

The following corollary is rather immediate, however we believe it to be worth stating explicitly, as it will play an important role in the next section.

\begin{corollary} \label{CorRIEst}
	Let $\lVert \cdot \rVert_X$ be an r.i.~quasi-Banach function norm satisfying \ref{P5} and let $X$ be the corresponding r.i.~quasi-Banach function space. Assume that $X$ has the \ACR property. Assume that $T \colon (X, \newtopology) \to (X, \newtopology)$ is continuous.  Then 
	\begin{enumerate}
		\item \label{CorRIEst_i} If a linear operator $S$ defined on $X$ satisfies for every $f \in X$ that $(Sf)^* \leq (Tf)^*$ then $S \colon (X, \newtopology) \to (X, \newtopology)$ is continuous and there exists the associate operator $S'$ on $X'$.
		\item \label{CorRIEst_ii} If a linear operator $S$ defined on $X$ satisfies for every $f \in X$ and every $n \in \mathbb{N}$ that $(S^nf)^* \leq (Tf)^*$ then $S \colon (X, \newtopology) \to (X, \newtopology)$ is power-bounded.
	\end{enumerate}
	
	Furthermore, the conclusions still hold if we consider as the upper bound a continuous operator $T \colon \left( \overline{X}, \newtopology \right) \to \left( \overline{X}, \newtopology \right)$.
\end{corollary}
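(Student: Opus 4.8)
The plan is to exploit the single structural feature that makes all of this essentially automatic: every defining seminorm $\lvert \cdot \rvert_{\varphi}$ of $\newtopology$ sees a function only through its non-increasing rearrangement and is monotone with respect to the pointwise order of rearrangements, since $\varphi^* \geq 0$ gives, whenever $g^* \leq h^*$ on $[0,\infty)$,
\begin{equation*}
	\lvert g \rvert_{\varphi} = \int_0^{\infty} \varphi^* g^* \: d\lambda \leq \int_0^{\infty} \varphi^* h^* \: d\lambda = \lvert h \rvert_{\varphi}.
\end{equation*}
First I would record that, under the hypothesis of \ref{CorRIEst_i}, $S$ is genuinely an operator on $X$: from $(Sf)^* \leq (Tf)^*$ and $Tf \in X$, Corollary~\ref{Corollary*<*=>||<||} yields $\lVert Sf \rVert_X \leq \lVert Tf \rVert_X < \infty$, so $Sf \in X$ (and likewise under the hypothesis of \ref{CorRIEst_ii}).

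For \ref{CorRIEst_i}, fix $\varphi \in X'$. Since the family $\{\lvert \cdot \rvert_{\psi}; \; \psi \in X'\}$ is saturated (Remark~\ref{RemPropPhi*Norm}) and $T \colon (X, \newtopology) \to (X, \newtopology)$ is continuous, there are $\psi \in X'$ and $C > 0$ with $\lvert Tf \rvert_{\varphi} \leq C \lvert f \rvert_{\psi}$ for all $f \in X$. Combining this with the displayed monotonicity applied to $g = Sf$, $h = Tf$ gives $\lvert Sf \rvert_{\varphi} \leq \lvert Tf \rvert_{\varphi} \leq C \lvert f \rvert_{\psi}$ for all $f \in X$, so $S \colon (X, \newtopology) \to (X, \newtopology)$ is continuous; the existence of the associate operator $S'$ on $X'$ then follows from Corollary~\ref{CorACRContinuity}, which is where the \ACR property enters. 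For \ref{CorRIEst_ii} the argument is identical but applied to all iterates simultaneously: from $(S^n f)^* \leq (Tf)^*$ one gets $\lvert S^n f \rvert_{\varphi} \leq \lvert Tf \rvert_{\varphi} \leq C \lvert f \rvert_{\psi}$ for every $n$, with $\psi$ and $C$ independent of $n$; as $\varphi$ was arbitrary and a basic $\newtopology$-neighbourhood of $0$ is cut out by finitely many $\lvert \cdot \rvert_{\varphi}$, this is exactly equicontinuity of $\{S^n; \; n \in \mathbb{N}\}$, i.e.\ power-boundedness.

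Finally, for the addendum with the bound furnished by a continuous $T \colon (\overline{X}, \newtopology) \to (\overline{X}, \newtopology)$ and the hypotheses $(Sf)^* \leq (Tf^*)^*$, respectively $(S^n f)^* \leq (Tf^*)^*$, I would transfer the estimate through the canonical representation using Theorem~\ref{ThmRepre(w')*}. Given $\varphi \in X'$, pick $\psi_0 \in (\overline{X})'$ with $\lvert h \rvert_{\varphi} = \lvert h^* \rvert_{\psi_0}$ for all $h \in X$; applied to $h = Sf$, together with monotonicity of $\lvert \cdot \rvert_{\psi_0}$ and $(Sf)^* \leq (Tf^*)^*$, this gives $\lvert Sf \rvert_{\varphi} \leq \lvert Tf^* \rvert_{\psi_0}$. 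Continuity of $T$ on $(\overline{X}, \newtopology)$ provides $\tau \in (\overline{X})'$ and $C > 0$ with $\lvert Tg \rvert_{\psi_0} \leq C \lvert g \rvert_{\tau}$ for all $g \in \overline{X}$, and the converse half of Theorem~\ref{ThmRepre(w')*} produces $\varphi' \in X'$ with $\lvert f \rvert_{\varphi'} = \lvert f^* \rvert_{\tau}$ for all $f \in X$; chaining these, $\lvert Sf \rvert_{\varphi} \leq C \lvert f^* \rvert_{\tau} = C \lvert f \rvert_{\varphi'}$, and one concludes as above (the well-definedness $Sf \in X$ now follows from $\lVert Sf \rVert_X = \lVert (Sf)^* \rVert_{\overline{X}} \leq \lVert (Tf^*)^* \rVert_{\overline{X}} = \lVert Tf^* \rVert_{\overline{X}} < \infty$). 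There is no genuine obstacle here; the only points needing a little care are confirming that $S$ lands in $X$ and, in the addendum, invoking Theorem~\ref{ThmRepre(w')*} in \emph{both} directions so that the auxiliary seminorm living on $\overline{X}$ is pulled back to an honest $\newtopology$-seminorm on $X$.
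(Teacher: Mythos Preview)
Your proof is correct and follows exactly the approach the paper has in mind: the corollary is stated as ``rather immediate'' with no explicit proof given, and the immediacy is precisely the monotonicity of each $\lvert \cdot \rvert_{\varphi}$ in rearrangements combined with saturation of the defining family, Corollary~\ref{CorACRContinuity} for the associate operator, and Theorem~\ref{ThmRepre(w')*} (in both directions) for the addendum. Your write-up supplies all the details the paper omits, including the minor bookkeeping that $S$ lands in $X$.
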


For the next two results, we restrict our setting to that of r.i.~Banach function spaces. This restriction is necessary in both cases. In the first one, this necessity is due to the fact that the result requires the equality $X = X''$ (see Theorem~\ref{TFA}).

\begin{theorem} \label{ThmOrtogonality}
	Let $X$ be an r.i.~Banach function space and $X'$ the corresponding associate space. Assume that both $X$ and $X'$ have the \ACR property. Let $T \colon (X, \newtopology) \to (X, \newtopology)$ be continuous and assume that the associate operator $T' \colon (X', \newtopology) \to (X', \newtopology)$ is also continuous. Then we have
	\begin{enumerate}
		\item \label{ThmOrtogonality_i} $(\operatorname{Ker} T)^{\circ} = \overline{\operatorname{Im} T'}^{\newtopology}$, $\left(\overline{\operatorname{Im} T}^{\newtopology} \right)^{\circ} = \operatorname{Ker} T'$
		\item \label{ThmOrtogonality_ii}$(\operatorname{Ker} T')^{\circ} = \overline{\operatorname{Im} T}^{\newtopology}$, $\left(\overline{\operatorname{Im} T'}^{\newtopology} \right)^{\circ} = \operatorname{Ker} T$.
	\end{enumerate}
	In the above, the (absolute) polar/annihilator symbol $( \cdot )^{\circ}$ refers to the dual pair $(X, X')$.
\end{theorem}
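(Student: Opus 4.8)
The plan is to recognise the four identities as the standard catalogue of annihilator relations between a $\newtopology$-continuous operator and its transpose on the dual pair $\langle X, X'\rangle$, so that, once the topological bookkeeping is in place, everything reduces to the bipolar theorem.

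First I would set up the dual pair and identify $T'$ as the transpose of $T$. Since $X$ is an r.i.~Banach function space, Theorem~\ref{TFA} gives $X'' = X$, and both $\lVert \cdot \rVert_X$ and $\lVert \cdot \rVert_{X'}$ satisfy \ref{P5}; applying Theorem~\ref{ThmACRDual} first to $X$ and then to $X'$ (both of which have the \ACR property by hypothesis) therefore yields $(X, \newtopology)^* = X'$ and $(X', \newtopology)^* = X'' = X$. Hence $\langle X, X'\rangle$ is a dual pair that separates points (both spaces contain the simple functions), and $\newtopology$ is a locally convex topology compatible with this pairing, both on $X$ and on $X'$. Two consequences I would record: for every linear subspace $M$ of $X$ or of $X'$ the $\newtopology$-closure of $M$ equals its weak closure, so that the bipolar theorem reads $M^{\circ\circ} = \overline{M}^{\newtopology}$ and $M^{\circ} = (\overline{M}^{\newtopology})^{\circ}$; and the defining relation $\int_{\mathcal{R}} T(f)\varphi \: d\mu = \int_{\mathcal{R}} f\,T'(\varphi) \: d\mu$ for $f \in X$, $\varphi \in X'$, says exactly that $T'$ is the transpose of $T$ with respect to $\langle X, X'\rangle$.

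Next I would prove the two ``elementary'' identities, namely the second equality in each of \ref{ThmOrtogonality_i} and \ref{ThmOrtogonality_ii}, straight from the pairing: for $\varphi \in X'$ one has $\varphi \in (\operatorname{Im} T)^{\circ}$ iff $\int_{\mathcal{R}} (Tf)\varphi \: d\mu = 0$ for all $f \in X$ iff $\int_{\mathcal{R}} f\,(T'\varphi) \: d\mu = 0$ for all $f \in X$ iff $T'\varphi = 0$ (using that the pairing separates the points of $X'$), so $(\operatorname{Im} T)^{\circ} = \operatorname{Ker} T'$, and since the polar of a set coincides with the polar of its $\newtopology$-closure this gives $(\overline{\operatorname{Im} T}^{\newtopology})^{\circ} = \operatorname{Ker} T'$; the identity $(\overline{\operatorname{Im} T'}^{\newtopology})^{\circ} = \operatorname{Ker} T$ follows in the mirror-image way, now using that the pairing separates the points of $X$. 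Finally, taking polars of these two identities and invoking the bipolar theorem yields the remaining two: $(\operatorname{Ker} T')^{\circ} = (\operatorname{Im} T)^{\circ\circ} = \overline{\operatorname{Im} T}^{\newtopology}$ and, symmetrically, $(\operatorname{Ker} T)^{\circ} = (\operatorname{Im} T')^{\circ\circ} = \overline{\operatorname{Im} T'}^{\newtopology}$.

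The argument is otherwise routine; the one point that genuinely needs the hypotheses — and the reason the statement is confined to r.i.~Banach function spaces — is the verification that $\newtopology$ is a topology of the dual pair $\langle X, X'\rangle$ on \emph{both} sides, since this is what licenses replacing weak closures by $\newtopology$-closures when applying the bipolar theorem. This uses Theorem~\ref{ThmACRDual} for $X$ and for $X'$ (so the \ACR property is needed for each) together with $X'' = X$ from Theorem~\ref{TFA} (so the Banach, not merely quasi-Banach, setting) to pin down $(X', \newtopology)^* = X$.
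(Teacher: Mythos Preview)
Your proposal is correct and follows essentially the same route as the paper: establish $(X,\newtopology)^*=X'$ and $(X',\newtopology)^*=X$ via Theorem~\ref{ThmACRDual} and $X''=X$, verify the elementary identities $(\operatorname{Im} T)^\circ=\operatorname{Ker} T'$ and $(\operatorname{Im} T')^\circ=\operatorname{Ker} T$, then apply the bipolar theorem on each side. The only cosmetic difference is that the paper cites \cite[Lemma~23.31]{MeiseVogt97} for the elementary annihilator identities whereas you derive them by hand from the pairing.
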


\begin{proof}
	By \cite[Lemma 23.31]{MeiseVogt97} we have $\left( \operatorname{Im} T' \right)^\circ = \operatorname{Ker} T$ as well as $\left( \operatorname{Im} T \right)^\circ = \operatorname{Ker} T'$, so that $\left( \operatorname{Ker} T \right)^\circ = \left( \operatorname{Im} T' \right)^{\circ\circ}$ as well as $\left( \overline{\operatorname{Im} T}^{\newtopology}  \right)^{\circ} = \operatorname{Ker} T'$. Additionally, the Bipolar Theorem \cite[Theorem~22.13]{MeiseVogt97} applied to the locally convex space $(X',\newtopology)$ gives  $(\operatorname{Ker} T)^{\circ} = \overline{\operatorname{Im} T'}^{\newtopology}$ which proves \ref{ThmOrtogonality_i}.
	
	From \ref{ThmOrtogonality_i} we conclude
	\begin{equation*}
	    (\operatorname{Ker} T')^{\circ} = \left(\overline{\operatorname{Im} T}^{\newtopology}\right)^{\circ\circ},\quad \left(\overline{\operatorname{Im} T'}^{\newtopology} \right)^{\circ} = \left(\operatorname{Ker} T\right)^{\circ\circ}.
	\end{equation*}
	\ref{ThmOrtogonality_ii} now follows from
	\begin{equation*}
	    \left(\overline{\operatorname{Im} T}^{\newtopology}\right)^{\circ\circ}=\overline{\operatorname{Im} T}^{\newtopology}, \quad \left(\operatorname{Ker} T\right)^{\circ\circ}=\overline{\operatorname{Ker} T}^{\newtopology}=\operatorname{Ker} T,
	\end{equation*}
	which holds by invoking the Bipolar Theorem once more (for the locally convex space $(X,\newtopology)$).
\end{proof}

We now come to the second of the two crucial properties of the $\newtopology$ topology, namely completeness. This property is a consequence of the embedding $(X, \newtopology) \hookrightarrow (\mathcal{M}_0, \mu_{\textup{loc}})$ established in Corollary~\ref{CorEmbM0} and we believe that it is interesting in its own right; recall that an infinite-dimensional normed space is never complete with respect to its (classical) weak topology (see e.g.~\cite[Theorem~3]{Kaplan1952} or \cite[Exercise~II.3]{Diestel1984}).

\begin{theorem} \label{Thm(w')*Complete}
    Let $X$ be an r.i.~Banach function space. Then it is $\newtopology$-complete.
\end{theorem}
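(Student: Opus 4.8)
The plan is to show that a $\xi$-Cauchy net in $X$ converges in $(X,\xi)$. Since $(X,\xi)$ is a Hausdorff locally convex space, it suffices to exhibit, for any $\xi$-Cauchy net $(f_\iota)_\iota$, a limit $f \in X$ with $f_\iota \to f$ in $\xi$. First I would use Corollary~\ref{CorEmbM0}, which gives the continuous embedding $(X,\xi) \hookrightarrow (\mathcal{M}_0, \mu_{\textup{loc}})$; hence $(f_\iota)_\iota$ is Cauchy in $(\mathcal{M}_0, \mu_{\textup{loc}})$, and by Proposition~\ref{PropM0CompMetr} this space is complete, so there is some $f \in \mathcal{M}_0$ with $f_\iota \to f$ in measure on sets of finite measure. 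The crux is then to prove two things: (a) $f \in X$, and (b) $f_\iota \to f$ in the $\xi$ topology, i.e.\ $\lvert f_\iota - f \rvert_\varphi \to 0$ for every $\varphi \in X'$.

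For this I would pass to a subnet, or better, argue directly via the norms $\lvert \cdot \rvert_\varphi$. Fix $\varphi \in X'$, $\varphi \neq 0$. By the $\xi$-Cauchy property, for each $\varepsilon > 0$ there is an index $\iota_0$ with $\lvert f_\iota - f_\kappa \rvert_\varphi \le \varepsilon$ whenever $\iota,\kappa \ge \iota_0$. The key step is to take the limit in $\kappa$ inside $\lvert f_\iota - f_\kappa \rvert_\varphi = \int_0^\infty \varphi^* (f_\iota - f_\kappa)^* \, d\lambda$. Since $f_\kappa \to f$ in measure on sets of finite measure, one has $(f_\iota - f_\kappa)^*(t) \to (f_\iota - f)^*(t)$ at every point $t$ of continuity of the limiting rearrangement — more carefully, convergence in measure of $f_\iota - f_\kappa$ to $f_\iota - f$ on all sets of finite measure implies, by the standard relationship between convergence in measure and rearrangements (essentially the argument in Proposition~\ref{Prop(w')*Implies*}), that $(f_\iota-f_\kappa)^* \to (f_\iota - f)^*$ a.e.\ on $(0,\infty)$; note here that each $f_\iota - f_\kappa$ lies in $X \subseteq \mathcal{M}_0$ so its rearrangement is finite a.e., and the same will hold for $f_\iota - f$ once we know the integrals are finite. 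Applying Fatou's lemma to $\int_0^\infty \varphi^* (f_\iota - f_\kappa)^* \, d\lambda$ as $\kappa$ runs through the net (using that for $\kappa \ge \iota_0$ the integrand is, in the lim inf sense, controlled) yields
\begin{equation*}
    \lvert f_\iota - f \rvert_\varphi = \int_0^\infty \varphi^* (f_\iota - f)^* \, d\lambda \le \liminf_\kappa \int_0^\infty \varphi^* (f_\iota - f_\kappa)^* \, d\lambda \le \varepsilon
\end{equation*}
for all $\iota \ge \iota_0$. This simultaneously shows $\lvert f_\iota - f \rvert_\varphi < \infty$ (so $f_\iota - f \in X''$ once tested against all $\varphi$, and since $X = X''$ for an r.i.\ Banach function space by Theorem~\ref{TFA}, combined with $f_\iota \in X$, we get $f \in X$), and that $\lvert f_\iota - f \rvert_\varphi \to 0$, i.e.\ $f_\iota \to f$ in $(X,\xi)$.

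To make the membership $f \in X$ airtight I would invoke the identity \eqref{RemPropPhi*Norm:E2}, namely $\lVert h \rVert_{X''} = \sup_{\lVert \varphi \rVert_{X'} \le 1} \lvert h \rvert_\varphi$, which for an r.i.\ Banach function space equals $\lVert h \rVert_X$ by Theorem~\ref{TFA}: applying this with $h = f_\iota - f$ and taking $\iota$ past $\iota_0$ (for $\varepsilon = 1$, say) gives $\lVert f_\iota - f \rVert_X \le 1$, hence $f = f_\iota - (f_\iota - f) \in X$. The main obstacle I anticipate is the interchange of limit and integral in the Fatou step — specifically justifying that $(f_\iota - f_\kappa)^*$ converges to $(f_\iota - f)^*$ pointwise a.e.\ (or at least that $\liminf_\kappa (f_\iota - f_\kappa)^* \ge (f_\iota - f)^*$ a.e.) purely from convergence in measure on finite-measure sets; this is the technical heart and is handled by a distribution-function / Fatou argument on the measures, in the spirit of the proof of Proposition~\ref{Prop(w')*Implies*}. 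Everything else is routine: the family $\{\lvert \cdot \rvert_\varphi\}$ is saturated and separating (Remark~\ref{RemPropPhi*Norm}), so convergence of all these norms to $0$ is exactly $\xi$-convergence, and completeness of a Hausdorff locally convex space is tested on nets against a generating family of seminorms.
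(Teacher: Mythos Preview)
Your strategy is right, but two steps need repair. The crucial one is the Fatou step: you want $\lvert f_\iota - f \rvert_\varphi \le \liminf_\kappa \lvert f_\iota - f_\kappa \rvert_\varphi$ by passing to the limit along the \emph{net} $\kappa$, but Fatou's lemma does not hold for nets, and your stronger claim that local convergence in measure implies $(f_\iota - f_\kappa)^* \to (f_\iota - f)^*$ a.e.\ is false in general (on $(\mathbb{R}, \lambda)$ take $g_n = \chi_{[n, n+1]}$: then $g_n \to 0$ locally in measure but $g_n^* \equiv \chi_{[0,1]}$). The remedy is to extract a sequence: for fixed $\iota \ge \iota_0$, use metrizability of $(\mathcal{M}_0, \mu_{\textup{loc}})$ (Proposition~\ref{PropM0CompMetr}) to find indices $\kappa_n \ge \iota_0$ with $f_{\kappa_n} \to f$ in $\mu_{\textup{loc}}$, pass to a further subsequence converging $\mu$-a.e., and then apply the pointwise Fatou property of rearrangements together with the classical Fatou lemma to get $\lvert f_\iota - f \rvert_\varphi \le \varepsilon$. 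The second gap is in your ``airtight'' paragraph: since $\iota_0 = \iota_0(\varphi, \varepsilon)$, you cannot take the supremum over $\varphi$ to conclude $\lVert f_\iota - f \rVert_X \le 1$ for a single $\iota$; instead, argue that $\lvert f \rvert_\varphi < \infty$ for each $\varphi$ separately and invoke Proposition~\ref{PropLandauRes} together with Theorem~\ref{TFA}.

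With these fixes your argument is complete and in fact shorter than the paper's for the convergence part. The paper obtains $f \in X$ in essentially the same way (Fatou along an extracted $\mu$-a.e.\ convergent sequence), but for the convergence $\lvert f_\iota - f \rvert_\varphi \to 0$ it does not attempt a direct Fatou argument; instead it splits the integral $\int_0^\infty \varphi^* (f_\iota - f)^* \, d\lambda$ into pieces near $0$, on a bounded middle interval, and near $\infty$, and on the middle piece uses resonance to pass to an honest integral $\int_{\mathcal{R}} \lvert \psi_0 \rvert \, \lvert f_\iota - f \rvert \, d\mu$ on which local convergence in measure can be exploited directly. Your repaired Fatou-along-a-subsequence approach bypasses that splitting entirely.
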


\begin{proof}
	Let $(f_{\iota})_{\iota}$ be a Cauchy net in $(X, \newtopology)$. Then Corollary~\ref{CorEmbM0} implies that it is also a Cauchy net in $(\mathcal{M}_0, \mu_{\textup{loc}})$, which is a complete metric space (see Proposition~\ref{PropM0CompMetr}). This implies that there is some function $f \in \mathcal{M}_0$ such that $f_{\iota} \to f$ in $(\mathcal{M}_0, \mu_{\textup{loc}})$ (see \cite[Remark 22.18 (d)]{MeiseVogt97}). This is the natural candidate for the limit of $(f_{\iota})_{\iota}$, we need to show that it belongs to $X$ and that the net indeed converges in $(X, \newtopology)$.
	
	Since $X$ is assumed to be an r.i.~Banach function space, Proposition~\ref{PropLandauRes} together with Theorem~\ref{TFA} imply that $f \in X$ if and only if it satisfies
	\begin{equation*}
		\int_0^{\infty} \varphi^* f^* \: d\lambda < \infty
	\end{equation*}
	for all $\varphi \in X'$, so let us fix such a $\varphi$. The fact that $(\mathcal{M}_0, \mu_{\textup{loc}})$ is metrisable allows us to construct a sequence $\iota_n$ of indices for which it holds both that $f_{\iota_n} \to f$ in $(\mathcal{M}_0, \mu_{\textup{loc}})$ as $n \to \infty$ and that $(f_{\iota_n})_n$ is Cauchy with respect to the norm $\lvert \cdot \rvert_{\varphi}$. We stress that this sequence of indices depends on $\varphi$. By further passing to a subsequence if necessary, we may assume that $f_{\iota_n} \to f$ $\mu$-a.e. Whence, it follows from the properties of non-increasing rearrangement (see e.g.~\cite[Proposition~1.7]{BennettSharpley88}) and the classical Fatou's lemma that
	\begin{equation*}
		\int_0^{\infty} \varphi^* f^* \: d\lambda \leq \int_0^{\infty} \varphi^* \liminf_{n \to \infty} f_{\iota_n}^* \: d\lambda \leq \liminf_{n \to \infty} \int_0^{\infty} \varphi^* f_{\iota_n}^* \: d\lambda,
	\end{equation*}
	where the right-hand side is clearly finite since $f_{\iota_n}$ was constructed to be Cauchy with respect to the norm $\lvert \cdot \rvert_{\varphi}$. As $\varphi \in X'$ was arbitrary, we have established $f \in X$.
	
	Let us now move to the convergence in $(X, \newtopology)$, hence we fix $\varphi \in X'$ and $\varepsilon > 0$. Using the assumption that $(f_{\iota})_{\iota}$ is $\newtopology$-Cauchy, we now find some index $\iota_0$ such that it holds for every $\iota, \kappa \geq \iota_0$ that
	\begin{equation*} \label{Thm(w')*Complete:E1}
		\int_0^{\infty} \varphi^* (f_{\iota} - f_{\kappa})^* \: d\lambda < \varepsilon.
	\end{equation*}
	We now fix arbitrary $\iota \geq \iota_0$. Next, using the triangle inequality of $\lvert \cdot \rvert_{\varphi}$, we observe that 
	\begin{equation*}
		\int_0^{\infty} \varphi^* (f_{\iota} - f)^* \: d\lambda < \infty,
	\end{equation*}
	since both $f_{\iota}, f \in X$ and $\varphi \in X'$. Consequently, we may find $N, \delta \in (0, \infty)$ such that $N \geq \delta$, $N \in \operatorname{Im} \mu$, and
	\begin{align*}
		\int_N^{\infty} \varphi^* (f_{\iota} - f)^* \: d\lambda &< \varepsilon, \\
		\int_0^{\delta} \varphi^* (f_{\iota} - f)^* \: d\lambda &< \varepsilon. 
	\end{align*}
	Next, since we assume that $(\mathcal{R}, \mu)$ is resonant and that $N \in \operatorname{Im} \mu$, we know that $\varphi^* \chi_{[0, N)}$ is the non-increasing rearrangement of some function in $\mathcal{M}(\mathcal{R}, \mu)$ (either by the classical Sierpiński theorem when $(\mathcal{R}, \mu)$ is non-atomic, see e.g.~\cite[Chapter~2, Corollary~7.8]{BennettSharpley88}, or by the construction presented in \eqref{ThmRepre(w')*:E1} when it is completely atomic), which necessarily belongs to $X'$ thanks to Corollary~\ref{Corollary*<*=>||<||}, and so it follows from \eqref{DefResonant:E1} that we may find some $\psi_0 \in X'$ satisfying both $\psi_0^* = \varphi^* \chi_{[0, N)}$ and
	\begin{equation*}
		\int_0^{N} \varphi^* (f_{\iota} - f)^* \: d\lambda < \int_{\mathcal{R}} \lvert \psi_0 \rvert \lvert f_{\iota} - f \rvert \: d\mu + \varepsilon.
	\end{equation*}
	We then put $\mathcal{R}_0 = \supp \psi_0$ (i.e.~the support of an arbitrary but fixed representative of the equivalence class $\psi_0$) and note that $\mu(\mathcal{R}_0) = N$.	Finally, using the fact that $f_{\iota} \to f$ in $(\mathcal{M}_0, \mu_{\textup{loc}})$, we find some $\kappa \geq \iota_0$ such that the set $E_{\kappa}$ given by
	\begin{equation*}
		E_{\kappa} = \left \{ x \in \mathcal{R}_0; \; \lvert f_{\kappa}(x) - f(x) \rvert > \varepsilon \left( \int_0^{N} \varphi^* \: d\lambda \right)^{-1} \right \} 
	\end{equation*}
	satisfies $\mu(E_{\kappa}) < \delta$ (recall that $\varphi^* \in \left( \overline{X} \right)'$ which implies that it is locally integrable, see Proposition~\ref{PropRepreAS} and Theorem~\ref{TFA}).
	
	We are now suitably equipped to compute the required estimate (using also the Hardy--Littlewood inequality (Theorem~\ref{THLI}) multiple times, as well as the triangle inequality of $\lVert \cdot \rVert_{L^1}$, subadditivity of the elementary maximal function (Proposition~\ref{PropSubAdd**}), and the Hardy's lemma (Lemma~\ref{LemmaHardy})):
	\begin{equation*}
		\begin{split}
			\int_0^{\infty} \varphi^* (f_{\iota} - f)^* \: d\lambda &\leq \int_0^{N} \varphi^* (f_{\iota} - f)^* \: d\lambda + \int_N^{\infty} \varphi^* (f_{\iota} - f)^* \: d\lambda \\
			&\leq \int_{\mathcal{R}} \lvert \psi_0 \rvert \lvert f_{\iota} - f \rvert \: d\mu + 2\varepsilon \\
			&\leq \int_{\mathcal{R}} \lvert \psi_0 \rvert \lvert f_{\iota} - f_\kappa \rvert \: d\mu + \int_{\mathcal{R}} \lvert \psi_0 \rvert \lvert f_{\kappa} - f \rvert \: d\mu + 2\varepsilon \\
			&\leq \int_0^{N} \varphi^* (f_{\iota} - f_{\kappa})^* \: d\lambda + \int_{E_{\kappa}} \lvert \psi_0 \rvert \lvert f_{\kappa} - f \rvert \: d\mu +3\varepsilon \\
			&\leq \int_0^{\delta} \varphi^* (f_{\kappa} - f)^* \: d\lambda + 4\varepsilon \\
			&\leq \int_0^{\delta} \varphi^* (f_{\kappa} - f_{\iota})^* \: d\lambda + \int_0^{\delta} \varphi^* (f_{\iota} - f)^* \: d\lambda + 4\varepsilon \\
			&< 6 \varepsilon.
		\end{split}
	\end{equation*}
\end{proof}

In contrast to the previous theorem, as follows from our next result, the $\newtopology$-completeness never holds for r.i.~quasi-Banach function spaces that have the \ACR property but fail to be Banach function spaces. This justifies our claim that the restriction to r.i.~Banach function spaces in the previous theorem was necessary.

\begin{proposition}
	Let $\lVert \cdot \rVert_X$ be an r.i.~quasi-Banach function norm satisfying \ref{P5} and let $X$ be the corresponding r.i.~quasi-Banach function space. Assume that $X$ has the \ACR property. Then $X''$, the second associate space of $X$, also has the \ACR property and $X$ is $\newtopology$-dense in $X''$. Consequently, if $X \neq X''$, then it is not $\newtopology$-complete.
\end{proposition}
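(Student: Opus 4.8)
The plan is to prove the two substantive assertions — that $X''$ has the \ACR property and that $X$ is $\newtopology$-dense in $X''$ — and then read off the last sentence: $(X'',\newtopology)$ is a Hausdorff locally convex space, so if $X$ were $\newtopology$-complete it would be $\newtopology$-closed in $X''$, and being also $\newtopology$-dense it would equal $X''$. First I would set up the ambient picture. Since $\lVert\cdot\rVert_X$ is a quasi-Banach function norm satisfying \ref{P5}, Theorem~\ref{TFA} makes $\lVert\cdot\rVert_{X'}$ an r.i.~Banach function norm, hence so is $\lVert\cdot\rVert_{X''}$ and $X'''=X'$; in particular $X''$ satisfies \ref{P5}, so $\newtopology$ is defined on it. Moreover $X\hookrightarrow X''$ by Proposition~\ref{PESSAS}, and since $X'''=X'$ both $\newtopology$-topologies are generated by the \emph{same} family of norms $\{\lvert\cdot\rvert_\varphi;\ \varphi\in X'\}$, which agree on elements of $X$; thus $\newtopology$ on $X$ is exactly the subspace topology from $(X'',\newtopology)$. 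This is the only non-routine point of the reduction.

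Next, the \ACR property of $X''$. If $\mu(\mathcal R)<\infty$ it holds trivially, so assume $\mu(\mathcal R)=\infty$. I would use the elementary equivalence, valid for every r.i.~quasi-Banach function space $Y$ over an infinite-measure resonant space, that $Y$ has the \ACR property iff $\chi_\mathcal R\notin Y$: if $f\in Y\setminus\mathcal M_{(ACR)}$ then $f^*\ge c\chi_{[0,\infty)}=c(\chi_\mathcal R)^*$ for some $c>0$, so $\chi_\mathcal R\in Y$ by Corollary~\ref{Corollary*<*=>||<||}. Thus $X$ having \ACR means $\chi_\mathcal R\notin X$, and I must deduce $\chi_\mathcal R\notin X''$. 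By Proposition~\ref{PAS} applied to $\lVert\cdot\rVert_{X'}$ one gets $\lVert\chi_\mathcal R\rVert_{X''}=\sup\{\lVert\psi\rVert_{L^1};\ \lVert\psi\rVert_{X'}\le1\}$, which is infinite unless $X'\hookrightarrow L^1$; and $X'\hookrightarrow L^1$ (equivalently $X'\subseteq L^1$, by Theorem~\ref{TEQBFS}) would force $t\lesssim\varphi_{X'}(t)$ for all $t$. So the whole matter reduces to the following quantitative estimate, which I regard as the heart of the proof: \emph{if $\lVert\cdot\rVert_X$ is an r.i.~quasi-Banach function norm over an infinite-measure resonant space satisfying \ref{P5} and with the \ACR property, then $\varphi_{X'}(t)=o(t)$ as $t\to\infty$.}

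To prove the estimate I would argue by contradiction: suppose $\varphi_{X'}(t_n)\ge\delta t_n$ for some $\delta>0$ and some $t_n\to\infty$. By Proposition~\ref{PAS}, $\varphi_{X'}(t)=\sup_{\lVert g\rVert_X\le1}\int_0^t g^*\,d\lambda$, so I may pick $g_n\in X$ with $\lVert g_n\rVert_X\le1$ and $\int_0^{t_n}g_n^*\,d\lambda\ge(\delta/2)t_n$. Put $M:=\varphi_{X'}(1)$, finite by \ref{P5}; then $\int_0^1 g_n^*\,d\lambda\le M$ (again Proposition~\ref{PAS}), hence also $g_n^*(1)\le M$. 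The truncation $\phi_n:=\min(g_n^*,M)$ is then $\le M$ everywhere, equals $g_n^*$ on $[1,\infty)$, and satisfies $\lVert\phi_n\rVert_{\overline X}\le\lVert g_n^*\rVert_{\overline X}=\lVert g_n\rVert_X\le1$ (lattice property \ref{P2} of $\lVert\cdot\rVert_{\overline X}$ and Theorem~\ref{TheoremRepresentation}), while $\int_0^{t_n}\phi_n\,d\lambda\ge\int_1^{t_n}g_n^*\,d\lambda\ge(\delta/2)t_n-M\ge(\delta/4)t_n$ for $n$ large. A Chebyshev estimate against $\phi_n\le M$ gives $\lambda(\{\phi_n>\delta/8\})\ge\delta t_n/(8M)$, and since $\phi_n$ is non-increasing this level set is an interval $[0,r_n)$ with $r_n\ge\delta t_n/(8M)$; hence $(\delta/8)\chi_{[0,r_n)}\le\phi_n$ and so $(\delta/8)\varphi_X(r_n)\le\lVert\phi_n\rVert_{\overline X}\le1$, using $\varphi_X=\varphi_{\overline X}$ (Theorem~\ref{TheoremRepresentation}). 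As $r_n\to\infty$ and $\varphi_X$ is non-decreasing this bounds $\varphi_X$ by $8/\delta$, so the Fatou property \ref{P3} yields $\lVert\chi_\mathcal R\rVert_X=\sup_n\varphi_X(\mu(\mathcal R_n))<\infty$ for $\mathcal R_n\uparrow\mathcal R$ of finite measure — i.e.\ $\chi_\mathcal R\in X$, contradicting the \ACR property of $X$. (For completely atomic $\mathcal R$ one either runs the argument on the grid of atom-multiples or first passes to the canonical representation $\overline X$ over $([0,\infty),\lambda)$, which satisfies \ref{P5} by Proposition~\ref{PropRepreP5}, has the \ACR property exactly when $X$ does, and whose double associate represents $X''$ via Proposition~\ref{PropRepreAS}.) I expect this estimate to be the main obstacle: in the quasi-Banach setting $g_n^*$ may blow up arbitrarily fast near $0$, so $\int_0^{t_n}g_n^*$ cannot be controlled by a pointwise bound; axiom \ref{P5} is precisely what lets one cap that singularity at the fixed height $M$ and thereby isolate the bulk mass on $[1,t_n]$ that would illegitimately produce $\chi_\mathcal R\in X$.

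Finally, for the density of $X$ in $(X'',\newtopology)$: given $f\in X''$, the part just proved gives $\lvert f\rvert\in X''\cap\mathcal M_{(ACR)}$. With $\mathcal R_n\uparrow\mathcal R$ of finite measure put $f_n:=\sgn(f)\min(\lvert f\rvert,n)\chi_{\mathcal R_n}$; then $\lvert f_n\rvert\le n\chi_{\mathcal R_n}$ forces $f_n\in X$ by \ref{P4}, $f_n\to f$ $\mu$-a.e., and $\lvert f_n\rvert\le\lvert f\rvert$. Proposition~\ref{Prop(w')*DomConv}, applied to the r.i.~Banach function space $X''$ with dominating function $\lvert f\rvert$, then gives $f_n\to f$ in $(X'',\newtopology)$, so $f$ belongs to the $\newtopology$-closure of $X$ in $X''$. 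This establishes the density, and with it the whole proposition.
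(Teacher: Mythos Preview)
Your proof is correct, but both main steps take a different route from the paper.

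For the \ACR property of $X''$, the paper simply invokes \cite[Theorems~4.16 and 4.26]{MusilovaNekvinda24}, whereas you give a self-contained argument: you show that failure of \ACR for $X''$ would force $X'\hookrightarrow L^1$, hence $t\lesssim\varphi_{X'}(t)$, and you rule this out by a direct Chebyshev-type estimate producing arbitrarily long plateaus in the unit ball of $\overline X$ and hence $\chi_\mathcal{R}\in X$. Your argument is longer but has the merit of being internal to the paper; note also that you actually obtain the stronger statement $\varphi_{X'}(t)=o(t)$, where only $\liminf_{t\to\infty}\varphi_{X'}(t)/t=0$ is needed for the contradiction. One small wrinkle: the identity $\varphi_X=\varphi_{\overline X}$ holds only on the range of $\mu$, so strictly speaking you bound $\varphi_{\overline X}$ first (where the $r_n$ live) and then transfer to $\varphi_X$, but this is cosmetic.

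For the density of $X$ in $(X'',\newtopology)$, the paper argues by duality: Theorem~\ref{ThmACRDual} gives $(X'',\newtopology)^*=X'$, and since $X$ contains the simple functions no nonzero $\varphi\in X'$ can annihilate $X$; Hahn--Banach then yields density. You instead build an explicit approximating sequence $f_n=\operatorname{sgn}(f)\min(\lvert f\rvert,n)\chi_{\mathcal{R}_n}\in X$ and apply Proposition~\ref{Prop(w')*DomConv} in $X''$, which not only proves density but in fact \emph{sequential} $\newtopology$-density, and bypasses Theorem~\ref{ThmACRDual} entirely. Your final deduction (complete subspace of a Hausdorff locally convex space is closed) is equivalent to the paper's Cauchy-net phrasing.
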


\begin{proof}
	We first recall that $X'$ is a Banach function space and thus $X''' = X'$ (see Theorem~\ref{TFA}), whence the norms defining the $\newtopology$ topology in $X$ are induced by the same functions as those on $X''$, i.e.~they are just restrictions. Furthermore, the fact that $X''$ also has the \ACR property follows from \cite[Theorem~4.16 and Theorem~4.26]{MusilovaNekvinda24}. Thus, Theorem~\ref{ThmACRDual} implies that $(X'', \newtopology)^* = X'$. However, $X$ contains the simple function, so the only function in $X'$ that induces the zero functional via the relation \eqref{ThmACRDual:E1} is the zero function, and so the classical Hahn--Banach theorem implies that $X$ is $\newtopology$-dense in $X''$.
	
	Finally, since the $\newtopology$ topologies on $X$ and $X''$ are both induced by $X'$, it follows that any net in $X$ that is $\newtopology$-convergent to a point in $X''$ is $\newtopology$-Cauchy in $X$, which shows that $X$ is not $\newtopology$-complete.
\end{proof}

The next result shows, that the $\newtopology$ topology is almost never metrisable---and when it is, then it is not interesting, as it coincides with the subspace topology inherited from $X''$.

\begin{theorem} \label{ThmEquivMetrNorm}
	Let $\lVert \cdot \rVert_X$ be an r.i.~quasi-Banach function norm satisfying \ref{P5} and let $X$ and $X'$ be, respectively, the corresponding r.i.~quasi-Banach function space and its associate space. Then the following statements are equivalent:
	\begin{enumerate}
        \item The topological space $(X, \newtopology)$ is first-countable. \label{ThmEquivMetrNorm_0}
		\item The topological space $(X, \newtopology)$ is metrizable. \label{ThmEquivMetrNorm_i}
		\item The topological space $(X, \newtopology)$ is normable. \label{ThmEquivMetrNorm_ii} 
		\item There is some $\psi \in X'$ such that $\lvert \cdot \rvert_{\psi} \approx \lVert \cdot \rVert_{X''}$ on $\mathcal{M}_0(\mathcal{R}, \mu)$. \label{ThmEquivMetrNorm_iii}
		\item There is some $\psi \in X'$ for which it holds that an arbitrary function $\varphi \in \mathcal{M}_0$ belongs to $X'$ if and only if there is some constant $C_{\varphi}$ for which it holds that $\varphi \prec C_{\varphi} \psi$.  \label{ThmEquivMetrNorm_iv}
	\end{enumerate}
\end{theorem}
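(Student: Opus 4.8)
The plan is to close the cycle \textup{normability} $\Rightarrow$ \textup{metrizability} $\Rightarrow$ \textup{first-countability} $\Rightarrow$ \ref{ThmEquivMetrNorm_iv} $\Rightarrow$ \ref{ThmEquivMetrNorm_iii} $\Rightarrow$ \textup{normability}, the first two arrows being trivial. The main auxiliary tool I would establish first is a \emph{comparison lemma}: for $\varphi,\psi\in\mathcal{M}(\mathcal{R},\mu)$ and $C>0$ one has $\lvert\cdot\rvert_\varphi\le C\lvert\cdot\rvert_\psi$ on $X$ if and only if $\varphi\prec C\psi$. The forward direction is Hardy's lemma (Lemma~\ref{LemmaHardy}) applied with the non-increasing weight $f^*$; for the converse I would test the inequality against $\chi_E$ for measurable $E$ with $\mu(E)<\infty$ (which lies in $X$ by \ref{P4}), obtaining $\int_0^{\mu(E)}\varphi^*\le C\int_0^{\mu(E)}\psi^*$, and then pass to all $t>0$ using that on a non-atomic space $\mu(E)$ sweeps out $[0,\mu(\mathcal{R}))$ while on a completely atomic space both sides of the estimate are affine between consecutive atom-grid points, together with the fact that $\varphi^*,\psi^*$ vanish beyond $\mu(\mathcal{R})$.

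For first-countability $\Rightarrow$ \ref{ThmEquivMetrNorm_iv} — which I expect to be the hard part — I would start from a countable neighbourhood base at $0$ of the topological vector space $(X,\newtopology)$ (Remark~\ref{RemEmbeTopo}); saturation of the family $\{\lvert\cdot\rvert_\varphi:\varphi\in X'\}$ (Remark~\ref{RemPropPhi*Norm}) lets me refine it to a base $\{\lvert\cdot\rvert_{\psi_n}<\varepsilon_n\}$ with $0\le\psi_n\in X'$, and replacing $\psi_n$ by $\sum_{k\le n}\psi_k$ makes the sequence $\lvert\cdot\rvert_{\psi_1}\le\lvert\cdot\rvert_{\psi_2}\le\cdots$ non-decreasing while still generating $\newtopology$. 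Then I would set $\psi=\sum_n 2^{-n}(1+\lVert\psi_n\rVert_{X'})^{-1}\psi_n$, which belongs to $X'$ by the Fatou property \ref{P3} and subadditivity of $\lVert\cdot\rVert_{X'}$, and note $\psi_n\prec C_n\psi$ since $\psi$ pointwise dominates a positive multiple of each $\psi_n$. For any $\varphi\in X'$, continuity of $\lvert\cdot\rvert_\varphi$ in $\newtopology$ gives $\lvert\cdot\rvert_\varphi\le C\lvert\cdot\rvert_{\psi_n}$ on $X$ for some $n$ (the $\psi_n$ being increasing and generating), so by the comparison lemma and transitivity of $\prec$ one gets $\varphi\prec CC_n\psi$; conversely $\varphi\prec C_\varphi\psi$ forces $\varphi\in X'$ because $X'$ is an r.i.~Banach function space (Theorem~\ref{TFA}), for which the Hardy--Littlewood--P\'olya principle holds. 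Here $\psi\ne0$, since $X'=\{0\}$ would make $\newtopology$ non-Hausdorff unless $X=\{0\}$, a trivial case.

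For \ref{ThmEquivMetrNorm_iv} $\Rightarrow$ \ref{ThmEquivMetrNorm_iii}, I would take $\psi$ from \ref{ThmEquivMetrNorm_iv}, observe that $\Phi:=1/\psi^{**}$ is finite and quasiconcave on $(0,\infty)$, and compute that the strong Marcinkiewicz endpoint norm satisfies $\lVert\varphi\rVert_{M_\Phi}=\sup_{t>0}\varphi^{**}(t)/\psi^{**}(t)=\inf\{C>0:\varphi\prec C\psi\}$, so that \ref{ThmEquivMetrNorm_iv} is exactly the set equality $M_\Phi=X'$. As both are r.i.~Banach function spaces over $(\mathcal{R},\mu)$, Theorem~\ref{TEQBFS} promotes this to $\lVert\cdot\rVert_{M_\Phi}\approx\lVert\cdot\rVert_{X'}$, hence $\varphi\prec C_0\lVert\varphi\rVert_{X'}\psi$ uniformly in $\varphi\in X'$; feeding this into $\lVert f\rVert_{X''}=\sup_{\lVert\varphi\rVert_{X'}\le1}\int_0^\infty f^*\varphi^*\,d\lambda$ (Proposition~\ref{PAS} applied to $X'$) via Hardy's lemma yields $\lVert f\rVert_{X''}\le C_0\lvert f\rvert_\psi$ on $\mathcal{M}_0$, while $\lvert f\rvert_\psi\le\lVert\psi\rVert_{X'}\lVert f\rVert_{X''}$ is immediate from the same representation; this is \ref{ThmEquivMetrNorm_iii}. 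Finally, \ref{ThmEquivMetrNorm_iii} $\Rightarrow$ normability is easy: once $\lvert\cdot\rvert_\psi\approx\lVert\cdot\rVert_{X''}$, every defining norm obeys $\lvert\cdot\rvert_\varphi\le\lVert\varphi\rVert_{X'}\lVert\cdot\rVert_{X''}\lesssim\lvert\cdot\rvert_\psi$ on $X$, so $\newtopology$ is the topology of the single norm $\lvert\cdot\rvert_\psi$.

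The main obstacle is the manufacture of a \emph{single} majorant $\psi\in X'$ in the first-countability step: this requires both the saturation of the norm family (to collapse finite maxima to one norm and to obtain an increasing countable base) and the comparison lemma translating norm domination into the Hardy--Littlewood--P\'olya relation, the latter involving the slightly delicate endpoint/interpolation bookkeeping on completely atomic measure spaces, where testing the norm inequality only yields information at the atom-grid points.
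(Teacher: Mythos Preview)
Your proposal is correct and follows essentially the same cycle of implications as the paper's proof (\ref{ThmEquivMetrNorm_0} $\Rightarrow$ \ref{ThmEquivMetrNorm_iv} $\Rightarrow$ \ref{ThmEquivMetrNorm_iii} $\Rightarrow$ \ref{ThmEquivMetrNorm_ii} $\Rightarrow$ \ref{ThmEquivMetrNorm_i} $\Rightarrow$ \ref{ThmEquivMetrNorm_0}). The only noteworthy differences are technical: you isolate the comparison lemma and build the majorant $\psi$ directly on $\mathcal{R}$ via a weighted series of the $\psi_n$, whereas the paper sums the rearrangements $\psi_n^*$ in the representation space $(\overline{X})'$ and invokes Theorem~\ref{ThmRepre(w')*} to pull the result back to $X'$; and for \ref{ThmEquivMetrNorm_iv} $\Rightarrow$ \ref{ThmEquivMetrNorm_iii} you route through the Marcinkiewicz space $M_{1/\psi^{**}}$ (which the paper discusses separately in the proposition immediately following the theorem), while the paper instead verifies directly that $\lvert\cdot\rvert_\psi$ is an r.i.~Banach function norm and applies Proposition~\ref{PropLandauRes}.
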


\begin{proof}
	We start by showing that \ref{ThmEquivMetrNorm_0} implies \ref{ThmEquivMetrNorm_iv}. Assume that $(X, \newtopology)$ is first-countable, i.e.~that there is a countable basis of neighbourhoods of zero. Furthermore, $\{\lvert \cdot \rvert_{\varphi}; \; \varphi \in X'\}$ is a saturated family of norms (see Remark~\ref{RemPropPhi*Norm}) and so we may assume that there is a sequence $\psi_n \in X'$, $\lVert \psi_n \rVert_{X'} \leq 1$, and a corresponding sequence $\varepsilon_n \in (0, \infty)$ such that the sets
	\begin{equation} \label{ThmEquivMetrNorm:E1}
		\left \{ f \in X; \; \lvert f \rvert_{\psi_n} < \varepsilon_n \right \}
	\end{equation}
	form a basis of neighbourhoods of zero. Now, if we denote by $\lVert \cdot \rVert_{\overline{X}}$ the canonical representation quasinorm, by $\lVert \cdot \rVert_{\left( \overline{X} \right)'}$ the corresponding associate norm, and finally by $\left( \overline{X} \right)'$ the corresponding associate space, then Proposition~\ref{PropRepreAS} ensures that $\lVert \psi_n^* \rVert_{\left(\overline{X}\right)'} \leq 1$ for every $n$ and thus the function $\widetilde{\psi} \in \mathcal{M}([0, \infty), \lambda)$ given by
	\begin{equation*}
		\widetilde{\psi} = \sum_{n=0}^{\infty} 2^{-n} \psi_n^*
	\end{equation*}
	satisfies $\widetilde{\psi} \in \left(\overline{X}\right)'$ (as $\left(\overline{X}\right)'$ is a Banach space, see Theorem~\ref{TFA}). Theorem~\ref{ThmRepre(w')*} then yields $\psi \in X'$ such that we have for every $f \in X$ that
	\begin{equation*}
		\lvert f \rvert_{\psi} = \sum_{n=0}^{\infty} 2^{-n} \int_0^{\infty} \psi_n^* f^* \: d\lambda.
	\end{equation*}
	Now, $\lvert \cdot \rvert_{\psi}$ is of course a norm (as observed in Remark~\ref{RemPropPhi*Norm}) and since \eqref{ThmEquivMetrNorm:E1} is a basis of neighbourhoods of zero, it follows that the topology induced by this norm is equivalent to the $\newtopology$ topology. (This proves \ref{ThmEquivMetrNorm_ii}, but we need to work a little bit further to obtain \ref{ThmEquivMetrNorm_iv}.) Consequently, the norms $\lvert \cdot \rvert_{\varphi}$, $\varphi \in X'$, are continuous with respect to the norm $\lvert \cdot \rvert_{\psi}$ (since they are continuous with respect to the $\newtopology$ topology), i.e.~we have the estimates
	\begin{align*} 
		\lvert \cdot \rvert_{\varphi} &\lesssim \lvert \cdot \rvert_{\psi} &\text{on } X, \text{ for } \varphi \in X'.
	\end{align*}
	By testing this estimate by characteristic functions of measure $t$, we get that for every $\varphi \in X'$ there is some constant $C_{\varphi}$ such that we have
	\begin{align*}
		\int_0^{t} \varphi^* \: d\lambda &\leq C_{\varphi} \int_0^{t} \psi^* \: d\lambda &\text{for } t \text{ in the range of } \mu.
	\end{align*}
	Moreover, when $\mu(\mathcal{R}) < \infty$ then the integrals on both sides are, as a functions of $t$, constant on $(\mu(\mathcal{R}), \infty)$ and when $(\mathcal{R}, \mu)$ is completely atomic then the same functions are piecewise linear (as the rearrangements are piecewise constant). Hence, this estimate extends to the whole of $[0, \infty)$, which means that we have
	\begin{align*} \label{ThmEquivMetrNorm:E2}
		\varphi &\prec C_{\varphi} \psi &\text{for every } \varphi \in X'.
	\end{align*}
	Since the Hardy--Littlewood--P\'{o}lya principle holds for $X'$ (because it is an r.i.~Banach function space, see Theorem~\ref{TFA} and e.g.~\cite[Chapter~2, Theorem~4.6]{BennettSharpley88}), this establishes \ref{ThmEquivMetrNorm_iv}.
	
	Let us now show that \ref{ThmEquivMetrNorm_iv} implies \ref{ThmEquivMetrNorm_iii}. Since $X'$ is non-trivial (see Theorem~\ref{TFA}), we observe that $\psi \neq 0$, where $\psi$ is the function from our assumption \ref{ThmEquivMetrNorm_iv}. Thus, it is not difficult to observe that $\lvert \cdot \rvert_{\psi}$ is in fact an r.i.~Banach function norm. Indeed, it has already been observed in Remark~\ref{RemPropPhi*Norm} that it is a norm, the properties \ref{P2} and \ref{P3} are direct consequences of the properties of non-increasing rearrangement (see e.g.~\cite[Chapter~2, Proposition~1.7]{BennettSharpley88}), \ref{P4} follows from the property \ref{P5} of $\lVert \cdot \rVert_{X'}$ (see Theorem~\ref{TFA}), and finally \ref{P5} can be shown via a rather standard argument, see e.g.~\cite[Proposition~3.2]{PesaRepreACqN}. Hence, recalling Theorem~\ref{TEQBFS} and Proposition~\ref{PropLandauRes} makes it clear that in order to show that $\lvert \cdot \rvert_{\psi} \approx \lVert \cdot \rVert_{X''}$ it suffices to show that we have for every function $f \in \mathcal{M}_0(\mathcal{R}, \mu)$ that
	\begin{equation*}
		\int_0^{\infty} \psi^* f^* \: d\lambda < \infty
	\end{equation*}
	implies
	\begin{align*}
		\int_0^{\infty} \varphi^* f^* \: d\lambda &< \infty &\text{for all } \varphi \in X'.
	\end{align*}
	However, this follows by combining the Hardy's lemma (Lemma~\ref{LemmaHardy}) with our assumption that $\varphi \prec C_{\varphi} \psi$ for every $\varphi \in X'$.
	
	That \ref{ThmEquivMetrNorm_iii} implies \ref{ThmEquivMetrNorm_ii} is rather clear, since \eqref{RemPropPhi*Norm:E2} shows that the subspace topology (on $X$) inherited from $(X'', \lVert \cdot \rVert_{X''})$ is stronger than $\newtopology$ (see also Proposition~\ref{PESSAS}). Finally, it is trivial that $\ref{ThmEquivMetrNorm_ii} \implies \ref{ThmEquivMetrNorm_i} \implies \ref{ThmEquivMetrNorm_0}$.
\end{proof}

Finally, let us show that the condition imposed on $X'$ in \ref{ThmEquivMetrNorm_iv} is rather restrictive.

\begin{proposition}
	Let $X$ be an r.i.~Banach function space such that there is some $g \in X$ for which it holds that an arbitrary function $f \in \mathcal{M}_0$ belongs to $X$ if and only if there is some constant $C_{f}$ for which it holds that $f \prec C_{f} g$. Then there exists a quasiconcave function $\Phi$ such that $M_{\Phi}$, the (strong) Marcinkiewicz endpoint space corresponding to $\Phi$ (considered over $([0,\infty), \lambda)$), is a representation space for $X$, up to equivalence of norms.
\end{proposition}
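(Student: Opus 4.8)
The plan is to take the required function to be $\Phi = 1/g^{**}$. Precisely, I would set $\Phi(t) = \bigl(g^{**}(t)\bigr)^{-1}$ for $t \in (0,\infty)$ and $\Phi(0) = 0$. First one checks that this is well defined: since an r.i.~Banach function space contains $\chi_E$ for every set $E$ of finite positive measure, the hypothesis $f \in X \Leftrightarrow f \prec C_f g$ rules out $g = 0$, whence $g^*(0{+}) > 0$ and $g^{**}(t) = t^{-1}\int_0^t g^* \in (0,\infty)$ for all $t > 0$ (the finiteness being the usual local integrability of Banach function spaces, which one may also see from the fact that the canonical representation of $X$ retains property~(P5), Proposition~\ref{PropRepreP5}, applied to the interval $[0,t)$). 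Thus $\Phi$ is well defined and strictly positive on $(0,\infty)$. Next one verifies that $\Phi$ is quasiconcave: it vanishes at $0$, it is non-decreasing because $g^{**}$ is non-increasing, and $\Phi(t)/t = \bigl(\int_0^t g^*\bigr)^{-1}$ is non-increasing because $t \mapsto \int_0^t g^*$ is non-decreasing. Hence, by the facts recalled in Section~\ref{Sec:FundamentalFunction}, $\lVert \cdot \rVert_{M_\Phi}$ is an r.i.~Banach function norm, both as a functional on $\mathcal{M}([0,\infty),\lambda)$ and, after the standard lift, as a functional on $\mathcal{M}(\mathcal{R},\mu)$.

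The second step is to identify $X$ with $M_\Phi$ (over $(\mathcal{R},\mu)$) as sets of functions. For $f \in \mathcal{M}_0(\mathcal{R},\mu)$ the relation $f \prec C_f g$ means exactly $f^{**} \le C_f g^{**}$ on $(0,\infty)$, i.e.\ $\sup_{t>0} f^{**}(t)/g^{**}(t) < \infty$, which is precisely the finiteness of $\lVert f \rVert_{M_\Phi} = \sup_{t>0}\Phi(t) f^{**}(t)$. So the hypothesis yields $X = M_\Phi$ as sets of functions on $\mathcal{R}$. Since both are r.i.~Banach function spaces over the same measure space, applying the automatic continuity of inclusions of (quasi-)Banach function spaces (Theorem~\ref{TEQBFS}) in each direction gives $X \hookrightarrow M_\Phi$ and $M_\Phi \hookrightarrow X$, hence $\lVert \cdot \rVert_X \approx \lVert \cdot \rVert_{M_\Phi}$ on this common space.

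Finally, for any $f \in \mathcal{M}(\mathcal{R},\mu)$ one has $(f^*)^{**} = f^{**}$, so the $M_\Phi$-norm of $f$ over $(\mathcal{R},\mu)$ coincides with the $M_\Phi$-norm of $f^*$ computed over $([0,\infty),\lambda)$; combined with the previous step this gives $\lVert f \rVert_X \approx \lVert f^* \rVert_{M_\Phi}$, i.e.\ $M_\Phi$ over $([0,\infty),\lambda)$ is a representation space for $X$ up to equivalence of norms, as claimed. I do not anticipate a serious obstacle: the only genuinely delicate points are the well-definedness and quasiconcavity of $\Phi$ (dispatched in the first step) and the bookkeeping in the degenerate ranges of $t$ when $\mu(\mathcal{R}) < \infty$ or $(\mathcal{R},\mu)$ is atomic --- but there $f^{**}$, $g^{**}$ and $\Phi$ are all controlled elementarily (rearrangements are then piecewise constant, and $f^{**}(t) = t^{-1}\lVert f \rVert_1$ for $t \ge \mu(\mathcal{R})$), so both the set identity $X = M_\Phi$ and the norm comparison persist without change.
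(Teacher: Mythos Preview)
Your proof is correct and essentially identical to the paper's: your $\Phi(t) = 1/g^{**}(t)$ is precisely the paper's $\Phi(t) = t/\int_0^t g^*\,d\lambda$, and both arguments hinge on the observation that $\lVert f\rVert_{M_\Phi}<\infty$ is equivalent to $f\prec C_f g$. The only organisational difference is that the paper argues the norm equivalence $\lVert f\rVert_X\approx\lVert f^*\rVert_{M_\Phi}$ by running the closed-graph/automatic-continuity argument twice directly between $X$ over $(\mathcal{R},\mu)$ and $M_\Phi$ over $([0,\infty),\lambda)$ (which requires a little extra care with subadditivity of $f\mapsto f^{**}$ for one direction), whereas you first lift $M_\Phi$ to $(\mathcal{R},\mu)$, apply Theorem~\ref{TEQBFS} there, and then transfer via $(f^*)^{**}=f^{**}$ --- your route is slightly tidier.
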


We stress that we do not claim that $M_{\Phi}$ will be the canonical representation space. Indeed, the space constructed in the proof is in some cases distinct from it; we shall discuss this in more detail after the proof.

\begin{proof}
	Let $g$ be as in the statement of the theorem and consider the function 
	\begin{align*}
		\Phi(t) &= \begin{cases}
			\frac{t}{\int_0^{t} g^* \: d\lambda}, &\text{for } t \in (0, \infty), \\
			0, &\text{for } t = 0.
		\end{cases}
	\end{align*}
	Then $\Phi$ is clearly quasiconcave and a simple calculation shows that we have for $f \in \mathcal{M}(\mathcal{R}, \mu)$
	\begin{equation*}
		\sup_{t \in [0, \infty)} \Phi(t) f^{**}(t) < \infty \iff f \prec C_{f} g \text{ for some } C_{f} \in (0, \infty).
	\end{equation*}
	Consequently, $f^* \in M_{\Phi} \iff f \in X$. That this fact implies $\lVert f^* \rVert_{\M_{\Phi}} \approx \lVert f \rVert_X$ for every $f \in \mathcal{M}(\mathcal{R}, \mu)$ is then proved by applying two times the standard approach presented e.g.~in \cite[Chapter~1, Proof of Theorem~1.8]{BennettSharpley88} or \cite[Proof of Theorem~3.9]{NekvindaPesa24} (for the estimate ``$\gtrsim$'' one needs to use also the properties of the elementary maximal function, notably Proposition~\ref{PropSubAdd**}); we leave the details to the reader.
\end{proof}

Let us now provide the promised detail about the relationship of $M_{\Phi}$ and $\overline{X}$, the canonical representation space of $X$. As it turns out, they coincide (up to equivalence of norms) if and only if $(\mathcal{R}, \mu)$ is non-atomic and of infinite measure (i.e.~when the representation is unique). Indeed, it is not difficult to verify that for an atomic measure space $(\mathcal{R}, \mu)$, the functions $\overline{f} \in M_{\Phi}$ (which are defined on $([0, \infty), \lambda)$) satisfy $\overline{f}^* \chi_{[0,1]} \in L^{\infty}$, while in the case of finite measure the same functions $\overline{f}$ satisfy $\overline{f}^* \chi_{(1, \infty)} \in L^1$; in both cases, those are much stronger conditions than those applying to functions in $\overline{X}$.

Let us further note that this result is far from optimal, we just do not wish to spend time proving something that is not really necessary for our purposes. Namely, the result also holds for r.i.~quasi-Banach function spaces, one only needs to show that the assumptions guarantee that $X$ satisfies \ref{P5}, which can be done as in \cite[Lemma~2.24]{Pesa22}. The converse implication also holds, but only for quasiconcave functions $\Phi$ that satisfy $\lim_{t \to 0_+} t \Phi^{-1}(t) = 0$, as then the least concave majorant of $t \Phi^{-1}(t)$ (see e.g.~\cite[Chapter~2, Proposition~5.10]{BennettSharpley88}) will be equal to an integral of a non-increasing function and we will be able to obtain the desired $g \in X$ via the construction used in \cite[Theorem~3.1]{MusilovaNekvinda24} (to show that this function has the desired properties one employs an argument similar to that of \cite[Proposition~3.3]{PesaRepreACqN}).

\section{Mean ergodicity of composition operators on r.i.~Banach function spaces} \label{SectionErgodic}

In this section we continue working under the standing assumption that the underlying measure space $(\mathcal{R}, \mu)$ is $\sigma$-finite and resonant and further restrict ourselves to the case of r.i.~Banach function spaces. Our motivation for this restriction is natural: We have characterised the symbols $\phi$ for which the operator $T_{\phi}$ is power-bounded on every r.i.~quasi-Banach function space and it is well known that power-bounded operators are mean ergodic on reflexive Banach spaces. However, r.i.~Banach function spaces have the property that $X = X''$ (see Theorem~\ref{TFA}), which can often be used in lieu of reflexivity, and we are working with a specific type of power-bounded operators. Hence, the question appears naturally whether we could employ those facts to obtain some form of mean ergodicity even for the non-reflexive r.i.~Banach function spaces. As will be shown in this section, the answer is affirmative: by using the $\newtopology$ topology, we obtain several results that provide different forms of mean ergodicity, always with much weaker assumptions than that of reflexivity.

However, our first observation in this section holds for arbitrary r.i.~quasi-Banach function spaces over $([0,\infty),\lambda)$.

\begin{proposition} \label{PropDil(w')*Cont}
	Let $\lVert \cdot \rVert_X$ be an r.i.~quasi-Banach function norm on $\mathcal{M}([0, \infty), \lambda)$ satisfying \ref{P5} and let $X$ be the corresponding r.i.~quasi-Banach function space. Then the dilation operator $D_t \colon  (X, \newtopology) \to (X, \newtopology)$ is continuous for every $t \in (0, \infty)$.
\end{proposition}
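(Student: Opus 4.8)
The plan is to reduce the continuity of $D_t$ on $(X,\newtopology)$ to a comparison of the defining seminorms $\lvert\cdot\rvert_\varphi$. Since $\newtopology$ is generated by the saturated family $\{\lvert\cdot\rvert_\varphi;\ \varphi\in X'\}$, it suffices to show that for each $\varphi\in X'$ there is a $\psi\in X'$ and a constant $C>0$ such that $\lvert D_t f\rvert_\varphi\le C\,\lvert f\rvert_\psi$ for all $f\in X$; continuity then follows from the standard criterion for continuity of linear maps between locally convex spaces (e.g.\ \cite[Lemma~22.5 / §22]{MeiseVogt97}). So the whole argument is really about estimating $\int_0^\infty \varphi^*(D_tf)^*\,d\lambda$.

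The key computation is that $(D_tf)^*=D_tf^*$ (noted in the excerpt after Theorem~\ref{TDRIS}), so by the substitution $s=tr$ we get
\begin{equation*}
	\lvert D_t f\rvert_\varphi=\int_0^\infty \varphi^*(s)f^*(ts)\,ds=\frac{1}{t}\int_0^\infty \varphi^*\!\Big(\frac{u}{t}\Big)f^*(u)\,du=\frac{1}{t}\int_0^\infty (D_{1/t}\varphi^*)(u)\,f^*(u)\,du.
\end{equation*}
Now $D_{1/t}\varphi^*$ need not itself be non-increasing, but it is equimeasurable with $D_{1/t}$ applied to a non-increasing function, and in fact $(D_{1/t}\varphi^*)^*=D_{1/t}\varphi^*$ since $D_{1/t}$ preserves monotonicity. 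The point is that $D_{1/t}\varphi^*=(D_{1/t}\varphi^*)^*=\psi^*$ for the function $\psi:=D_{1/t}\varphi$ on $[0,\infty)$ (using $(D_{1/t}\varphi)^*=D_{1/t}\varphi^*$ once more), and by Theorem~\ref{TDRIS} the dilation operator is bounded on the associate space $X'$ as well (apply Theorem~\ref{TDRIS} to $X'$, which is again an r.i.\ quasi-Banach function space over $([0,\infty),\lambda)$ by the remark following Proposition~\ref{PAS}; note $X'$ satisfies \ref{P5} automatically, indeed it is an r.i.\ Banach function norm by Theorem~\ref{TFA}). Hence $\psi=D_{1/t}\varphi\in X'$ with $\lVert\psi\rVert_{X'}\le \lVert D_{1/t}\rVert_{X'\to X'}\lVert\varphi\rVert_{X'}$, and
\begin{equation*}
	\lvert D_t f\rvert_\varphi=\frac{1}{t}\int_0^\infty \psi^* f^*\,d\lambda=\frac{1}{t}\,\lvert f\rvert_\psi,
\end{equation*}
which is exactly the estimate we need, with $C=1/t$.

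The main obstacle — really the only subtlety — is the bookkeeping around rearrangements of dilated functions: one must be careful that $D_{1/t}\varphi^*$, viewed as an element of $\mathcal M([0,\infty),\lambda)$, is genuinely the non-increasing rearrangement of a member of $X'$ (rather than merely an integrable function comparable to one), so that the functional $\lvert\cdot\rvert_\psi$ with $\psi=D_{1/t}\varphi$ appears on the right-hand side. This is handled cleanly by the identity $(D_s g)^*=D_s g^*$ together with the boundedness of $D_s$ on $X'$. An alternative, slightly less computational route is to invoke Corollary~\ref{CorRIEst}: the dilation $D_t$ on $\overline{X}=X$ (when $X$ is already over $([0,\infty),\lambda)$) is $\lVert\cdot\rVert_X$-bounded by Theorem~\ref{TDRIS}, hence $\newtopology$-continuous on a dense subspace in the Banach case, but the direct seminorm estimate above is both shorter and valid in the full quasi-Banach generality claimed, so that is the route I would write up.
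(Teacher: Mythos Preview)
Your proof is correct and follows essentially the same approach as the paper: both compute $\lvert D_t f\rvert_\varphi$ via the identity $(D_t f)^*=D_t f^*$ and a change of variables to obtain $\lvert D_t f\rvert_\varphi=t^{-1}\lvert f\rvert_{D_{1/t}\varphi}$, then appeal to Theorem~\ref{TDRIS} (applied to $X'$) for $D_{1/t}\varphi\in X'$. Your write-up simply spells out more of the bookkeeping that the paper compresses into two lines.
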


\begin{proof}
	Let $f \in X$ and $\varphi \in X'$. Then the fact that the dilation operator commutes with non-increasing rearrangement and a simple change of variables show, that
	\begin{equation*}
		\int_0^{\infty} (D_tf)^* \varphi^* \: d\lambda = t^{-1} \int_0^{\infty} f^* (D_{t^{-1}}\varphi)^* \: d\lambda,
	\end{equation*}
	where $D_{t^{-1}}\varphi \in X'$ as follows from Theorem~\ref{TDRIS}.
\end{proof}

Next step is to use the uniform estimate by the dilation operator, obtained from the power-measure-boundedness assumption on the symbol in Lemma~\ref{LemmaDilPowerEst}, to obtain $\newtopology$-power-boundedness for both $T_{\phi}$ and $T_{\phi}'$. To avoid technicalities that are irelevant to our main results, we assume $X$ to be an r.i.~Banach function space.

\begin{proposition} \label{PropTsigma(w')*Bounded}
	Let $\lVert \cdot \rVert_X$ be an r.i.~Banach function norm and let $X$ be the corresponding r.i.~Banach function space. Assume that $X$ has the \ACR property. Then
	\begin{enumerate}
		\item If $\phi \colon \mathcal{R} \to \mathcal{R}$ is measure-bounded, then $T_{\phi} \colon  (X, \newtopology) \to (X, \newtopology)$ is continuous and the associate operator $T_{\phi}'$ both exists and is continuous as an operator $T_{\phi}' \colon (X', \newtopology) \to (X', \newtopology)$.
		\item If $\phi \colon \mathcal{R} \to \mathcal{R}$ is power-measure-bounded, then $T_{\phi} \colon  (X, \newtopology) \to (X, \newtopology)$ is power-bounded. Furthermore, the associate operator $T_{\phi}' \colon (X', \newtopology) \to (X', \newtopology)$ is in this case power-bounded as well.
	\end{enumerate}
\end{proposition}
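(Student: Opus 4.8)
The plan is to reduce both parts of the statement to a single Hardy--Littlewood--P\'olya-type estimate and then transfer it to the defining seminorms of $\newtopology$ by Hardy's lemma. Write $c=\max\{1,A\}$, where $A$ is the power-measure-bound of $\phi$ (for part~(i), the measure-bound). The target is the pair of estimates
\begin{align*}
	T_\phi^n f &\prec c\,f \quad (f\in X), & (T_\phi')^n\psi &\prec c\,\psi \quad (\psi\in X'),
\end{align*}
valid for every $n\geq1$ with $c$ \emph{independent of $n$}. Granting these, Hardy's lemma (Lemma~\ref{LemmaHardy}) applied with the non-increasing weight $\varphi^*$ converts $T_\phi^n f\prec c\,f$ into $|T_\phi^n f|_\varphi=\int_0^\infty\varphi^*(T_\phi^n f)^*\,d\lambda\leq c\int_0^\infty\varphi^* f^*\,d\lambda=c\,|f|_\varphi$ for every $\varphi\in X'$, and likewise $|(T_\phi')^n\psi|_\varphi\leq c\,|\psi|_\varphi$ for every $\varphi\in X''=X$ (recall $X''=X$ by Theorem~\ref{TFA}). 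Since these bounds are uniform in $n$ and the families $\{|\cdot|_\varphi\}$ generate $\newtopology$ on $X$ and on $X'$ respectively, part~(ii) follows at once; part~(i) is the case $n=1$.

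For the estimate on $T_\phi$ I would invoke Lemma~\ref{LemmaDilEst}\ref{LemmaDilEst_i} (resp.\ Lemma~\ref{LemmaDilPowerEst}), which gives $(T_\phi^n f)^*\leq D_{A^{-1}}f^*$ on $[0,\infty)$; integrating over $[0,t]$ and changing variables turns this into $(T_\phi^n f)^{**}(t)\leq f^{**}(t/A)\leq c\,f^{**}(t)$, i.e.\ $T_\phi^n f\prec c\,f$. (Alternatively, the $\newtopology$-continuity and $\newtopology$-power-boundedness of $T_\phi$ on $X$, together with the existence of $T_\phi'$ on $X'$, can be read off directly from Corollary~\ref{CorRIEst}, using Proposition~\ref{PropDil(w')*Cont} to see that $D_{A^{-1}}$ is $\newtopology$-continuous on $\overline X$ and the \ACR property of $X$.) For the associate side I would first record that $T_\phi'$ is positive, being the associate of the positive operator $T_\phi$, and that testing the defining relation of the associate operator against $\chi_E\in X$ (with $\mu(E)<\infty$) gives $\int_E T_\phi'\psi\,d\mu=\int_{\phi^{-1}(E)}\psi\,d\mu$, hence $\int_E (T_\phi')^n\psi\,d\mu=\int_{\phi^{-n}(E)}\psi\,d\mu$ since $(T_\phi')^n=(T_{\phi^n})'$ by Remark~\ref{RemarkPowerCompo}. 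Combining this with the Hardy--Littlewood inequality (Theorem~\ref{THLI}), the bound $\mu(\phi^{-n}(E))\leq A\mu(E)$, positivity (so $|(T_\phi')^n\psi|\leq (T_\phi')^n|\psi|$), and the standard description of $h^{**}(t)$ over a resonant measure space as the supremum of $\tfrac1t\int_E|h|\,d\mu$ over sets $E$ with $\mu(E)\leq t$, I obtain $\big((T_\phi')^n\psi\big)^{**}(t)\leq c\,\psi^{**}(t)$ for all $t$, that is $(T_\phi')^n\psi\prec c\,\psi$. The existence of $T_\phi'$ on $X'$ itself follows from Corollary~\ref{CorACRContinuity} (or Corollary~\ref{CorRIEst}\ref{CorRIEst_i}) once the $\newtopology$-continuity of $T_\phi$ on $X$ is in hand, using the \ACR property of $X$.

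The routine steps—the change of variables and the manipulations of $f^{**}$—present no real difficulty. The one point that requires genuine care, and the reason to factor everything through the relation $\prec$, is that the associate operator $T_\phi'$ is \emph{not} a composition operator and carries no pointwise dilation bound, so Corollary~\ref{CorRIEst} does not apply to it on $X'$; a direct estimate there is unavoidable. One might instead try to bound $|T_\phi'\psi|_\varphi$ by realising $D_{A^{-1}}\varphi^*$ as the non-increasing rearrangement of an honest function on $\mathcal R$, but this runs into trouble when $\mu(\mathcal R)<\infty$ or when the measure space is atomic, whereas the $^{**}$/$\prec$ route is uniform across all resonant measure spaces precisely because the Hardy--Littlewood--P\'olya principle holds for every r.i.~Banach function norm (Theorem~\ref{TFA} and \cite[Chapter~2, Theorem~4.6]{BennettSharpley88}) and the seminorms $|\cdot|_\varphi$ are automatically monotone with respect to $\prec$. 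A final bookkeeping remark: in the power-bounded case $c$ must not depend on $n$, and it does not, because a single power-measure-bound $A$ controls all iterates $\phi^n$ simultaneously (Lemma~\ref{LemmaDilPowerEst}).
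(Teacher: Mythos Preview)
Your argument is correct and takes a genuinely different route from the paper for the associate operator. The paper bounds the seminorm $|(T_\phi^n)'\varphi|_f$ directly: it uses resonance to rewrite it as $\sup_{g^*=f^*}\bigl|\int (T_\phi^n)'(\varphi)\,g\,d\mu\bigr|$, passes to $T_\phi^n$ via the definition of the associate, applies Hardy--Littlewood and the dilation bound $(T_\phi^n g)^*\leq D_{A^{-1}}g^*$, and arrives at $\int_0^\infty \varphi^*\, D_{A^{-1}} f^*\,d\lambda$; it then invokes the representation machinery (Theorem~\ref{ThmRepre(w')*}) to recognise this as an honest $\newtopology$-seminorm on $X'$. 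You sidestep the representation theorem entirely by establishing $(T_\phi')^n\psi\prec c\,\psi$ through the local-averages description of $h^{**}$ and the identity $\int_E (T_\phi')^n|\psi|\,d\mu=\int_{\phi^{-n}(E)}|\psi|\,d\mu$, and then transferring to the seminorms by Hardy's lemma. This is more elementary and yields the cleaner quantitative conclusion $(T_\phi')^n\psi\prec c\,\psi$, a relation the paper only obtains later (for the Ces\`aro means and the limit operator) in Proposition~\ref{PropHLPEstimate}. Your closing remark about the difficulty of realising $D_{A^{-1}}\varphi^*$ as the rearrangement of a function on $\mathcal R$ is exactly the obstacle the paper overcomes via Theorem~\ref{ThmRepre(w')*}.

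One small point deserves care. The formula $t\,h^{**}(t)=\sup_{\mu(E)\leq t}\int_E|h|\,d\mu$ fails in the completely atomic case for $t$ not a multiple of the atom size $\beta$ (take $h$ the characteristic function of a single atom and $t=\beta/2$: the left side equals $\beta/2$, the right side equals $0$). So your argument as written only gives $\int_0^t((T_\phi')^n\psi)^*\leq c\int_0^t\psi^*$ for $t=k\beta$, $k\in\N$. The extension to all $t>0$ is routine---both sides are piecewise affine with the same breakpoints, so the endpoint inequalities propagate---and the paper itself uses precisely this manoeuvre elsewhere (see the proof of Theorem~\ref{ThmEquivMetrNorm}); but since you explicitly flag the atomic case as a pitfall of the dilation-based alternative, it is worth noting that your $\prec$ route needs the same small patch.
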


\begin{proof}
	The continuity and power-boundedness of $T_{\phi} \colon  (X, \newtopology) \to (X, \newtopology)$ follow by combining Lemmata~\ref{LemmaDilEst} and \ref{LemmaDilPowerEst}, respectively, with Proposition~\ref{PropDil(w')*Cont} and Corollary~\ref{CorRIEst}. The same is true for the existence of the associate operator $T' \colon X' \to X'$.
	
	As for the power-boundedness of $T_{\phi}' \colon (X', \newtopology) \to (X', \newtopology)$ under the appropriate assumptions, we begin by computing (for arbitrary $f \in X'' = X$ and $\varphi \in X'$)
	\begin{equation} \label{PropTsigma(w')*Bounded:E1}
		\begin{split}
			\int_0^{\infty} ((T_{\phi}^n)' \varphi)^* f^* \: d\lambda &= \sup_{g^* = f^*} \left \lvert \int_{\mathcal{R}} (T_{\phi}^n)' ( \varphi ) g \: d\mu \right \rvert \\
			&= \sup_{g^* = f^*} \left \rvert \int_{\mathcal{R}} \varphi T_{\phi}^n (g) \: d\mu \right \rvert \\ 
			&\leq \sup_{g^* = f^*} \int_0^{\infty} \varphi^* (T_{\phi}^n g)^* \: d\lambda \\
			&\leq \int_0^{\infty} \varphi^* D_{A^{-1}} f^* \: d\lambda,
		\end{split}
	\end{equation}
	where we use (in this order) the definition of resonant measure spaces (see Definition~\ref{DefResonant} and Remark~\ref{RemResonant}), the definition of the associate operator (Definition~\ref{DefAssocOperator}), the Hardy--Littlewood inequality (Theorem~\ref{THLI}), and Lemma~\ref{LemmaDilPowerEst}.
	
	Now, $f \in X = (X')'$, and thus Proposition~\ref{PropRepreAS} shows that $f^* \in \left( \overline{X'} \right)'$. Theorem~\ref{TDRIS} then ensures that also $D_{A^{-1}} f^* \in \left( \overline{X'} \right)'$, whence the conclusion follows via Theorem~\ref{ThmRepre(w')*}.
\end{proof}

The computation \eqref{PropTsigma(w')*Bounded:E1} uses the assumption that $X$ is an r.i.~Banach function space (and thus $X = X''$) so that it can work with $T_{\phi}^n$ from the second line on. Without this assumption, one would have to work with $(T_{\phi}^n)'' \colon X'' \to X''$ which might in principle be distinct from $T_{\phi}^n$ on $X'' \setminus X$. The assumption that $\phi$ is power-measure-bounded prevents this behaviour, but the argument would significantly complicate the proof with no real benefit, as we do not require the stronger result. 

We now arrive to the first of our main results. The properties of $T_{\phi}$ with respect to the $\newtopology$ topology together with the properties of the topology itself allow us to use an argument similar to the classical one from \cite{Lorch39} to show that $T_{\phi}$ is mean ergodic on $(X, \newtopology)$ for power-measure-bounded $\phi$, under the rather weak additional assumption that both $X$ and $X'$ have the \ACR property. We stress once more that this assumption is significantly weaker than reflexivity of $X$, which is equivalent to both $X$ and $X'$ having absolutely continuous norms (see Corollary~\ref{CorReflexBFS}). We also recall that our assumptions on the underlying measure space is just $\sigma$-finiteness and resonance; we make no assumption on the finiteness of the measure.

\begin{theorem} \label{ThmErgodic(w')*}
	Let $X$ be an r.i.~Banach function space and let $X'$ be the corresponding associate space. Assume that both $X$ and $X'$ have the \ACR property. Then for every power-measure-bounded mapping $\phi \colon \mathcal{R} \to \mathcal{R}$ the operator $T_{\phi}$ is mean ergodic on $(X, \newtopology)$, i.e.~there is some continuous operator $T \colon (X, \newtopology) \to (X, \newtopology)$ such that we have for every $f \in X$ that
	\begin{align} \label{ThmErgodic(w')*:E0}
		\frac{1}{n}\sum_{i = 0}^{n-1} T_{\phi}^i f &\to Tf & \text{in } (X, \newtopology)  \text{ as } n \to \infty.
	\end{align}
	
	Furthermore, the above defined operator $T$ is positive and continuous as $T \colon (X, w') \to (X, w')$ and $T \colon (X, \lVert \cdot \rVert_X) \to (X, \lVert \cdot \rVert_X)$. Similarly, the associate operator $T'$ on $X'$ is also continuous as an operator $T' \colon (X', \lVert \cdot \rVert_{X'}) \to (X', \lVert \cdot \rVert_{X'})$, $T' \colon (X', \newtopology) \to (X', \newtopology)$, and $T' \colon (X', w') \to (X', w')$ and it is also the $\newtopology$-ergodic-limit of $T_{\phi}'$, the associate operator of $T_{\phi}$; i.e.~it satisfies for every $\varphi \in X'$ that
	\begin{align} \label{ThmErgodic(w')*:E0.1}
		\frac{1}{n}\sum_{i = 0}^{n-1} (T_{\phi}')^i \varphi &\to T'\varphi & \text{in } (X', \newtopology)  \text{ as } n \to \infty.
	\end{align}
	Finally, the operators $T$ and $T'$ are projections onto the spaces $\operatorname{Ker} (I - T_{\phi})$ and $\operatorname{Ker} (I - T_{\phi}')$, respectively; the operators $I - T$ and $I - T'$ are projections onto $\overline{\operatorname{Im} (I - T_{\phi})}^{\newtopology}$ and $\overline{\operatorname{Im} (I - T_{\phi}')}^{\newtopology}$, respectively; and we have
	\begin{align}
		X &= \operatorname{Ker} (I - T_{\phi}) \bigoplus \overline{\operatorname{Im} (I - T_{\phi})}^{\newtopology}, \label{ThmErgodic(w')*:E0.2}\\
		X'&= \operatorname{Ker} (I - T_{\phi}') \bigoplus \overline{\operatorname{Im} (I - T_{\phi}')}^{\newtopology}. \nonumber
	\end{align}
\end{theorem}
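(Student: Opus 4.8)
The plan is to run the classical Lorch mean-ergodic argument, but carried out entirely inside the locally convex space $(X,\newtopology)$, using the three structural facts from Section~\ref{SecRITopo}: completeness of $(X,\newtopology)$ (Theorem~\ref{Thm(w')*Complete}), the identification $(X,\newtopology)^*=X'$ (Theorem~\ref{ThmACRDual}, which is where both \ACR hypotheses enter, one for $X$ and one for $X'$), and the polarity relations of Theorem~\ref{ThmOrtogonality}. Write $A_n=\frac1n\sum_{i=0}^{n-1}T_\phi^i$. First I would invoke Proposition~\ref{PropTsigma(w')*Bounded} to see that $T_\phi$ is $\newtopology$-power-bounded on $X$ and $T_\phi'$ is $\newtopology$-power-bounded on $X'$ (in particular the associate operators exist), so that $\{A_n\}$ is $\newtopology$-equicontinuous on $X$, the analogous Ces\`aro averages of $T_\phi'$ are $\newtopology$-equicontinuous on $X'$, and for each $g\in X$ the orbit $\{T_\phi^ng;\ n\in\mathbb{N}\}$ is $\newtopology$-bounded (similarly in $X'$).

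Next I would check convergence of $A_n$ on the subspace $M:=\operatorname{Ker}(I-T_\phi)+\operatorname{Im}(I-T_\phi)$: on $\operatorname{Ker}(I-T_\phi)$ one has $A_nf=f$, while for $f=g-T_\phi g$ one has $A_nf=\frac1n(g-T_\phi^ng)\to0$ in $\newtopology$ because $\{T_\phi^ng\}$ is $\newtopology$-bounded. The hard part will be to prove that $M$ is $\newtopology$-dense in $X$. Since $\newtopology$ is a topology of the dual pair $\langle X,X'\rangle$ (Theorem~\ref{ThmACRDual}), the bipolar theorem reduces this to showing that the annihilator $M^\circ$ in $X'$ is trivial. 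Put $S=I-T_\phi$, whose associate $S'=I-T_\phi'$ is $\newtopology$-continuous on $X'$ by Proposition~\ref{PropTsigma(w')*Bounded}; then Theorem~\ref{ThmOrtogonality} gives $(\operatorname{Ker}S)^\circ=\overline{\operatorname{Im}S'}^{\newtopology}$ and $(\overline{\operatorname{Im}S}^{\newtopology})^\circ=\operatorname{Ker}S'$, so that $M^\circ=\overline{\operatorname{Im}S'}^{\newtopology}\cap\operatorname{Ker}S'$. That this intersection is $\{0\}$ I would obtain by repeating the two preceding observations for $T_\phi'$ on $X'$: its $\newtopology$-equicontinuous Ces\`aro averages converge to $0$ on $\operatorname{Im}S'$, hence — by equicontinuity — on $\overline{\operatorname{Im}S'}^{\newtopology}$, while they equal the identity on $\operatorname{Ker}S'$, so a common vector must vanish. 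This step, which ties together Theorems~\ref{Thm(w')*Complete}, \ref{ThmACRDual}, and~\ref{ThmOrtogonality}, is the crux of the whole proof; everything else is formal.

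With the density of $M$ in hand, $\newtopology$-equicontinuity of $\{A_n\}$ together with $\newtopology$-completeness of $X$ forces $A_nf$ to be $\newtopology$-convergent for every $f\in X$, and I would set $T:=\lim_nA_n$, a $\newtopology$-continuous linear operator; it is positive because each $A_n$ is positive (Remark~\ref{RemarkTrivNiceProp}) and $\newtopology$-convergence implies convergence in measure ($(X,\newtopology)\hookrightarrow(\mathcal{M}_0,\mu_{\textup{loc}})$, Corollary~\ref{CorEmbM0}). Corollary~\ref{CorACRContinuity} then upgrades $T$ to a $w'$-continuous operator possessing an associate $T'$ on $X'$, whereupon Proposition~\ref{PropT'toNormBFS} yields norm-continuity of both $T$ and $T'$. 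The projection statements are the standard formal consequences: from $T_\phi A_n-A_n=\frac1n(T_\phi^n-I)\to0$ one gets $T_\phi(Tf)=Tf$, i.e.\ $Tf\in\operatorname{Ker}(I-T_\phi)$, together with $Tf=f$ for $f\in\operatorname{Ker}(I-T_\phi)$; hence $T^2=T$ is a projection onto $\operatorname{Ker}(I-T_\phi)$, while $\operatorname{Im}(I-T)=\operatorname{Ker}T=\overline{\operatorname{Im}(I-T_\phi)}^{\newtopology}$ follows from the identity $I-T_\phi^i=(I-T_\phi)\sum_{j=0}^{i-1}T_\phi^j$ (for one inclusion) and $\newtopology$-continuity of $T$ (for the other), giving the splitting \eqref{ThmErgodic(w')*:E0.2}. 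Finally, to identify the $\newtopology$-ergodic limit of $T_\phi'$ (produced in the crux step) with the associate operator $T'$, I would compute, for $f\in X=X''$ and $\varphi\in X'$, that $\int_{\mathcal{R}}f\,\bigl(\lim_n\frac1n\sum_{i<n}(T_\phi')^i\varphi\bigr)\,d\mu=\lim_n\frac1n\sum_{i<n}\int_{\mathcal{R}}(T_\phi^if)\varphi\,d\mu=\int_{\mathcal{R}}(Tf)\varphi\,d\mu=\int_{\mathcal{R}}f\,(T'\varphi)\,d\mu$, so that the two operators agree since $X$ contains the simple functions; this gives \eqref{ThmErgodic(w')*:E0.1}, and the remaining continuity, projection, and splitting assertions for $T'$ in $X'$ follow by running the entire argument symmetrically in $X'$ (whose hypotheses are met: $X'$ is an r.i.\ Banach function space with $X''=X$, and both $X'$ and $X''=X$ have the \ACR property).
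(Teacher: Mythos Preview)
Your proposal is correct and follows essentially the same route as the paper's proof: both run the Lorch mean-ergodic argument in $(X,\newtopology)$, using Proposition~\ref{PropTsigma(w')*Bounded} for $\newtopology$-power-boundedness, Theorem~\ref{ThmOrtogonality} to reduce the density question to $\overline{\operatorname{Im}(I-T_\phi')}^{\newtopology}\cap\operatorname{Ker}(I-T_\phi')=\{0\}$, the symmetric argument on $X'$ to establish this, and then Theorem~\ref{Thm(w')*Complete} together with equicontinuity to extend the convergence to all of $X$. Your derivation of the projection statements via the identity $I-T_\phi^i=(I-T_\phi)\sum_{j<i}T_\phi^j$ is slightly more direct than the paper's net argument, but otherwise the two proofs coincide.
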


\begin{proof}
	Assume first that $f \in \operatorname{Ker} (I - T_{\phi})$. Then it is obvious that 
	\begin{align*}
		\frac{1}{n}\sum_{i = 0}^{n-1} T_{\phi}^i f &= f &\text{for } n \in \mathbb{N}.
	\end{align*}
	
	Assume now that $f \in \operatorname{Im} (I - T_{\phi})$, i.e.~that there is some $g \in X$ such that $f = (I - T_{\phi})(g)$. Since $T_{\phi} \colon  (X, \newtopology) \to (X, \newtopology)$ is power-bounded (see Proposition~\ref{PropTsigma(w')*Bounded}), it follows via a simple computation that we have for every $\varphi \in X'$ that
	\begin{align} \label{ThmErgodic(w')*:E1}
		\left \lvert \frac{1}{n}\sum_{i = 0}^{n-1} T_{\phi}^i f \right \rvert_{\varphi} &\leq \frac{1}{n} ( \left \lvert g \right \rvert_{\varphi} + \left \lvert T_{\phi}^{n}g \right \rvert_{\varphi}) \to 0 &\text{as } n \to \infty.
	\end{align}
	
	Now, the power-boundedness of $T_{\phi} \colon  (X, \newtopology) \to (X, \newtopology)$ implies that the sequence of operators
	\begin{align} \label{ThmErgodic(w')*:E1.1}
		\frac{1}{n}\sum_{i = 0}^{n-1} T_{\phi}^i,& &n \in \mathbb{N},
	\end{align}
	is equicontinuous on $(X, \newtopology)$. A standard argument thus shows that \eqref{ThmErgodic(w')*:E1} implies
	\begin{align} \label{ThmErgodic(w')*:E2}
		\frac{1}{n}\sum_{i = 0}^{n-1} T_{\phi}^i f &\to 0 & \text{in } (X, \newtopology)  \text{ as } n \to \infty
	\end{align}
	for every $f \in \overline{\operatorname{Im} (I - T_{\phi})}^{\newtopology}$.
	
	Note, that both $\operatorname{Ker} (I - T_{\phi})$ and $\overline{\operatorname{Im} (I - T_{\phi})}^{\newtopology}$ are $\newtopology$-closed linear subspaces of $X$ and that \eqref{ThmErgodic(w')*:E2} implies that 
	\begin{equation*}
		\operatorname{Ker} (I - T_{\phi}) \cap \overline{\operatorname{Im} (I - T_{\phi})}^{\newtopology} = \{0\}.
	\end{equation*} 
	We have thus established that the sequence of operators given by \eqref{ThmErgodic(w')*:E1.1} converges pointwise (with respect to the $\newtopology$ topology) on the direct sum of these two subspaces. Hence, it remains to show the this direct sum is $\newtopology$-dense in $X$, i.e.~that
	\begin{equation} \label{ThmErgodic(w')*:E3}
		\overline{\operatorname{Ker} (I - T_{\phi}) \bigoplus \overline{\operatorname{Im} (I - T_{\phi})}^{\newtopology}}^{\newtopology} = X,
	\end{equation}
	and use again the fact that the sequence is equicontinuous to extend the convergence to the whole $X$ via the same standard argument we have used above. We note that this extension, unlike that to $\overline{\operatorname{Im} (I - T_{\phi})}^{\newtopology}$, requires $X$ to be $\newtopology$-complete, which we have established in Theorem~\ref{Thm(w')*Complete}.
	
	Consider thus the associate operator $T_{\phi}' \colon (X', \newtopology) \to (X', \newtopology)$.  Since clearly $(I - T_{\phi}') =  (I - T_{\phi})'$, Theorem~\ref{ThmOrtogonality} implies that
	\begin{equation*}
		\begin{split}
			\left( \operatorname{Ker} (I - T_{\phi}) \bigoplus \overline{\operatorname{Im} (I - T_{\phi})}^{\newtopology} \right)^{\circ} &= (\operatorname{Ker} (I - T_{\phi}))^{\circ} \cap \left(\overline{\operatorname{Im} (I - T_{\phi})}^{\newtopology} \right)^{\circ} \\
			&= \overline{\operatorname{Im} (I - T_{\phi}')}^{\newtopology} \cap \operatorname{Ker} (I - T_{\phi}').
		\end{split}
	\end{equation*}	
	However, since $T_{\phi}' \colon (X', \newtopology) \to (X', \newtopology)$ is also power-bounded (see Proposition~\ref{PropTsigma(w')*Bounded}), it follows that all the above established facts about $T_{\phi}$ are also valid for $T_{\phi}'$ (recall that $X'' = X$ by Theorem~\ref{TFA}), specifically we have
	\begin{equation*}
		\operatorname{Ker} (I - T_{\phi}') \cap \overline{\operatorname{Im} (I - T_{\phi}')}^{\newtopology} = \{0\}.
	\end{equation*}
	Finally, we observe that the left-hand side of \eqref{ThmErgodic(w')*:E3} is a $\newtopology$-closed linear subspace of $X$ and that we have just proved that the only functional in $(X, \newtopology)^* = X'$ (see Theorem~\ref{ThmACRDual}) that vanishes on the whole of this subspace is the zero functional. Hence, the equality in \eqref{ThmErgodic(w')*:E3} follows from the classical Hahn--Banach theorem.
	
	We have thus shown that the sequence \eqref{ThmErgodic(w')*:E1.1} converges pointwise on $X$ with respect to the $\newtopology$ topology. It is evident that the pointwise limit is a linear operator which we denote $T$. That $T \colon (X, \newtopology) \to (X, \newtopology)$ is continuous follows from the equicontinuity of the sequence \eqref{ThmErgodic(w')*:E1.1} via the usual argument. Since the operators $T_{\phi}^n$ are all positive and $\newtopology$-convergence implies existence of a $\mu$-a.e.~convergent subsequence (see Corollary~\ref{CorEmbM0}), it follows that $T$ is positive.
	
	We note that a virtually identical argument shows that the sequence of Ces\`aro means
	\begin{align} \label{ThmErgodic(w')*:E1.2}
		\frac{1}{n}\sum_{i = 0}^{n-1} (T_{\phi}')^i,& &n \in \mathbb{N},
	\end{align}
	converges pointwise on $X'$ with respect to the $\newtopology$ topology to \textit{some} linear continuous operator on $X'$.
	
	Now, Corollary~\ref{CorACRContinuity} and Proposition~\ref{PropT'toNormBFS} yield the continuity of $T$ with respect to the $w'$ and $\lVert \cdot \rVert_X$ topologies and the existence of the associate operator $T'$ on $X'$. Proposition~\ref{PropT'toNormBFS} and Proposition~\ref{PropW'T'BFS} then imply the continuity of $T'$ with respect to the $\lVert \cdot \rVert_{X'}$ and $w'$ topologies on $X'$. Finally, it is easy to verify via direct computation that the pointwise $\newtopology$-limit of the sequence \eqref{ThmErgodic(w')*:E1.2} satisfies the definition of the associate operator of $T$, i.e.~that it coincides with $T'$ (recall that $\newtopology$-convergence implies $w'$-convergence as noted in Remark~\ref{RemEmbeTopo}), from which it follows that $T'$ is also continuous with respect to the $\newtopology$ topology.
	
	It remains to show that the limit operators are the appropriate projections and that the direct sum in \eqref{ThmErgodic(w')*:E3} is not just dense but in fact covers the entire space. The argument is the same for the operator $T$ and space $X$ as it is for $T'$ and $X'$, so we perform only the former.
	
	It is clear from the construction that $T$ is equal to identity on $\operatorname{Ker} (I - T_{\phi})$ and constantly zero on $\overline{\operatorname{Im} (I - T_{\phi})}^{\newtopology}$. Let now $f \in X$ be arbitrary. Then by \eqref{ThmErgodic(w')*:E3} there is some net
	\begin{equation} \label{ThmErgodic(w')*:E4}
		f_{\iota} \in \operatorname{Ker} (I - T_{\phi}) \bigoplus \overline{\operatorname{Im} (I - T_{\phi})}^{\newtopology}
	\end{equation}
	such that $f_{\iota} \to f$ in $(X, \newtopology)$. Using \eqref{ThmErgodic(w')*:E4}, we find for each $\iota$ the unique functions $\widetilde{f_{\iota}} \in \operatorname{Ker} (I - T_{\phi})$ and $\widehat{f_{\iota}} \in \overline{\operatorname{Im} (I - T_{\phi})}^{\newtopology}$ such that $f_{\iota} = \widetilde{f_{\iota}} + \widehat{f_{\iota}}$. Then necessarily
	\begin{align*}
		\widetilde{f_{\iota}} = \widetilde{f_{\iota}} + 0 = T\left(\widetilde{f_{\iota}} \right) + T \left( \widehat{f_{\iota}} \right) = T(f_{\iota}) &\to Tf &\text{in } (X, \newtopology).
	\end{align*}
	Since $\operatorname{Ker} (I - T_{\phi})$ is closed in $(X, \newtopology)$, we obtain that $Tf \in \operatorname{Ker} (I - T_{\phi})$. It then follows that
	\begin{align*}
		\widehat{f_{\iota}} = f_{\iota} - \widetilde{f_{\iota}} &\to f - Tf &\text{in } (X, \newtopology).
	\end{align*}
	Again, $\overline{\operatorname{Im} (I - T_{\phi})}^{\newtopology}$ is closed in $(X, \newtopology)$ and so $f - Tf \in \overline{\operatorname{Im} (I - T_{\phi})}^{\newtopology}$. This proves \eqref{ThmErgodic(w')*:E0.2} and it follows that $T$ is a projection onto $\operatorname{Ker} (I - T_{\phi})$ while $I - T$ is a projection onto $\overline{\operatorname{Im} (I - T_{\phi})}^{\newtopology}$.
\end{proof}

\begin{remark}
    We recall that the convergence in \eqref{ThmErgodic(w')*:E0} implies convergence in measure, as per Proposition~\ref{Prop(w')*Implies*}.
\end{remark}

\begin{remark}
	The proof of Theorem~\ref{ThmErgodic(w')*} in principle holds for a wider class of operators than just the composition ones. Indeed, what we need is $\newtopology$-power-boundedness of both the original operator and its associate operator (which are not equivalent to each other). We chose to stick to the composition operators, as they are the topic of this paper and so are also the motivation for this result. Furthermore, they also naturally satisfy not only the assumptions required for this proof, but also for the proofs of the theorems below, which allows us to keep our assumptions simple.
\end{remark}

Let us now move on to the question of $\lVert \cdot \rVert_X$-convergence of the Ces\`aro means. The first step is to establish uniform estimates by the dilation operator for said means and their limit.

\begin{proposition} \label{PropHLPEstimate}
	Let $\phi \colon \mathcal{R} \to \mathcal{R}$ be a power-measure-bounded mapping and consider the composition operator $T_{\phi}$. Then there is some constant $B \in (0, 1]$ such that the sequence of Ces\`aro means satisfies
	\begin{align} 
		\left( \frac{1}{n}\sum_{i = 0}^{n-1} T_{\phi}^i f \right)^{*} &\prec D_{B} f^* &\text{for all } f \in \mathcal{M}(\mathcal{R}, \mu) \text{ and } n \in \mathbb{N}. \label{PropHLPEstimate:E1}
	\end{align}
	
	Furthermore, let $X$ be an r.i.~Banach function space and let $X'$ be the corresponding associate space such that both $X$ and $X'$ have the \ACR property and consider the operator $T$ defined as in \eqref{ThmErgodic(w')*:E0}. Then the operator $T$ satisfies (for the same constant $B$ as above)
	\begin{align}
		(Tf)^* &\prec D_{B} f^* &\text{for all } f \in X. \label{PropHLPEstimate:E2}
	\end{align}
\end{proposition}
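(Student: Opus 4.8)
The plan is to establish the two displayed inequalities separately: \eqref{PropHLPEstimate:E1} purely at the level of rearrangements, and then \eqref{PropHLPEstimate:E2} by passing to the $\newtopology$-limit. For \eqref{PropHLPEstimate:E1}, first note that the inequality is trivial when $f\notin\mathcal M_0$ (then $D_Bf^*\equiv\infty$ on an initial interval, so $(D_Bf^*)^{**}\equiv\infty$), hence we may assume $f\in\mathcal M_0$; since $\phi$ is measure-bounded and thus non-singular, every iterate $T_\phi^i f$ again lies in $\mathcal M_0$. Let $A\in(0,\infty)$ be the power-measure-bound of $\phi$ and put $B=\min\{A^{-1},1\}\in(0,1]$. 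By Lemma~\ref{LemmaDilPowerEst}, $(T_\phi^n f)^*\le D_{A^{-1}}f^*$ on $[0,\infty)$ for every $n\ge 1$; since $f^*$ is non-increasing and $A^{-1}\ge B$, this yields $(T_\phi^n f)^*\le D_Bf^*$ for $n\ge 1$, while for $n=0$ one has $(T_\phi^0 f)^*=f^*=D_1f^*\le D_Bf^*$ because $B\le 1$. One cannot pass to the Ces\`aro means via the rearrangement itself, which is not subadditive; instead one uses the elementary maximal function, which is subadditive by Proposition~\ref{PropSubAdd**} (here the resonance of $(\mathcal R,\mu)$ enters). Iterating Proposition~\ref{PropSubAdd**} and using absolute homogeneity of the rearrangement, for $g_n=\frac1n\sum_{i=0}^{n-1}T_\phi^i f$ we obtain
\[
g_n^{**}(t)\le\frac1n\sum_{i=0}^{n-1}(T_\phi^i f)^{**}(t)\le\frac1n\sum_{i=0}^{n-1}(D_Bf^*)^{**}(t)=(D_Bf^*)^{**}(t)\qquad(t\in(0,\infty)),
\]
the middle step being monotonicity of $s\mapsto\int_0^s(\cdot)\,d\lambda$ applied to $(T_\phi^i f)^*\le D_Bf^*$ (and $D_Bf^*$ is itself non-increasing). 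This is precisely $g_n^*\prec D_Bf^*$.

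For \eqref{PropHLPEstimate:E2}, fix $f\in X$. As $X$ has the \ACR property, $f\in\mathcal M_{(ACR)}\subseteq\mathcal M_0$, and $Tf\in X$ with $g_n\to Tf$ in $(X,\newtopology)$ by Theorem~\ref{ThmErgodic(w')*}. By Corollary~\ref{CorEmbM0} this convergence holds also in $(\mathcal M_0,\mu_{\textup{loc}})$; since that space is metrisable (Proposition~\ref{PropM0CompMetr}) and $(\mathcal R,\mu)$ is $\sigma$-finite, a standard diagonal argument over an exhausting sequence of sets of finite measure yields a subsequence $(g_{n_k})_k$ with $g_{n_k}\to Tf$ $\mu$-a.e. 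Combining the lower semicontinuity of the non-increasing rearrangement under $\mu$-a.e.\ convergence (the classical property $(Tf)^*(s)\le\liminf_k g_{n_k}^*(s)$) with the classical Fatou lemma and the estimate \eqref{PropHLPEstimate:E1} already proved, we get for every $t\in(0,\infty)$
\[
\int_0^t(Tf)^*\,d\lambda\le\int_0^t\liminf_{k\to\infty}g_{n_k}^*\,d\lambda\le\liminf_{k\to\infty}\int_0^t g_{n_k}^*\,d\lambda\le\int_0^t D_Bf^*\,d\lambda,
\]
that is, $(Tf)^{**}\le(D_Bf^*)^{**}$ on $(0,\infty)$, which is exactly $(Tf)^*\prec D_Bf^*$.

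The argument is essentially routine given the machinery already assembled; the two points that require attention are the bookkeeping that lets a single $B\in(0,1]$ dominate all iterates (including the trivial $n=0$ term), and the fact that the non-increasing rearrangement is not subadditive, so that the Hardy--Littlewood--P\'olya relation --- via the elementary maximal function --- is the correct object both for the Ces\`aro means and for the passage to the limit. In the last step there is no continuity of $h\mapsto h^{**}$ available, which is why one extracts a $\mu$-a.e.-convergent subsequence (made possible by the embedding into $(\mathcal M_0,\mu_{\textup{loc}})$) and argues by Fatou rather than directly from $\newtopology$-convergence.
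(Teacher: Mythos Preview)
Your proof is correct and follows essentially the same route as the paper's own argument: choose $B=\min\{A^{-1},1\}$, use the uniform pointwise bound $(T_\phi^i f)^*\le D_Bf^*$ together with subadditivity of $f\mapsto f^{**}$ to obtain \eqref{PropHLPEstimate:E1}, and then pass to the limit for \eqref{PropHLPEstimate:E2} by extracting a $\mu$-a.e.\ convergent subsequence from the $\newtopology$-convergent Ces\`aro means and applying Fatou. The only cosmetic differences are that you spell out the trivial case $f\notin\mathcal M_0$ and the diagonal extraction via Corollary~\ref{CorEmbM0}, whereas the paper cites Proposition~\ref{Prop(w')*Implies*} directly.
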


Lemma~\ref{LemmaDilPowerEst} gives a uniform pointwise estimate for the rearrangements of $T_{\phi}^n$ by a dilation operator. We could use $\eqref{EqAlmostSubAdd*}$ to obtain similar pointwise estimates for the rearrangements of the Ces\`aro means, but they would no longer be uniform, in the sense that the dilation parameter would converge to $0$ as $n \to \infty$. If we want a uniform estimate, then we must replace the pointwise estimate for rearrangements by the the Hardy--Littlewood--P\'{o}lya relation (as presented in Definition~\ref{DefHLPR}). 

\begin{proof}
	We recall that our assumptions and Lemma~\ref{LemmaDilPowerEst} guarantee that we have for every function $f \in \mathcal{M}(\mathcal{R}, \mu)$ and every $i \in \mathbb{N}$, $i \geq 1$ that
	\begin{align*}
		(T^i_{\phi} f)^* &\leq D_{A^{-1}} f^* &\text{on $[0, \infty)$},
	\end{align*}
	where $A$ is the power-measure-bound of $\phi$. As $T_{\phi}^0$ is the identity operator, it is obvious that $(T^0_{\phi} f)^* = D_{1} f^*$. Whence, if we put $B = \min \{1, A^{-1}\}$ then we get for every $f \in \mathcal{M}(\mathcal{R}, \mu)$ and every $i \in \mathbb{N}$ that
	\begin{align*}
		(T^i_{\phi} f)^* &\leq D_{B} f^* &\text{on $[0, \infty)$}.
	\end{align*}	
	Combining this with Proposition~\ref{PropSubAdd**}, we observe that we also have for every $n \in \mathbb{N}$ that
	\begin{align*} 
		\left( \frac{1}{n}\sum_{i = 0}^{n-1} T_{\phi}^i f \right)^{**} &\leq \frac{1}{n}\sum_{i = 0}^{n-1} (T_{\phi}^i f)^{**} \leq (D_{B} f^*)^{**} &\text{on } [0, \infty),
	\end{align*}
	which establishes \eqref{PropHLPEstimate:E1}.
	
	Furthermore, Theorem~\ref{ThmErgodic(w')*} implies that our assumptions on $X$ and $X'$ guarantee that the Ces\`aro means converge to $Tf$ in $(X, \newtopology)$, so it follows from Proposition~\ref{Prop(w')*Implies*} that there is a subsequence converging $\mu$-a.e. Denoting its indices $n_k$, it follows from the properties of non-increasing rearrangement (see e.g.~\cite[Proposition~1.7]{BennettSharpley88}) and the classical Fatou's lemma that
	\begin{align*}
		(Tf)^{**} &\leq \liminf_{k \to \infty} \left( \frac{1}{n_k}\sum_{i = 0}^{n_k-1} T_{\phi}^i f \right)^{**} \leq (D_{B} f^*)^{**} &\text{on $[0, \infty)$},
	\end{align*}
	which establishes \eqref{PropHLPEstimate:E2}.
\end{proof}

It is worth noting that while the pointwise estimate for rearrangements yielded uniform absolute continuity of the quasinorms of $(T^n_{\phi}(f))^* \in \overline{X}$ (see Corollary~\ref{CorPresXa}), this weaker uniform estimate via the Hardy--Littlewood--P\'{o}lya relation does not. Indeed, the estimate is sufficient to cover sets $E_k = [0, k^{-1}]$, but it fails to provide useful information if we consider the sets $E_k = [k, \infty)$. However, since it is still strong enough to provide the estimate \eqref{PropHLPEstimate:E2},  we still get, thanks to Theorem~\ref{ThmInheritanceACqN}, that the limit operator $T$ preserves absolute continuity of the quasinorm.

\begin{corollary} \label{CorTPresXa}
    Let $\phi \colon \mathcal{R} \to \mathcal{R}$ be a power-measure-bounded mapping and consider the composition operator $T_{\phi}$. Let $X$ be an r.i.~Banach function space and let $X'$ be the corresponding associate space such that both $X$ and $X'$ have the \ACR property and consider the operator $T$ defined as in \eqref{ThmErgodic(w')*:E0}. Then $T(X_a) \subseteq X_a$.
\end{corollary}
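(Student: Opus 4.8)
The plan is to combine the uniform Hardy--Littlewood--P\'{o}lya estimate \eqref{PropHLPEstimate:E2} for the limit operator $T$ with the transfer of absolute continuity along the Hardy--Littlewood--P\'{o}lya relation provided by Theorem~\ref{ThmInheritanceACqN}~(ii). Since \eqref{PropHLPEstimate:E2} is a statement about non-increasing rearrangements, and the majorant $D_B f^*$ naturally lives on $([0,\infty),\lambda)$ rather than on $\mathcal{R}$, the natural place to run the argument is the canonical representation space $\overline{X}$; this also sidesteps the distinction between non-atomic and completely atomic underlying measure spaces.

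First I would fix $f \in X_a$ and pass to the representation: by Theorem~\ref{ThmRepreACqN} we have $f^* \in (\overline{X})_a$. Let $B \in (0,1]$ be the constant from Proposition~\ref{PropHLPEstimate}, so that $(Tf)^* \prec D_B f^*$. Since $\overline{X}$ is an r.i.~quasi-Banach function space over $([0,\infty),\lambda)$ and $B \in (0,\infty)$, Lemma~\ref{LemDilPresXa} gives $D_B f^* = D_B(f^*) \in (\overline{X})_a$.

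Next I would check that the Hardy--Littlewood--P\'{o}lya principle holds for $\lVert\cdot\rVert_{\overline{X}}$: it holds for $\lVert\cdot\rVert_X$ because $X$ is an r.i.~Banach function space (see Theorem~\ref{TFA} and e.g.~\cite[Chapter~2, Theorem~4.6]{BennettSharpley88}), and Proposition~\ref{PropRepreHLPP} then transfers this to $\lVert\cdot\rVert_{\overline{X}}$. Applying Theorem~\ref{ThmInheritanceACqN}~(ii) over the resonant measure space $([0,\infty),\lambda)$, with $\overline{X}$ in the role of the space, $D_B f^* \in (\overline{X})_a$ in the role of the function $f$, and $(Tf)^* \prec D_B f^*$ in the role of $h$, we conclude $(Tf)^* \in (\overline{X})_a$. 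Finally, Theorem~\ref{ThmRepreACqN} applied to $Tf \in X$ turns this back into $Tf \in X_a$. As $f \in X_a$ was arbitrary, $T(X_a) \subseteq X_a$.

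There is no genuine obstacle here beyond bookkeeping; the one point that requires a little care is that \eqref{PropHLPEstimate:E2} only controls rearrangements, so one cannot hope to use the original $f$ itself as the majorant in Theorem~\ref{ThmInheritanceACqN}~(ii). Indeed, since the power-measure-bound $A$ may exceed $1$, the dilated rearrangement $D_B f^*$ genuinely dominates $f^*$ and $Tf \prec f$ need not hold; this is precisely why $D_B f^*$, and hence the passage through $\overline{X}$ together with Lemma~\ref{LemDilPresXa}, enters the argument.
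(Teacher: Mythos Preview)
Your proof is correct and follows essentially the same approach as the paper: pass to the canonical representation space via Theorem~\ref{ThmRepreACqN}, use Lemma~\ref{LemDilPresXa} to get $D_B f^* \in (\overline{X})_a$, invoke Proposition~\ref{PropRepreHLPP} to ensure the Hardy--Littlewood--P\'{o}lya principle holds for $\lVert\cdot\rVert_{\overline{X}}$, apply Theorem~\ref{ThmInheritanceACqN}~(ii) with the estimate \eqref{PropHLPEstimate:E2}, and then transfer back via Theorem~\ref{ThmRepreACqN}. Your closing remark about why $D_B f^*$ rather than $f$ must serve as the majorant is a nice clarification not made explicit in the paper.
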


\begin{proof}
    Let $f \in X_a$ and $B$ be as in Proposition~\ref{PropHLPEstimate}, then Theorem~\ref{ThmRepreACqN} implies that $f^* \in \overline{X}_a$ and so also $D_{B} f^* \in \overline{X}_a$ (by Lemma~\ref{LemDilPresXa}). Recalling that the Hardy--Littlewood--P\'{o}lya principle holds for $\lVert \cdot \rVert_X$ because it is an r.i.~Banach function norm, see e.g.~\cite[Chapter~2, Theorem~4.6]{BennettSharpley88}, and therefore also for $\lVert \cdot \rVert_{\overline{X}}$ by Proposition~\ref{PropRepreHLPP}, we thus obtain from Proposition~\ref{PropHLPEstimate} and Theorem~\ref{ThmInheritanceACqN} that $(Tf)^* \in \overline{X}_a$. Finally, applying again Theorem~\ref{ThmRepreACqN} yields $Tf \in X_a$.
\end{proof}

This is the key information that allows us to show that the Ces\`aro means converge to $T$ on $X_a$ not just in the $\newtopology$ topology, but also in the $\lVert \cdot \rVert_X$ topology.

\begin{theorem} \label{ThmErgodicNormLocalised}
	Let $X$ be an r.i.~Banach function space and let $X'$ be the corresponding associate space. Assume that both $X$ and $X'$ have the \ACR property. Let $\phi \colon \mathcal{R} \to \mathcal{R}$ be a power-measure-bounded mapping and consider the composition operator $T_{\phi}$ and the operator $T$ defined as in \eqref{ThmErgodic(w')*:E0}. Then we have for every $f \in X_a$ that $Tf \in X_a$ and that
	\begin{align}
		\frac{1}{n}\sum_{i = 0}^{n-1} T_{\phi}^i f &\to Tf & \text{in } (X, \lVert \cdot \rVert_X)  \text{ as } n \to \infty. \label{ThmErgodicNormLocalised:E0}
	\end{align}
\end{theorem}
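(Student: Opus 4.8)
The strategy is to combine the $\newtopology$-convergence of the Ces\`aro means, already established in Theorem~\ref{ThmErgodic(w')*}, with a suitable dominated-convergence argument that upgrades it to norm convergence on $X_a$. The essential extra ingredient is that, thanks to the power-measure-boundedness of $\phi$ and Lemma~\ref{LemmaDilPowerEst}, the rearrangements of all the iterates $T_\phi^i f$ are uniformly dominated by the single function $D_{A^{-1}}f^* \in \bigl(\overline{X}\bigr)_a$ (using Theorem~\ref{ThmRepreACqN} and Lemma~\ref{LemDilPresXa}), and likewise $(Tf)^* \prec D_B f^*$ by Proposition~\ref{PropHLPEstimate}, whence $Tf \in X_a$ by Corollary~\ref{CorTPresXa}.

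\textbf{Step 1.} Fix $f \in X_a$. First I would record that $Tf \in X_a$; this is exactly Corollary~\ref{CorTPresXa}, so nothing new is needed. Denote the Ces\`aro means by $A_n f = \frac1n\sum_{i=0}^{n-1}T_\phi^i f$. By Theorem~\ref{ThmErgodic(w')*} we know $A_n f \to Tf$ in $(X,\newtopology)$, and hence, by Proposition~\ref{Prop(w')*Implies*}, also $A_n f \to Tf$ in measure; in particular every subsequence of $(A_n f)_n$ has a further subsequence converging $\mu$-a.e.\ to $Tf$.

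\textbf{Step 2.} I want to apply the abstract dominated convergence theorem, Proposition~\ref{PropDomConv}, but the naive majorant for $A_n f$ is $\frac1n\sum_{i=0}^{n-1}T_\phi^i|f|$, which need not be dominated by a single fixed element of $X_a$ pointwise on $\mathcal R$ — the obstacle is precisely that the Hardy--Littlewood--P\'olya estimate of Proposition~\ref{PropHLPEstimate} controls only the \emph{rearrangements} $(A_n f)^*$, not the functions themselves. The remedy is to argue on the level of rearrangements in the canonical representation space $\overline{X}$. By $\eqref{PropHLPEstimate:E1}$ we have $(A_n f)^{**}\le (D_B f^*)^{**}$ on $[0,\infty)$ for all $n$, with $D_B f^* \in \bigl(\overline X\bigr)_a$ (Lemma~\ref{LemDilPresXa} and Theorem~\ref{ThmRepreACqN}), and $(Tf)^{**}\le(D_B f^*)^{**}$ as well. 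I would argue by contradiction: if $\lVert A_n f - Tf\rVert_X \not\to 0$, pass to a subsequence with $\lVert A_{n_k} f - Tf\rVert_X \ge \varepsilon_0 > 0$ and (by Step~1) with $A_{n_k} f \to Tf$ $\mu$-a.e.; then $|A_{n_k}f - Tf|^* \le |A_{n_k}f|^*(\cdot/2) + |Tf|^*(\cdot/2)$ combined with the $**$-bounds above, together with Proposition~\ref{PropACRDom}/Corollary~\ref{PropACRMono}-type reasoning, forces $(A_{n_k}f - Tf)^*(t)\to 0$ pointwise. The clean way to finish is to invoke the Hardy--Littlewood--P\'olya principle for $\lVert\cdot\rVert_{\overline X}$ (valid since $\overline X$ is a representation of an r.i.\ Banach function space, via Proposition~\ref{PropRepreHLPP} and \cite[Chapter~2, Theorem~4.6]{BennettSharpley88}) to get $\lVert A_{n_k}f - Tf\rVert_X = \lVert (A_{n_k}f-Tf)^*\rVert_{\overline X}$ controlled by $\lVert D_B f^* \cdot \chi_{\{\dots\}}\rVert_{\overline X}$-type tail terms that vanish by absolute continuity of the quasinorm of $D_B f^*$, contradicting $\lVert A_{n_k}f - Tf\rVert_X \ge \varepsilon_0$.

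\textbf{Step 3 (cleaner alternative).} In fact the argument can be streamlined using density: by Proposition~\ref{PropXaAlternative} the simple functions are dense in $X_a$, and for a simple function $s$ the iterates $T_\phi^i s$ are supported, after rearrangement, inside a fixed finite-measure interval dilated by $D_{A^{-1}}$, so $(A_n s)^*$ is dominated by $D_{A^{-1}}s^* \in (\overline X)_a$, a \emph{compactly supported bounded} function; for such $s$ one gets norm convergence directly from Proposition~\ref{PropDomConv} applied in $\overline X$ combined with the a.e.\ convergence from Step~1. Then the equicontinuity of the family $\{A_n\}$ on $(X,\lVert\cdot\rVert_X)$ — which follows from power-measure-boundedness via Lemma~\ref{LemmaDilPowerEst} and Theorem~\ref{TDRIS}, giving $\lVert A_n\rVert_{X\to X}\le \lVert D_{A^{-1}}\rVert_{\overline X\to\overline X}$ uniformly in $n$ — lets one pass from the dense subspace of simple functions to all of $X_a$ by the standard $\varepsilon/3$ argument, using $Tf \in X_a$ to control the limit. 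I expect Step~2/3, namely correctly handling the \emph{tails at infinity} (where the Hardy--Littlewood--P\'olya majorant is weaker than a pointwise one, cf.\ the remark after Proposition~\ref{PropHLPEstimate}), to be the main technical obstacle; everything else is bookkeeping with results already in hand.
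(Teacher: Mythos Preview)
Your dominated-convergence strategy has a genuine gap that cannot be repaired along the lines you sketch. In Step~3 the pointwise claim ``$(A_n s)^*$ is dominated by $D_{A^{-1}} s^*$'' is simply false: take $\phi(k)=k-1$ on $(\Z,m)$ and $s=\chi_{\{0\}}$; then $A_n s=\tfrac1n\chi_{\{0,\dots,n-1\}}$, so $(A_n s)^*=\tfrac1n\chi_{[0,n)}$, which is \emph{not} bounded by $s^*=\chi_{[0,1)}$. The only uniform control available is the Hardy--Littlewood--P\'olya bound $(A_n f - Tf)^{**}\le 2(D_B f^*)^{**}$, which gives the pointwise majorant $(A_n f - Tf)^*(t)\le 2(D_B f^*)^{**}(t)$. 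But $(D_B f^*)^{**}$ need not lie in $\overline{X}$ at all: boundedness of the Hardy averaging operator on $\overline{X}$ is equivalent to the upper Boyd index being strictly below $1$, and this is \emph{not} implied by the \ACR hypotheses on $X$ and $X'$ (think of spaces near $L^1$ such as $L\log L$ over a finite measure space, where both \ACR conditions hold trivially). Consequently no fixed majorant in $\overline{X}_a$ exists, Proposition~\ref{PropDomConv} is inapplicable, and your ``tail terms that vanish by absolute continuity'' do not in fact vanish --- exactly the obstruction flagged in the remark after Proposition~\ref{PropHLPEstimate}, which you correctly identified but did not overcome.

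The paper's proof avoids this entirely by a duality argument rather than domination. From the $\newtopology$-ergodic decomposition $X=\operatorname{Ker}(I-T_\phi)\oplus\overline{\operatorname{Im}(I-T_\phi)}^{\newtopology}$ (Theorem~\ref{ThmErgodic(w')*}) and the fact that $Tf,(I-T)f\in X_a$, the problem reduces to showing $(I-T)f\in\overline{\operatorname{Im}(I-T_\phi)}^{\lVert\cdot\rVert_X}$. The key is a separate lemma establishing $\overline{\operatorname{Im}(I-T_\phi)\cap X_a}^{\newtopology}=\overline{\operatorname{Im}(I-T_\phi)}^{\newtopology}$, proved by Hahn--Banach: any $\varphi\in X'$ annihilating $\operatorname{Im}(I-T_\phi)\cap X_a$ must annihilate $(I-T_\phi)g$ for every simple $g$ (since simple functions lie in $X_a$ and $T_\phi$ preserves $X_a$), forcing $\varphi\in\operatorname{Ker}(I-T_\phi')$, whence $\varphi$ annihilates all of $\overline{\operatorname{Im}(I-T_\phi)}^{\newtopology}$ by Theorem~\ref{ThmOrtogonality}. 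Finally, since $(X_a,\lVert\cdot\rVert_X)^*=X'=(X_a,\newtopology)^*$ (Theorems~\ref{ThmACNDual} and~\ref{ThmACRDual}), the $\newtopology$-closure and the norm-closure of any subspace agree \emph{inside} $X_a$, and the transfer is complete.
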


We will need the following lemma:

\begin{lemma} \label{LemmaCapXaDense}
	Let $X$ be an r.i.~Banach function space and $X'$ the corresponding associate space. Assume that both $X$ and $X'$ have the \ACR property and that the simple functions have absolutely continuous norms in $X$. Let $\phi$ be measure-bounded. Then
	\begin{equation} \label{LemmaCapXaDense:E1}
		\overline{\operatorname{Im} (I - T_{\phi}) \cap X_a}^{\newtopology} = \overline{\operatorname{Im} (I - T_{\phi})}^{\newtopology}.
	\end{equation}
\end{lemma}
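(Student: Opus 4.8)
The plan is to prove the non-trivial inclusion $\overline{\operatorname{Im}(I-T_\phi)}^{\newtopology} \subseteq \overline{\operatorname{Im}(I-T_\phi)\cap X_a}^{\newtopology}$, the reverse being obvious. Since the $\newtopology$-closure of a set equals the $\newtopology$-closure of its $\newtopology$-closure, it suffices to show that every element of $\operatorname{Im}(I-T_\phi)$ lies in $\overline{\operatorname{Im}(I-T_\phi)\cap X_a}^{\newtopology}$; the general case then follows by taking $\newtopology$-closures. So fix $f = (I-T_\phi)(g)$ for some $g \in X$. First I would approximate $g$ itself: by Proposition~\ref{PropXaAlternative}, since the simple functions have absolutely continuous norm in $X$, they are dense in $X_a$, and moreover any $g \in X$ is the $\mu$-a.e.\ pointwise limit of simple functions $s_k$ with $|s_k| \leq |g|$. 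Even though $|g| \in X$ need not have absolutely continuous norm, it does lie in $\mathcal{M}_{(ACR)}$ because $X$ has the \ACR property; hence Proposition~\ref{Prop(w')*DomConv} (applied with the dominating function $|g| \in X \cap \mathcal{M}_{(ACR)}$) gives $s_k \to g$ in $(X,\newtopology)$.

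The second step is to transport this approximation through $I - T_\phi$. Since $\phi$ is measure-bounded, Proposition~\ref{PropTsigma(w')*Bounded}(i) shows $T_\phi \colon (X,\newtopology) \to (X,\newtopology)$ is continuous, hence so is $I - T_\phi$, and therefore
\[
(I - T_\phi)(s_k) \to (I - T_\phi)(g) = f \qquad \text{in } (X,\newtopology).
\]
It remains to check that each $(I - T_\phi)(s_k)$ actually lies in $\operatorname{Im}(I-T_\phi) \cap X_a$. This is where the hypotheses come together: $s_k$ is a simple function, so $s_k \in X_a$ by assumption; and since $\phi$ is measure-bounded, Corollary~\ref{CorPresXa}(i) gives $T_\phi s_k \in X_a$; as $X_a$ is a linear subspace (Proposition~\ref{PropXaOrdId}), $(I-T_\phi)(s_k) = s_k - T_\phi s_k \in X_a$. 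It is trivially in $\operatorname{Im}(I-T_\phi)$. Thus $f$ is a $\newtopology$-limit of elements of $\operatorname{Im}(I-T_\phi) \cap X_a$, which is the desired inclusion.

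The argument is essentially a density-transport bookkeeping exercise, and I do not expect a genuine obstacle; the only subtle point is that one cannot hope for \emph{norm} convergence $s_k \to g$ (since $g \notin X_a$ in general), which is precisely why the whole lemma is phrased in the $\newtopology$ topology and why Proposition~\ref{Prop(w')*DomConv}, whose hypothesis requires only membership in $\mathcal{M}_{(ACR)}$ rather than absolute continuity of the norm, is the right tool. One small care must be taken: $\operatorname{Im}(I-T_\phi)$ need not be closed, so I work directly with the inclusion of sets and then pass to $\newtopology$-closures at the very end, using the idempotence of the closure operator, rather than trying to argue with $\overline{\operatorname{Im}(I-T_\phi)}^{\newtopology}$ from the start.
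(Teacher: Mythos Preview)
Your proof is correct, but it takes a genuinely different route from the paper's. The paper argues by contrapositive and Hahn--Banach: given $f\notin\overline{\operatorname{Im}(I-T_\phi)\cap X_a}^{\newtopology}$, it separates $f$ from that closure by some $\varphi\in X'$ (using $(X,\newtopology)^*=X'$ from Theorem~\ref{ThmACRDual}), then observes that $(I-T_\phi)g\in\operatorname{Im}(I-T_\phi)\cap X_a$ for every simple $g$, so $\varphi$ annihilates all such elements, which forces $(I-T_\phi')\varphi=0$; finally Theorem~\ref{ThmOrtogonality} gives $(\operatorname{Ker}(I-T_\phi'))^{\circ}=\overline{\operatorname{Im}(I-T_\phi)}^{\newtopology}$, so $f$ lies outside the latter set as well.

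Your direct approximation argument is more elementary: it avoids Hahn--Banach and the polar calculus of Theorem~\ref{ThmOrtogonality} entirely, relying instead on the $\newtopology$-dominated convergence of Proposition~\ref{Prop(w')*DomConv} together with the $\newtopology$-continuity of $T_\phi$. A pleasant by-product is that your proof never uses the hypothesis that $X'$ has the \ACR property (it is needed in the paper's proof only through Theorem~\ref{ThmOrtogonality}), so your argument actually establishes the lemma under the weaker assumption that only $X$ has \ACR. The paper's duality route, on the other hand, fits more naturally with the overall architecture of Section~\ref{SectionErgodic}, where Theorem~\ref{ThmOrtogonality} is already in play.
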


\begin{proof}
	That the left-hand side of \eqref{LemmaCapXaDense:E1} is included in the right-hand side is clear. Hence, we consider $f \notin \overline{\operatorname{Im} (I - T_{\phi}) \cap X_a}^{\newtopology}$. Recalling Theorem~\ref{ThmACRDual}, we may use the classical Hahn--Banach theorem to find $\varphi \in X'$ such that
	\begin{equation} \label{LemmaCapXaDense:E2}
		\int_{\mathcal{R}} \varphi f \: d\mu > 0, 
	\end{equation}
	while it holds for every $h \in \overline{\operatorname{Im} (I - T_{\phi}) \cap X_a}^{\newtopology}$ that
	\begin{equation*}
		\int_{\mathcal{R}} \varphi h \: d\mu = 0.
	\end{equation*}
	Consider now a simple function $g$. It is our assumption that $g \in X_a$, from which it follows via Corollary~\ref{CorPresXa} that $(I - T_{\phi})g \in \operatorname{Im} (I - T_{\phi}) \cap X_a$. Thence,
	\begin{equation*}
		0 = \int_{\mathcal{R}} \varphi (I - T_{\phi})(g) \: d\mu = \int_{\mathcal{R}} (I - T_{\phi}')(\varphi) g \: d\mu,
	\end{equation*}
	where $T_{\phi}'$ is the associate operator of $T_{\phi}$ that both exists and is continuous as an operator $T_{\phi}' \colon (X', \newtopology) \to (X', \newtopology)$ by Proposition~\ref{PropTsigma(w')*Bounded}. Since $g$ was an arbitrary simple function, it follows that $(I - T_{\phi}')(\varphi) = 0$ $\mu$-a.e., i.e.~that $\varphi \in \operatorname{Ker} (I - T_{\phi}')$. Combining this fact with \eqref{LemmaCapXaDense:E2}, we observe that
	\begin{equation*}
		f \notin \left( \operatorname{Ker} (I - T_{\phi}') \right)^{\circ} = \overline{\operatorname{Im} (I - T_{\phi})}^{\newtopology},
	\end{equation*}
	where the last equality follows from Theorem~\ref{ThmOrtogonality}.
\end{proof}

\begin{proof}[Proof of Theorem~\ref{ThmErgodicNormLocalised}]
	We first recall that we know from Corollary~\ref{CorTPresXa} that $Tf \in X_a$. Then we move on to \eqref{ThmErgodicNormLocalised:E0}. 
    
    In light of Theorem~\ref{ThmErgodic(w')*}, we only have to prove that the assumption $f \in X_a$ implies that the convergence to the limit operator $T$ in \eqref{ThmErgodic(w')*:E0} holds also with respect to the $\lVert \cdot \rVert_X$ topology. Considering Proposition~\ref{PropXaAlternative}, we see that either $X_a$ contains the simple functions or we have $X_a = \{0\}$. Since the validity of \eqref{ThmErgodicNormLocalised:E0} in the latter case is trivial, we focus on the former case, i.e.~we assume that $X_a$ contains the simple functions.
	
	Since $T_{\phi}$ is power-bounded on $(X, \lVert \cdot \rVert_X)$ (by Theorem~\ref{ThmPowerBoundedQBFS}), we may use virtually the same arguments as in the proof of Theorem~\ref{ThmErgodic(w')*} to show that we have for $g \in \operatorname{Ker} (I - T_{\phi})$ that
	\begin{align*}
		\frac{1}{n}\sum_{i = 0}^{n-1} T_{\phi}^i g & \to g = Tg &\text{in } (X, \lVert \cdot \rVert_X)  \text{ as } n \to \infty,
	\end{align*}
	while for $g \in \overline{\operatorname{Im} (I - T_{\phi})}^{\lVert \cdot \rVert_X}$ it holds that
	\begin{align*}
		\frac{1}{n}\sum_{i = 0}^{n-1} T_{\phi}^i g & \to 0 = Tg &\text{in } (X, \lVert \cdot \rVert_X)  \text{ as } n \to \infty,
	\end{align*}
	where we employ the fact that $(X, \lVert \cdot \rVert_X) \hookrightarrow (X, \newtopology)$ and thus the $\lVert \cdot \rVert_X$-closure is a subset of the $\newtopology$-closure.
	
	Recalling the decomposition \eqref{ThmErgodic(w')*:E0.2} and the fact that $T$ is a projection onto $\operatorname{Ker} (I - T_{\phi})$ while $I - T$ is a projection onto $\overline{\operatorname{Im} (I - T_{\phi})}^{\newtopology}$, it is clear that \eqref{ThmErgodicNormLocalised:E0} will follow once we show that 
	\begin{equation} \label{ThmErgodicNormLocalised:E1}
		(I - T)f \in \overline{\operatorname{Im} (I - T_{\phi})}^{\lVert \cdot \rVert_X}.
	\end{equation} 
	We will first need to prepare some tools, however.
	
	With a slight abuse of notation, we will denote by $(X_a, \lVert \cdot \rVert_X)$ and $(X_a, \newtopology)$ the linear space $X_a$ equipped with the subspace topology inherited from $(X, \lVert \cdot \rVert_X)$ and $(X, \newtopology)$, respectively. Proposition~\ref{PropXaOrdId} yields that $X_a$ is $\lVert \cdot \rVert_X$-closed in $X$, hence $(X_a, \lVert \cdot \rVert_X)$ is a Banach space, and Theorem~\ref{ThmACNDual} shows that $(X_a, \lVert \cdot \rVert_X)^* = X'$ (since we assume that $X_a$ contains the simple functions). On the other hand, the assumption that $X_a$ contains the simple functions also implies that $X_a$ is dense in $(X, \newtopology)$ (see Corollary~\ref{CorSimple(w')^*dense}), whence $(X, \newtopology)^* \subseteq (X_a, \newtopology)^*$, in the sense that there is a one-to-one correspondence between the functionals in $(X, \newtopology)^*$ and their restrictions in $(X_a, \newtopology)^*$. Putting this together, we obtain
    \begin{equation*}
        X' = (X, \newtopology)^* \subseteq (X_a, \newtopology)^* \subseteq (X_a, \lVert \cdot \rVert_X)^* = X',
    \end{equation*}
    where the first equality is due to Theorem~\ref{ThmACRDual} and the second inclusion is due to the $\lVert \cdot \rVert_X$ topology being finer than $\newtopology$ (see Remark~\ref{RemEmbeTopo}). Thus, we have shown that all the three duals can be identified with $X'$, from which we obtain via the classical Hahn--Banach theorem that the closures in $(X_a, \lVert \cdot \rVert_X)$ and $(X_a, \newtopology)$ coincide.
    
    Having prepared the necessary tools, we now return to \eqref{ThmErgodicNormLocalised:E1}. As $(I - T)f \in \overline{\operatorname{Im} (I - T_{\phi})}^{\newtopology}$, Lemma~\ref{LemmaCapXaDense} shows that there is a net $(g_{\iota})_{\iota}$ in $\operatorname{Im} (I - T_{\phi}) \cap X_a$ such that $g_{\iota} \to (I - T)f$ in $(X, \newtopology)$. As we also have that $(I - T)f \in X_a$ (since $Tf \in X_a$, as established above), it follows that $g_{\iota} \to (I - T)f$ in $(X_a, \newtopology)$, which means that $(I - T)f$ belongs to the closure of $\operatorname{Im} (I - T_{\phi}) \cap X_a$ in $(X_a, \newtopology)$. Since this closure coincides with the closure in $(X_a, \lVert \cdot \rVert_X)$, we get that there is a sequence of functions $(h_n)_{n\in\N}$ in $\operatorname{Im} (I - T_{\phi}) \cap X_a$ such that $h_n \to (I - T)f$ in $(X_a, \lVert \cdot \rVert_X)$. However, this clearly means that we also have $h_n \to (I - T)f$ in $(X, \lVert \cdot \rVert_X)$, which establishes \eqref{ThmErgodicNormLocalised:E1}.
\end{proof}

\begin{remark}
	Again, the proof of Theorem~\ref{ThmErgodicNormLocalised} is not limited to composition operators, however the assumptions start to pile up. Aside from the $\newtopology$-power-boundedness of both the original operator and its associate operator, which are inherited from Theorem~\ref{ThmErgodic(w')*}, we also require that the original operator is $\lVert \cdot \rVert_X$-power-bounded (which does not follow from $\newtopology$-power-boundedness), that it preserves the space $X_a$, and that the ergodic limit also preserves $X_a$.
\end{remark}

Let us now present the proof of Theorem~~\ref{ThmErgodicLite}, which follows quite easily from the already presented results.

\begin{proof}[Proof of Theorem~\ref{ThmErgodicLite}]
    Most of the claims of the theorem are already contained in Theorems~\ref{ThmErgodic(w')*} and \ref{ThmErgodicNormLocalised}, we only have to prove that
    \begin{equation*}
         \overline{\operatorname{Im} (I - T_{\phi})}^{\newtopology} =  \overline{\operatorname{Im} (I - T_{\phi})}^{\lVert \cdot \rVert_X}
    \end{equation*}
    whenever $X$ has absolutely continuous norm. However, this assumption implies that $(X, \lVert \cdot \rVert_X)^* = X'$ (see Theorem~\ref{ThmACNDual}) which together with the fact that also $(X, \newtopology)^* = X'$ (see Theorem~\ref{ThmACRDual}) yields the desired conclusion via the classical Hahn--Banach theorem.
\end{proof}

The following concrete example covers, in particular, translation operators on r.i.~Banach functions spaces over open subsets of euclidean spaces.

\begin{example}\label{example:C^1}
    Let $U\subset\R^d$ be open and $\phi:U\rightarrow U$ be $C^1$, one-to-one, and $\operatorname{det}J\phi(x)\neq 0$ for every $x\in U$, where by $J\phi$ we denote the Jacobian of $\phi$. Moreover, let $\mu=g\,d\lambda^d$, i.e.~$\mu$ is the Borel measure on $U$ which has the Lebesgue density $g\in\mathcal{M}_+(U,\lambda^d|_U)$. By the change-of-variables formula, it follows easily that for $n\in\N$ the image measure $\mu\left(\phi^{-n}(\cdot)\right)$ of $\mu$ under $\phi^n$ has Lebesgue density
    \begin{equation*}
        g_n(x):=\chi_{\phi^n(U)}(x)g\left(\phi^{-n}(x)\right)\left(\prod_{j=0}^{n-1}\lvert \operatorname{det}J\phi^{-1} \rvert\circ \phi^{-j}\right)(x).
    \end{equation*}

    Hence, for the special case $g\equiv 1$, i.e.~$\mu=\lambda^d|_U$, by Theorem \ref{ThmErgodicLite}, $T_\phi$ is mean ergodic on every r.i.~Banach function space $X$ over $(U,\lambda^d|_U)$ which has absolutely continuous norm and such that $X'$ has the \ACR whenever there is $C>0$ such that
    \begin{align*}
        \chi_{\phi^n(U)}\left(\prod_{j=0}^{n-1} \lvert \operatorname{det}J\phi^{-1} \rvert \circ \phi^{-j}\right) & \leq C &\lambda^d\text{-a.e.~for all }n\in\N.
    \end{align*}
    Specializing further to $\phi(x)=Ax+b$ for $A\in\operatorname{Gl}(\R^d)$ with $\lvert \operatorname{det}A \rvert \geq 1$, $b\in\R^d$, this holds for every $U$ with $\phi(U)\subset U$.
\end{example}

We also cover the shift operator on sequence spaces, as we shall see next.

\begin{example}\label{example:shifts}
    Let $\mathcal{R}\in\{\N,\Z\}$ and let $\mu$ be the counting measure over $\mathcal{R}$. Clearly, $\phi(n)=n+1$, $n\in\mathcal{R}$, is power-measure-bounded. Thus, by Theorem~\ref{ThmErgodicLite}, the composition operator $T_\phi$, called \emph{unilateral shift} or \emph{bilateral shift} in case $\mathcal{R}=\N$ or $\mathcal{R}=\Z$, respectively, is mean ergodic on every r.i.~Banach function space $X$ (of sequences) over $\N$ or $\Z$ which has absolutely continuous norm and for which $X'$ has the \ACR property; a notable non-reflexive examples are the Lorentz sequence spaces $\ell^{p,1}$, $p \in (1, \infty)$. Furthermore, other notable non-reflexive spaces are covered by Theorems~\ref{ThmErgodic(w')*} and \ref{ThmErgodicNormLocalised}, e.g.~the Lorentz sequence spaces $\ell^{p, \infty}$, $p \in (1, \infty)$, also known as the weak Lebesgue spaces. (For the precise definition of said spaces and a modern overview of their applications we suggest the recent paper \cite{DolezalovaVybiral20}.)
\end{example}

We close the section with two counterexamples showing that the assumption in Theorem~\ref{ThmErgodic(w')*} that both $X$ and $X'$ have the \ACR property is necessary in general. In order to avoid technicalities that would only obscure the core ideas, we present the counterexamples only for the case $(\mathcal{R}, \mu) = (\mathbb{R}, \lambda)$; we believe it to be quite trivial to modify the construction for the cases of $(\mathbb{R}^n, \lambda^n)$ or a completely atomic space of infinite measure (note that the infiniteness of the measure is crucial, as otherwise the \ACR property cannot fail). For arbitrary non-atomic measure spaces of infinite measure, the construction can be replicated as long as one can construct the appropriate measure-preserving mappings.

We start with the case of the \ACR condition not holding for $X'$. Informally speaking, this corresponds to $X$ being equivalent to $L^1$ near infinity.

\begin{example} \label{CounterExampleL1}
	Let $X$ be an r.i.~Banach function space over $(\mathbb{R}, \lambda)$ such that $X'$ does not have the \ACR property. By \cite[Theorem~4.16]{NekvindaPesa24}, this implies that $\chi_{\mathbb{R}} \in X'$.
		
	Now, consider the measure-preserving mapping $\phi \colon \mathbb{R} \to \mathbb{R}$, given for $t \in \mathbb{R}$ by
	\begin{equation*}
		\phi(t) = t-1,
	\end{equation*}
	and the function $f = \chi_{[0,1)} \in X$. It is evident that it holds for every $i \in \mathbb{N}$ that $T_{\phi}^i f = \chi_{[i, i+1)}$ and thus we have for every $n \in \mathbb{N}$ that
	\begin{equation*}
		\frac{1}{n}\sum_{i = 0}^{n-1} T_{\phi}^i f = \frac{1}{n} \chi_{[0, n)}.
	\end{equation*}
	As $\frac{1}{n} \chi_{[0, n)} \to 0$ in measure as $n \to \infty$, Proposition~\ref{Prop(w')*Implies*} tells us that the only candidate for the $\newtopology$-limit of the Ces\`aro means is $0$. However, we have observed above that $\chi_{\mathbb{R}} \in X'$ and of course we have
	\begin{equation}
		\left \lvert \frac{1}{n} \chi_{[0, n)} \right \rvert_{\chi_{\mathbb{R}}} = \frac{1}{n} \int_0^{\infty} \chi_{[0, n)}^* \: d\lambda = 1.
	\end{equation}
\end{example}

Next is the case of the \ACR condition not holding for $X$. Informally speaking, it corresponds to $X$ being equivalent to $L^{\infty}$ near infinity.

\begin{example} \label{CounterExampleLInfty}
	Let $X$ be an r.i.~Banach function space over $(\mathbb{R}, \lambda)$ that does not have the \ACR property. By \cite[Theorem~4.16]{NekvindaPesa24}, this implies that the function
	\begin{equation*}
		f = \chi_{[0, \infty)}
	\end{equation*}
	satisfies $f \in X$ (since it is equimeasurable with $\chi_{\mathbb{R}} \in X$).
	
	Now, consider the same measure-preserving mapping $\phi \colon \mathbb{R} \to \mathbb{R}$ as above, i.e.~$\phi$ is given for $t \in \mathbb{R}$ by
	\begin{equation*}
		\phi(t) = t-1.
	\end{equation*}
	Evidently $T_{\phi}^i f = \chi_{[i,\infty)}$ holds and thus we have for every $n \in \mathbb{N}$ that
	\begin{equation*}
		\frac{1}{n}\sum_{i = 0}^{n-1} T_{\phi}^i f = \chi_{[n-1, \infty)} + \sum_{i = 0}^{n-2} \frac{i+1}{n} \chi_{[i, i+1)}.
	\end{equation*}
	Hence, the Ces\`aro means converge to $0$ $\lambda$-a.e., so we again get from Proposition~\ref{Prop(w')*Implies*} that $0$ is the only candidate for their $\newtopology$-limit (as convergence in measure implies existence of a subsequence converging a.e.). However, we clearly have
	\begin{equation*}
		\chi_{[0, \infty)} \geq \left( \frac{1}{n}\sum_{i = 0}^{n-1} T_{\phi}^i f \right)^* \geq \chi_{[n-1, \infty)}^* = \chi_{[0, \infty)},
	\end{equation*}
	whence the Ces\`aro means do not converge to $0$ when measured with any of the norms $\lvert \cdot \rvert_{\varphi}$, $\varphi \in X'$, $\varphi \neq 0$.
\end{example}

\section{Pointwise ergodicity of composition operators on r.i.~Banach function spaces}\label{SectionPointwise}

We now turn our attention to pointwise ergodicity. We shall show that norm convergence of the Ces\`aro means implies pointwise convergence, provided that the symbol $\phi$ satisfies some additional assumptions. The key tool to obtain pointwise convergence is, as usual, an appropriate version of the Maximal Ergodic Theorem, that in turn is obtained via interpolation.

\subsection{An elementary interpolation theorem}

To prove our version of the Maximal Ergodic Theorem, we need the weak-type interpolation result that we present below. This result is rather simple, or, to put it more precisely, if follows in a quite simple fashion from several know facts. However, we have not been able to locate it in literature, hence we present it here with a full proof.

The result is naturally formulated for sublinear operators, which is also the setting we need (since the maximal ergodic operator, as defined in Definition~\ref{DefMaxErgodicOp} is clearly not linear). We recall that the appropriate definitions can be found in Section~\ref{SecSublinearOps}.

\begin{theorem} \label{ThmInterpolation}
    Let $S$ be a sublinear operator that is bounded both as $S \colon L^{\infty} \to L^{\infty}$ and as $S \colon L^1 \to L^{1, \infty}$, where all spaces are considered to be equipped with the standard (quasi)normed topologies. Then $S$ satisfies for every $f \in L^1 + L^{\infty}$ that $(Sf)^* \lesssim f^{**}$ on $[0, \infty)$.

    Consequently, it holds for every r.i.~Banach function space $X$, every $f \in X$, and every $s \geq  0$ that
    \begin{equation} \label{ThmInterpolation:E1}
        s \varphi_X \Big ( \mu( \{ \lvert Sf \rvert > s \} ) \Big ) \lesssim \lVert f \rVert_X,
    \end{equation}
    where $\varphi_X$ is the fundamental function of $X$ and where the hidden constant depends only on $\lVert \cdot \rVert_X$, not on $f$ or $s$.
\end{theorem}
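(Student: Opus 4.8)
The plan is to prove the pointwise estimate $(Sf)^* \lesssim f^{**}$ on $[0,\infty)$ by the classical Calder\'on--Mityagin/Marcinkiewicz truncation argument and then derive \eqref{ThmInterpolation:E1} by a simple rearrangement-invariance computation. First I would fix $t \in (0,\infty)$ and $f \in L^1 + L^\infty$, and split $f = f_t + f^t$, where $f^t$ is the ``tall'' part $f^t = (\lvert f \rvert - f^*(t))_+ \sgn f$ and $f_t = f - f^t$; this is the standard truncation at height $f^*(t)$. One checks the routine facts $\lVert f_t \rVert_{L^\infty} \leq f^*(t)$ and $\lVert f^t \rVert_{L^1} = \int_0^t (f^* - f^*(t)) \: d\lambda \leq \int_0^t f^* \: d\lambda = t f^{**}(t)$. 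By sublinearity, $\lvert Sf \rvert \leq \lvert Sf_t \rvert + \lvert Sf^t \rvert$ pointwise $\mu$-a.e., hence by \eqref{EqAlmostSubAdd*} we get $(Sf)^*(2t) \leq (Sf_t)^*(t) + (Sf^t)^*(t)$.

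For the first term, $S \colon L^\infty \to L^\infty$ bounded gives $(Sf_t)^*(t) \leq \lVert Sf_t \rVert_{L^\infty} \lesssim \lVert f_t \rVert_{L^\infty} \leq f^*(t) \leq f^{**}(t)$. For the second term, $S \colon L^1 \to L^{1,\infty}$ bounded means $\sup_{s>0} s\, (Sf^t)^*(s) = \lVert Sf^t \rVert_{L^{1,\infty}} \lesssim \lVert f^t \rVert_{L^1} \leq t f^{**}(t)$, so in particular $t\, (Sf^t)^*(t) \lesssim t f^{**}(t)$, i.e.\ $(Sf^t)^*(t) \lesssim f^{**}(t)$. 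Combining, $(Sf)^*(2t) \lesssim f^{**}(t)$ for all $t > 0$; since $f^{**}$ is non-increasing, replacing $2t$ by $t$ and using $f^{**}(t/2) \leq 2 f^{**}(t/2)$... more carefully, $(Sf)^*(t) \lesssim f^{**}(t/2) \leq 2 f^{**}(t/2)$, and since we only need the estimate up to a multiplicative constant it suffices to note $f^{**}(t/2) \le 2f^{**}(t)$ fails in general, so instead I would simply record the cleaner conclusion $(Sf)^*(t) \lesssim f^{**}(t/2)$, which by the trivial bound $f^{**}(t/2) = \frac{2}{t}\int_0^{t/2} f^* \le \frac{2}{t}\int_0^{t} f^* = 2 f^{**}(t)$ does give $(Sf)^*(t) \lesssim f^{**}(t)$ after all. (This last step is the only place needing a moment's care, and it is elementary.)

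For the consequence, fix an r.i.\ Banach function space $X$ and $f \in X$. The estimate $(Sf)^* \lesssim f^{**}$ is exactly the Hardy--Littlewood--P\'olya relation $Sf \prec C f$ for a suitable constant $C$ (after noting $f^{**} \le C (Cf)^{**}$), and since the Hardy--Littlewood--P\'olya principle holds for every r.i.\ Banach function norm (see \cite[Chapter~2, Theorem~4.6]{BennettSharpley88}), we get $\lVert Sf \rVert_X \lesssim \lVert f \rVert_X$. To obtain the quantitative weak-type bound \eqref{ThmInterpolation:E1}, set $\sigma = \mu(\{\lvert Sf \rvert > s\}) = \lambda(\{(Sf)^* > s\})$. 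If $\sigma = 0$ the inequality is trivial, so assume $\sigma > 0$; then $(Sf)^*(\tau) \geq s$ for all $\tau < \sigma$, hence $(Sf)^* \geq s \chi_{[0,\sigma)}$ $\lambda$-a.e., and by the lattice and rearrangement-invariance properties of $\lVert \cdot \rVert_X$ together with $\lVert Sf \rVert_X \lesssim \lVert f \rVert_X$ we conclude
\begin{equation*}
    s\, \varphi_X(\sigma) = \lVert s\, \chi_{[0,\sigma)} \rVert_X \leq \lVert (Sf)^* \rVert_X = \lVert Sf \rVert_X \lesssim \lVert f \rVert_X,
\end{equation*}
which is \eqref{ThmInterpolation:E1}. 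The main (and only mild) obstacle is the bookkeeping of the dilation constants when passing from $(Sf)^*(2t) \lesssim f^{**}(t)$ to the clean form $(Sf)^* \lesssim f^{**}$; everything else is a direct application of the truncation method and the Hardy--Littlewood--P\'olya principle already available in the paper.
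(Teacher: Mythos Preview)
Your truncation argument for the pointwise bound $(Sf)^*\lesssim f^{**}$ is correct and is precisely the elementary route the paper alludes to when it cites \cite[Chapter~3, Proof of Theorem~3.8]{BennettSharpley88}; the bookkeeping with the dilation factor $2$ is handled correctly via $f^{**}(t/2)\le 2f^{**}(t)$.

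The derivation of \eqref{ThmInterpolation:E1}, however, has a genuine gap. You write that ``$(Sf)^* \lesssim f^{**}$ is exactly the Hardy--Littlewood--P\'olya relation $Sf \prec C f$'', but this is not so: $Sf\prec Cf$ means $(Sf)^{**}\le C f^{**}$, which is strictly stronger than $(Sf)^{*}\le C f^{**}$ and does \emph{not} follow from it. Indeed, averaging the latter gives only $(Sf)^{**}(t)\le \tfrac{C}{t}\int_0^t f^{**}(s)\,ds$, and the right-hand side is in general not comparable to $f^{**}(t)$ (take $f=\chi_{[0,1]}$, so $f^{**}(t)=\min(1,1/t)$; then $\tfrac{1}{t}\int_0^t f^{**}=(1+\log t)/t$ for $t>1$). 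Consequently your claimed strong-type bound $\lVert Sf\rVert_X\lesssim\lVert f\rVert_X$ is false in general: for $X=L^1$ and $S$ the Hardy--Littlewood maximal operator (which satisfies your hypotheses) one has $Sf\notin L^1$ for every nonzero $f\in L^1$. Since your final chain ends with ``$\lVert Sf\rVert_X\lesssim\lVert f\rVert_X$'', the argument breaks down precisely there.

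The paper avoids this by never attempting strong boundedness on $X$. Instead, it multiplies the pointwise inequality by $\varphi_{\overline{X}}(t)$ and takes the supremum over $t$, obtaining
\[
\lVert (Sf)^*\rVert_{m_{\varphi_{\overline{X}}}}=\sup_t \varphi_{\overline{X}}(t)(Sf)^*(t)\;\lesssim\;\sup_t \varphi_{\overline{X}}(t)f^{**}(t)=\lVert f^*\rVert_{M_{\varphi_{\overline{X}}}}\;\lesssim\;\lVert f^*\rVert_{\overline{X}}=\lVert f\rVert_X,
\]
where the last step is the embedding $\overline{X}\hookrightarrow M_{\varphi_{\overline{X}}}$ (Section~\ref{Sec:FundamentalFunction}). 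The weak-type inequality \eqref{ThmInterpolation:E1} then drops out of the left-hand side. Your chain $s\,\varphi_X(\sigma)\le\lVert(Sf)^*\rVert_{\overline X}$ is salvageable if you replace the target space $\overline X$ by $m_{\varphi_{\overline X}}$ and close the estimate through $M_{\varphi_{\overline X}}$ rather than through the Hardy--Littlewood--P\'olya principle.
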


Note that the left-hand side of \eqref{ThmInterpolation:E1} is well defined since $\mu( \{ \lvert Sf \rvert > s \} )$ is clearly in the range of $\mu$ for every s. Furthermore, the theorem is on purpose formulated in a way that avoids stating that $S$ is bounded from $X$ to $m_{\varphi_X}$. While this is indeed the correct intuition, such a statement would lead to some technical issues in the cases when the underlying measure space is not $([0, \infty), \lambda)$. Given that we will only need the estimate \eqref{ThmInterpolation:E1}, we chose to formulate the statement in a way that avoids these issues.

\begin{proof}
    That the assumed boundedness properties of the operator $S$ guarantee $(Sf)^* \lesssim f^{**}$ on $[0, \infty)$ for every $f \in L^1 + L^{\infty}$ is known, although the task of providing a specific reference has turned out to be more difficult than expected. The result certainly follows immediately from the knowledge of the $K$-functionals for the couples $(L^1, L^{\infty})$ (see e.g.~\cite[Chapter~2, Theorem~6.2]{BennettSharpley88}) and $(L^{1,\infty},L^{\infty})$ (see e.g.~\cite[Commentary after Corollary~3.1]{Bennett73} and the references therein). Alternatively, a more elementary approach using only the $K$-functional of $(L^1, L^{\infty})$ can be used; see e.g.~\cite[Chapter~3, Proof of Theorem~3.8]{BennettSharpley88} (the proof is performed for the Hardy--Littlewood maximal operator, but the estimate we need uses only the boundedness properties we assume and subadditivity).

    Let us now consider $\overline{X}$, the canonical representation space of $X$, and $\varphi_{\overline{X}}$, its fundamental function. Then we immediately obtain for every $f \in \mathcal{M}(\mathcal{R}, \mu)$
    \begin{equation*}
        \lVert (Sf)^* \rVert_{m_{\varphi_{\overline{X}}}} = \sup_{t \in [0, \infty)} \varphi_{\overline{X}}(t) (Sf)^*(t) \leq \sup_{t \in [0, \infty)} \varphi_{\overline{X}}(t) f^{**}(t) = \lVert f^* \rVert_{M_{\varphi_{\overline{X}}}} \lesssim \lVert f^* \rVert_{\overline{X}},
    \end{equation*}
    where the last estimate follows from the embedding $\overline{X} \hookrightarrow M_{\varphi_{\overline{X}}}$ (see e.g.~\cite[Proposition~5.9]{BennettSharpley88}). Now, the right-hand side of the estimate is obviously equal to the right-hand side of \eqref{ThmInterpolation:E1}, while the left-hand side of the estimate is larger than or equal to the left-hand side of \eqref{ThmInterpolation:E1}, as follows from \cite[Lemma~4.14]{MusilovaNekvinda24}, the fact that $Sf$ and $(Sf)^*$ are equimeasurable, and the fact that the functions $\varphi_X$ and $\varphi_{\overline{X}}$ coincide on the range of $\mu$.
\end{proof}

\subsection{A maximal ergodic theorem} The theorem we require is a statement about the boundedness of the following maximal ergodic operator:

\begin{definition} \label{DefMaxErgodicOp}
    Let $\phi \colon \mathcal{R} \to \mathcal{R}$ be non-singular. We then define the maximal ergodic operator $T^{ \#}_{\phi} \colon \mathcal{M}(\mathcal{R}, \mu) \to \mathcal{M}(\mathcal{R}, \mu)$ by the formula
    \begin{align*}
        T^{ \#}_{\phi}(f) &= \sup_{\substack{n \in \mathbb{N} \\ n \geq 1}} \frac{1}{n} \sum_{i = 0}^{n-1} \lvert T_{\phi}^i f \rvert, &f \in \mathcal{M}(\mathcal{R}, \mu),
    \end{align*}
    where both the sum and the supremum are to be interpreted pointwise.
\end{definition}

\begin{remark} \label{RemMaxErgodicOp}
    Let $\phi \colon \mathcal{R} \to \mathcal{R}$ be non-singular. Then $T^{ \#}_{\phi}$ is clearly a sub-linear operator that is well defined on $\mathcal{M}(\mathcal{R}, \mu)$ (we stress that we make no claims about finiteness almost everywhere). Furthermore, it is also clear that it is a contraction on $L^{\infty}$. Finally, it is clear that we have $T^{ \#}_{\phi}(f) = T^{ \#}_{\phi}(\lvert f \rvert)$ for every $f \in \mathcal{M}$, whence any further boundedness properties can be established by examining just the non-negative functions.
\end{remark}

Let us now present the assumptions under which we are able to prove the required boundedness of the maximal ergodic operator.

\begin{definition} \label{Def(I)}
    We say, that $\phi \colon \mathcal{R} \to \mathcal{R}$ satisfies the condition (I), if it is power-measure-bounded and further satisfies at least one of the following conditions:
    \begin{enumerate}[label=(I\arabic*)]
        \item \label{I1} Its measure-bound $A$ satisfies $A \leq 1$.
        \item \label{I3} There is a constant $C > 0$ such that we have for every $i \in \mathbb{N}$ and every measurable $E \subseteq \mathcal{R}$
            \begin{equation*}
                C \mu (E) \leq \mu ( \phi^{-i} ( E )).
            \end{equation*}
    \end{enumerate}
\end{definition}

It is evident that the assumptions \ref{I1} and \ref{I3} go in quite different direction from each other. Furthermore, even the proofs are completely different for the two cases (as evidenced in Proof of Theorem~\ref{ThmMaxErgodicL1} below). Hence, it would be very interesting to know whether there is some common denominator for these conditions.

We are now prepared to prove our version of the Maximal Ergodic Theorem. We start by establishing a weak-type boundedness on $L^1$ and then employ the interpolation principle established in Theorem~\ref{ThmInterpolation}.

\begin{theorem} \label{ThmMaxErgodicL1}
    Let $\phi \colon \mathcal{R} \to \mathcal{R}$ satisfy the condition (I). Then $T^{ \#}_{\phi} \colon L^1 \to L^{1, \infty}$ is correctly defined and bounded.
\end{theorem}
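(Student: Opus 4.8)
The statement splits naturally into the two cases of Definition~\ref{Def(I)}, and since $T^{\#}_\phi(f)=T^{\#}_\phi(|f|)$ (Remark~\ref{RemMaxErgodicOp}) it suffices to work with $f\in L^1$, $f\geq 0$. The goal is to produce, in each case, a constant $B>0$ such that
\begin{align*}
    \mu\big(\{T^{\#}_\phi f>s\}\big)&\leq \frac{B}{s}\,\lVert f\rVert_{L^1} &\text{for all }s>0,
\end{align*}
which simultaneously shows that $T^{\#}_\phi f$ is finite $\mu$-a.e.\ (hence correctly defined as an element of $\mathcal M_0$, and thus of $L^{1,\infty}$) and that $T^{\#}_\phi\colon L^1\to L^{1,\infty}$ is bounded. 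The main obstacle will be Case~\ref{I1}: here $\phi$ itself need not be measure-preserving, only $A\leq 1$, so the classical Hopf--Dunford--Schwartz maximal ergodic theorem is not directly available and one must either transfer to an $L^1$-contraction and invoke the known maximal inequality for positive $L^1$-contractions, or reprove the maximal estimate by hand via the ``filling scheme'' / sunrise argument adapted to the operator $T_\phi$.

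\textbf{Case \ref{I1} ($A\leq 1$).} First I would observe that $A\leq 1$ means $\mu(\phi^{-1}(E))\leq\mu(E)$ for all measurable $E$, i.e.\ the Radon--Nikodym derivative $h_\nu$ of $\nu=\mu\circ\phi^{-1}$ with respect to $\mu$ satisfies $0\leq h_\nu\leq 1$ $\mu$-a.e. Consequently, for $f\geq 0$,
\begin{align*}
    \int_{\mathcal R}T_\phi f\,d\mu&=\int_{\mathcal R}f\,h_\nu\,d\mu\leq\int_{\mathcal R}f\,d\mu,
\end{align*}
so $T_\phi$ is a positive contraction on $L^1$ (it is also a contraction on $L^\infty$ by Proposition~\ref{PropCharLuzin}). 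The classical Hopf maximal ergodic inequality for positive $L^1$-contractions (equivalently, the Dunford--Schwartz theorem, see \cite{DunfordSchwartz56} or \cite[Ch.~VIII]{DunfordSchwartz58}) then gives precisely
\begin{align*}
    s\,\mu\big(\{T^{\#}_\phi f>s\}\big)&\leq\lVert f\rVert_{L^1} &\text{for all }s>0,\ f\in L^1,
\end{align*}
which is the required weak-type $(1,1)$ bound with constant $B=1$. (If one prefers to keep the paper self-contained, this inequality can be reproved directly: fix $s>0$, let $E=\{T^{\#}_\phi f>s\}$, and run the standard maximal-function decomposition on $E$ using the positivity of $T_\phi$ and $\int T_\phi g\,d\mu\leq\int g\,d\mu$ to conclude $s\,\mu(E)\leq\int_E f\,d\mu\leq\lVert f\rVert_{L^1}$.)

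\textbf{Case \ref{I3} (two-sided bound).} Here $\phi$ is power-measure-bounded with bound $A$, and additionally $C\mu(E)\leq\mu(\phi^{-i}(E))$ for all $i$ and all measurable $E$. The plan is to bound $T^{\#}_\phi$ by the identity operator in a weak-type sense. Using power-measure-boundedness, for each $i\geq 1$ the operator $T^i_\phi=T_{\phi^i}$ satisfies $\mu(\{|T^i_\phi f|>s\})=\mu(\phi^{-i}(\{|f|>s\}))\leq A\,\mu(\{|f|>s\})$. For the lower bound on preimages, note $\mu(\phi^{-i}(\{|f|>s\}))\geq C\,\mu(\{|f|>s\})$, so in fact the iterates are uniformly comparable to $f$ in measure. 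Now I would estimate, for $f\geq 0$,
\begin{align*}
    \mu\Big(\Big\{\tfrac1n\sum_{i=0}^{n-1}T^i_\phi f>s\Big\}\Big)&\leq\sum_{i=0}^{n-1}\mu\Big(\Big\{T^i_\phi f>\tfrac{s}{n}\Big\}\Big)
\end{align*}
is too lossy; instead the right approach is to pass through $L^1$ directly. Since $\int_{\mathcal R}T^i_\phi f\,d\mu=\int_{\mathcal R}f\,h_{\nu_i}\,d\mu$ where $h_{\nu_i}$ is the Radon--Nikodym derivative of $\mu\circ\phi^{-i}$, and power-measure-boundedness gives $\lVert h_{\nu_i}\rVert_{L^\infty}\leq A$, we get $\int T^i_\phi f\,d\mu\leq A\lVert f\rVert_{L^1}$ for every $i$, hence the Cesàro averages are uniformly bounded in $L^1$ by $A\lVert f\rVert_{L^1}$. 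To upgrade this to a maximal inequality one invokes a Hopf-type argument: the key point the lower bound \ref{I3} buys us is that the measure $\mu$ is, up to the constant $C$, invariant in the relevant direction, so $T_\phi$ (viewed on $L^1$ after the obvious normalization) is within a bounded factor of a positive contraction, and the maximal inequality for such operators applies with a constant depending only on $A$ and $C$. Writing out this reduction carefully — replacing $\mu$ by an equivalent measure $\tilde\mu$ comparable to $\mu$ for which $T_\phi$ becomes a genuine $L^1(\tilde\mu)$-contraction, if such a measure can be built from \ref{I3}, or else running the filling-scheme argument directly with the two-sided comparison — is the technical heart of this case. In either case the conclusion is the weak-type bound $s\,\mu(\{T^{\#}_\phi f>s\})\lesssim_{A,C}\lVert f\rVert_{L^1}$, which establishes that $T^{\#}_\phi\colon L^1\to L^{1,\infty}$ is correctly defined and bounded.
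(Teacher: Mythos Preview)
Your treatment of Case~\ref{I1} is correct and matches the paper: $A\leq 1$ makes $T_\phi$ a positive contraction on both $L^1$ and $L^\infty$, and one invokes the Dunford--Schwartz maximal inequality.

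Case~\ref{I3}, however, has a genuine gap. You correctly recognise that the naive union bound is too lossy, but the alternatives you sketch are not carried through and it is not clear they can be made to work. The ``equivalent measure'' idea in particular is problematic: the two-sided bound $C\mu(E)\leq\mu(\phi^{-i}(E))\leq A\mu(E)$ for all $i$ does not in any obvious way produce a single measure $\tilde\mu\approx\mu$ for which $T_\phi$ is an honest $L^1(\tilde\mu)$-contraction (the Radon--Nikodym derivatives $h_{\nu_i}$ may vary with $i$, and there is no evident limit or averaging procedure that yields a $\phi$-invariant density). The ``filling scheme with two-sided comparison'' is likewise just a label, not an argument.

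The key idea you are missing is the Calder\'on transference principle. Fix $K$ and a large $J\geq 2K$; for each $x\in\mathcal R$ form the finite sequence $a_x(i)=T_\phi^i f(x)$ for $0\leq i\leq J$ (zero otherwise). The truncated maximal function $T^{\#}_{\phi,K}f$ evaluated at $\phi^j(x)$ equals the discrete maximal operator $M_K a_x$ at $j$ for $0\leq j\leq J-K$. The known weak-type $(1,1)$ bound for the discrete maximal operator on $\ell^1(\mathbb Z)$ gives a pointwise inequality in $x$; integrating over $\mathcal R$, the right-hand side is controlled by power-measure-boundedness, while the left-hand side is bounded below using \ref{I3} --- this is precisely where the lower bound enters, converting $\sum_j\mu(\phi^{-j}(\{T^{\#}_{\phi,K}f>\alpha\}))$ into $(J-K+1)C\mu(\{T^{\#}_{\phi,K}f>\alpha\})$. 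Sending $J\to\infty$ and then $K\to\infty$ finishes the proof.
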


\begin{proof}
    When $\phi$ satisfies \ref{I1}, then $T_{\phi}$ is a contraction on $L^1$, while it is also trivially a contraction on $L^{\infty}$. Whence, we may apply the classical result of Dunford and Schwartz (see e.g.~\cite[Section~3, Lemma~7]{DunfordSchwartz56}) on $\lvert f \rvert$ (so that our definitions of the maximal ergodic operator coincide) and obtain the desired boundedness.

    When $\phi$ satisfies \ref{I3}, we obtain the desired boundedness via the Calderón transference principle, in a way analogous to that of \cite[Section~2.6.3]{EinsiedlerWard11}:

    Fix arbitrary $\alpha > 0$. Denote by $M$ the version of a discrete maximal operator that is given by the formula (for an arbitrary sequence $(a(i))_{i \in \mathbb{Z}}$)
    \begin{align*}
        Ma(j) &= \sup_{\substack{n \in \mathbb{N} \\ n \geq 1}} \frac{1}{n} \sum_{i = 0}^{n-1} \lvert a(j+i) \rvert &\text{for } j \in \mathbb{Z}.
    \end{align*}
    It is well known that $M \colon \ell^1 \to \ell^{1, \infty}$ is a bounded sub-linear operator, see e.g.~\cite[Lemma~2.29]{EinsiedlerWard11} (applied on $\lvert f \rvert$). We will further need the $K$-approximation of $M$, for $K \in \N$:
    \begin{align*}
        M_K a(j) &= \sup_{\substack{n \in \mathbb{N} \\ 1 \leq n \leq K}} \frac{1}{n} \sum_{i = 0}^{n-1} \lvert a(j+i) \rvert &\text{for } j \in \mathbb{Z},
    \end{align*}
    as well as the corresponding approximation $T^{ \#}_{\phi, K}$ of $T^{ \#}_{\phi}$: 
    \begin{align*}
        T^{ \#}_{\phi, K}(f) &= \sup_{\substack{n \in \mathbb{N} \\ 1 \leq n \leq K}} \frac{1}{n} \sum_{i = 0}^{n-1} \lvert T_{\phi}^i f \rvert.
    \end{align*}
    We now fix $K$ and also $J \in \N$, $J \geq 2K$. For fixed $x \in \mathcal{R}$, we define the sequence $(a_x(i))_{i \in \mathbb{Z}}$ by
    \begin{align*}
        a_x(i) &= \begin{cases}
            T_{\phi}^i f(x), &\text{for } 0 \leq i \leq J, \\
            0 &\text{otherwise}.
        \end{cases}
    \end{align*}
    Clearly, if $0 \leq j \leq J - K$, then 
    \begin{equation} \label{ThmMaxErgodicL1:E1}
        M_K a_x(j) = T^{ \#}_{\phi, K}(f)(\phi^j(x)).
    \end{equation}
    Furthermore, since $ M_K \leq M \colon \ell^1 \to \ell^{1, \infty}$, we get
    \begin{equation*}
        \alpha m \left( \left \{ j \in \mathbb{Z}; \; 0 \leq j \leq J-K, \, M_K a_x(j) > \alpha \right \} \right) \leq \alpha m \left( \left \{ j \in \mathbb{Z}; \; M a_x(j) > \alpha \right \} \right) \lesssim \lVert a_x \rVert_{\ell^1},
    \end{equation*}
    where we recall that $m$ is the counting measure on $\mathbb{Z}$. Using \eqref{ThmMaxErgodicL1:E1}, this may be reformulated as
    \begin{equation} \label{ThmMaxErgodicL1:E2}
        \alpha \sum_{j = 0}^{J-K} \chi_{ \phi^{-j}( \{ T^{ \#}_{\phi, K}(f) > \alpha\} ) } (x) \lesssim \sum_{j = 0}^{J} T_{\phi}^j \lvert f(x) \rvert.
    \end{equation}
    Since $\phi$ is power-measure-bounded (and thus power-bounded on $L^1$, see Theorem~\ref{ThmPowerBoundedQBFS}) and assumed to satisfy \ref{I3}, we obtain after integrating \eqref{ThmMaxErgodicL1:E2} over $x \in \mathcal{R}$ that
    \begin{equation*}
        (J-K+1) \alpha \mu \left( \left \{ T^{ \#}_{\phi, K}(f) > \alpha \right \} \right) \lesssim (J+1) \lVert f \rVert_{L^1}.
    \end{equation*}
    We note here that the hidden constant depends only on the bound of $M \colon \ell^1 \to \ell^{1, \infty}$, the power-measure-bound of $\phi$ and the constant $C$ from \ref{I3}. Since $J \geq 2K$ was arbitrary, it follows that
    \begin{equation*}
        \alpha \mu \left( \left \{ T^{ \#}_{\phi, K}(f) > \alpha \right \} \right) \lesssim \lVert f \rVert_{L^1},
    \end{equation*}
    for every $K$, where the hidden constant does not depend on $K$. Letting $K \to \infty$ thus concludes the proof.
\end{proof}

\begin{remark}
    It is worth noting, that the condition \ref{I3} is closely related, but not quite the same as the assumptions of the recent result \cite[Theorem~3.1]{Martin-ReyesDelaRosa22}. There, the authors assume that the operator in question is a positive and invertible Lamperti operator whose inverse is positive and such that both the operator and its inverse are Ces\`aro bounded on $L^1$ and $L^{\infty}$. As it turns out, if the symbol $\phi$ is power-measure-bounded and satisfies \ref{I3}, then the only thing that stops $T_{\phi}$ from qualifying is the possible lack of surjectivity. Indeed, composition operators are a subclass of the Lamperti ones; $T_{\phi}$ is positive, power-bounded on $L^1$ and $L^{\infty}$ (by Theorem~\ref{ThmPowerBoundedQBFS}), and injective (by Proposition~\ref{PropCharInjectivity}); and the inverse is positive (by Proposition~\ref{PropCharInjectivity}) and continuous on its domain (by Theorem~\ref{ThmInjectiveAndClosedRangeQBFS}). Finally, if the operator were surjective, and thus invertible, then one could obtain power-boundedness of the inverse by modifying the proof of Theorem~\ref{ThmPowerBoundedQBFS} (in a similar way in which the proof of Theorem~\ref{ThmBoundedQBFS} is modified to obtain Theorem~\ref{ThmInjectiveAndClosedRangeQBFS}).
\end{remark}

By combining Theorem~\ref{ThmInterpolation} with Theorem~\ref{ThmMaxErgodicL1} (and Remark~\ref{RemMaxErgodicOp} for the bound $T^{ \#}_{\phi} \colon L^{\infty} \to L^{\infty}$) we obtain the general weak-type boundedness on every r.i.~Banach function space that we require for our pointwise ergodic result.

\begin{corollary} \label{CorMaxErgodicX}
    Let $\phi \colon \mathcal{R} \to \mathcal{R}$ satisfy condition (I) and let $X$ be an r.i.~Banach function space. Then $T^{ \#}_{\phi}$ satisfies for every $f \in X$ and every $s \in [0, \infty)$
    \begin{equation} \label{CorMaxErgodicX:E1}
        s \varphi_X \Bigg ( \mu \left( \left \{ T^{ \#}_{\phi} f > s \right \} \right) \Bigg ) \lesssim \lVert f \rVert_X,
    \end{equation}
    where $\varphi_X$ is the fundamental function of $X$ and where the hidden constant depends only on $\lVert \cdot \rVert_X$, not on $f$ or $s$.
\end{corollary}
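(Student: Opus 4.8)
The plan is to obtain Corollary~\ref{CorMaxErgodicX} as an immediate consequence of the interpolation principle in Theorem~\ref{ThmInterpolation} applied to the maximal ergodic operator $S = T^{\#}_{\phi}$. First I would verify that the two endpoint hypotheses of Theorem~\ref{ThmInterpolation} are in place. Since condition (I) includes power-measure-boundedness of $\phi$, the map $\phi$ is in particular non-singular, so by Remark~\ref{RemMaxErgodicOp} the operator $T^{\#}_{\phi}$ is a well-defined sublinear operator on $\mathcal{M}(\mathcal{R}, \mu)$ and is a contraction on $L^{\infty}$; this yields boundedness $T^{\#}_{\phi} \colon L^{\infty} \to L^{\infty}$. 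The remaining hypothesis, boundedness $T^{\#}_{\phi} \colon L^1 \to L^{1,\infty}$, is exactly the content of Theorem~\ref{ThmMaxErgodicL1}, again under condition (I).

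With both endpoint estimates secured, Theorem~\ref{ThmInterpolation} applies verbatim to $S = T^{\#}_{\phi}$ and delivers, for every r.i.~Banach function space $X$, every $f \in X$, and every $s \geq 0$,
\[
s\,\varphi_X\!\left( \mu\left( \left\{ \lvert T^{\#}_{\phi} f \rvert > s \right\} \right) \right) \lesssim \lVert f \rVert_X,
\]
with the implicit constant depending only on $\lVert \cdot \rVert_X$. To match the form stated in \eqref{CorMaxErgodicX:E1}, I would only note that $T^{\#}_{\phi} f$ is, by Definition~\ref{DefMaxErgodicOp}, a pointwise supremum of non-negative functions, hence non-negative, so $\{ \lvert T^{\#}_{\phi} f \rvert > s \} = \{ T^{\#}_{\phi} f > s \}$, and the asserted estimate follows.

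There is no genuine obstacle here: the corollary is a true corollary, with all the substance residing in Theorems~\ref{ThmInterpolation} and \ref{ThmMaxErgodicL1}. The only point meriting a word of care is that the first assertion of Theorem~\ref{ThmInterpolation} is phrased for $f \in L^1 + L^\infty$, whereas here $f$ ranges over an arbitrary r.i.~Banach function space $X$; but the ``consequently'' clause of that theorem already covers this case (implicitly via $X \hookrightarrow L^1 + L^\infty$), so nothing further is required.
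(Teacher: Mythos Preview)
Your proposal is correct and follows exactly the paper's approach: the corollary is obtained by combining Theorem~\ref{ThmInterpolation} with Theorem~\ref{ThmMaxErgodicL1}, using Remark~\ref{RemMaxErgodicOp} for the $L^\infty$ endpoint. Your additional remarks on non-negativity of $T^{\#}_{\phi} f$ and the $X \hookrightarrow L^1 + L^\infty$ point are accurate and make explicit what the paper leaves implicit.
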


We note that we have left out the modulus around $T^{ \#}_{\phi} f$ because it is always non-negative.

\subsection{The pointwise ergodic theorem}

We may now finally prove the main result of this section, i.e.~our version of the Pointwise Ergodic Theorem in its full strength. The proof follows the standard approach, see e.g.~\cite[Section~2.6]{EinsiedlerWard11}, where we use Corollary~\ref{CorMaxErgodicX} to close the gap between norm and pointwise convergence. Of course, the fact that we work with a general r.i.~Banach function space $X$ instead of $L^1$ adds some technical difficulties that have to be taken care of.

\begin{theorem} \label{ThmErgodicPointwise}
	Let $X$ be an r.i.~Banach function space and let $X'$ be the corresponding associate space. Assume that both $X$ and $X'$ have the \ACR property. Let $\phi \colon \mathcal{R} \to \mathcal{R}$ satisfy the condition (I). Consider the composition operator $T_{\phi}$ and the operator $T$ defined as in \eqref{ThmErgodic(w')*:E0}. Then we have for every $f \in X_a$ that 
	\begin{align}
		\frac{1}{n}\sum_{i = 0}^{n-1} T_{\phi}^i f &\to Tf & \mu\text{-a.e.~as } n \to \infty. \label{ThmErgodicPointwise:E0}
	\end{align}
\end{theorem}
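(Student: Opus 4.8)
The plan is to run the classical ``maximal inequality plus dense class'' argument, with Corollary~\ref{CorMaxErgodicX} in the role of the maximal ergodic theorem and the structural information from Theorems~\ref{ThmErgodic(w')*} and~\ref{ThmErgodicNormLocalised} supplying a dense set on which almost everywhere convergence is transparent. First I would dispose of the degenerate case: if $X_a=\{0\}$ the statement is vacuous, so by Proposition~\ref{PropXaAlternative} we may assume that the simple functions belong to, and are $\lVert\cdot\rVert_X$-dense in, $X_a$. Set
\begin{equation*}
	\mathcal{G}=\Big\{f\in X_a;\ \frac1n\sum_{i=0}^{n-1}T_\phi^i f\to Tf\ \ \mu\text{-a.e.}\Big\}.
\end{equation*}
The aim is to prove $\mathcal{G}=X_a$, which I would do by showing that $\mathcal{G}$ is a $\lVert\cdot\rVert_X$-closed linear subspace of $X_a$ and that it contains a $\lVert\cdot\rVert_X$-dense subset of $X_a$.

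Closedness is the technical core (a Banach-principle argument). Let $f$ lie in the $\lVert\cdot\rVert_X$-closure of $\mathcal{G}$ and pick $g\in\mathcal{G}$ with $\lVert f-g\rVert_X<\varepsilon$. Splitting $\frac1n\sum_{i=0}^{n-1}T_\phi^i f-Tf$ as $\big(\frac1n\sum_{i=0}^{n-1}T_\phi^i g-Tg\big)+\big(\frac1n\sum_{i=0}^{n-1}T_\phi^i(f-g)-T(f-g)\big)$ and using that the first bracket vanishes $\mu$-a.e.\ (as $g\in\mathcal{G}$), one obtains the $\mu$-a.e.\ pointwise bound
\begin{equation*}
	\Lambda(f):=\limsup_{n\to\infty}\Big|\frac1n\sum_{i=0}^{n-1}T_\phi^i f-Tf\Big|\ \le\ T^{\#}_\phi(f-g)+\lvert T(f-g)\rvert.
\end{equation*}
Hence for every $s>0$ we have $\mu(\{\Lambda(f)>s\})\le\mu(\{T^{\#}_\phi(f-g)>s/2\})+\mu(\{\lvert T(f-g)\rvert>s/2\})$; Corollary~\ref{CorMaxErgodicX} bounds $\varphi_X$ of the first term by $\lesssim\varepsilon/s$, while the elementary estimate $\lVert h\rVert_X\ge (s/2)\varphi_X(\mu(\{\lvert h\rvert>s/2\}))$ together with the $\lVert\cdot\rVert_X$-boundedness of $T$ (Theorem~\ref{ThmErgodic(w')*}) bounds $\varphi_X$ of the second term by $\lesssim\varepsilon/s$ as well. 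Since $\varphi_X$ is non-decreasing and strictly positive on $(0,\infty)$, letting $\varepsilon\to0$ forces $\mu(\{\Lambda(f)>s\})=0$; intersecting over $s=1/k$ gives $\Lambda(f)=0$ $\mu$-a.e., i.e.\ $f\in\mathcal{G}$. As $\mathcal{G}$ is obviously a linear subspace, it is $\lVert\cdot\rVert_X$-closed in $X_a$.

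For the dense subset, I would verify that $\mathcal{G}$ contains $\operatorname{Ker}(I-T_\phi)\cap X_a$ (trivially: there the Ces\`aro means equal $f=Tf$) and every function $(I-T_\phi)g$ with $g$ simple. Indeed $\frac1n\sum_{i=0}^{n-1}T_\phi^i(I-T_\phi)g=\frac1n(g-T_\phi^n g)\to0$ uniformly, hence $\mu$-a.e., because $T_\phi$ is a contraction on $L^\infty$ (Proposition~\ref{PropCharLuzin}) and $g\in L^\infty$; moreover $(I-T_\phi)g\in X_a$ since $\phi$ is measure-bounded, so $T_\phi$ preserves $X_a$ (Corollary~\ref{CorPresXa}), and $T\big((I-T_\phi)g\big)=0$ because $\frac1n(g-T_\phi^n g)\to0$ also in $\newtopology$ (Proposition~\ref{PropTsigma(w')*Bounded}) and $\newtopology$ is Hausdorff. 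It then remains to see that $(\operatorname{Ker}(I-T_\phi)\cap X_a)+\{(I-T_\phi)g;\ g\text{ simple}\}$ is $\lVert\cdot\rVert_X$-dense in $X_a$. Given $f\in X_a$, write $f=Tf+(I-T)f$; by Corollary~\ref{CorTPresXa}, $Tf\in\operatorname{Ker}(I-T_\phi)\cap X_a$, and in particular $(I-T)f\in X_a$. The proof of Theorem~\ref{ThmErgodicNormLocalised} shows $(I-T)f\in\overline{\operatorname{Im}(I-T_\phi)\cap X_a}^{\lVert\cdot\rVert_X}$ and that, for subsets of $X_a$, the $\newtopology$- and $\lVert\cdot\rVert_X$-closures coincide. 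Any $h=(I-T_\phi)w\in\operatorname{Im}(I-T_\phi)\cap X_a$ is the $\newtopology$-limit of $(I-T_\phi)s_k$ with $s_k$ simple (approximate $w$ in $\newtopology$ by simple functions via Corollary~\ref{CorSimple(w')^*dense} and use $\newtopology$-continuity of $T_\phi$); since all these functions and $h$ lie in $X_a$, it follows that $h$, and hence $(I-T)f$, lies in the $\lVert\cdot\rVert_X$-closure of $\{(I-T_\phi)g;\ g\text{ simple}\}$. This yields the density, and together with the closedness of $\mathcal{G}$ we conclude $\mathcal{G}=X_a$.

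I expect the main obstacle to be this last step: transferring the abstract decomposition $X=\operatorname{Ker}(I-T_\phi)\oplus\overline{\operatorname{Im}(I-T_\phi)}^{\newtopology}$ into a concrete, pointwise-visible dense class, while keeping the whole argument inside $X_a$ (where the dualities with respect to $\newtopology$ and $\lVert\cdot\rVert_X$ agree) — this is precisely where the fine structure established in the proof of Theorem~\ref{ThmErgodicNormLocalised} is indispensable. A lesser point of care is the behaviour of $\varphi_X$ near $0$ in the Banach-principle step, which is why I reduce at the outset to the case where the simple functions belong to $X_a$.
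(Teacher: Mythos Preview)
Your argument is correct and follows the classical Banach-principle scheme, but it is organised differently from the paper's proof. The paper also reduces to the case where simple functions lie in $X_a$ and also uses Corollary~\ref{CorMaxErgodicX} to pass from a dense class to all of $X_a$; however, its dense class is simply the set of all simple functions, and pointwise convergence for a simple $f$ is obtained by a double-averaging trick: one approximates $Tf$ in norm by a fixed Ces\`aro mean $\frac{1}{M}\sum_{i=0}^{M-1}T_\phi^i f$ (via Theorem~\ref{ThmErgodicNormLocalised}), applies the maximal inequality to the difference, and controls the remaining terms using $\lVert f\rVert_{L^\infty}<\infty$. Your route instead takes as dense class $\bigl(\operatorname{Ker}(I-T_\phi)\cap X_a\bigr)+\{(I-T_\phi)g:\ g\text{ simple}\}$, on which pointwise convergence is essentially free (telescoping plus the $L^\infty$ contraction), and then invokes the finer structural facts extracted from the \emph{proof} of Theorem~\ref{ThmErgodicNormLocalised} (Lemma~\ref{LemmaCapXaDense} and the coincidence of $\newtopology$- and $\lVert\cdot\rVert_X$-closures on $X_a$) to establish density. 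Your approach has a cleaner ``convergence on the dense class'' step; the paper's approach is more self-contained in that it only cites the \emph{statement} of Theorem~\ref{ThmErgodicNormLocalised}. One minor point of care in your closedness step: to pass from $\varphi_X(\mu_1),\varphi_X(\mu_2)\to 0$ to $\mu(\{\Lambda(f)>s\})=0$ you implicitly use that $\varphi_X$ is strictly positive on $(0,\infty)$ and that the relevant measures are finite---both hold here (the \ACR hypothesis forces $\varphi_X(\infty)=\infty$ when $\mu(\mathcal{R})=\infty$), but the paper handles the analogous step by passing to the fundamental function $\varphi_{\overline{X}}$ of the representation space, which is defined and subadditive on all of $[0,\infty)$.
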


\begin{proof}
    Recalling Proposition~\ref{PropXaAlternative}, we see that we may assume without loss of generality that simple functions are contained and $\lVert \cdot \rVert_X$-dense in $X_a$. We thus begin by showing the pointwise convergence for the simple functions and then extend to general functions in $X_a$.

    Let $f \in X_a$ be a simple function and fix arbitrary $\varepsilon, \varepsilon_0 > 0$. Then $\lVert f \rVert_{L^{\infty}} = f^*(0) < \infty$ and Theorem~\ref{ThmErgodicNormLocalised} guarantees that there is an $M \in \mathbb{N}$ such that
    \begin{equation*}
        \left \lVert Tf - \frac{1}{M}\sum_{i = 0}^{M-1} T_{\phi}^i f \right \rVert_X < \varepsilon \varepsilon_0.
    \end{equation*}
    Consequently, Corollary~\ref{CorMaxErgodicX} yields
    \begin{equation} \label{ThmErgodicPointwise:E1}
        \varepsilon \varphi_X \left( \mu \left( \left \{ T^{ \#}_{\phi} \left( Tf - \frac{1}{M}\sum_{i = 0}^{M-1} T_{\phi}^i f \right) > \varepsilon  \right \} \right)  \right) \lesssim \varepsilon \varepsilon_0, 
    \end{equation}
    where $\varphi_X$ is the fundamental function of $X$ and the hidden constant depends only on $X$.

    Now, $Tf$ is invariant with respect to $\phi$ (i.e.~$T_{\phi}(Tf) = Tf$), so we have for every $n \in \mathbb{N}$, $n\geq 1$
    \begin{equation} \label{ThmErgodicPointwise:E2}
        \frac{1}{n}\sum_{j = 0}^{n-1} T_{\phi}^j (Tf) = Tf.
    \end{equation}
    On the other hand, we get for $n \geq 2M$ that
    \begin{equation} \label{ThmErgodicPointwise:E3}
        \frac{1}{n}\sum_{j = 0}^{n-1} T_{\phi}^j \left( \frac{1}{M}\sum_{i = 0}^{M-1} T_{\phi}^i f \right) = \frac{1}{n}\sum_{j = 0}^{n-1} T_{\phi}^j f + \frac{1}{nM}\sum_{i = 1}^{M-1} \sum_{j = 0}^{i-1} \left( T_{\phi}^{n+j}f -  T_{\phi}^j f \right).
    \end{equation}
    We further observe that our assumption $\lVert f \rVert_{L^{\infty}} < \infty$ implies
    \begin{align} \label{ThmErgodicPointwise:E4}
         \left \lvert \frac{1}{nM}\sum_{i = 1}^{M-1} \sum_{j = 0}^{i-1} \left( T_{\phi}^{n+j}f -  T_{\phi}^j f \right) \right \rvert < \frac{1}{n} \frac{2M^2}{M} \lVert f \rVert_{L^{\infty}} &\to 0 &\text{as } n \to \infty.
    \end{align}
    Finally, we get the following estimate, where the first equality follows by putting \eqref{ThmErgodicPointwise:E2}, \eqref{ThmErgodicPointwise:E3}, \eqref{ThmErgodicPointwise:E4} together:
    \begin{equation*}
    \begin{split}
        \limsup_{n \to \infty} \left\lvert Tf - \frac{1}{n}\sum_{j = 0}^{n-1} T_{\phi}^j f \right \rvert &= \limsup_{n \to \infty} \left\lvert \frac{1}{n}\sum_{j = 0}^{n-1} T_{\phi}^j \left( Tf - \frac{1}{M}\sum_{i = 0}^{M-1} T_{\phi}^i f \right) \right \rvert \\
        &\leq T^{ \#}_{\phi} \left( Tf - \frac{1}{M}\sum_{i = 0}^{M-1} T_{\phi}^i f \right).
    \end{split}
    \end{equation*}
    When we combine this estimate with \eqref{ThmErgodicPointwise:E1}, we obtain (we recall that $\varphi_X$ is non-decreasing)
    \begin{equation*}
        \varphi_X \left( \mu \left( \left \{ \limsup_{n \to \infty} \left\lvert Tf - \frac{1}{n}\sum_{j = 0}^{n-1} T_{\phi}^j f \right \rvert > \varepsilon  \right \} \right)  \right) \lesssim \varepsilon_0.
    \end{equation*}
    Since $\varepsilon_0$ was arbitrary and $\varphi_X$ is zero only at zero, we conclude that
    \begin{align*}
        \limsup_{n \to \infty} \left\lvert Tf - \frac{1}{n}\sum_{j = 0}^{n-1} T_{\phi}^j f \right \rvert &\leq \varepsilon &\mu\text{-a.e.}
    \end{align*}
    As $\varepsilon$ was also arbitrary, this means that \eqref{ThmErgodicPointwise:E0} holds for $f$.

    Let now $f \in X_a$ be arbitrary. Assuming that $f$ is not the zero function (otherwise both this step and the validity of \eqref{ThmErgodicPointwise:E0} are trivial), we may use Proposition~\ref{PropXaAlternative} to find a sequence $f_k$ of simple functions such that $f_k \to f$ in $(X, \lVert \cdot \rVert_X)$. We again fix arbitrary $\varepsilon, \varepsilon_0 > 0$ and find some $k_0$ such that
    \begin{equation} \label{ThmErgodicPointwise:E5}
        \lVert f - f_{k_0} \rVert_X < \varepsilon \varepsilon_0.
    \end{equation}
    Since Theorem~\ref{ThmErgodic(w')*} asserts that $T$ is continuous on $(X, \lVert \cdot \rVert_X)$, \eqref{ThmErgodicPointwise:E5} implies that we also have
    \begin{equation} \label{ThmErgodicPointwise:E6}
        \lVert Tf - Tf_{k_0} \rVert_X \lesssim \varepsilon \varepsilon_0,
    \end{equation}
    where, of course, the hidden constant does not depend on the values of $\varepsilon, \varepsilon_0, k_0$. Let us now establish the following notation:
    \begin{align*}
        E_0 &= \left \{ \limsup_{n \to \infty} \left \lvert Tf - \frac{1}{n}\sum_{j = 0}^{n-1} T_{\phi}^j f \right \rvert > 3\varepsilon  \right \}, \\
        E_1 &= \left \{ \lvert Tf - Tf_{k_0} \rvert > \varepsilon  \right \}, \\
        E_2 &= \left \{ \limsup_{n \to \infty} \left \lvert Tf_{k_0} - \frac{1}{n}\sum_{j = 0}^{n-1} T_{\phi}^j f_{k_0} \right \rvert > \varepsilon  \right \}, \\
        E_3 &= \left \{ \limsup_{n \to \infty} \left \lvert \frac{1}{n}\sum_{j = 0}^{n-1} T_{\phi}^j f_{k_0} - \frac{1}{n}\sum_{j = 0}^{n-1} T_{\phi}^j f \right \rvert > \varepsilon  \right \}.
    \end{align*}
    Then the triangle inequality, subadditivity of limes superior, and a standard combinatorial argument yield that $E_0 \subseteq E_1 \cup E_2 \cup E_3$ and thus
    \begin{equation} \label{ThmErgodicPointwise:E6.1}
        \mu(E_0) \leq \mu(E_1) + \mu(E_2) + \mu(E_3).
    \end{equation}

    Now we must deal with some technicalities concerning the domain of $\varphi_X$; we do so in the following way: Consider $\varphi_{\overline{X}}$, the fundamental function of $\overline{X}$, the canonical representation space of $X$. As $([0, \infty), \lambda)$ is non-atomic and of infinite measure, the domain of $\varphi_{\overline{X}}$ is the entire interval $[0, \infty)$ and the function is therefore subadditive on this interval (this follows immediately from the definition by considering disjoint sets of given measures). Furthermore, it is non-decreasing and coincides with $\varphi_X$ on the range of $\mu$ (i.e.~the domain of $\varphi_X$). Thence, we may compute (using \eqref{ThmErgodicPointwise:E6.1})
    \begin{equation} \label{ThmErgodicPointwise:E7}
    \begin{split}
        \varphi_X \left( \mu \left( \left \{ \limsup_{n \to \infty} \left\lvert Tf - \frac{1}{n}\sum_{j = 0}^{n-1} T_{\phi}^j f \right \rvert > 3\varepsilon  \right \} \right)  \right) &\leq \varphi_{\overline{X}} ( \mu(E_1) + \mu(E_2) + \mu(E_3) ) \\
        &\leq \varphi_X ( \mu(E_1) ) + \varphi_X ( \mu(E_2) ) + \varphi_X ( \mu(E_3) ).
    \end{split}
    \end{equation}
    (The reason for using $\varphi_{\overline{X}}$ in the intermediate step is that the sum of the measures may lie outside the domain of $\varphi_X$.)

    We now have to estimate the three terms on the right-hand side of \eqref{ThmErgodicPointwise:E7}. $\varphi_X ( \mu(E_2) )$ is the simplest, as $f_{k_0}$ is a simple function, whence
    \begin{equation}
        \varphi_X ( \mu(E_2) ) = \varphi_X ( 0 ) = 0,
    \end{equation}
    as proven in the first step. 
    
    As for $\varphi_X ( \mu(E_1) )$, it follows from the fact that $(Tf - Tf_{k_0})^*$ is equimeasurable with $Tf - Tf_{k_0}$, the characterisation of $\lVert \cdot \rVert_{m_{\varphi_{\overline{X}}}}$ obtained in \cite[Lemma~4.14]{MusilovaNekvinda24}, the embedding $\overline{X} \hookrightarrow m_{\varphi_{\overline{X}}}$, and \eqref{ThmErgodicPointwise:E6} that
    \begin{equation*}
    \begin{split}
        \varepsilon \varphi_X \left( \mu \left( \left \{ \lvert Tf - Tf_{k_0} \rvert > \varepsilon  \right \} \right) \right) &= \varepsilon \varphi_{\overline{X}} \left( \lambda \left( \left \{ (Tf - Tf_{k_0})^*  > \varepsilon  \right \} \right) \right)\\
        &\leq \lVert (Tf - Tf_{k_0})^* \rVert_{m_{\varphi_{\overline{X}}}} \\
        &\lesssim \lVert (Tf - Tf_{k_0})^* \rVert_{\overline{X}} \\
        &= \lVert Tf - Tf_{k_0} \rVert_{X} \\
        &\lesssim \varepsilon \varepsilon_0,
    \end{split}
    \end{equation*}
    where, of course, the hidden constant does not depend on the values of $\varepsilon, \varepsilon_0, k_0$.

    Finally, considering $\varphi_X ( \mu(E_3) )$, we immediately observe that
    \begin{equation*}
        \limsup_{n \to \infty} \left \lvert \frac{1}{n}\sum_{j = 0}^{n-1} T_{\phi}^j f_{k_0} - \frac{1}{n}\sum_{j = 0}^{n-1} T_{\phi}^j f \right \rvert \leq T^{ \#}_{\phi} (  f - f_{k_0} ).
    \end{equation*}
    Therefore, Corollary~\ref{CorMaxErgodicX} together with \eqref{ThmErgodicPointwise:E5} imply
    \begin{equation*}
        \varepsilon \varphi_X \left( \mu \left( \left \{  \limsup_{n \to \infty} \left \lvert \frac{1}{n}\sum_{j = 0}^{n-1} T_{\phi}^j f_{k_0} - \frac{1}{n}\sum_{j = 0}^{n-1} T_{\phi}^j f \right \rvert > \varepsilon  \right \} \right) \right) \lesssim \varepsilon \varepsilon_0.
    \end{equation*}
    Again, the hidden constant does not depend on the values of $\varepsilon, \varepsilon_0, k_0$.

    Plugging the above-obtained three estimates into \eqref{ThmErgodicPointwise:E7}, we conclude that
    \begin{equation*}
        \varphi_X \left( \mu \left( \left \{ \limsup_{n \to \infty} \left\lvert Tf - \frac{1}{n}\sum_{j = 0}^{n-1} T_{\phi}^j f \right \rvert > 3\varepsilon  \right \} \right)  \right) \lesssim 2 \varepsilon_0.
    \end{equation*}
    As above, it follows from the facts that both $\varepsilon_0$ and $\varepsilon$ we arbitrary and that $\varphi_X$ is zero only at zero that \eqref{ThmErgodicPointwise:E0} holds for $f$.
\end{proof}

\bibliographystyle{dabbrv}
\bibliography{bibliography}
\end{document}